\DeclareMathAlphabet{\smallchanc}{OT1}{pzc}%
                                 {m}{it}
\DeclareFontFamily{OT1}{pzc}{}
\DeclareFontShape{OT1}{pzc}{m}{it}%
             {<-> s * [1.100] pzcmi7t}{}
\DeclareMathAlphabet{\mathchanc}{OT1}{pzc}%
                                 {m}{it}
\newcommand{\sectionsize}{\relsize{-.5}}
\newcommand{\theoremsize}{\relsize{-.5}}
\newcommand{\mrsize}{\relsize{-.8}}
\renewcommand{\subsectionautorefname}{\sectionsize\sf \subsectionautorefname}
\@ifdefinable\equationname{\let\equationname\equationautorefname}
\def\equationautorefname~#1\@empty\@empty\null{\protect{\theoremsize\sf
    (#1\@empty\@empty\null)}}%
\@ifdefinable\AMSname{\let\AMSname\AMSautorefname}
\def\AMSautorefname~#1\@empty\@empty\null{\sf( #1\@empty\@empty\null)}%
\@ifdefinable\itemname{\let\itemname\itemautorefname}
\def\itemautorefname~#1\@empty\@empty\null{\mrsize{%
    {\sf #1}}\@empty\@empty\null%
}%
\newcommand{\basetheorem}[3]{%
    \newtheorem{#1}{#2}[#3]
    \newtheorem*{#1*}{#2}
    \expandafter\def\csname #1autorefname\endcsname{#2}
}%
\newcommand{\maketheorem}[3]{%
    \newaliascnt{#1}{#2}
    \newtheorem{#1}[#1]{\theoremsize\sf #3}
    \aliascntresetthe{#1}
    \expandafter\def\csname #1autorefname\endcsname{\theoremsize\sf #3}
    \newtheorem{#1*}{#3}
}%
\newcommand{\baseremark}[3]{%
    \newtheorem{#1}{#2}{#3}
    \newtheorem*{#1*}{#2}
    \expandafter\def\csname #1autorefname\endcsname{#2}
}%
\newcommand{\makeremark}[3]{%
    \newaliascnt{#1}{#2}
    \newtheorem{#1}[equation]{#3}
    \aliascntresetthe{#1}
    \expandafter\def\csname #1autorefname\endcsname{\theoremsize\sf #3}
    \newtheorem{#1*}{#3}
}%
\theoremstyle{plain}   
\newcommand{\longpagesize}{
\textwidth= 6.75in
\textheight=9.25in
\voffset-.85in
\hoffset-.99in
\marginparwidth=56pt
\footskip.5in
}
\newcounter{are-there-sections}
\newcommand{\zzzzz}{}
\def\me{S\'ANDOR J KOV\'ACS\xspace}
\def\mythanks{Supported in part by NSF Grant DMS-2100389.%
} %
\def\myaddress{University of Washington, Department of Mathematics,
Seattle,WA 98195,USA}
\def\myemail{skovacs@uw.edu\xspace}
\DeclareMathAlphabet{\smallchanc}{OT1}{pzc}%
                                 {m}{it}
\DeclareFontFamily{OT1}{pzc}{}
\DeclareFontShape{OT1}{pzc}{m}{it}%
             {<-> s * [1.100] pzcmi7t}{}
\DeclareMathAlphabet{\mathchanc}{OT1}{pzc}%
                                 {m}{it}
\newcommand{\mcA}{\mathchanc{A}}
\newcommand{\mcB}{\mathchanc{B}}
\newcommand{\mcD}{\mathchanc{D}}
\newcommand{\mcR}{\mathchanc{R}}
 \DeclareFontFamily{OMS}{rsfs}{\skewchar\font'60}
 \DeclareFontShape{OMS}{rsfs}{m}{n}{<-5>rsfs5 <5-7>rsfs7 <7->rsfs10 }{}
 \DeclareSymbolFont{rsfs}{OMS}{rsfs}{m}{n}
 \DeclareSymbolFontAlphabet{\scr}{rsfs}
\newcommand{\sA}{\mathscr{A}}
\newcommand{\sB}{\mathscr{B}}
\newcommand{\sE}{\mathscr{E}}
\newcommand{\sF}{\mathscr{F}}
\newcommand{\sG}{\mathscr{G}}
\newcommand{\sH}{\mathscr{H}}
\newcommand{\sI}{\mathscr{I}}
\newcommand{\sK}{\mathscr{K}}
\newcommand{\sL}{\mathscr{L}}
\newcommand{\sM}{\mathscr{M}}
\newcommand{\sO}{\mathscr{O}}
\newcommand{\sfA}{{\sf A}}
\newcommand{\sfB}{{\sf B}}
\newcommand{\sfC}{{\sf C}}
\newcommand{\sfG}{{\sf G}}
\newcommand{\sfK}{{\sf K}}
\newcommand{\sfM}{{\sf M}}
\newcommand{\sfN}{{\sf N}}
\newcommand{\sfQ}{{\sf Q}}
\newcommand{\bsfF}{\pmb{\sf F}}
\newcommand{\bsfG}{\pmb{\sf G}}
\newcommand{\bsff}{\pmb{\sf f}}
\newcommand{\bsfg}{\pmb{\sf g}}
\newcommand{\bC}{\mathbb{C}}
\newcommand{\bD}{\mathbb{D}}
\newcommand{\bH}{\mathbb{H}}
\newcommand{\bN}{\mathbb{N}}
\newcommand{\bZ}{\mathbb{Z}}
\def\frm{\mathfrak{m}}
\newcommand{\frp}{\mathfrak{p}}
\newcommand{\tf}{\widetilde{f}}
\DeclareSymbolFont{largesymbolsA}{U}{jkpexa}{m}{n}
\DeclareMathSymbol{\varprod}{\mathop}{largesymbolsA}{16}
\newcommand{\LeftEqNo}{\let\veqno\@@leqno}
\newcommand{\ol}{\overline}
\newcommand{\ul}{\underline}
\newcommand{\into}{\hookrightarrow}
\newcommand{\onto}{\twoheadrightarrow}
\newcommand{\ideal}{\vartriangleleft}
\newcommand{\properideal}%
        {\subsetneq}
\newcommand{\homideal}{\vartriangleleft_{\mathrm{hom}}\!}
\newcommand{\wt}{\widetilde}
\newcommand{\leteq}{\colon\!\!\!=}
\newcommand{\col}{\colon}
\newcommand{\Irr}{{\mathchanc{Irr}}}
\newcommand\dash[1]{\rule[-.2ex]{#1}{.4pt}}
\DeclareMathOperator{\an}{{an}}
\DeclareMathOperator{\Ass}{{Ass}}
\DeclareMathOperator{\codim}{codim}
\newcommand{\coh}{\mathrm{coh}}
\DeclareMathOperator{\coker}{{coker}}
\DeclareMathOperator{\cone}{{Cone}}
\DeclareMathOperator{\depth}{{depth}}
\DeclareMathOperator{\exc}{Exc}
\newcommand{\filt}{{\operatorname{filt}}}
\DeclareMathOperator{\Hom}{Hom}
\newcommand{\sHom}[0]{{\mathchanc{Hom}}}
\DeclareMathOperator{\id}{{id}}
\DeclareMathOperator{\Id}{{Id}}
\DeclareMathOperator{\im}{{im}}
\DeclareMathOperator{\length}{{length}}
\newcommand{\lotimes}{\overset{L}{\otimes}}
\newcommand{\hotimes}[0]{%
  \protect{\ensuremath{\kern.1em{{%
  \raisebox{.5\depth}{$\scriptstyle[$}
  }}\kern-.25em\otimes\kern-.25em{%
  \raisebox{.5\depth}{$\scriptstyle]$}
  }\kern.1em}}}
\DeclareMathOperator{\Mor}{{Mor}}
\DeclareMathOperator{\Ob}{{Ob}}
\DeclareMathOperator{\obj}{{Obj}}
\DeclareMathOperator{\ob}{{Ob}}
\newcommand{\qc}{\mathrm{qc}}
\newcommand{\red}{\mathrm{red}}
\DeclareMathOperator{\Sing}{{Sing}}
\DeclareMathOperator{\Spec}{{Spec}}
\DeclareMathOperator{\supp}{{supp}}
\DeclareMathOperator{\skvert}{{\,\vert\,}}
\newcommand{\factor}[2]{\left. \raise .2em\hbox{\ensuremath{#1}\vphantom{$I^d$}}
\hskip -.1em \right/ \hskip -.4em \raise -.3em\hbox{\ensuremath{#2}}}%
\newcommand\mtimes[3]{{\varprod_{#1}^{#2}}_{\raise 1ex \hbox{\scriptsize #3}}}%
\newcommand{\myR}{{\mcR\!}}
\newcommand{\blank}{\dash{1em}}
\newcommand{\kdot}{{{\,\begin{picture}(1,1)(-1,-2)\circle*{2}\end{picture}\,}}}
\newcommand{\cmx}[1]{{#1}^{\raisebox{.15em}{\ensuremath\kdot}}}
\newcommand{\cx}[1]{{\sf #1}}
\newcommand{\dcx}[1]{{\omega}^\kdot_{#1}}
\newcommand{\Om}{\underline{\Omega}}
\def\dimcoh#1.#2.#3.{h^{#1}(#2,#3)}
\def\hypcoh#1.#2.#3.{\mathbb H_{\vphantom{l}}^{#1}(#2,#3)}
\def\loccoh#1.#2.#3.#4.{H^{#1}_{#2}(#3,#4)}
\def\dimloccoh#1.#2.#3.#4.{h^{#1}_{#2}(#3,#4)}
\def\lochypcoh#1.#2.#3.#4.{\mathbb H^{#1}_{#2}(#3,#4)}
\def\seslong#1.#2.#3.{0  \longrightarrow  #1   \longrightarrow 
 #2 \longrightarrow #3 \longrightarrow 0} 
\def\sesshort#1.#2.#3.{0
 \rightarrow #1 \rightarrow #2 \rightarrow #3 \rightarrow 0}
\def\dist#1.#2.#3.{  #1   \longrightarrow 
 #2 \longrightarrow #3 \stackrel{+1}{\longrightarrow} } 
\def\CDdist#1.#2.#3.{  #1   @>>>  #2  @>>>   #3 @>+1>> }  
\def\shortses#1.#2.#3.{0  \rightarrow  #1   \rightarrow 
 #2  \rightarrow   #3 \rightarrow  0}
\def\shortdist#1.#2.#3.{  #1   \rightarrow 
 #2  \rightarrow   #3 \stackrel{+1}{\rightarrow} }  
\def\ddist#1.#2.#3.#4.#5.#6.{\CD
#1 @>>> #2 @>>> #3 @>+1>> \\
@VVV @VVV @VVV \\
#4 @>>> #5 @>>> #6 @>+1>> 
\endCD}
\def\ddistun#1.#2.#3.#4.#5.#6.{\CD
#1 @>>> #2 @>>> #3 @>+1>> \\
@. @VVV @VVV  \\
#4 @>>> #5 @>>> #6 @>+1>> 
\endCD}
\def\Iff#1#2#3{
\hfil\hbox{\hsize =#1
\vtop{\noin #2}
\hskip.5cm 
\lower.5\baselineskip\hbox{$\Leftrightarrow$}\hskip.5cm
\vtop{\noin #3}}\hfil\medskip}
\newcommand{\union}\cup
\newcommand{\intersect}\cap
\newcommand{\Union}\bigcup
\newcommand{\Intersect}\bigcap
\def\myoplus#1.#2.{\underset #1 \to {\overset #2 \to \oplus}}
\newcommand{\resto}[1]{\raise -.5ex\hbox{$\vert$}_{#1}}
\def\qis{\,{\simeq}_{\text{qis}}\,}
\def\filtqis{\,{\simeq}_{\text{filt qis}}\,}
\newcommand{\ses}{short exact sequence\xspace}
\newcommand{\sess}{short exact sequences\xspace}
\newcommand{\dt}{distinguished triangle\xspace}
\newcommand{\dts}{distinguished triangles\xspace}
\newcommand{\DDB}{Deligne-Du~Bois\xspace}
\newcommand{\DB}{Du~Bois\xspace}
\newcommand{\rtl}{rational\xspace}
\newcommand{\sings}{singularities\xspace}
\newcommand\egpx{\ensuremath{\sfG^{X,D}_{p}}}
\newcommand\wegpx{\ensuremath{\wt\sfG\vphantom{\sfG}^{X,D}_{p}}}
\newcommand\hegpxz{\ensuremath{\what\sfG\vphantom{\sfG}^{X,\emptyset}_{p}}}
\newcommand\egnu[2]{\ensuremath{\nu^{#1}_{#2}}}
\newcommand\wegnu[2]{\ensuremath{\wt\nu^{#1}_{#2}}}
\newcommand\oegnu[2]{\ensuremath{\ol\nu^{#1}_{#2}}}
\newcommand\f{{\bsff}} 
\newcommand\uf{\ul{\bsff}\vphantom{\bsff}} 
\renewcommand\tf{\tilde{\bsff}} 
\newcommand\hz{\ensuremath{\footnotesize\protect{\textsf{\!\textit{h}}^{\,0}\!}}}
\begin{document}
\makeatletter
\definecolor{brick}{RGB}{204,0,0}
\def\@cite#1#2{{%
 \m@th\upshape\mdseries[{\sffamily\relsize{-.5}#1}{\if@tempswa,
   \sffamily\relsize{-.5}\color{brick} #2\fi}]}}
\newcommand{\sandor}{{\color{blue}{S\'andor \mdyydate\today}}}
\newenvironment{refmr}{}{}
%
\definecolor{refblue}{RGB}{0,0,128}
        \newcommand\refblue{\color{refblue}}
\newcommand\james{M\hskip-.1ex\raise .575ex \hbox{\text{c}}\hskip-.075ex Kernan\xspace}
\newcommand\ifft{if and only if\xspace}
\newcommand\tiff{if and only if\xspace}
\newcommand\sfref[1]{{\sf\protect{\relsize{-.5}\ref{#1}}}}
\newcommand\stepref[1]{{\sf\protect{\relsize{-.5}\refblue Step~\ref{#1}}}}
\newcommand\demoref[1]{{\sf(\protect{\relsize{-.5}\ref{#1}})}}
\renewcommand\eqref{\demoref}

%
\renewcommand\thesubsection{\thesection.\Alph{subsection}}
\renewcommand\subsection{
  \renewcommand{\sfdefault}{phv}
  \@startsection{subsection}%
  {2}{0pt}{-\baselineskip}{.2\baselineskip}{\raggedright
    \sffamily\itshape\relsize{-.5}
  }}
\renewcommand\section{
  \renewcommand{\sfdefault}{phv}
  \@startsection{section} %
  {1}{0pt}{\baselineskip}{.2\baselineskip}{\centering
    \sffamily
    \scshape
}}

\setlist[enumerate, 1]{itemsep=3pt,topsep=3pt,leftmargin=1.5em,font=\upshape,
  label={(\roman*)}}
\newlist{enumfull}{enumerate}{1}
\setlist[enumfull]{itemsep=3pt,topsep=3pt,leftmargin=3.25em,font=\upshape,
  label={(\thethm.\arabic*\/)}}
\newlist{enumbold}{enumerate}{1}
\setlist[enumbold]{itemsep=3pt,topsep=3pt,leftmargin=1.95em,font=\upshape,
  label={
    \textbf{({\it\textbf{\roman*}}\/)}}}
\newlist{enumalpha}{enumerate}{1}
\setlist[enumalpha]{itemsep=3pt,topsep=3pt,leftmargin=2em,font=\upshape,
  label=(\alph*\/)}
\newlist{widemize}{itemize}{1}
\setlist[widemize]{itemsep=3pt,topsep=3pt,leftmargin=1em,label=$\bullet$}
\newlist{widenumerate}{enumerate}{1}
\setlist[widenumerate]{itemsep=3pt,topsep=3pt,leftmargin=1.75em,label=(\roman*)}
\newlist{widenumalpha}{enumerate}{1}
\setlist[widenumalpha]{itemsep=3pt,topsep=3pt,leftmargin=1.5em,label=(\alph*)}
\newcounter{parentthmnumber}
\setcounter{parentthmnumber}{0}
\newcounter{currentparentthmnumber}
\setcounter{currentparentthmnumber}{0}
\newcounter{nexttag}
\newcommand{\setnexttag}{%
  \setcounter{nexttag}{\value{enumi}}%
  \addtocounter{nexttag}{1}%
}
\newcommand{\placenexttag}{%
\tag{\roman{nexttag}}%
}

\newenvironment{thmlista}{%
\label{parentthma}
\begin{enumerate}
}{%
\end{enumerate}
}
\newlist{thmlistaa}{enumerate}{1}
\setlist[thmlistaa]{label=(\arabic*), ref=\autoref{parentthm}\thethm(\arabic*)}
\newcommand*{\parentthmlabeldef}{%
  \expandafter\newcommand
  \csname parentthm\the\value{parentthmnumber}\endcsname
}
\newcommand*{\ptlget}[1]{%
  \romannumeral-`\x
  \ltx@ifundefined{parentthm\number#1}{%
    \ltx@space
    \parentthmundefined
  }{%
    \expandafter\ltx@space
    \csname mymacro\number#1\endcsname
  }%
}  
\newcommand*{\parentthmundefined}{\textbf{??}}
\parentthmlabeldef{parentthm}
\newenvironment{thmlistr}{%
\label{parentthm}
\begin{thmlistrr}}{%
\end{thmlistrr}}
\newlist{thmlistrr}{enumerate}{1}
\setlist[thmlistrr]{label=(\roman*), ref=\autoref{parentthm}(\roman*)}
\newcounter{proofstep}%
\setcounter{proofstep}{0}%
\newcommand{\pstep}[1]{%
  \smallskip
  \noindent
  \emph{{\sc Step \arabic{proofstep}:} #1.}\addtocounter{proofstep}{1}}
\newcounter{lastyear}\setcounter{lastyear}{\the\year}
\addtocounter{lastyear}{-1}
\newcommand\sideremark[1]{%
\normalmarginpar
\marginpar
[
\hskip .45in
\begin{minipage}{.75in}
\tiny #1
\end{minipage}
]
{
\hskip -.075in
\begin{minipage}{.75in}
\tiny #1
\end{minipage}
}}
\newcommand\rsideremark[1]{
\reversemarginpar
\marginpar
[
\hskip .45in
\begin{minipage}{.75in}
\tiny #1
\end{minipage}
]
{
\hskip -.075in
\begin{minipage}{.75in}
\tiny #1
\end{minipage}
}}
\newcommand\Index[1]{{#1}\index{#1}}
\newcommand\inddef[1]{\emph{#1}\index{#1}}
\newcommand\noin{\noindent}
\newcommand\hugeskip{\bigskip\bigskip\bigskip}
\newcommand\tinyskip{\vskip.25em}
\newcommand\smc{\sc}
\newcommand\dsize{\displaystyle}
\newcommand\sh{\subheading}
\newcommand\nl{\newline}
\newcommand\input /home/kovacs/tex/latex/{\input /home/kovacs/tex/latex/} 
\newcommand\Get{\Input /home/kovacs/tex/latex/} 
\newcommand\toappear{\rm (to appear)}
\newcommand\mycite[1]{[#1]}
\newcommand\myref[1]{(\ref{#1})}
\newcommand{\parref}[1]{\eqref{\bf #1}}
\newcommand\myli{\hfill\newline\smallskip\noindent{$\bullet$}\quad}
\newcommand\vol[1]{{\bf #1}\ } 
\newcommand\yr[1]{\rm (#1)\ } 
\newcommand\cf{cf.\ \cite}
\newcommand\mycf{cf.\ \mycite}
\newcommand\te{there exist\xspace}
\newcommand\tes{there exists\xspace}
\newcommand\st{such that\xspace}
\newcommand\CM{Cohen-Macaulay\xspace}
\newcommand\GR{Grauert-Riemenschneider\xspace}
\newcommand\notinclass{{\relsize{-.5}\sf [not discussed in class]}\xspace}
\newcommand\myskip{3pt}
\newtheoremstyle{bozont}{3pt}{3pt}%
     {\itshape}
     {}
     {\bfseries}
     {.}
     {.5em}
     {\thmname{#1}\thmnumber{ #2}\thmnote{\normalsize
        \ \rm #3}}
\newtheoremstyle{bozont-sub}{3pt}{3pt}%
     {\itshape}
     {}
     {\bfseries}
     {.}
     {.5em}
     {\thmname{#1}\ \arabic{section}.\arabic{thm}.\thmnumber{#2}\thmnote{\normalsize
 \ \rm #3}}
\newtheoremstyle{bozont-named-thm}{3pt}{3pt}%
     {\itshape}
     {}
     {\bfseries}
     {.}
     {.5em}
     {\thmname{#1}\thmnumber{#2}\thmnote{ #3}}
\newtheoremstyle{bozont-named-bf}{3pt}{3pt}%
     {}
     {}
     {\bfseries}
     {.}
     {.5em}
     {\thmname{#1}\thmnumber{#2}\thmnote{ #3}}
\newtheoremstyle{bozont-named-sf}{3pt}{3pt}%
     {}
     {}
     {\sffamily}
     {.}
     {.5em}
     {\thmname{#1}\thmnumber{#2}\thmnote{ #3}}
\newtheoremstyle{bozont-named-sc}{3pt}{3pt}%
     {}
     {}
     {\scshape}
     {.}
     {.5em}
     {\thmname{#1}\thmnumber{#2}\thmnote{ #3}}
\newtheoremstyle{bozont-named-it}{3pt}{3pt}%
     {}
     {}
     {\itshape}
     {.}
     {.5em}
     {\thmname{#1}\thmnumber{#2}\thmnote{ #3}}
\newtheoremstyle{bozont-sf}{3pt}{3pt}%
     {}
     {}
     {\sffamily}
     {.}
     {.5em}
     {\thmname{#1}\thmnumber{ #2}\thmnote{\normalsize
 \ \rm #3}}
\newtheoremstyle{bozont-sc}{3pt}{3pt}%
     {}
     {}
     {\scshape}
     {.}
     {.5em}
     {\thmname{#1}\thmnumber{ #2}\thmnote{\normalsize
 \ \rm #3}}
\newtheoremstyle{bozont-remark}{3pt}{3pt}%
     {}
     {}
     {\sffamily}
     {.}
     {.5em}
     {\thmname{#1}\thmnumber{ #2}\thmnote{\normalsize
 \ \rm #3}}
\newtheoremstyle{bozont-subremark}{3pt}{3pt}%
     {}
     {}
     {\scshape}
     {.}
     {.5em}
     {\thmname{#1}\ \arabic{section}.\arabic{thm}.\thmnumber{#2}\thmnote{\normalsize
 \ \rm #3}}
\newtheoremstyle{bozont-def}{3pt}{3pt}%
     {}
     {}
     {\bfseries}
     {.}
     {.5em}
     {\thmname{#1}\thmnumber{ #2}\thmnote{\normalsize
 \ \rm #3}}
\newtheoremstyle{bozont-reverse}{3pt}{3pt}%
     {\itshape}
     {}
     {\bfseries}
     {.}
     {.5em}
     {\thmnumber{#2.}\thmname{ #1}\thmnote{\normalsize
 \ \rm #3}}
\newtheoremstyle{bozont-reverse-sc}{3pt}{3pt}%
     {\itshape}
     {}
     {\scshape}
     {.}
     {.5em}
     {\thmnumber{#2.}\thmname{ #1}\thmnote{\normalsize
 \ \rm #3}}
\newtheoremstyle{bozont-reverse-sf}{3pt}{3pt}%
     {\itshape}
     {}
     {\sffamily}
     {.}
     {.5em}
     {\thmnumber{#2.}\thmname{ #1}\thmnote{\normalsize
 \ \rm #3}}
\newtheoremstyle{bozont-remark-reverse}{3pt}{3pt}%
     {}
     {}
     {\sc}
     {.}
     {.5em}
     {\thmnumber{#2.}\thmname{ #1}\thmnote{\normalsize
 \ \rm #3}}
\newtheoremstyle{bozont-def-reverse}{3pt}{3pt}%
     {}
     {}
     {\sffamily}
     {.}
     {.5em}
     {\thmnumber{{\relsize{-.5}(#2)}}\thmname{ {\relsize{-.25}#1}}\thmnote{{\relsize{-.5}\ \sf #3}}}
\newtheoremstyle{bozont-def-newnum-reverse}{3pt}{3pt}%
     {}
     {}
     {\bfseries}
     {}
     {.5em}
     {\thmnumber{#2.}\thmname{ #1}\thmnote{\normalsize
 \ \rm #3}}
\newtheoremstyle{bozont-def-newnum-reverse-plain}{3pt}{3pt}%
   {}
   {}
   {}
   {}
   {.5em}
   {\thmnumber{\!(#2)}\thmname{ #1}\thmnote{\normalsize
 \ \rm #3}}
\newtheoremstyle{bozont-number}{3pt}{3pt}%
   {}
   {}
   {}
   {}
   {0pt}
   {\thmnumber{\!(#2)} }
\newtheoremstyle{bozont-say}{3pt}{3pt}%
     {}
     {}
     {\sffamily}
     {.}
     {.5em}
     {\thmnumber{{\relsize{-.5}\S#2}}\thmname{ {\relsize{-.25}#1}}%
       \ \thmnote{
         \it #3}}
\newtheoremstyle{bozont-subsay}{3pt}{3pt}%
     {}
     {}
     {\sffamily}
     {.}
     {.5em}
     {\thmnumber{{\relsize{-.5}\S\S#2}}\thmname{ {\relsize{-.25}#1}}\thmnote{{\relsize{-.5}\ \sf #3}}}
\newtheoremstyle{bozont-step}{3pt}{3pt}%
   {\itshape}
   {}
   {\scshape}
   {}
   {.5em}
   {$\boxed{\text{\sc \thmname{#1}~\thmnumber{#2}:\!}}$}
\theoremstyle{bozont}    
\basetheorem{trouble}{trouble}{}
\basetheorem{cavalry}{cavalry}{}
\ifnum \value{are-there-sections}=0 {%
  \basetheorem{proclaim}{Theorem}{}
} 
\else {%
  \basetheorem{proclaim}{Theorem}{section}
} 
\fi
\maketheorem{thm}{proclaim}{Theorem}
\maketheorem{mainthm}{proclaim}{Main Theorem}
\maketheorem{cor}{proclaim}{Corollary} 
\maketheorem{cors}{proclaim}{Corollaries} 
\maketheorem{lem}{proclaim}{Lemma} 
\maketheorem{prop}{proclaim}{Proposition} 
\maketheorem{conj}{proclaim}{Conjecture}
\basetheorem{subproclaim}{Theorem}{proclaim}
\maketheorem{sublemma}{subproclaim}{Lemma}
\newenvironment{sublem}{%
\setcounter{sublemma}{\value{equation}}
\begin{sublemma}}
{\end{sublemma}}
\theoremstyle{bozont-sub}
\maketheorem{subthm}{equation}{Theorem}
\maketheorem{subcor}{equation}{Corollary} 
\maketheorem{subprop}{equation}{Proposition} 
\maketheorem{subconj}{equation}{Conjecture}
\theoremstyle{bozont-named-thm}
\maketheorem{namedthm}{proclaim}{}
\theoremstyle{bozont-sc}
\newtheorem{proclaim-special}[proclaim]{\specialthmname}
\newenvironment{proclaimspecial}[1]
     {\def\specialthmname{#1}\begin{proclaim-special}}
     {\end{proclaim-special}}
\theoremstyle{bozont-subremark}
\basetheorem{subremark}{Remark}{proclaim}
\maketheorem{subrem}{equation}{Remark}
\maketheorem{subnotation}{equation}{Notation} 
\maketheorem{subassume}{equation}{Assumptions} 
\maketheorem{subobs}{equation}{Observation} 
\maketheorem{subexample}{equation}{Example} 
\maketheorem{subex}{equation}{Exercise} 
\maketheorem{inclaim}{equation}{Claim} 
\maketheorem{subquestion}{equation}{Question}
\theoremstyle{bozont-remark}
\basetheorem{remark}{Remark}{proclaim}
\makeremark{subclaim}{subremark}{Claim}
\maketheorem{rem}{proclaim}{Remark}
\maketheorem{claim}{proclaim}{Claim} 
\maketheorem{notation}{proclaim}{Notation} 
\maketheorem{assume}{proclaim}{Assumptions} 
\maketheorem{assumeone}{proclaim}{Assumption} 
\maketheorem{obs}{proclaim}{Observation} 
\maketheorem{example}{proclaim}{Example} 
\maketheorem{examples}{proclaim}{Examples} 
\maketheorem{complem}{equation}{Complement}
\maketheorem{const}{proclaim}{Construction}   
\maketheorem{ex}{proclaim}{Exercise} 
\newtheorem{case}{Case} 
\newtheorem{subcase}{Subcase}   
\newtheorem{step}{Step}
\newtheorem{approach}{Approach}
\maketheorem{Fact}{proclaim}{Fact}
\newtheorem{fact}{Fact}
\newtheorem*{SubHeading*}{\SubHeadingName}%
\newtheorem{SubHeading}[proclaim]{\SubHeadingName}
\newtheorem{sSubHeading}[equation]{\sSubHeadingName}
\newenvironment{demo}[1] {\def\SubHeadingName{#1}\begin{SubHeading}}
  {\end{SubHeading}}%
\newenvironment{subdemo}[1]{\def\sSubHeadingName{#1}\begin{sSubHeading}}
  {\end{sSubHeading}} %
\newenvironment{demor}[1]{\def\SubHeadingName{#1}\begin{SubHeading-r}}
  {\end{SubHeading-r}}%
\newenvironment{subdemor}[1]{\def\sSubHeadingName{#1}\begin{sSubHeading-r}}
  {\end{sSubHeading-r}} %
\newenvironment{demo-r}[1]{%
  \def\SubHeadingName{#1}\begin{SubHeading-r}}
  {\end{SubHeading-r}}%
\newenvironment{subdemo-r}[1]{\def\sSubHeadingName{#1}\begin{sSubHeading-r}}
  {\end{sSubHeading-r}} %
\newenvironment{demo*}[1]{\def\SubHeadingName{#1}\begin{SubHeading*}}
  {\end{SubHeading*}}%
\maketheorem{defini}{proclaim}{Definition}
\maketheorem{defnot}{proclaim}{Definitions and notation}
\maketheorem{question}{proclaim}{Question}
\maketheorem{terminology}{proclaim}{Terminology}
\maketheorem{crit}{proclaim}{Criterion}
\maketheorem{pitfall}{proclaim}{Pitfall}
\maketheorem{addition}{proclaim}{Addition}
\maketheorem{principle}{proclaim}{Principle} 
\maketheorem{condition}{proclaim}{Condition}
\maketheorem{exmp}{proclaim}{Example}
\maketheorem{hint}{proclaim}{Hint}
\maketheorem{exrc}{proclaim}{Exercise}
\maketheorem{prob}{proclaim}{Problem}
\maketheorem{ques}{proclaim}{Question}    
\maketheorem{alg}{proclaim}{Algorithm}
\maketheorem{remk}{proclaim}{Remark}          
\maketheorem{note}{proclaim}{Note}            
\maketheorem{summ}{proclaim}{Summary}         
\maketheorem{notationk}{proclaim}{Notation}   
\maketheorem{warning}{proclaim}{Warning}  
\maketheorem{defn-thm}{proclaim}{Definition--Theorem}  
\maketheorem{convention}{proclaim}{Convention}  
\maketheorem{hw}{proclaim}{Homework}
\maketheorem{hws}{proclaim}{\protect{${\mathbb\star}$}Homework}
\maketheorem{obstacle}{trouble}{Road Block \#\!}
\maketheorem{avoid}{cavalry}{Detour \#\!}
\newtheorem*{ack}{\small Acknowledgment}
\newtheorem*{acks}{\small Acknowledgments}
\theoremstyle{bozont-number}
\theoremstyle{bozont-def}    
\maketheorem{defn}{proclaim}{Definition}
\maketheorem{subdefn}{equation}{Definition}
\theoremstyle{bozont-reverse}    
\maketheorem{corr}{proclaim}{Corollary} 
\maketheorem{lemr}{proclaim}{Lemma} 
\maketheorem{propr}{proclaim}{Proposition} 
\maketheorem{conjr}{proclaim}{Conjecture}
\theoremstyle{bozont-remark-reverse}
\newtheorem{SubHeading-r}[proclaim]{\SubHeadingName}
\newtheorem{sSubHeading-r}[equation]{\sSubHeadingName}
\newtheorem{SubHeadingr}[proclaim]{\SubHeadingName}
\newtheorem{sSubHeadingr}[equation]{\sSubHeadingName}
\theoremstyle{bozont-reverse-sc}
\newtheorem{proclaimr-special}[proclaim]{\specialthmname}
\newenvironment{proclaimspecialr}[1]%
{\def\specialthmname{#1}\begin{proclaimr-special}}%
{\end{proclaimr-special}}
\theoremstyle{bozont-remark-reverse}
\maketheorem{remr}{proclaim}{Remark}
\maketheorem{subremr}{equation}{Remark}
\maketheorem{notationr}{proclaim}{Notation} 
\maketheorem{assumer}{proclaim}{Assumptions} 
\maketheorem{obsr}{proclaim}{Observation} 
\maketheorem{exampler}{proclaim}{Example} 
\maketheorem{exr}{proclaim}{Exercise} 
\maketheorem{claimr}{proclaim}{Claim} 
\maketheorem{inclaimr}{equation}{Claim} 
\maketheorem{definir}{proclaim}{Definition}
\theoremstyle{bozont-def-newnum-reverse}    
\maketheorem{newnumr}{proclaim}{}
\theoremstyle{bozont-def-newnum-reverse-plain}
\maketheorem{newnumrp}{proclaim}{}
\theoremstyle{bozont-def-reverse}    
\maketheorem{defnr}{proclaim}{Definition}
\maketheorem{questionr}{proclaim}{Question}
\newtheorem{newnumspecial}[proclaim]{\specialnewnumname}
\newenvironment{newnum}[1]{\def\specialnewnumname{#1}\begin{newnumspecial}}{\end{newnumspecial}}
\theoremstyle{bozont-say}    
\newtheorem{say}[proclaim]{}
\theoremstyle{bozont-subsay}    
\newtheorem{subsay}[equation]{}
\theoremstyle{bozont-step}
\newtheorem{bstep}{Step}
\newcounter{thisthm} 
\newcounter{thissection} 
\newcommand{\ilabel}[1]{%
  \newcounter{#1}%
  \setcounter{thissection}{\value{section}}%
  \setcounter{thisthm}{\value{proclaim}}%
  \label{#1}}
\newcommand{\iref}[1]{%
  (\the\value{thissection}.\the\value{thisthm}.\ref{#1})}
\newcounter{lect}
\setcounter{lect}{1}
\newcommand\resetlect{\setcounter{lect}{1}\setcounter{page}{0}}
\newcommand\lecture{\newpage\centerline{\sfbf Lecture \arabic{lect}}
  \addtocounter{lect}{1}}
\newcommand\nnplecture{\hugeskip\centerline{\sfbf Lecture \arabic{lect}}
\addtocounter{lect}{1}}
\newcounter{topic}
\setcounter{topic}{1}
\newenvironment{topic}
{\noindent{\sc Topic 
\arabic{topic}:\ }}{\addtocounter{topic}{1}\par}
\counterwithin{equation}{proclaim}
\counterwithin{enumfulli}{equation}
\counterwithin{figure}{section} 
\newcommand\equinsect{\numberwithin{equation}{section}}
\newcommand\equinthm{\numberwithin{equation}{proclaim}}
\newcommand\figinthm{\numberwithin{figure}{proclaim}}
\newcommand\figinsect{\numberwithin{figure}{section}}
\newenvironment{sequation}{%
\setcounter{equation}{\value{thm}}
\numberwithin{equation}{section}%
\begin{equation}%
}{%
\end{equation}%
\numberwithin{equation}{proclaim}%
\addtocounter{proclaim}{1}%
}
\newcommand{\num}{\arabic{section}.\arabic{proclaim}}
\newenvironment{pf}{\smallskip \noindent {\sc Proof. }}{\qed\smallskip}
\newenvironment{enumerate-p}{
  \begin{enumerate}}
  {\setcounter{equation}{\value{enumi}}\end{enumerate}}
\newenvironment{enumerate-cont}{
  \begin{enumerate}
    {\setcounter{enumi}{\value{equation}}}}
  {\setcounter{equation}{\value{enumi}}
  \end{enumerate}}
\let\lenumi\labelenumi
\newcommand{\rmlabels}{\renewcommand{\labelenumi}{\rm \lenumi}}
\newcommand{\rmlabelsoff}{\renewcommand{\labelenumi}{\lenumi}}
\newenvironment{heading}{\begin{center} \sc}{\end{center}}
\newcommand\subheading[1]{\smallskip\noindent{{\bf #1.}\ }}
\newlength{\swidth}
\setlength{\swidth}{\textwidth}
\addtolength{\swidth}{-,5\parindent}
\newenvironment{narrow}{
  \medskip\noindent\hfill\begin{minipage}{\swidth}}
  {\end{minipage}\medskip}
\newcommand\nospace{\hskip-.45ex}
\newcommand{\sfbf}{\sffamily\bfseries}
\newcommand{\sfbfs}{\sffamily\bfseries\relsize{-.5}
}
\newcommand{\twidle}{\textasciitilde}
\makeatother

\newcounter{stepp}
\setcounter{stepp}{0}
\newcommand{\nextstep}[1]{%
  \addtocounter{stepp}{1}%
  \begin{bstep}%
    {#1}
  \end{bstep}%
  \noindent%
}
\newcommand{\resetsteps}{\setcounter{stepp}{0}}
\newcommand\change[1]{{\begin{color}{mydarkblue}\sfbf #1\end{color}}}
\newcommand\mchange[1]{{\begin{color}{mydarkblue}\mathbf #1\end{color}}}
\newcommand\keep[1]{{\begin{color}{fuchsia}\sfbf #1\end{color}}}
\newcommand\mrtl{$m$-rational\xspace}
\newcommand\premrtl{pre-$m$-rational\xspace}
\newcommand\smrtl{strict-$m$-rational\xspace}
\newcommand\premdb{pre-$m$-Du~Bois\xspace}%
\newcommand\premmdb{pre-$(m-1)$-Du~Bois\xspace}%
\newcommand\wmdb{weakly-$m$-Du~Bois\xspace}%
\newcommand\wmmdb{weakly-$(m-1)$-Du~Bois\xspace}%
\newcommand\mdb{$m$-Du~Bois\xspace}
\newcommand\smdb{strict-$m$-Du~Bois\xspace}%
\newcommand{\isomap}{\overset\simeq\longrightarrow}
\newcounter{startbiblioat}
\setcounter{startbiblioat}{0}
\let\oldthebibliography=\thebibliography
\let\oldendthebibliography=\endthebibliography
\renewenvironment{thebibliography}[1]{
    \oldthebibliography{#1}
    \setcounter{enumiv}{\value{startbiblioat}}
}{\oldendthebibliography}


\title
{Complexes of differential forms and singularities: The injectivity theorem} 
\author{\me} 
\date{\usdate\today} 
\thanks{\mythanks} 
\address{\myaddress}
\email{\myemail} 
%
\subjclass[2020]{14B05,14J17}
\maketitle
\begin{abstract}
\vskip-2em
  Conjecture G of Popa, Shen, and Vo [PSV24] is confirmed: it is proved that for
  varieties with (m-1)-Du Bois singularities, the natural morphism from the
  Grothendieck dual of the m-th graded Du Bois complex to the Grothendieck dual of
  its zero-th cohomology sheaf is injective on cohomology.
\end{abstract}
\setcounter{tocdepth}{1}
\tableofcontents

\section{Introduction}
\noindent
The notion of rational singularities has been studied for quite a long time and it
has proved to be extremely useful. An extension of this notion, that of \DB \sings,
was introduced by Steenbrink \cite{Steenbrink83}.  \DB \sings started to become
better known and generate interest after Koll\'ar's conjecture that log canonical
\sings are \DB was confirmed in \cite{KK10}. This had opened up a slew of
applications in birational geometry and moduli theory 
cf.~\cite{SingBook,ModBook}.

Recently, following the original Hodge theoretic motivation of Steenbrink, Musta\c
t\u a, Olano, Popa, and Witaszek initiated the study of ``higher'' versions \DB
\sings for hypersurfaces in \cite{MR4583654}. This inspired \cite{MR4480883}, where
the terminology was coined. As a next step, the lci case was studied in
\cite{MR4491455}.  Friedman and Laza introduced higher rational singularities in
analogy with higher \DB \sings and studied the connections between the two notions in
\cite{FL-isolated,FL-lci}.  This connection was further studied in \cite{MP22}.
Most 
results in these 
papers were restricted to lci or isolated \sings and the natural question whether
those results hold without the lci or isolated assumptions arose.

The main goal of this paper is to extend some of the results obtained in these papers
to the general case. In addition, some other results are proved that are new even in
the lci case.


It turns out that the preparation of this paper took an unexpectedly long time and in
the meantime some of the results mentioned above were also extended to the general
case in \cite{SVV23}. As a result, there are some overlaps between this and that
paper (\cite{SVV23}). However, there are some small differences in the basic
definitions adopted and a definite difference in approach and philosophy, so it
seemed that these overlaps are not duplicating anything. In fact, the proof presented
here of the result analogous to the main result of \cite{SVV23} is different from the
approach taken there.
Another related paper, \cite{PSV24}, was also posted recently. 
In fact, the main result of this paper is stated there as a conjecture, along with
several other conjectures that follow from that one. In this paper, the main
conjecture of \cite{PSV24} is confirmed (for the relevant definitions and notation
see \autoref{sec:compl-diff-forms} and \autoref{sec:singularities}):
\begin{thm}[=\protect{\autoref{thm:key-injectivity}
    \cite[Conjecture~G]{PSV24}}]\label{conjG}
  Let $X$ be a variety with \premmdb \sings. Then the following natural map is
  injective on cohomology:
  \[
    \myR\sHom_X(\Om_X^m,\dcx X)\longrightarrow\myR\sHom_X(h^0(\Om_X^m),\dcx X).
  \]
\end{thm}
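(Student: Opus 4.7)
The plan is to reduce the injectivity claim to the vanishing of a connecting map in a long exact sequence, and then to leverage the pre-$(m-1)$-Du~Bois hypothesis via induction and duality.

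First I would complete the canonical map $h^0(\Om_X^m) \to \Om_X^m$ to a distinguished triangle
\[
h^0(\Om_X^m) \longrightarrow \Om_X^m \longrightarrow Q \stackrel{+1}{\longrightarrow}
\]
with $Q := \tau_{\geq 1}(\Om_X^m)$ concentrated in cohomological degrees $\geq 1$. Applying $\myR\sHom(-,\dcx X)$ and reading off the long exact sequence of cohomology sheaves, the asserted injectivity becomes equivalent to the vanishing on cohomology sheaves of the induced map $\myR\sHom(Q,\dcx X) \to \myR\sHom(\Om_X^m,\dcx X)$, equivalently to the boundary $\sExt^{i-1}(h^0(\Om_X^m),\dcx X) \to \sExt^i(Q,\dcx X)$ being surjective for every $i$.

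Next I would induct on $m$. The case $m = 0$ is essentially Kov\'acs' original injectivity theorem for \DB singularities --- the hypothesis pre-$(-1)$-Du~Bois is vacuous --- applied to the augmentation $h^0(\Om_X^0) \to \Om_X^0$. For the inductive step, I would pass to a smooth hyperresolution $\pi_\bullet: X_\bullet \to X$ realizing $\Om_X^m \simeq \myR\pi_{\bullet*}\Omega^m_{X_\bullet}$, and apply Grothendieck-Verdier duality level by level on the simplicial scheme. The pre-$(m-1)$-Du~Bois hypothesis supplies $\Om_X^p \simeq h^0(\Om_X^p)$ for all $p \leq m-1$, giving a concrete handle on the lower levels that should be combined either via a hypercohomology spectral sequence or via the filtration b\^ete on the full Du~Bois complex $\FullDuBois X$.

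The main obstacle I expect is bridging between consecutive inductive levels, since $\Om_X^{m-1}$ and $\Om_X^m$ do not sit in a single natural distinguished triangle. The technical core of the argument will therefore likely be the construction of an auxiliary object --- for instance a cone of a map between two $\myR\pi_{\bullet*}$-complexes, or a de~Rham-type sequence on $X_\bullet$ --- that, once the pre-$(m-1)$-Du~Bois vanishings are imposed, becomes precisely the complex whose dualization forces the connecting maps from the first step to vanish. Deploying Grothendieck duality functorially across the hyperresolution, while keeping careful track of $h^0$ versus the full \DB complex, is where the real technical effort should lie.
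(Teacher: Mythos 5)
Your opening reduction is sound: completing $h^0(\Om_X^m)\to\Om_X^m$ to a triangle with cone $\tau_{\geq 1}(\Om_X^m)$ and reading the long exact sequence of $\sExt$'s into $\dcx X$ is a reasonable first step, and you are also right that the heart of the matter is the construction of an auxiliary object sitting between $h^0(\Om_X^m)$ and $\Om_X^m$. But past that point the plan has a genuine gap, and the gap is precisely the one that makes this theorem hard.

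First, the proposed induction direction is unlikely to close. There is no natural triangle linking $\Om_X^{m-1}$ to $\Om_X^m$ on $X$ itself; the filtration b\^ete relates $\uf^X_m$, $\uf^X_{m-1}$ and $\Om_X^m[-m]$, but it does not let you pass the injectivity statement from degree $m-1$ to degree $m$, even with the pre-$(m-1)$-Du~Bois hypothesis in hand. The paper's induction is instead on $\dim X$, restricting to a \emph{general} hyperplane section $H$ and using the \dts of \autoref{lem:ses-for-hyperplane-section} and \autoref{cor:premmdb-ses}. That choice is not cosmetic: it is what supplies a regular element $f$ defining $H$, and the Nakayama-type argument in the Claim inside the proof of \autoref{thm:key-injectivity} ($f\cdot\sZ_\ii=\sZ_\ii$, hence $\sZ_\ii=0$) is how one gets that the kernel of the dual map has zero-dimensional support. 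Nothing in an induction on $m$ produces such a regular element.

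Second, and more fundamentally, dualizing ``level by level'' across a hyperresolution $\pi_\bullet\col X_\bullet\to X$ cannot supply the needed input. On each level $X_\alpha$ the map $h^0(\Om_{X_\alpha}^m)\to\Om_{X_\alpha}^m$ is an isomorphism (each $X_\alpha$ is smooth), so the content of the theorem is entirely in the assembly of levels, and $\myR\pi_{\bullet*}$ does not preserve injectivity on cohomology sheaves. What the proof actually needs is a \emph{surjectivity} on hypercohomology of the dual map, which---after Grothendieck--Serre duality---is obtained from Deligne's $E_1$-degeneration of the Hodge-to-de~Rham spectral sequence for compactly supported cohomology (\autoref{thm:hodge}, \autoref{cor:surjective-cohomology}), transported through cyclic covers and Koll\'ar's trick (\autoref{sec:cyclic-covers}, \autoref{sec:kollars-trick-2.0}) to arbitrary twists $\sL^{-j}$ of the relevant complexes. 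This Hodge-theoretic/topological source of surjectivity is the engine of the argument and is entirely absent from your proposal.

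Third, the auxiliary object actually used, $\wegpx$ from \autoref{sec:eminence-grise}, is not a cone of two $\myR\pi_{\bullet*}$-complexes but an iterated cone built recursively from $\tf^X_p(\log H)$, $\uf^X_p(\log H)$, and a choice of general sections $s_i$ of a power of an ample line bundle. Its defining properties (\autoref{thm:main-surjectivity}\autoref{item:6},\autoref{item:20},\autoref{item:7}) are exactly what allow the surjectivity input to be translated into the desired injectivity. Finally, the whole argument requires passing to a projective closure of the (a priori only quasi-projective) variety, applying Serre vanishing \eqref{eq:72}, and then reconciling the projective statement with the local one via local cohomology with support in a divisor (\autoref{sec:projective-vs-local}). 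Your plan has no step corresponding to this proper-vs-local balancing, which the paper singles out in the introduction as one of the two main obstacles. In short: the reduction is fine, the diagnosis that an auxiliary object is needed is fine, but the proposed source of the key surjectivity (levelwise duality on a hyperresolution) is not strong enough and the proposed induction variable is the wrong one.
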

\begin{rem}
  An important feature of this result is that it is about injectivity of sheaves, and
  hence it is a \emph{local} statement. A major difficulty in the proof is to use the
  \emph{global} surjectivity coming from the degeneration of the Fr\"olicher spectral
  sequence in a local setting.
\end{rem}
%
This statement for $m=0$ first appeared in \cite[Thm.~3.3]{MR3617778} which was later
generalized to the case of pairs in \cite[Thm.~3.2]{KS13}. The natural extension to
higher degree forms was formulated and proved for local complete intersections in
\cite{MR4491455,MP22}. The same result for isolated singularities was proved in
\cite[Thm~D]{PSV24}.
As explained in \cite{PSV24}, \autoref{conjG} implies several other conjectures.
\begin{cor}[\protect{\cite[Conjecture H]{PSV24}}]\label{conjH}
  Let $X$ be a variety with only \premmdb \sings and assume that $X$ has \premdb
  \sings away from a closed subset of dimension $s$. Then
  \[
    h^i(\Om_X^m)=0 \qquad \text{for} \qquad 0<i<\depth h^0(\Om_X^m)-s-1.
  \]
\end{cor}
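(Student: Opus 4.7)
The plan is to feed \autoref{conjG} into the canonical distinguished triangle
\[
h^0(\Om_X^m) \longrightarrow \Om_X^m \longrightarrow \tau_{\geq 1}\Om_X^m \longrightarrow h^0(\Om_X^m)[1]
\]
and then recover the vanishing of $h^i(\Om_X^m)$ by dualizing twice. Write $D \coloneqq \myR\sHom_X(-,\dcx X)$ and $\sF \coloneqq h^0(\Om_X^m)$. Applying $D$ to the triangle yields a long exact sequence containing the morphism $h^j(D(\Om_X^m))\to h^j(D(\sF))$, which is injective for every $j$ by \autoref{conjG} (whose hypothesis holds since $X$ is \premmdb). Exactness then forces the preceding map $h^j(D(\tau_{\geq 1}\Om_X^m)) \to h^j(D(\Om_X^m))$ to be the zero map, so the connecting morphism makes $h^j(D(\tau_{\geq 1}\Om_X^m))$ a quotient of $h^{j-1}(D(\sF))$.

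Next I would invoke the standard depth--Ext bounds from local duality: for any coherent $\sG$ on $X$,
\[
h^j(D(\sG)) = 0 \ \text{for } j>-\depth\sG, \qquad h^j(D(\sG)) = 0 \ \text{for } j<-\dim\supp\sG.
\]
The first applied to $\sF$ gives $h^{j-1}(D(\sF))=0$ for $j \geq 2-\depth\sF$, so by the quotient relation above,
\[
h^j(D(\tau_{\geq 1}\Om_X^m)) = 0 \qquad\text{for}\qquad j \geq 2-\depth\sF,
\]
i.e.\ the complex $D(\tau_{\geq 1}\Om_X^m)$ has cohomology concentrated in degrees $\leq 1-\depth\sF$.

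To translate this back into a statement about $h^i(\Om_X^m)$, apply $D$ once more and use Grothendieck--Serre biduality $D \circ D \simeq \id$ on $D^b_{\coh}(X)$. The convergent spectral sequence
\[
E_2^{p,q} = \sExt_X^p\bigl(h^{-q}(D(\tau_{\geq 1}\Om_X^m)),\,\dcx X\bigr) \;\Longrightarrow\; h^{p+q}(\tau_{\geq 1}\Om_X^m)
\]
has $E_2^{p,q}\neq 0$ only if both the $h^{-q}$ and the $\sExt^p$ are nonzero. The \premdb hypothesis off a dimension-$s$ closed subset $Z$ says $\tau_{\geq 1}\Om_X^m$ has all its cohomology sheaves supported on $Z$; since $D$ commutes with restriction to the open complement $X\setminus Z$ (on which $\tau_{\geq 1}\Om_X^m$ vanishes), the cohomology of $D(\tau_{\geq 1}\Om_X^m)$ is likewise supported on $Z$, whence the support--Ext bound forces $p\geq -s$. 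The concentration established in the previous paragraph forces $-q \leq 1-\depth\sF$, i.e.\ $q \geq \depth\sF-1$. Together they give $i = p+q \geq \depth\sF - s - 1$, so whenever $i < \depth\sF - s - 1$ the entire diagonal $p+q=i$ of $E_2$ vanishes and hence $h^i(\tau_{\geq 1}\Om_X^m)=0$; for $i\geq 1$ this agrees with $h^i(\Om_X^m)$, which is the desired conclusion.

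The main technical point to get right is not conceptual but bookkeeping: one must verify cleanly that the cohomology sheaves of $D(\tau_{\geq 1}\Om_X^m)$ are indeed supported on $Z$, and keep careful track of the unit shift introduced by the connecting homomorphism so that the final bound lands at exactly $\depth\sF - s - 1$ rather than an off-by-one variant. Once \autoref{conjG} is in hand the rest of the argument is entirely formal homological algebra.
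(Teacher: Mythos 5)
Your argument is correct, and it is essentially the one the paper points to: the paper gives no proof of this corollary, instead citing the deduction of Theorem~A from Theorem~D on p.~14 of \cite{PSV24} (and, in \autoref{sec:projective-vs-local}, routes it through a depth comparison between $\Om_X^m$ and $\wt\Omega_X^m$, which is just a repackaging of the concentration bound you derive for $D(\tau_{\geq 1}\Om_X^m)$). The key ingredients you identify — (i) \autoref{conjG} forces $h^j(D(\tau_{\geq 1}\Om_X^m))$ to be a quotient of $h^{j-1}(D(h^0(\Om_X^m)))$, (ii) the local-duality depth bound on $h^0(\Om_X^m)$ then confines $D(\tau_{\geq 1}\Om_X^m)$ to degrees $\leq 1-\depth h^0(\Om_X^m)$, (iii) the \premdb hypothesis off $Z$ puts $\tau_{\geq 1}\Om_X^m$ (hence its dual) in coherent sheaves supported on $Z$, so the support--$\sExt$ bound gives $p\geq -s$ in the biduality spectral sequence — are exactly the ones one needs, and the final bookkeeping $i = p+q \geq \depth h^0(\Om_X^m)-s-1$ lands where it should. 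The one presentational caveat worth keeping in mind is that $\depth$ here is the infimum of local depths at closed points; as long as that is what \cite[p.8]{PSV24} means, your vanishing statements for $h^j(D(\sG))$ and the support bound are exactly the local-duality estimates, and the argument closes.
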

This follows by the argument \cite[p.14]{PSV24} proving that Theorem~D (of [ibid.])
implies Theorem~A (of [ibid.]),
Furthermore, 
the analogues of \autoref{conjG} and \autoref{conjH} for the \emph{intersection \DB
  complexes}, \cite[Conjectures~10.1 and 11.1]{PSV24}, also follow from
\autoref{conjG} and \autoref{conjH} via \cite[Thm.~10.3]{PSV24}. (For the definition
of the intersection \DB complexes, see \cite[Def.~3.3]{PP24}).

We also show that, as another application of \autoref{conjG}, there is a surjectivity
statement for local cohomology for \premmdb \sings as well:
\begin{thm}[=\protect{~\autoref{cor:surj-for-loc-coh-on-dual-of-Om-for-mdb}}]
  Let $X$ be a variety and $x\in X$ a point. Assume that $X$ is \premmdb near $x$.
  Then for each $q$ and $p\leq m$ the natural morphism is surjective:
  \[
    \xymatrix{%
      H^q_x(X,h^0(\Om_X^p)) \ar@{->>}[r] & \bH^q_x(X,\Om_X^p) }
  \]
\end{thm}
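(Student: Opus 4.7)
The plan is to deduce the surjectivity from \autoref{conjG} via Grothendieck local duality. The map in the statement is induced by the canonical morphism $h^0(\Om_X^p)\to\Om_X^p$ (the inclusion of the bottom cohomology sheaf), and under local duality it is Matlis-dual to the map
$\myR\sHom_X(\Om_X^p,\dcx X)\to\myR\sHom_X(h^0(\Om_X^p),\dcx X)$
whose cohomology sheaves are controlled by \autoref{conjG}.

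First, I would observe that being \premmdb near $x$ implies that $X$ is pre-$(p-1)$-Du~Bois near $x$ for every $p\leq m$ (this is built into the cumulative nature of the definition). Hence \autoref{conjG} may be invoked with $m$ replaced by $p$ for every $p\leq m$, yielding an injection
\[
\sExt_X^j(\Om_X^p,\dcx X)\hookrightarrow\sExt_X^j(h^0(\Om_X^p),\dcx X)
\]
of coherent $\cO_X$-modules in a neighborhood of $x$, for every $j\in\bZ$ and every $p\leq m$.

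Next, I would localize at $x$ and invoke Grothendieck local duality: for $\sF\in D^b_{\coh}(X)$, $R=\cO_{X,x}$ with completion $\hat R$, and $E$ the injective hull of the residue field,
\[
\bH^i_x(X,\sF)^\vee\iso\sExt_X^{-i}(\sF,\dcx X)_x^{\wedge},
\]
where $(-)^\vee=\Hom_{\hat R}(-,E)$ is Matlis duality. Since the local hypercohomology of objects in $D^b_{\coh}(X)$ supported at $x$ is Artinian and therefore Matlis reflexive, applying the exact functor $(-)^\vee$ to the injection above reverses arrows and yields the desired surjection
\[
H^q_x(X,h^0(\Om_X^p))\twoheadrightarrow\bH^q_x(X,\Om_X^p),
\]
upon setting $q=-j$ and using that $\bH^q_x=H^q_x$ on the ordinary sheaf $h^0(\Om_X^p)$.

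The real content of the theorem is \autoref{conjG} itself; the passage to local cohomology is formal. The only point deserving care is the compatibility of local duality with the natural map $h^0(\Om_X^p)\to\Om_X^p$, i.e., the fact that the induced map on local (hyper)cohomology is precisely the Matlis dual of the $\myR\sHom$-map considered in \autoref{conjG}. This is a standard naturality statement for Grothendieck duality, and is the main item that would need to be verified carefully in the write-up.
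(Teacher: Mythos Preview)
Your proposal is correct and follows essentially the same route as the paper: the paper applies the exact, faithful functor $\Hom_{\sO_{X,x}}(\blank,E(\kappa(x)))$ to the map in question, reduces to showing the dual map is injective, and identifies that dual map via local duality with the localization at $x$ of the injectivity statement in \autoref{thm:key-injectivity}. Your write-up is slightly more detailed (explicitly noting the Artinian/Matlis-reflexive point and the cumulative nature of the pre-$(p-1)$-Du~Bois condition), but the argument is the same.
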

\noin We also obtain splitting criteria, reminiscent of \cite[Thm~2.3]{Kovacs99} and
\cite[Thm.~1]{Kovacs00b} for \premdb \sings in \autoref{thm:splitting-for-fX} and
\autoref{thm:splitting-for-Om}. The following is a simple consequence:
\begin{thm}\label{thm:m-rtl-implies-m-DB}
  Let $X$ be a variety with \premrtl (respectively \mrtl, respectively strict \mrtl)
  \sings. Then $X$ has \wmdb \sings (and hence also \premdb \sings), (respectively
  \mdb, respectively strict \mdb) \sings.
\end{thm}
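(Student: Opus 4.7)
The plan is to derive this theorem from the two splitting criteria stated earlier, namely \autoref{thm:splitting-for-fX} and \autoref{thm:splitting-for-Om}, which are themselves consequences of the injectivity theorem \autoref{conjG}. Philosophically this mirrors the classical case $m=0$ treated in \cite{Kovacs99,Kovacs00b}, where the statement ``rational implies Du~Bois'' is proved by producing a left inverse of the natural morphism and then invoking a splitting principle; we want to run the same loop one degree $p\le m$ at a time.

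First I would unpack the hypothesis side by side with the conclusion. The \mdb (respectively strict \mdb, \wmdb) condition is a statement about the natural morphism
\[
  \Om_X^p \longrightarrow \Omp{X}{p} \quad (\text{resp.\ its } h^0, \text{ resp.\ pre-versions})
\]
for $p\le m$. The corresponding \mrtl (resp.\ strict/pre-) hypothesis, interpreted via Grothendieck duality applied to a resolution $\pi\colon\tX\to X$, supplies a morphism going the ``other way'' in the derived category — essentially a retraction of the above morphism, or of its dual $\myR\sHom(\Omp{X}{p},\dcx X)\to\myR\sHom(\Om_X^p,\dcx X)$, obtained from the rationality of $\pi$. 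So \textbf{Step 1} is to translate each flavor of rationality into the existence of such a (one-sided) inverse of the natural morphism in the appropriate derived category and in the appropriate range $p\le m$.

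\textbf{Step 2} is to feed this one-sided inverse into the splitting criterion. The injectivity statement \autoref{conjG} says precisely that the dual map
\[
  \myR\sHom_X(\Om_X^m,\dcx X)\longrightarrow\myR\sHom_X(h^0(\Om_X^m),\dcx X)
\]
is injective on cohomology, which is exactly what is needed to promote a one-sided inverse to a genuine splitting (and hence, upon iterating and using the filtration structure of $\FullDuBois X$ restricted to degrees $\le m$, to a quasi-isomorphism). The splitting criteria \autoref{thm:splitting-for-fX} and \autoref{thm:splitting-for-Om} package this conversion. Applying them in each of the three cases yields, respectively, the \wmdb / \mdb / strict \mdb property, and the fact that \wmdb together with the pre-version of the statement implies \premdb (as the parenthetical in the theorem asserts).

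\textbf{Step 3} is bookkeeping across the three flavors: one runs the argument in parallel with, in each instance, the appropriate variant of the splitting criterion. The main obstacle I anticipate is precisely in this bookkeeping — namely, matching the ``strict'' version of rationality with the correct form of the splitting criterion, since the strict condition involves both a vanishing of higher $h^i(\Om_X^p)$ for $i>0$, $p\le m$ and the identification $h^0(\Om_X^p)\cong \Om_X^{[p]}$ (or the analogue thereof), and keeping these two ingredients in phase with the injectivity of \autoref{conjG} in each intermediate degree is the delicate point. Once the dictionary rational $\leftrightarrow$ Du~Bois via injectivity is set up cleanly in \textbf{Step 1}, however, each implication is a one-line deduction from the corresponding splitting criterion.
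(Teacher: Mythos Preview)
Your overall strategy --- produce a left inverse to $\wt\Omega_X^p\to\Om_X^p$ from the rationality hypothesis and then invoke \autoref{thm:splitting-for-Om} --- is exactly the paper's approach. Two points, however, are not yet pinned down and the paper handles them differently from what you sketch.

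First, the construction of the left inverse in Step~1 is more concrete than ``duality applied to a resolution''. The paper uses the chain of natural morphisms from \autoref{prop:morphs-of-cxs},
\[
  \wt\Omega_X^p \longrightarrow \Om_X^p \longrightarrow \Irr_X^p,
\]
together with two isomorphisms: $\Irr_X^p\simeq\wt\Irr_X^p$ (this is exactly the \premrtl hypothesis) and $\wt\Irr_X^p\simeq\wt\Omega_X^p$ (this is \autoref{cor:wtOm-of-rtl-sing}, which needs only rational singularities, i.e.\ the case $m=0$ of the hypothesis). The composite $\wt\Omega_X^p\to\Om_X^p\to\wt\Omega_X^p$ is then the natural map, hence an isomorphism, so $\Om_X^p\to\Irr_X^p\simeq\wt\Omega_X^p$ is the desired left inverse. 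No passage to the dual is required at this stage; the injectivity theorem enters only inside the proof of \autoref{thm:splitting-for-Om}.

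Second, the splitting criterion yields only \premdb in every case; it does not by itself produce the weakly-, full-, or strict- variants. These upgrades are separate, short arguments that do not involve further splitting criteria: the passage from \premdb to \wmdb uses \autoref{prop:wmdb+rtl=mdb} (reflexivity of $\wt\Omega_X^p$, again via \autoref{cor:wtOm-of-rtl-sing}); the passage to \mdb adds the codimension bound already contained in the \mrtl hypothesis; and the strict case follows because strict-$m$-rational gives $\Omega_X^p\simeq\wt\Irr_X^p\simeq\wt\Omega_X^p$ directly. So your Step~3 is simpler than you fear --- there is nothing delicate about the strict case --- but your Step~2 claims too much: the splitting criterion is applied once, uniformly, and the flavoring is layered on afterwards.
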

\autoref{thm:m-rtl-implies-m-DB} was also obtained in \cite[Thm.~B, Cor.~C,
Thm.~D(b)]{SVV23} using different arguments. 

We will work with schemes essentially of finite type over $\bC$, but the results
easily extend to schemes essentially of finite type over any algebraically closed
field of characteristic zero.

The structure of the paper is as follows. After setting up some notations we review
some simple, but useful ancillary results in \autoref{sec:filtr-co-filtr}.
%
Then we review \emph{filtrations} and \emph{co-filtrations} and their \emph{hyper-}
analogues which are better suited for derived categories.

We review the various complexes of differential forms, their (hyper)filtration, and
(hyper)co-filtrations in \autoref{sec:compl-diff-forms} and define the classes of
\sings in which we are interested in \autoref{sec:singularities}. The behavior of the
complexes defined earlier with respect to hyperplane sections is studied in
\autoref{sec:hyperplane-sections} and with respect to cyclic covers in
\autoref{sec:cyclic-covers}. In \autoref{sec:filtered-deligne-db} we discuss some
Hodge theoretic aspects of these complexes and some consequences.
%
%
The technical core of the paper is \autoref{sec:eminence-grise}, which is where the
key surjectivity statement is proved. Due to the lack of exactness at a crucial
point, this had to be done by introducing an ancillary object, which however turns
out to be very powerful, so one might call it \emph{l'\'eminence grise}
\cite{GE-wiki}.
After this, there still remains an obstacle:
turning the surjectivity statement of \autoref{cor:surj-for-Om} into the main
injectivity result requires the use of Serre's vanishing.
In turn, Serre's vanishing requires the ambient space to be projective, while in the
main result we do not want to assume projectivity. This requires a funambulist's care
to balance the conditions and conclude the desired injectivity statement.
This obstacle is dealt with and the main result is also
proved 
in \autoref{sec:eminence-grise}.  Finally, several applications are presented in
\autoref{sec:applications}.

\begin{rem}
  In an earlier version of this paper a powerful method of Koll\'ar was adapted for
  derived categories. After 
  simplifying the proof, this was no longer necessary, so it is omitted from this
  version, however, it will be included in a forthcoming article
  \cite{Kovacs-in-prep}, exhibiting the method's versatility.
\end{rem}

\begin{ack}
  Participating in the AIM workshop \emph{Higher Du Bois and higher rational
    singularities} in October, 2024 was beneficial to me and useful for this
  article. I am grateful to everyone with whom I had a chance to talk about related
  or unrelated ideas. A subsequent visit to Harvard University was also very
  helpful. I would like to thank Mihnea Popa, Rosie Shen, and Duc Vo for many hours
  of interesting discussions. I am thankful to Sung Gi Park and Duc Vo for pointing
  out subtle, but crucial mistakes in earlier versions of this paper.  Last, but
  definitely not least, I want to thank Bradley Dirks, J\'anos Koll\'ar, Pat Lank,
  Haoming Ning, and Brian Nugent for useful comments and suggestions.
\end{ack}


\section{Preliminaries}\label{sec:filtr-co-filtr}
\noindent
In this article, a \emph{(complex) variety} will mean a reduced scheme of finite type
over $\bC$, the field of complex numbers. In particular, a variety (in this paper) is
not necessarily irreducible.

\begin{defini}
  Let $X$ be a complex scheme (i.e., a scheme essentially of finite type over $\bC$)
  of dimension n. Let $D_{\rm filt}(X)$ denote the derived category of filtered
  complexes of $\sO_{X}$-modules with ($\bC$-linear) differentials of order $\leq 1$
  and $D_{\rm filt, coh}(X)$ the subcategory of $D_{\rm filt}(X)$ of complexes
  $\cx K$, such that for all $i$, the cohomology sheaves of $Gr^{i}_{\rm filt}\cx K$
  are coherent cf.~\cite{DuBois81}, \cite{GNPP88}.  Note that these categories are
  the derived categories (with the corresponding restrictions) of differential graded
  modules over the differential graded algebra $\Omega_X^\kdot$. Let $D(X)$ and
  $D_{\rm coh}(X)$ denote the derived categories with the same definition except that
  the complexes are assumed to have the trivial filtration.  The superscripts
  $+, -, b$ carry the usual meaning (bounded below, bounded above, bounded). Recall
  that isomorphism in these categories is defined by quasi-isomorphism of complexes.
  A sheaf $\sF$ is also considered a complex $\sF^\kdot$ with $\sF^0=\sF$ and
  $\sF^i=0$ for $i\neq 0$.  If $\cx K$ is a complex in any of these categories, then
  $h^i(\cx K)$ denotes the $i$-th cohomology sheaf of $\cx K$.

  Recall that if $\imath:\Sigma \into X$ is a closed embedding of schemes then
  $\imath_*$ is exact and hence $\myR\imath_*=\imath_*$.
  Accordingly 
  if $\sfA\in\ob D(\Sigma)$, then, as usual for sheaves, we will drop $\imath_*$ from
  the notation of the object $\imath_*\sfA$. In other words, we will, without further
  warning, consider $\sfA$ an object in $D(X)$.

  Let $X$ be a variety and $\Sigma\subseteq X$ a closed subset. A \emph{log
    resolution} of the pair $(X,\Sigma)$ is a proper birational morphism $\pi:Y\to X$
  such that $(\pi^{-1}_*\Sigma+E)_\red$ is an snc 
  divisor where $E=\exc(\pi)$ is the exceptional set of $\pi$. A \emph{strong log
    resolution} of $(X,\Sigma)$ is a log resolution that is an isomorphism over the
  locus where $(X,\Sigma)$ is an snc pair.
  Cubical varieties and cubical hyperresolutions will be used following the
  terminology of \cite[Chapter~5]{MR2393625} and \cite[Appendix~2]{MR2796408}. In
  particular, a \emph{hyperresolutions} will always refer to a cubical
  hyperresolution, which (here) always have finitely many components.
  
  We will also use the following notation which seems to be becoming standard: Let
  $X$ be a scheme of pure dimension $n$ that admits a (normalized) dualizing complex,
  denoted by $\dcx X$. Define the \emph{Grothendieck duality functor} on the bounded
  derived category of quasi-coherent sheaves:
  \begin{equation}
    \label{eq:64}
    \bD_X(\blank)\leteq \myR\sHom_X(\blank, \dcx X)[-n]. 
  \end{equation}
  Note that an advantage of this notation is that it shifts the Grothendieck dual to
  human readable form: For instance, if $X$ is a smooth irreducible variety of
  dimension $n$, then $\bD_X(\Omega_X^p)\simeq \Omega_X^{n-p}$.
\end{defini}

\subsection{Prime avoidance}
We will use the following notation throughout this subsection.
\begin{notation}\label{assumption}
  Let $X$ be a noetherian scheme, $\sF$ a coherent $\sO_X$-module, $\sL$ a semi-ample
  invertible sheaf on $X$,  $s\in\Gamma(X,\sL^m)$ a general section for some
  $m\gg 0$, and $Z(s)$ its zero locus.
\end{notation}

\noin
The following is a well-known statement. It is stated and proved here to make its use
simpler.

\begin{lem}[(Global prime avoidance)]\label{lem:prime-avoidance}
  $Z(s)$ does not contain any associated point of $\sF$.
\end{lem}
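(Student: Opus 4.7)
The plan is to reduce to a standard hyperplane-avoidance argument in the finite-dimensional $\bC$-vector space of global sections. Since $\sF$ is coherent and $X$ is noetherian, the set of associated points $\Ass(\sF)=\{P_1,\dots,P_r\}$ is finite, so it suffices to exhibit an $m \gg 0$ for which a general $s \in \Gamma(X,\sL^m)$ satisfies $P_i \notin Z(s)$ for every $i$.

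First I would use semi-ampleness of $\sL$: by definition some positive power $\sL^{m_0}$ is globally generated, and then $\sL^{\ell m_0}$ is globally generated for every $\ell \geq 1$. Fix $m = \ell m_0$ with $\ell$ sufficiently large. Then for each associated point $P_i$ the evaluation map
\[
  \mathrm{ev}_{P_i}\colon \Gamma(X, \sL^m) \longrightarrow \sL^m \otimes_{\sO_X} k(P_i)
\]
is surjective onto a nonzero $\bC$-vector space, so its kernel $V_i := \{\,s : s \text{ vanishes at } P_i\,\}$ is a \emph{proper} $\bC$-linear subspace of $\Gamma(X,\sL^m)$.

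Since $\bC$ is infinite and $\Gamma(X,\sL^m)$ is finite-dimensional over $\bC$, the finite union $\bigcup_{i=1}^{r} V_i$ is a proper subset---a vector space over an infinite field is never the union of finitely many proper subspaces---which gives meaning to the phrase ``general section''. Any $s$ lying outside $\bigcup_i V_i$ satisfies $P_i \notin Z(s)$ for every $P_i \in \Ass(\sF)$, which is exactly the assertion of the lemma.

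The only step needing attention is ensuring global generation of $\sL^m$ at every associated point \emph{simultaneously}, but that is automatic once $\sL^m$ is globally generated as a sheaf; the rest is elementary linear algebra. The content of the lemma lies not in its proof but in packaging this fact in the form required in later sections.
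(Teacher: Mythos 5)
Your proof is correct, but it takes a genuinely different route from the paper's. The paper forms the section ring $S = \bigoplus_{m\geq 0} H^0(X,\sL^m)$, observes that for each $x\in\Ass(\sF)$ the homogeneous ideal $\frp_x$ of sections vanishing at $x$ is prime and does not contain $S_+$, and invokes the \emph{homogeneous prime avoidance} lemma \cite[\href{https://stacks.math.columbia.edu/tag/00JS}{Tag 00JS}]{stacks-project} to produce a homogeneous $s\in S_+$ avoiding all $\frp_x$. You instead work degree by degree: fix $m$ with $\sL^m$ globally generated, note that for each associated point $P_i$ the sections vanishing at $P_i$ form a proper $\bC$-linear subspace $V_i\subsetneq\Gamma(X,\sL^m)$ (by surjectivity of the evaluation map), and use the linear-algebra fact that a vector space over an infinite field is never a finite union of proper subspaces. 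Both arguments are valid; yours is more elementary and more explicitly justifies the word ``general'' in a fixed degree, while the paper's packages the same avoidance principle in its graded-commutative-algebra form.

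One small caveat: you assert that $\Gamma(X,\sL^m)$ is finite-dimensional over $\bC$, but the hypothesis is only that $X$ is noetherian, not proper, so this need not hold. Fortunately the finite-dimensionality is irrelevant: the fact that a finite union of proper subspaces of a vector space over an infinite field is again proper holds without any dimension restriction, so your argument goes through unchanged once that unnecessary assumption is dropped.
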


\begin{proof}
  This proof is modeled after the proof of
  \cite[\href{https://stacks.math.columbia.edu/tag/09NV}{Tag 09NV}]{stacks-project}.
  As $X$ is noetherian, $\Ass(\sF)$ is finite. %
  Let $S\leteq \oplus_{m\in\bN}H^0(X,\sL^m)$ and for each $x\in\Ass(\sF)$ let
  $\frp_x\homideal S$ denote the homogenous ideal of sections vanishing at $x$. This
  is clearly a prime ideal in $S$ and as some power of $\sL$ is generated by global
  sections, $S_+\not\subseteq \frp_x$ for any $x\in\Ass(\sF)$. Then it follows from
  the homogenous prime avoidance lemma
  \cite[\href{https://stacks.math.columbia.edu/tag/00JS}{Tag 00JS}]{stacks-project},
  that there exists a homogenous element $s\in S_+$ such that $s\not\in\frp_x$ for
  any $x\in\Ass(\sF)$. This is equivalent to the statement.
\end{proof}

\begin{cor}\label{cor:prime-avoidance--injectivity}
  Using \autoref{assumption}, the natural morphism induced by multiplication by $s$
  is injective:
  \[
    \xymatrix{
      \sF\otimes\sL^{-m}\,\ar@{^(->}[r]^- {s\cdot} & \sF.  
    }%
  \]
\end{cor}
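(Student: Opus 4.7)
The plan is to deduce the injectivity from \autoref{lem:prime-avoidance} by a standard associated-points argument. Let $\sK$ denote the kernel of the multiplication map $\sF\otimes\sL^{-m}\xrightarrow{\,s\cdot\,}\sF$. Since injectivity can be checked stalkwise (and $\sL$ is locally free), it suffices to show $\sK=0$, for which it is enough to show that $\sK$ has no associated points.

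First I would observe that since $\sL^{-m}$ is an invertible sheaf, tensoring with it induces a bijection on associated points, so $\Ass(\sF\otimes\sL^{-m})=\Ass(\sF)$. As $\sK\subseteq \sF\otimes\sL^{-m}$ is a coherent subsheaf, this gives $\Ass(\sK)\subseteq\Ass(\sF)$. Next I would observe that $\sK$ is set-theoretically supported on $Z(s)$: away from $Z(s)$, the section $s$ trivializes $\sL^m$, so multiplication by $s$ is an isomorphism on stalks and $\sK$ vanishes. Therefore every point of $\Ass(\sK)$ lies in $Z(s)$, giving
\[
\Ass(\sK)\subseteq \Ass(\sF)\cap Z(s).
\]

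Now I invoke \autoref{lem:prime-avoidance}, which states exactly that $Z(s)$ contains no associated point of $\sF$. Hence $\Ass(\sK)=\varnothing$, and since $\sK$ is coherent on a noetherian scheme, this forces $\sK=0$, which is the desired injectivity.

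I do not anticipate a serious obstacle: the only mildly nontrivial point is that associated points of a subsheaf are contained in the associated points of the ambient sheaf, which is standard (and can be checked locally from the primary decomposition, or via the characterization of associated points as embedded primes of the annihilator of sections). Everything else is a direct invocation of \autoref{lem:prime-avoidance}.
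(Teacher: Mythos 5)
Your proof is correct and follows essentially the same route as the paper: deduce the injectivity from \autoref{lem:prime-avoidance} by tracking associated points. The paper's own version argues by contradiction at a single stalk and invokes \cite[\href{https://stacks.math.columbia.edu/tag/0AVL}{Lemma 0AVL}]{stacks-project}, whereas you phrase it globally via the kernel $\sK$ and the inclusion $\Ass(\sK)\subseteq\Ass(\sF)\cap Z(s)$; your packaging is slightly cleaner in that it does not require the offending point itself to be an associated point of $\sF$, only some generalization of it, but the underlying idea is identical.
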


\begin{proof} Assume that there exists an $x\in X$ such that $
  \sF_x{\color{gray}(\otimes\sL_x^{-m})}\overset {s_x\cdot}\longrightarrow\sF_x$ is
  not injective. It follows that $s$ is not invertible at $x$ and hence, in
  particular, $x\in Z(s)$. Furthermore, then $x\in\Ass(\sF)$ by
  \cite[\href{https://stacks.math.columbia.edu/tag/0AVL}{Tag
    0AVL}]{stacks-project}, 
  which contradicts \autoref{lem:prime-avoidance}.
\end{proof}

\noin A similar statement holds without assuming that $s$ is general if $\sF$ is
torsion-free:

\begin{lem}\label{lem:torsion-free-tensor}
  If $\sF$ is torsion-free and $H$is an effective Cartier divisor on $X$, then the
  natural morphism
  \[
    \xymatrix{%
      \sF\otimes\sO_X(-H)\,\ar@{^(->}[r] & \sF }
  \]
   induced by $H$ is injective.
\end{lem}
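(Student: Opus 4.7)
The plan is to verify injectivity locally on $X$ and then invoke the torsion-freeness of $\sF$. The morphism $\sF\otimes\sO_X(-H)\to\sF$ is obtained from the canonical inclusion $\sO_X(-H)\hookrightarrow\sO_X$ by tensoring with $\sF$, and the underlying inclusion is itself injective precisely because $H$ is an effective Cartier divisor. So the real content of the statement is that tensoring this particular injection with $\sF$ preserves injectivity.

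First I would fix an arbitrary point $x\in X$ and pass to an affine open neighborhood $U\ni x$ on which $H$ has a local equation $f\in\sO_X(U)$, which by definition of an effective Cartier divisor is a non-zero-divisor on $U$. Trivializing the invertible sheaf $\sO_X(-H)|_U$, the composite $\sO_U\isomap\sO_X(-H)|_U\hookrightarrow\sO_U$ becomes multiplication by $f$. Consequently the induced map $\sF|_U\simeq\sF|_U\otimes\sO_X(-H)|_U\to\sF|_U$ is also multiplication by $f$, and its kernel is the $f$-torsion subsheaf of $\sF|_U$. This vanishes because $\sF$ is torsion-free and $f$ is a non-zero-divisor; since $x$ was arbitrary, the global map is injective.

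There is no genuine obstacle here; the argument is purely an unwinding of definitions. The only point worth flagging is that \emph{torsion-free} must be taken in the usual sense that every local non-zero-divisor of $\sO_X$ acts injectively on $\sF$. This is the standard convention for coherent sheaves on a noetherian scheme and applies verbatim to the $f$ at hand, so the last step goes through without issue.
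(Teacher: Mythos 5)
Your proof is correct and is essentially the same argument the paper gives, just phrased locally: you observe that the map is multiplication by a local non-zero-divisor $f$ defining $H$, so its kernel is $f$-torsion, while the paper observes equivalently that the map is an isomorphism off $H$, so its kernel is supported on $H$; in both cases torsion-freeness of $\sF$ (equivalently of $\sF\otimes\sO_X(-H)$) kills the kernel.
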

\begin{proof}
  This morphism is an isomorphism on $X\setminus H$ and hence its kernel is supported
  on $H$.  As $\sF$, and hence $\sF\otimes\sO_X(-H)$ is torsion-free, this morphism
  is injective everywhere.
\end{proof}

\noin The following applies in both situations of
\autoref{cor:prime-avoidance--injectivity} and \autoref{lem:torsion-free-tensor}.
\begin{lem}\label{lem:ltimes-is-times}
  Let $H$ be an effective Cartier divisor on $X$, and assume that the natural
  morphism
  \[
    \xymatrix{%
      \sF\otimes\sO_X(-H)\,\ar@{^(->}[r] & \sF 
    }
  \]
  induced by $H$ is injective.  Then $\sF$ and $\sO_H$ are Tor-independent, i.e.,
  $\sF\lotimes\sO_H\simeq\sF\otimes\sO_H.$ 
\end{lem}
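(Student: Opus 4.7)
The plan is to compute the derived tensor product $\sF\lotimes\sO_H$ explicitly using a locally free resolution of $\sO_H$. Since $H$ is an effective Cartier divisor, the ideal sheaf $\sO_X(-H)$ is invertible and we have the short exact sequence
\[
0 \longrightarrow \sO_X(-H) \longrightarrow \sO_X \longrightarrow \sO_H \longrightarrow 0,
\]
so $\sO_H$ admits the two-term locally free resolution $[\sO_X(-H) \to \sO_X]$ placed in degrees $-1$ and $0$, with the differential given by multiplication by the canonical section cutting out $H$.

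First I would use this resolution to compute $\sF\lotimes\sO_H$ as the complex $[\sF\otimes\sO_X(-H) \to \sF]$ in degrees $-1$ and $0$, where the differential is precisely the morphism in the hypothesis. Next I would invoke the hypothesis: since this differential is injective, the complex is quasi-isomorphic to its degree-$0$ cohomology sheaf, concentrated in degree $0$. Equivalently, $\sTor_i^{\sO_X}(\sF,\sO_H)=0$ for all $i>0$, the only potentially nonzero higher Tor being $\sTor_1$, which is identified with the kernel of $\sF\otimes\sO_X(-H)\to\sF$.

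Finally I would identify the degree-$0$ cohomology of $[\sF\otimes\sO_X(-H)\to\sF]$ with the ordinary tensor product $\sF\otimes\sO_H$. This follows because tensoring the defining sequence of $\sO_H$ with $\sF$ yields the exact sequence $\sF\otimes\sO_X(-H)\to\sF\to\sF\otimes\sO_H\to 0$, and by right exactness $\sF\otimes\sO_H$ is precisely the cokernel appearing as $h^0$ of the resolving complex. Combining these observations gives the required quasi-isomorphism $\sF\lotimes\sO_H\simeq\sF\otimes\sO_H$.

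There is no real obstacle here; the only point that requires any care is that the locally free resolution of $\sO_H$ by $[\sO_X(-H)\to\sO_X]$ is a two-term complex, so injectivity of the single nonzero differential of this resolution after tensoring with $\sF$ is exactly what is needed to ensure the derived and underived tensor products agree.
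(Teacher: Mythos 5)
Your argument is correct and is essentially identical to the paper's: both use the two-term locally free resolution $[\sO_X(-H)\to\sO_X]$ of $\sO_H$ to realize $\sF\lotimes\sO_H$ as $[\sF\otimes\sO_X(-H)\to\sF]$, and then invoke the hypothesized injectivity to collapse this complex to its degree-zero cohomology $\sF\otimes\sO_H$. Your write-up merely makes explicit what the paper compresses into a single line.
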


\begin{proof}
    The \ses,
  \begin{equation}
    \label{eq:75}
    \xymatrix{%
      0\ar[r] & \sO_X(-H)\ar[r] & \sO_X\ar[r] & \sO_H\ar[r] & 0
    }    
  \end{equation}
  gives a locally free resolution of $\sO_H$, so $\xymatrix{%
    \sF\lotimes\sO_H\simeq\bigg[\sF\otimes\sO_X(-H)\ar[r] &
    \sF\bigg]\simeq\sF\otimes\sO_H}$.
\end{proof}



\subsection{Serre duality in $D_{\rm coh}^b(X)$}
\noindent
\label{subsec:groth-dual}%
%
%
%
Recall the \emph{Grothendieck duality functor}
from \autoref{eq:64} ($X$ is of pure dimension $n$):
\[
  \bD_X(\blank)\leteq \myR\sHom_X(\blank, \dcx X)[-n].
\]

\begin{lem}\label{lem:duality-over-a-field}
  Let $k$ be a field and $\cx A\in \Ob D^b(\Spec k)$. Then
  \[
    h^{-j}\left(\myR\Hom_k(\cx A,k)\right) \simeq \Hom_k(h^j(\cx A), k).
  \]
\end{lem}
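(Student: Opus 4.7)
The plan is to exploit the fact that over a field every module is both projective and injective, which forces every bounded complex of $k$-vector spaces to split as a direct sum of its cohomology. Concretely, I will show $\cx A\simeq\bigoplus_{i}h^i(\cx A)[-i]$ in $D^b(\Spec k)$, then pass this decomposition through $\myR\Hom_k(\blank,k)$ and take the $(-j)$-th cohomology.

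First, I would establish the splitting. Let $\cx A=(\cdots\to A^{i-1}\xrightarrow{d^{i-1}} A^i\xrightarrow{d^i}A^{i+1}\to\cdots)$ be a bounded complex of $k$-vector spaces. Since every short exact sequence of $k$-vector spaces splits, the exact sequences
\[
0\to Z^i\to A^i\to B^{i+1}\to 0 \qquad\text{and}\qquad 0\to B^i\to Z^i\to h^i(\cx A)\to 0
\]
both split. Choosing splittings and assembling them produces an isomorphism of complexes between $\cx A$ and $\bigoplus_i h^i(\cx A)[-i]$ up to a contractible summand, giving a quasi-isomorphism and hence an isomorphism in $D^b(\Spec k)$:
\[
\cx A\;\simeq\;\bigoplus_{i}h^i(\cx A)[-i].
\]

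Next, I would compute $\myR\Hom_k(\cx A,k)$. Because $k$ is injective as a $k$-module, $\Hom_k(\blank,k)$ is already exact, so $\myR\Hom_k$ coincides with the naive $\Hom_k^\kdot$ complex and preserves finite direct sums and commutes with shifts (with the usual sign). Applying it to the decomposition above yields
\[
\myR\Hom_k(\cx A,k)\;\simeq\;\bigoplus_{i}\Hom_k\!\bigl(h^i(\cx A),k\bigr)[i],
\]
a complex with zero differentials concentrated in the indicated degrees.

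Finally, taking $h^{-j}$ of both sides picks out exactly the summand with $i=j$, producing the claimed isomorphism
\[
h^{-j}\!\left(\myR\Hom_k(\cx A,k)\right)\;\simeq\;\Hom_k\!\bigl(h^j(\cx A),k\bigr).
\]
The only conceptually nontrivial point is the splitting of $\cx A$ into its cohomology; everything else is formal. Since one could also prove the statement without the decomposition by replacing $\cx A$ with its cohomology via the hypercohomology spectral sequence (which degenerates trivially as all differentials land in $\Ext^{>0}(\blank,k)=0$), the semisimplicity of $\mathrm{Vect}_k$ makes this essentially unavoidable and provides a sanity check on the argument.
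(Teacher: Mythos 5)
Your proof is correct, but it takes a longer route than the paper. The paper's argument is a one-liner: $\Hom_k(\blank,k)$ is an exact contravariant functor (since $k$ is injective as a $k$-module), and an exact functor commutes with taking cohomology of a complex; hence $\myR\Hom_k$ may be computed by the naive $\Hom$-complex and $h^{-j}$ of that complex is precisely $\Hom_k(h^j(\cx A),k)$. You instead first invoke semisimplicity of $\mathrm{Vect}_k$ to split $\cx A\simeq\bigoplus_i h^i(\cx A)[-i]$ in the derived category, then push this decomposition through $\myR\Hom_k$. Both approaches ultimately rest on the same fact (exactness of $\Hom_k(\blank,k)$), but the splitting step is an extra layer that the paper avoids: once you know the functor is exact, you can take cohomology termwise without ever decomposing the complex. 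Your version buys you a slightly more explicit picture (the complex literally is its cohomology up to contractibles), which is a reasonable thing to emphasize pedagogically, but for this particular statement it is machinery you don't need.
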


\begin{proof}
  $\Hom_k(\blank ,k)$ is an exact contravariant functor, so it commutes with
  cohomology.
\end{proof}

\noin
The following is essentially Serre duality for complexes.

\begin{lem}\label{lem:grothendieck-duality}
  Let $X$ be proper of pure dimension $n$ over a field $k$,
  and $\cx A\in\Ob D_{\rm coh}^b(X)$. Then for $\forall j$, 
  \[
    \bH^{j}(X,\cx A
    )^{\vee} \simeq \bH^{n-j}(X,\bD_X(\cx A)
    )
  \]
\end{lem}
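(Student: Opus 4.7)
The plan is to derive the statement as a direct consequence of Grothendieck duality for the proper morphism $\pi:X\to\Spec k$, with $D\Omega_X^\kdot$ playing the role of the dualizing complex as provided by \autoref{thm:GD-for-D-filt}, combined with \autoref{lem:duality-over-a-field} to translate $\myR\Hom_k(\blank,k)$ into pointwise linear duals.

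First, I would rewrite the left-hand side. Since $\bH^j(X,\cx A)=h^j(\myR\pi_*\cx A)$ is a $k$-vector space sitting in degree $0$, \autoref{lem:duality-over-a-field} gives
\[
  \bH^j(X,\cx A)^\vee \simeq h^{-j}\bigl(\myR\Hom_k(\myR\pi_*\cx A,k)\bigr).
\]
Next, I would invoke Grothendieck duality for $\pi$, as asserted in \autoref{thm:GD-for-D-filt}: since $\pi$ is proper and $D\Omega_X^\kdot$ serves as a dualizing complex, one has the natural isomorphism
\[
  \myR\Hom_k(\myR\pi_*\cx A,k) \simeq \myR\pi_*\myR\sHom_X(\cx A,D\Omega_X^\kdot).
\]
By the definition $\bD_X(\blank)=\myR\sHom_X(\blank,D\Omega_X^\kdot)[-n]$, the right-hand side equals $\myR\pi_*\bD_X(\cx A)[n]$.

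Taking $h^{-j}$ of both sides and absorbing the shift then yields
\[
  \bH^j(X,\cx A)^\vee \simeq h^{-j}\bigl(\myR\pi_*\bD_X(\cx A)[n]\bigr) = h^{n-j}\bigl(\myR\pi_*\bD_X(\cx A)\bigr) = \bH^{n-j}(X,\bD_X(\cx A)),
\]
which is the desired identification. The only delicate point is to make sure that the version of Grothendieck duality available in the filtered-coherent setting is formulated correctly for the morphism to $\Spec k$, but this is precisely the content of the result of Du~Bois cited as \autoref{thm:GD-for-D-filt}; once that is invoked, the argument is a bookkeeping exercise with the $[-n]$ shift in the definition of $\bD_X$.
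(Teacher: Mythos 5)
Your proof is correct and follows essentially the same route as the paper: both invoke Du Bois's Grothendieck duality from \autoref{thm:GD-for-D-filt} to identify $\myR\pi_*\bD_X(\cx A)[n]$ with $\myR\Hom_k(\myR\pi_*\cx A,k)$, then use \autoref{lem:duality-over-a-field} to extract the cohomology in degree $-j$. The only cosmetic difference is that you start from the left-hand side and chain forward, whereas the paper writes the duality isomorphism first and then reads off both sides; the content is identical.
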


\begin{proof}
  Let $\pi:X\to\Spec k$ denote the structure morphism of $X$. Then by Grothendieck
  duality, 
  \begin{equation*}
    \label{eq:2}
    \myR\pi_*\bD_X(\cx A)[n]\simeq \myR\pi_*\myR\sHom_X(\cx A, \omega_X^\kdot)\simeq
    \myR \Hom_k(\myR\pi_*(\cx A), k). 
  \end{equation*}
  The $h^{-j}$-th cohomology of the left hand side 
  is $\bH^{n-j}(X,\bD_X(\cx A)
  )$. By \autoref{lem:duality-over-a-field} the $h^{-j}$-th cohomology of the right
  hand side is $\bH^j(X,\cx A
  )^{\vee}$.
\end{proof}

\noin Recall that Grothendieck duality implies that the actions of $\bD_X$ and
$\bD_Z$ agree on any object that is supported on a closed subscheme $Z\subseteq X$:

\begin{lem}\label{lem:Dz-is-same-as-Dx}
  Let $\jmath: Z\into X$ be a closed subscheme of pure codimension $r$ of a complex
  scheme $X$ of pure dimension $n$ and $\cx A\in\obj D_{\rm coh}^b(Z)$. Then
  $\jmath_*\bD_Z(\cx A)\simeq \bD_X(\jmath_*\cx A)[r]$. \hfill \qed
\end{lem}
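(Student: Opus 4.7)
The plan is to derive the statement directly from Grothendieck duality applied to the closed immersion $\jmath\colon Z \hookrightarrow X$, using the filtered version of duality provided by \autoref{thm:GD-for-D-filt}. The shift by $[r]$ will materialize at the end from the difference $\dim X - \dim Z = r$ built into the definition of $\bD_X$.

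First, I would invoke the closed-immersion adjunction (i.e., duality for the proper map $\jmath$): for any $\cx B \in D^{-}_{\rm filt,coh}(X)$,
\[
  \jmath_*\myR\sHom_Z(\cx A,\jmath^!\cx B)\,\simeq\,\myR\sHom_X(\jmath_*\cx A,\cx B).
\]
The unfiltered version is standard, and its filtered analogue for $\cx B = D\Omega_X^\kdot$ is part of the Du~Bois duality package cited in \autoref{thm:GD-for-D-filt} (see \cite{MR1149647}). Next, I would identify $\jmath^! D\Omega_X^\kdot \simeq D\Omega_Z^\kdot$; this is the statement that the filtered dualizing complex is compatible with closed immersions, which is again part of the construction/characterization in \cite[Thm.~5.2]{MR1149647}. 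This is the step I expect to carry the real content: everything else is formal manipulation once this identification is available.

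With these two ingredients in hand, the rest is bookkeeping. Setting $\cx B = D\Omega_X^\kdot$, the adjunction and the identification give
\[
  \myR\sHom_X(\jmath_*\cx A,D\Omega_X^\kdot) \simeq
  \jmath_*\myR\sHom_Z(\cx A,D\Omega_Z^\kdot).
\]
Shift both sides by $[-n]$ with $n=\dim X$. The left side becomes $\bD_X(\jmath_*\cx A)$ by definition. On the right, $\dim Z = n-r$, so $\myR\sHom_Z(\cx A,D\Omega_Z^\kdot) = \bD_Z(\cx A)[n-r]$; applying $\jmath_*$ and shifting by $[-n]$ yields $\jmath_*\bD_Z(\cx A)[-r]$. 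Hence
\[
  \bD_X(\jmath_*\cx A) \simeq \jmath_*\bD_Z(\cx A)[-r],
\]
which is equivalent to the desired isomorphism $\jmath_*\bD_Z(\cx A) \simeq \bD_X(\jmath_*\cx A)[r]$.

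The only genuine obstacle is the filtered nature of everything: one must ensure that the adjunction and the identification $\jmath^! D\Omega_X^\kdot \simeq D\Omega_Z^\kdot$ are valid at the level of $D_{\rm filt,coh}$ and not merely after forgetting the filtration. Since both facts are subsumed by the version of Grothendieck duality for the filtered dualizing complex established by Du~Bois, no additional argument should be needed beyond citing \cite[Thm.~5.2]{MR1149647}.
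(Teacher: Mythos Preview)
Your proof is correct and follows essentially the same approach as the paper: both invoke Grothendieck duality for the closed immersion $\jmath$ (which you decompose into the adjunction $\jmath_*\myR\sHom_Z(\cx A,\jmath^!\cx B)\simeq\myR\sHom_X(\jmath_*\cx A,\cx B)$ together with the identification $\jmath^! D\Omega_X^\kdot\simeq D\Omega_Z^\kdot$, while the paper packages these as a single step) and then track the shift coming from $\dim X-\dim Z=r$ in the definition of $\bD$. Your explicit remark that the filtered version of these facts is supplied by \cite{MR1149647} is a welcome clarification.
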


\begin{proof}
  Let $n\leteq\dim X$. Then Grothendieck duality implies that
  \begin{equation*}
    \jmath_*\bD_Z(\cx A)\simeq
    \myR\jmath_*\myR\sHom_Z(\cx A,
    \omega_Z^\kdot)[-(n-r)]\simeq \myR\sHom_X(\myR\jmath_*\cx A,
    \omega_X^\kdot)[-n+r]\simeq \bD_X(\jmath_*\cx A)[r]. \qedhere
  \end{equation*}
\end{proof}

\noin
Let us also recall the following.

\begin{lem}\label{lem:conjugate-ss}
  Let $X$ be a scheme and $\sfA\in\obj D(X)$. Then for any $q\in\bN$ there exists a
  natural map $\bH^q(X,\sfA)\to H^0(X, h^q(\sfA))$.
\end{lem}

\begin{rem}
  This map appears in the conjugate spectral sequence corresponding to
  $\myR\Gamma(X,\blank)$.
\end{rem}

\begin{proof}
  Consider the canonical truncation defined in
  \cite[\href{https://stacks.math.columbia.edu/tag/0118}{Tag 0118}]{stacks-project}:
  $\tau_{\leq q}\sfA\to\sfA$ and note that this induces an isomorphism
  $\bH^q(X,\tau_{\leq q}\sfA)\simeq\bH^q(X,\sfA)$.
  On the other hand, by definition, there exists a natural map
  $\tau_{\leq q}\sfA\to h^q(\sfA)[-q]$.  Combining these implies that there is a
  natural map
  \begin{equation}
    \label{eq:92}
    \xymatrix{%
      \bH^q(X,\sfA) \simeq \bH^q(X,\tau_{\leq q}\sfA)\ar[r] &
      \bH^q(X,h^q(\sfA)[-q])\simeq H^0(X,h^q(\sfA)).
    }\qedhere
  \end{equation}
\end{proof}

\subsection{Filtrations and co-filtrations}
\noindent
\begin{defini}
  By analogy with filtrations,
  cf.~\cite[\href{https://stacks.math.columbia.edu/tag/0121}{Tag
    0121}]{stacks-project}, we define \emph{co-filtrations} as follows: Let $\sfA$ be
  an object of an abelian category. A \emph{co-filtration} of $\sfA$ is a sequence of
  epimorphisms.
  \[
    \sfA \onto \dots \onto F_{n}\sfA \onto F_{n-1}\sfA \onto \dots \onto 0.
  \]
  {The reader is invited to formulate the analogues of the statements in
    \cite[\href{https://stacks.math.columbia.edu/tag/0121}{Tag 0121}]{stacks-project}
    for co-filtrations.}
  In this article we will only consider finite, separated, and exhaustive
  co-filtrations, i.e., such that for some appropriate $n,m\in\bZ$, $\sfA=F_nA$ and
  $F_m\sfA=0$ and we will simply call them co-filtrations.  After possibly relabeling
  our 
  co-filtration we may assume that there is a sequence of epimorphisms,
  \[
    \sfA = F_{n}\sfA \onto F_{n-1}\sfA \onto \dots \onto F_0\sfA= 0.
  \]
  A morphisms of co-filtered objects is a \emph{co-filtered morphism} if it respects
  the co-filtration. 
\end{defini}

\begin{lem}\label{lem:filtr-co-filtr}
  Let $(\sfA,F^\kdot)$ be a filtered object,
  $\sfA\supseteq \dots \supseteq F^n \supseteq F^{n+1} \supseteq \dots$, as in
  \cite[\href{https://stacks.math.columbia.edu/tag/0121}{Tag
    0121(2)}]{stacks-project}. Then there exists a unique (up to isomorphism) natural
  co-filtration, $F_\kdot$, of $\sfA$ which is dual to $F^\kdot$ in the sense that
  for each $p\in\bZ$ there exists a \ses, 
  \begin{equation}
    \label{eq:19}
    \xymatrix{%
      0 \ar[r] & F^{p+1}\sfA \ar[r] & \sfA \ar[r] & F_p\sfA \ar[r] & 0.
    }
  \end{equation}
\end{lem}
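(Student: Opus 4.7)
The plan is to construct the co-filtration by taking cokernels. Concretely, for each $p \in \bZ$ I would define
\[
F_p\sfA \leteq \Coker\bigl(F^{p+1}\sfA\hookrightarrow \sfA\bigr),
\]
so that the \ses in \eqref{eq:19} is automatic by construction. This handles existence and gives the displayed short exact sequence in one stroke.

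Next I would verify that this does assemble into a co-filtration. Since the filtration is decreasing, the inclusion $F^{p+2}\sfA \hookrightarrow F^{p+1}\sfA$ sits inside the inclusion into $\sfA$, and the universal property of the cokernel applied to the composition $F^{p+2}\sfA \hookrightarrow F^{p+1}\sfA \hookrightarrow \sfA$ factors the quotient $\sfA \onto F_{p+1}\sfA$ through $\sfA \onto F_p\sfA$, producing a canonical epimorphism $F_{p+1}\sfA \onto F_p\sfA$. The hypotheses that the filtration is finite, separated and exhaustive translate, via $\sfA = F^m$ for $m$ small enough and $F^n = 0$ for $n$ large enough, into $F_{m-1}\sfA = 0$ and $F_n\sfA = \sfA$, so the resulting co-filtration is again finite, separated, and exhaustive, of the form required.

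For uniqueness and naturality, the point is that a cokernel of a given monomorphism is unique up to unique isomorphism, so any other co-filtration $F'_\kdot$ fitting into \ses \eqref{eq:19} must receive a canonical isomorphism $F_p\sfA \isom F'_p\sfA$ compatible with the quotient map from $\sfA$. Functoriality of the cokernel then guarantees that a morphism of filtered objects $(\sfA,F^\kdot) \to (\sfB,G^\kdot)$ induces a unique morphism of the associated co-filtrations, establishing the naturality claim.

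I do not expect any serious obstacle here: the entire argument is a direct application of the universal property of cokernels in an abelian category, together with bookkeeping of indices to match the statement. The only mild care needed is to get the direction and indexing of the arrows right, so that the resulting sequence $\sfA = F_n\sfA \onto F_{n-1}\sfA \onto \dots \onto F_0\sfA = 0$ matches the convention declared just above the lemma.
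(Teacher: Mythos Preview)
Your proposal is correct and takes essentially the same approach as the paper: define $F_p\sfA$ as the cokernel of $F^{p+1}\sfA\hookrightarrow\sfA$, construct the transition maps via the universal property, and read off uniqueness from the short exact sequence. One wording slip: it is $\sfA\onto F_p\sfA$ that factors through $\sfA\onto F_{p+1}\sfA$ (using the universal property of the latter as $\Coker(F^{p+2}\to\sfA)$), not the other way around, though your stated conclusion $F_{p+1}\sfA\onto F_p\sfA$ is the right one.
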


\begin{proof}
  For $p\in\bZ$ let $F_p\sfA\leteq \coker(F^{p+1}\sfA \to \sfA)$. This implies the
  existence of the short exact sequence in the statement. Next, consider this \ses
  for $p$ and $p-1$ and the morphism on the left hand side given by the original
  filtration as indicated on the diagram:
  \[
    \xymatrix{%
      0 \ar[r] & \ar@{^(->}[d] F^{p+1}\sfA \ar[r] & \ar[d]^{\id_\sfA} \sfA \ar[r] &
      F_p\sfA \ar[r] \ar@{-->}[d]^{\phi_p} & 0 \\
      0 \ar[r] & F^{p}\sfA \ar[r] & \sfA \ar[r]^-\alpha & F_{p-1}\sfA \ar[r] & 0.  }
  \]
  It follows that there exists a morphism $\phi_p$ (indicated by the dashed arrow).
  Furthermore, as $\alpha\circ\id_\sfA$ is surjective by definition, so is $\phi_p$
  and hence we obtain a co-filtration of $\sfA$ by the $F_p\sfA$.
  The uniqueness of the co-filtration is straightforward from \autoref{eq:19}.
\end{proof}

\begin{defini}\label{def:associated-co-filtration}
  Let $(\sfA,F^\kdot)$ be a filtered object. Then the co-filtration $F_\kdot$
  constructed in \autoref{lem:filtr-co-filtr} (satisfying \autoref{eq:19}) will be
  called the \emph{associated co-filtration of $F^\kdot$}.

  From this point forward filtrations will be considered along with their
  co-filtrations and accordingly we will drop the ``dot'' in the super- or subscript
  unless we need to distinguish between filtrations or co-filtrations. This is
  consistent with our other terminology: 
\end{defini}

\begin{lem}\label{lem:morphs-of-filtr-co-filtr}
  Let $\psi:(\sfA,F)\to (\sfB,G)$ be a filtered morphism of filtered
  objects. Consider $\sfA$ and $\sfB$ as co-filtered objects via the associated
  co-filtrations of $F$ and $G$ respectively. Then the morphism $\psi:\sfA\to\sfB$ is
  also a co-filtered morphism.
\end{lem}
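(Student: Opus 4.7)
The plan is to unwind the definition of the associated co-filtration given in \autoref{def:associated-co-filtration} and check that $\psi$ descends to the cokernels. Specifically, by \autoref{lem:filtr-co-filtr}, the associated co-filtrations are defined by $F_p\sfA = \coker(F^{p+1}\sfA \hookrightarrow \sfA)$ and $G_p\sfB = \coker(G^{p+1}\sfB \hookrightarrow \sfB)$. The key input is that $\psi$ being a filtered morphism means $\psi(F^{p+1}\sfA) \subseteq G^{p+1}\sfB$ for every $p$, so we have a commutative square
\[
  \xymatrix{
    F^{p+1}\sfA \ar[r] \ar[d] & \sfA \ar[d]^{\psi} \\
    G^{p+1}\sfB \ar[r] & \sfB.
  }
\]

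First I would push this square out to a map of cokernels, which by the universal property of cokernels gives a unique morphism $\psi_p : F_p\sfA \to G_p\sfB$ making the evident square with the projections commute. Then I would show that the family $\{\psi_p\}$ is compatible with the structure maps $\phi_p^{\sfA} : F_p\sfA \onto F_{p-1}\sfA$ and $\phi_p^{\sfB} : G_p\sfB \onto G_{p-1}\sfB$ of the two co-filtrations. This compatibility follows by a second application of the universal property: the two composites $\phi_p^{\sfB}\circ\psi_p$ and $\psi_{p-1}\circ \phi_p^{\sfA}$, when precomposed with the surjection $\sfA \onto F_p\sfA$, both agree with the map $\sfA \to G_{p-1}\sfB$ induced by $\psi$.

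There is essentially no obstacle here; the lemma is a purely formal consequence of the construction in \autoref{lem:filtr-co-filtr} and the universal property of cokernels. The only thing to be careful about is to ensure that the uniqueness clause in \autoref{lem:filtr-co-filtr} is invoked (or re-derived) to identify the induced morphisms $\psi_p$ unambiguously with the maps between the associated co-filtrations, so that the statement ``$\psi$ is a co-filtered morphism'' is unambiguous.
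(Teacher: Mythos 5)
Your proposal is correct and follows essentially the same route as the paper: both produce the induced maps $F_p\sfA\to G_p\sfB$ from the commutative square and the universal property of the cokernel. You go one step further than the paper's proof by explicitly verifying that the induced maps commute with the structure maps $\phi_p$ of the two co-filtrations (the paper leaves this implicit); that extra check is appropriate, since being a co-filtered morphism requires it.
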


\begin{proof}
  The fact that $\psi$ is a filtered morphism implies that for each $p\in\bZ$ there
  exists a commutative diagram with the solid arrows:
  \[
    \xymatrix{%
      0 \ar[r] & \ar[d] F^{p+1}\sfA \ar[r] & \ar[d]^{\psi} \sfA \ar[r] &
      F_p\sfA \ar[r] \ar@{-->}[d] & 0 \\
      0 \ar[r] & G^{p+1}\sfB \ar[r] & \sfB \ar[r] & G_{p}\sfB \ar[r] & 0.  }
  \]
  This in turn implies the existence of the dashed arrow which commutes with the rest
  of the diagram and hence $\psi$ is indeed a co-filtration morphism.
\end{proof}

\subsection{Hyperfiltrations and co-hyperfiltrations}\label{subsec:hyperf-co-hyperf}

The derived category of filtered objects is a mixed bag. The filtration is defined on
the representing complex and hence dealing with these filtrations is sometimes
cumbersome. Furthermore, filtrations may not be compatible with arbitrary functors,
so applying such functors can kill the filtration.  For these reasons, it is often
convenient to treat filtrations as %
\emph{hyperfiltration}s (see below)
which removes the potential dependence on actual complexes representing the original
filtered object and are compatible with arbitrary functors.  This philosophy has
already been adopted for example in \cite{Kovacs97c,Kovacs05a,KT21}.

\begin{notation}
  In this section we will work with objects in a triangulated
  category, $\mcD$.
  The main example to keep in mind is the derived category of an abelian category:
  let $\mcA$ be an abelian category, $C(\mcA)$ the category of complexes of objects
  in $\mcA$, $K(\mcA)$ the homotopy category of complexes of objects in $\mcA$, and
  $\mcD(\mcA)$ the derived category of $\mcA$.
\end{notation}

\begin{defini}[\protect{\cite[\S\S1.2]{Kovacs05a}}]\label{def:hyperfiltrations}%
  A \emph{hyperfiltration}, $\bsfF^\kdot=\bsfF^\kdot(\sfA)$, of an object
  $\sfA\in \Ob\mcD$ is a set of objects $\bsfF^p\in\Ob\mcD$ and a set of morphisms
  $(\phi^p=\phi^p(\bsfF^\kdot):\bsfF^{p+1}\to \bsfF^{p})\in\Mor\mcD$ for each
  $p\in\bN$ such that $\bsfF^0\simeq\sfA$.  This last condition implies that our
  hyperfiltrations are \emph{exhaustive}. One could make the definition more general,
  or even exhaustive without having to declare that $\sfA$ is part of the
  filtration. However, we will only use this type of hyperfiltrations, so there is no
  need for the more general setup, at least not in this article.  This setup has the
  advantage that $\sfA$ is actually part of the hyperfiltration $\cmx\bsfF$, which
  allows us to suppress $\sfA$ from the notation. 

  In order to avoid having to worry about the range of indices, we extend our
  hyperfiltrations (and filtrations as well) with the following definition: Let
  $\bsfF^p\leteq\sfA$ and $(\phi^p:\bsfF^{p+1}\to \bsfF^{p})\leteq\id_{\sfA}$ for
  each $p\in\bZ$, $p<0$. 
  A hyperfiltration is \emph{finite} if there exists a $p_0\in\bN$ such that
  $\bsfF^p\simeq 0$ for each $p\geq p_0$. In this case the \emph{length} of a
  hyperfiltration $\cmx\bsfF$, denoted by $\length{\cmx\bsfF}$, is the smallest $p_0$
  for which the above property holds.
  For a $p\in\bZ$ the \emph{$p^\text{th}$-associated graded complex} of the
  hyperfiltration $\cmx{\bsfF}$ is defined as the mapping cone of $\phi^p$:
  $\bsfG^p\leteq\bsfG^p_{\bsfF}(\sfA)\leteq \cone(\phi^p)$.  The associated natural
  morphisms will be denoted by notation $\varrho^p:\bsfF^p\to\bsfG^p$ and
  $\varepsilon^p:\bsfG^p\to \bsfF^{p+1}[1]$.
  The morphisms $\phi^p$ will be called the \emph{interior morphisms} of
  $\bsfF^\kdot$.  A hyperfiltration is called \emph{non-redundant} if an interior
  morphism $\phi^p:\bsfF^{p+1}\to\bsfF^p$ is an isomorphism only if
  $p\geq\length\cmx\bsfF$ or $p<0$.
  %
  A \emph{morphism} of hyperfiltrations $\sigma^\kdot:\bsfF_1^\kdot\to\bsfF_2^\kdot$
  is a collection of morphisms $\sigma^p:\bsfF_1^p\to\bsfF_2^p$ for each $p\in\bZ$,
  which are compatible with the interior morphisms of the hyperfiltrations
  $\bsfF_1^\kdot$ and $\bsfF_2^\kdot$, i.e., such that
  $\sigma^p\circ\phi_1^p=\phi_2^p\circ\sigma^{p+1}$.

  A \emph{hyperfiltered morphism} of hyperfiltered objects
  $\sigma:(\sfA,\cmx\bsfF_\sfA)\to (\sfB,\cmx\bsfF_\sfB)$ is a morphism of
  hyperfiltrations $\sigma^\kdot:\cmx\bsfF_\sfA\to\cmx\bsfF_\sfB$. 
  Because of the convention that $\bsfF_\sfA^0=\sfA$ and $\bsfF_\sfB^0=\sfB$, this
  includes a morphism $\sigma^0:\sfA\to\sfB$ of the underlying objects.  A
  \emph{hyperfiltered isomorphism} of hyperfiltered objects is a hyperfiltered
  morphism that has an inverse which is also a hyperfiltered morphism.

  A \emph{(finite) co-hyperfiltration}, $\bsfF_\kdot=\bsfF_\kdot(\sfA)$, of an object
  $\sfA\in \Ob\mcD$ is a set of objects $\bsfF_p\in\Ob\mcD$ and a set of
  \emph{interior morphisms}
  $(\phi_{p}=\phi_p(\bsfF_\kdot):\bsfF_{p+1}\to \bsfF_{p})\in\Mor\mcD$ for each
  $p\in\bN$, such that $\bsfF_p\simeq 0$ for $p<0$, and for some $n\in\bN$,
  $\bsfF_m\simeq\sfA$ and $\phi_m=\id_\sfA$ for each $m\geq n$. The smallest such $n$
  will be called the \emph{height} of $\bsfF_\kdot$.  The \emph{length} of a
  co-hyperfiltration $\bsfF_\kdot$, denoted by $\length{\bsfF_\kdot}$, is $n-p_0$,
  where $n$ is the height of $\bsfF_\kdot$ and $p_0\in\bN$ is the largest integer
  such that $\bsfF_p\simeq 0$ for each $p< p_0$.  As in the case of hyperfiltrations,
  we extend our co-hyperfiltrations to negative indices by $\bsfF_p\leteq 0$ and
  $\phi_p\leteq 0: \bsfF_{p+1}\to\bsfF_p$ for $p\in\bZ$, $p<0$.  A co-hyperfiltration
  is called \emph{non-redundant} if an interior morphism
  $\phi_p:\bsfF_{p+1}\to\bsfF_p$ is an isomorphism only if $p< n-\length\bsfF_\kdot$
  or $p\geq n$, where $n$ is the height of $\bsfF_\kdot$.
\end{defini}

\begin{rem}%
  Philosophically, a co-hyperfiltration should start with $\bsfF^0=\sfA$ and the
  indexing should go with negative integers. This would allow for not necessarily
  finite co-hyperfiltrations. However, in this article we will only use finite
  (co-)hyperfiltrations and the above termninology makes the already complex notation
  more bearable and human readable.%
\end{rem}

\noin
Hyperfiltrations are better suited for derived categories than filtrations in many
ways. For instance, the following is a straightforward consequence of the definition.

\begin{lem}\label{lem:image-of-hyperfilt}
  Let $\Phi:\mcD_1\to\mcD_2$ be a (covariant) functor between derived categories and
  $(\sfA,\bsfF^\kdot)$ a hyperfiltered object in $\mcD_1$ with associated graded
  complexes $\bsfG^p$. Then $(\Phi(\sfA),\Phi(\bsfF^\kdot))$ is a hyperfiltered
  object in $\mcD_2$ and the $p^\text{th}$-associated graded complex of
  $\Phi(\bsfF^\kdot)$ is $\Phi(\bsfG^p)$. \hfill\qed
\end{lem}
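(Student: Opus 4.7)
The plan is to unpack the definition of a hyperfiltration and check that each required piece of structure is preserved by $\Phi$. Recall that the data of $\bsfF^\kdot$ consists of objects $\bsfF^p \in \Ob\mcD_1$ together with interior morphisms $\phi^p: \bsfF^{p+1} \to \bsfF^p$, subject to the normalization $\bsfF^0 \simeq \sfA$. The associated graded piece $\bsfG^p$ is defined as the mapping cone $\cone(\phi^p)$, together with the structural morphisms $\varrho^p$ and $\varepsilon^p$ coming from the distinguished triangle of the cone.

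First I would apply $\Phi$ termwise to the data: set $\Phi(\bsfF^p) \in \Ob\mcD_2$ and take the images $\Phi(\phi^p):\Phi(\bsfF^{p+1})\to\Phi(\bsfF^p)$ as interior morphisms. Since any functor sends isomorphisms to isomorphisms, the identification $\bsfF^0\simeq \sfA$ is carried to $\Phi(\bsfF^0)\simeq \Phi(\sfA)$. The extension convention $\bsfF^p=\sfA$ and $\phi^p=\id_\sfA$ for $p<0$ is similarly preserved because $\Phi(\id)=\id$. Therefore the collection $(\Phi(\sfA),\Phi(\bsfF^\kdot))$ satisfies the axioms of a hyperfiltered object in $\Ob\mcD_2$.

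For the identification of the associated graded complexes, the only genuine input needed is that $\Phi$, as a functor between triangulated (derived) categories, commutes with the mapping cone construction, i.e., sends distinguished triangles to distinguished triangles. Granting this, $\Phi(\bsfG^p)=\Phi(\cone(\phi^p))\simeq \cone(\Phi(\phi^p))$, which is by definition the $p^\text{th}$-associated graded complex of the hyperfiltration $\Phi(\bsfF^\kdot)$. The natural morphisms $\varrho^p$ and $\varepsilon^p$ for $\Phi(\bsfF^\kdot)$ are then just the images under $\Phi$ of the corresponding morphisms for $\bsfF^\kdot$.

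The main (and essentially only) point requiring attention is this compatibility of $\Phi$ with mapping cones; for the functors actually used in the paper (pushforwards, pullbacks, $\sHom$, tensor with a line bundle, etc., in their derived incarnations) this holds by construction, so the lemma amounts to a formal verification.
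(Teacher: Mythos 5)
The paper gives no proof of this lemma; it is stated with an immediate QED box, being regarded as an unwinding of the definitions. Your proposal is exactly that unwinding and is correct. The one point you rightly isolate is that, as literally stated, the lemma allows an arbitrary covariant functor $\Phi$, whereas the assertion about associated graded complexes requires $\Phi$ to carry distinguished triangles to distinguished triangles — equivalently, to commute with mapping cones up to canonical isomorphism — since otherwise $\Phi(\cone(\phi^p))$ need not agree with $\cone(\Phi(\phi^p))$. The first claim, that $(\Phi(\sfA),\Phi(\bsfF^\kdot))$ is a hyperfiltered object, needs nothing beyond functoriality (preservation of composition, identities, and hence isomorphisms); only the identification of the graded pieces needs exactness. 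This hypothesis is tacit in the paper and is satisfied by every functor the paper actually applies to hyperfiltrations (derived pushforward, $\bD_X$, tensor with a line bundle, and so on), so your proof makes explicit an implicit standing assumption rather than uncovering a genuine gap.
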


\noin
We also have the analogue of \autoref{lem:filtr-co-filtr}:

\begin{lem}\label{lem:hyperf-co-hyperf}
  Let $(\sfA,\bsfF^\kdot)$ be a finite hyperfiltered object,
  $\dots\to\bsfF^{n+1}\to\bsfF^n\to\dots\to\bsfF^0\simeq\sfA$, as in
  \autoref{def:hyperfiltrations}.  Then there exists a unique (up to isomorphism)
  natural co-hyperfiltration, $\bsfF_\kdot$, of $\sfA$ which is dual to $\bsfF^\kdot$
  in the sense that for $\forall p\in\bZ$ there exists a \dt of objects,
  \begin{equation}
    \label{eq:21}
    \xymatrix{%
      \bsfF^{p+1}\sfA \ar[r] & \sfA \ar[r] & \bsfF_p\sfA \ar[r]^-{+1} & .
    }
  \end{equation}
\end{lem}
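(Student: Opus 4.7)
The plan is to mirror the construction of \autoref{lem:filtr-co-filtr}, replacing short exact sequences with \dts and cokernels with mapping cones. First, for each $p\in\bZ$, define $\bsfF_p\leteq\cone(\bsfF^{p+1}\to\sfA)$, where the morphism is the composition of interior morphisms $\phi^0\circ\phi^1\circ\cdots\circ\phi^p$ (which is an isomorphism when $p\leq -1$ by the extension convention on negative indices). This produces the required \dt \eqref{eq:21} by definition, and since the mapping cone in a triangulated category is unique up to (non-canonical) isomorphism, each $\bsfF_p$ is determined up to isomorphism.

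Next, to construct the interior morphisms $\phi_p:\bsfF_{p+1}\to\bsfF_p$, I apply the axiom TR3 of triangulated categories to the commutative square
\[
  \xymatrix{
    \bsfF^{p+2}\ar[r]\ar[d]_{\phi^{p+1}} & \sfA \ar@{=}[d] \\
    \bsfF^{p+1}\ar[r] & \sfA,
  }
\]
which extends to a morphism of the two defining \dts and yields $\phi_p$ on the third vertices. I then verify the boundary behavior: for $p\leq -1$ the morphism $\bsfF^{p+1}\to\sfA$ is an isomorphism, hence $\bsfF_p\simeq 0$; for $p\geq\length{\cmx\bsfF}-1$ one has $\bsfF^{p+1}\simeq 0$, hence $\bsfF_p\simeq\sfA$ and $\phi_p$ may be taken to be $\id_\sfA$. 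This confirms that $\bsfF_\kdot$ is a co-hyperfiltration of $\sfA$.

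For the uniqueness claim, suppose $\wt\bsfF_\kdot$ is another family satisfying the analogues of \eqref{eq:21}. Applying TR3 to the identity morphisms $\id_{\bsfF^{p+1}}$ and $\id_\sfA$ between the defining triangles produces a morphism $\bsfF_p\to\wt\bsfF_p$, which is an isomorphism by the five-lemma for triangulated categories. The main obstacle is the classical non-functoriality of the cone construction: the third morphism provided by TR3 is not canonical, so both the interior morphisms $\phi_p$ and the comparison isomorphisms $\bsfF_p\to\wt\bsfF_p$ involve genuine choices. These choices can nonetheless be made coherently by induction on $p$, and the statement \emph{unique up to isomorphism} should be interpreted in the weak sense that the underlying objects $\bsfF_p$ are determined up to (non-canonical) isomorphism while the interior morphisms exist consistently with the defining \dts.
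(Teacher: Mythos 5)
Your proposal takes essentially the same route as the paper's own proof: define $\bsfF_p$ as the mapping cone of $\bsfF^{p+1}\to\sfA$, then use TR3 on the commutative square over $\id_\sfA$ to produce the interior morphisms $\phi_p$. Your closing caveat about the non-functoriality of the cone construction is a fair observation the paper glosses over, but it does not affect the statement as phrased (``unique up to isomorphism'').
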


\begin{proof}
  For $p\in\bZ$ let $\bsfF_p\sfA\leteq \cone(\bsfF^{p+1}\sfA \to \sfA)$ be the
  mapping cone of the indicated morphism. This implies the existence of the \dt in
  the statement. Furthermore, consider this \dt for $p$ and $p-1$ and the morphism on
  the left hand side given by the original hyperfiltration as indicated on the
  diagram:
  \[
    \xymatrix{
      \ar[d] \bsfF^{p+1}\sfA \ar[r] & \ar[d]^{\id_\sfA} \sfA \ar[r] &
      \bsfF_p\sfA \ar[r]^-{+1} \ar@{-->}[d]^{\phi_p} &  \\
      \bsfF^{p}\sfA \ar[r] & \sfA \ar[r] & \bsfF_{p-1}\sfA \ar[r]^-{+1} & .  }
  \]
  It follows that there exists a morphism $\phi_p$ (indicated by the dashed arrow),
  hence we obtain a co-filtration of $\sfA$ by the $\bsfF_p\sfA$.
  The uniqueness (in the derived category) of the co-filtration is straightforward
  from \autoref{eq:21}.
  Note that the finiteness of $\bsfF^\kdot$ is (only) required for $\bsfF_\kdot$ to
  be finite and exhaustive.
\end{proof}

\begin{defini}\label{def:associated-co-hyperfiltration}
  Let $(\sfA,\bsfF^\kdot)$ be a hyperfiltered object. Then the co-hyperfiltration
  $\bsfF_\kdot$ constructed in \autoref{lem:hyperf-co-hyperf} (satisfying
  \autoref{eq:21}) will be called the \emph{associated co-hyperfiltration of
    $\bsfF^\kdot$}.
  As with filtrations, from this point forward hyperfiltrations will be considered to
  be a unit with their co-hyperfiltrations and accordingly we will drop the ``dot''
  in the super- or subscript unless we need to distinguish between hyperfiltrations
  or co-hyperfiltrations. Again, this is consistent with our other terminology as
  shown by the next statement, an analogue of \autoref{lem:morphs-of-filtr-co-filtr}.
\end{defini}

\begin{lem}\label{lem:morphs-of-hyperfiltr-co-hyperfiltr}
  Let $\sigma:(\sfA,\cmx\bsfF_\sfA)\to (\sfB,\cmx\bsfF_\sfB)$ be a hyperfiltered
  morphism of hyperfiltered objects. Consider $\sfA$ and $\sfB$ as co-hyperfiltered
  objects via the associated co-hyperfiltrations $\bsfF^\sfA_\kdot$ and
  $\bsfF^\sfB_\kdot$ respectively. Then the morphism $\sigma:\sfA\to\sfB$ is also a
  co-hyperfiltered morphism.
\end{lem}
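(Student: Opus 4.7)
The plan is to mimic the proof of \autoref{lem:morphs-of-filtr-co-filtr}, replacing ordinary short exact sequences with the distinguished triangles \autoref{eq:21} and the commutative diagrams obtained from them by the TR3 axiom of a triangulated category. Recall that for each $p\in\bZ$, the associated co-hyperfiltration objects are given by $\bsfF_p^\sfA\simeq\cone(\bsfF^{p+1}_\sfA\to \sfA)$ and $\bsfF_p^\sfB\simeq\cone(\bsfF^{p+1}_\sfB\to \sfB)$, and the interior morphisms $\phi_p^\sfA$ and $\phi_p^\sfB$ are the dashed fill-ins constructed in the proof of \autoref{lem:hyperf-co-hyperf}.

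First, I would construct the candidate morphisms $\sigma_p:\bsfF_p^\sfA\to \bsfF_p^\sfB$. Since $\sigma$ is a hyperfiltered morphism, for every $p\in\bZ$ the square
\[
  \xymatrix{
    \bsfF^{p+1}_\sfA\ar[r]\ar[d] & \sfA\ar[d]^\sigma \\
    \bsfF^{p+1}_\sfB\ar[r] & \sfB
  }
\]
commutes. Applying TR3 to this square and to the distinguished triangles defining $\bsfF_p^\sfA$ and $\bsfF_p^\sfB$ gives a morphism $\sigma_p:\bsfF_p^\sfA\to\bsfF_p^\sfB$ fitting into a morphism of distinguished triangles. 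In particular, for $p$ at least the height of both co-hyperfiltrations, $\bsfF^{p+1}_\sfA\simeq 0$ and $\bsfF^{p+1}_\sfB\simeq 0$, so $\sigma_p$ agrees with $\sigma$ under the identifications $\bsfF_p^\sfA\simeq\sfA$ and $\bsfF_p^\sfB\simeq\sfB$, as required by the definition of a co-hyperfiltered morphism.

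The real issue is verifying the compatibility $\phi_p^\sfB\circ\sigma_{p+1}=\sigma_p\circ\phi_p^\sfA$ with the interior morphisms. This is where the main obstacle lies: in a general triangulated category the fill-in provided by TR3 is not canonical, so a priori one only knows that \emph{some} morphism $\sigma_p$ exists, and the composition on either side of the putative commutativity relation is again constructed via TR3 from the same outer data. My plan is to resolve this by an octahedral argument: consider the composition $\bsfF^{p+2}_\sfA\to\bsfF^{p+1}_\sfA\to\sfA$ (and the analogous composition for $\sfB$) and apply the octahedral axiom together with the morphism $\sigma$ of distinguished triangles between the two towers. This simultaneously produces $\sigma_p$, $\sigma_{p+1}$, $\phi_p^\sfA$, and $\phi_p^\sfB$ as morphisms belonging to one coherent octahedral diagram, and the commutativity $\phi_p^\sfB\circ\sigma_{p+1}=\sigma_p\circ\phi_p^\sfA$ reads off as one of the commuting faces. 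Concretely, the octahedral axiom applied to $\bsfF^{p+2}_\sfA\to\bsfF^{p+1}_\sfA\to\sfA$ identifies the cone of $\phi_p^\sfA$ with $\cone(\bsfF^{p+2}_\sfA\to\bsfF^{p+1}_\sfA)[1]$-shifted data; applying the same reasoning to $\sfB$ and using the hyperfiltered morphism $\sigma$ between the two diagrams yields the desired compatibility as the naturality of the octahedral construction.

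Finally, the extended indices (where $\bsfF^p\leteq\sfA$ and interior morphisms are the identity for $p<0$) contribute no new content: the resulting $\bsfF_p^\sfA\simeq 0$ and $\bsfF_p^\sfB\simeq 0$ for $p<0$, and the required compatibilities are trivial in that range. Hence $\{\sigma_p\}_{p\in\bZ}$ assembles into a co-hyperfiltered morphism, proving the lemma.
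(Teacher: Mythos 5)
Your first step (constructing $\sigma_p$ as a TR3 fill-in) is exactly what the paper does, and you are right to flag the compatibility $\phi^\sfB_p\circ\sigma_{p+1}=\sigma_p\circ\phi^\sfA_p$ as the genuine issue: TR3 fill-ins are not canonical, so producing one $\sigma_p$ for each $p$ does not by itself yield a morphism of co-hyperfiltrations. The paper's own proof passes over this silently, so your noticing it is to your credit. However, the fix you propose does not close the gap. You invoke ``naturality of the octahedral construction,'' but there is no such thing in a bare triangulated category: the mapping cone is not a functor, the octahedral axiom asserts only the \emph{existence} of a compatible octahedron for a single composition, and there is no axiom providing a morphism between two octahedra induced by a morphism of their input composites. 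Appealing to it as a black box is circular — the coherence you need is precisely the thing the axioms fail to provide.

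The way out, and the implicit reading of the paper's terse proof, is to descend from $\mcD(\mcA)$ to the category of complexes. In the paper's applications the hyperfiltrations come from honest filtrations $\bsfF^{p+1}\subseteq\bsfF^p\subseteq\sfA$ of actual complexes and $\sigma$ is an actual filtered chain map, so one may take $\bsfF_p\leteq\sfA/\bsfF^{p+1}$ as the quotient complex rather than an abstract cone. At that level the quotient construction \emph{is} functorial in the pair $(\sfA,\bsfF^{p+1})$, the interior morphisms $\phi_p$ are literally the induced quotient maps, and the compatibility $\phi^\sfB_p\circ\sigma_{p+1}=\sigma_p\circ\phi^\sfA_p$ holds on the nose by functoriality of cokernels, exactly as in the abelian case of \autoref{lem:morphs-of-filtr-co-filtr}. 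One then passes to the derived category at the end. If you want to stay entirely inside $\mcD$ without choosing representatives, you would instead need an enhancement (a filtered or dg enhancement) that restores functoriality of cones — a valid route, but one you should name explicitly rather than attribute to the octahedral axiom.
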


\begin{proof}
  The fact that $\sigma$ is a hyperfiltered morphism implies that for each $p\in\bZ$
  there exists a commutative diagram with the solid arrows:
  \[
    \xymatrix@R1.5em{%
      \ar[d] \bsfF_\sfA^{p+1}\sfA \ar[r] & \ar[d]^{\sigma} \sfA \ar[r] &
      \bsfF^\sfA_p\sfA \ar@{-->}[d] \ar[r]^-{+1} &   \\
      \bsfF_\sfB^{p+1}\sfB \ar[r] & \sfB \ar[r] & \bsfF^\sfB_{p}\sfB \ar[r]^-{+1} & .
    }
  \]
  This in turn implies the existence of the dashed arrow which commutes with the rest
  of the diagram and hence $\sigma$ is indeed a co-hyperfiltered morphism.
\end{proof}

One could also define the associated graded complexes of a co-hyperfiltration, but it
turns out that these are the same as the associated graded complexes of the original
hyperfiltration. In fact, using the notation of \autoref{def:hyperfiltrations},
consider the following commutative diagram:
\begin{equation}
  \label{eq:65}
  \begin{aligned}
    \xymatrix@!@R.0em{%
      && \bsfF^{p}\ar[llddd]|!{[dll];[drr]}\hole|!{[dddd];[dll]}\hole
      \ar[drr]  && \\
      \bsfF^{p+1} \ar[rrrr]\ar@[][urr] &&&& \sfA
      \ar@[][dddll] \ar[dd] \\
      &&&& \\
      \bsfG^p \ar[uu]^{+1} \ar@{-->}[drr]
      \ar@{<--}[rrrr]|!{[uu];[drr]}\hole|!{[drr];[uurrrr]}\hole^{+1} &&&& \bsfF_{p-1}
      \ar@{<--}[llll]|!{[dll];[uu]}\hole|!{[uullll];[dll]}\hole
      \ar[lluuu]|!{[uu];[dll]}\hole|!{[uu];[uullll]}\hole^{+1}
      \\
      && \bsfF_{p} \ar@[][uuull]_{+1} \ar@{-->}@[mauve][urr] }.
  \end{aligned}
\end{equation}
Then the octahedral axiom implies that there is a \dt
\[
  \xymatrix{%
    \bsfG^p\ar[r] & \bsfF_{p}\ar[r] & \bsfF_{p-1}\ar[r]^-{+1} & .
  }
\]

\noin
The notion of co-hyperfiltrations allows the contravariant version of
\autoref{lem:image-of-hyperfilt}:

\begin{lem}\label{lem:contravariant-image-of-hyperfilt}
  Let $\Psi:\mcD_1\to\mcD_2$ be a contravariant functor between derived categories
  and let $(\sfA,\bsfF^\kdot)$ be a finite hyperfiltered object in $\Ob\mcD_1$ of
  length $n$ with associated graded complexes $\bsfG^p$. Then $\Psi(\sfA)$ admits a
  co-hyperfiltration given by $\bsfF_p\leteq \Psi(\bsfF^{n-p}))$ in $\Ob\mcD_2$ and
  the $p^\text{th}$-associated graded complex of the co-hyperfiltration $\bsfF_\kdot$
  is isomorphic to $\Psi(\bsfG^{n-p})$
\end{lem}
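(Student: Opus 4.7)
The plan is to construct the claimed co-hyperfiltration of $\Psi(\sfA)$ by applying $\Psi$ term-wise to $\bsfF^\kdot$ and reindexing via $p\mapsto n-p$, then to identify its associated graded complexes by running the images of the defining triangles of $\bsfG^p$ through the octahedral triangle appearing in \autoref{eq:65}. A brief sanity check: for the prescription $\bsfF_p \leteq \Psi(\bsfF^{n-p})$ to furnish a genuine co-hyperfiltration (whose interior morphisms go $\bsfF_{p+1}\to\bsfF_p$, i.e., opposite to the direction of $\phi^{n-p-1}\colon \bsfF^{n-p}\to\bsfF^{n-p-1}$) the functor $\Psi$ must in fact reverse arrows, i.e., be contravariant, and the proof will use this interpretation.

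First I would set $\bsfF_p \leteq \Psi(\bsfF^{n-p})$ and take the interior morphism to be $\phi_p \leteq \Psi(\phi^{n-p-1})\colon \bsfF_{p+1}\to\bsfF_p$. The boundary conditions for a co-hyperfiltration follow at once from the length-$n$ hypothesis together with the extension conventions introduced in \autoref{def:hyperfiltrations}: since $\bsfF^n\simeq 0$ one has $\bsfF_0 \simeq \Psi(0) \simeq 0$, while for $p\geq n$ the extension $\bsfF^{q} = \sfA$ and $\phi^{q} = \id_{\sfA}$ for $q<0$ (together with $\bsfF^0 = \sfA$) gives $\bsfF_p \simeq \Psi(\sfA)$ with $\phi_p = \id_{\Psi(\sfA)}$. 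Hence $\bsfF_\kdot$ is a finite co-hyperfiltration of height $n$.

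For the graded pieces, I would apply $\Psi$ to the distinguished triangle
\[
  \bsfF^{n-p+1} \longrightarrow \bsfF^{n-p} \longrightarrow \bsfG^{n-p} \overset{+1}{\longrightarrow}
\]
defining $\bsfG^{n-p}$. Contravariance together with the usual rotation of triangles converts this into a distinguished triangle of the shape
\[
  \Psi(\bsfG^{n-p}) \longrightarrow \bsfF_{p} \longrightarrow \bsfF_{p-1} \overset{+1}{\longrightarrow},
\]
which is precisely the shape of the triangle $\bsfG^p \to \bsfF_p \to \bsfF_{p-1} \overset{+1}{\to}$ produced by the octahedral axiom in \autoref{eq:65}, applied now to the co-hyperfiltration $\bsfF_\kdot$. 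The third vertex in a distinguished triangle being determined up to isomorphism, this identifies the $p^{\text{th}}$-associated graded complex of $\bsfF_\kdot$ with $\Psi(\bsfG^{n-p})$.

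The main (and really only) obstacle I anticipate is the bookkeeping: keeping track of the index shift $p\leftrightarrow n-p$, being careful about the way a contravariant triangulated functor rotates a distinguished triangle, and noting that the associated graded of a co-hyperfiltration (as produced by \autoref{eq:65}) is defined only up to non-canonical isomorphism in the triangulated category. Once these conventions are pinned down, the verification is entirely formal.
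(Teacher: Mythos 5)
Your argument is correct and follows the same route as the paper's: apply $\Psi$ to each term of the hyperfiltration and reindex via $p\mapsto n-p$. You are also right that ``covariant'' in the statement is a typo for ``contravariant''\,---\,the lemma's label, the sentence immediately preceding it (``the contravariant version of \autoref{lem:image-of-hyperfilt}''), the paper's own proof (which explicitly writes ``the contravariant functor $\Psi$''), and the intended application to $\bD_X$ all confirm this, and your observation that covariance would make the interior morphisms point the wrong way pins down exactly why. Your write-up is in fact slightly more complete than the paper's, which only constructs the co-hyperfiltration and does not actually verify the claim about the associated graded complexes; your step of applying (the contravariant, exact) $\Psi$ to the defining triangle of $\bsfG^{n-p}$ and rotating is the correct way to supply that missing verification, with the implicit standing hypothesis that $\Psi$ is triangulated, as it is in the intended application $\Psi=\bD_X$.
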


\begin{proof}
  Applying the contravariant functor $\Psi$ on the hyperfiltration
  $\bsfF^n\to\dots\to\bsfF^0\simeq\sfA$ gives a sequence of morphisms:
  $\Psi(\sfA)\simeq
  \Psi(\bsfF^0)\to\dots\to\Psi(\bsfF^j)\to\dots\to\Psi(\bsfF^n)$. The assignment
  $\bsfF_p\leteq \Psi(\bsfF^{n-p})$ turns this sequence into a co-hyperfiltration
  $\bsfF_\kdot$ as stated.
\end{proof}

\noin Hyperfiltrations also lead to a spectral sequence the same way filtrations do:

\begin{prop}[\protect{\cite[Thm~1.2.2, Appendix]{Kovacs05a}}]\label{prop:E_1-ss}
  Let $\mcA$ and $\mcB$ be abelian categories, $\Phi:\mcA\to\mcB$ a left exact
  additive functor, and $(\sfA,\bsfF)$ a hyperfiltered object in the derived category
  $\mcD(\mcA)$ with associated graded complexes $\bsfG^p$. Then there exists an $E_1$
  spectral sequence,
  \[
    \xymatrix{%
      E_1^{p,q}=\myR^{p+q}\Phi(\bsfG^p) \ar@{=>}[r] &  \myR^{p+q}\Phi(\sfA)
    }
  \]
  abutting to $\myR^{p+q}\Phi(\sfA)$. \hfill\qed
\end{prop}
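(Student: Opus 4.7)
The plan is to extract the spectral sequence from an exact couple built directly out of the distinguished triangles defining the associated graded objects, bypassing the need to represent the hyperfiltration by an honest filtered complex.

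First, by the very definition of $\bsfG^p$ as the mapping cone of $\phi^p$, for each $p\in\bZ$ there is a distinguished triangle
\[
\xymatrix{\bsfF^{p+1} \ar[r]^{\phi^p} & \bsfF^p \ar[r]^{\varrho^p} & \bsfG^p \ar[r]^-{+1} & }
\]
in $\mcD(\mcA)$. Since $\myR\Phi$ is a triangulated functor between derived categories, it sends this to a distinguished triangle in $\mcD(\mcB)$. Taking cohomology then yields, for every $p,q\in\bZ$, a long exact sequence
\[
\cdots \to \myR^{p+q}\Phi(\bsfF^{p+1}) \to \myR^{p+q}\Phi(\bsfF^p) \to \myR^{p+q}\Phi(\bsfG^p) \to \myR^{p+q+1}\Phi(\bsfF^{p+1}) \to \cdots
\]

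Second, I set $D_1^{p,q} := \myR^{p+q}\Phi(\bsfF^p)$ and $E_1^{p,q} := \myR^{p+q}\Phi(\bsfG^p)$. The collection of the above long exact sequences, as $p$ varies, fits together into a bigraded exact couple whose three structure maps have bidegrees $(-1,1)$, $(0,0)$, and $(1,0)$. The classical derived-couple construction then yields a spectral sequence with $E_1$-term as required and differentials $d_r\colon E_r^{p,q}\to E_r^{p+r,q-r+1}$.

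Third, for abutment to $\myR^{p+q}\Phi(\sfA)$, I use the extension convention of \autoref{def:hyperfiltrations}: for $p<0$ one has $\bsfF^p \simeq \sfA$ with $\phi^p=\id_\sfA$, so the corresponding cones $\bsfG^p$ vanish and the $D$-part of the couple stabilizes from below at $\myR^{p+q}\Phi(\sfA)$. Combined with the finiteness of the hyperfiltration (so that $\bsfF^p\simeq 0$ for $p\gg 0$, hence $D_1^{p,q}=0$ in that range), this gives a finite filtration on $\myR^{p+q}\Phi(\sfA)$ whose successive quotients are precisely the $E_\infty^{p,q}$.

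The main obstacle is really cosmetic but genuine: in a triangulated category, distinguished triangles are only specified up to non-canonical isomorphism, so one must ensure that the mapping cones $\bsfG^p$ and the connecting morphisms can be assembled \emph{coherently} across all $p$ so that the identities defining an exact couple hold strictly, rather than merely up to homotopy. This can be handled by fixing, once and for all, a compatible system of representatives for the tower $\{\bsfF^p\}$ and its cones, for instance by replacing the tower with a filtered K-injective resolution before applying $\Phi$. Once this coherence is in place, the remainder of the argument is the standard exact-couple machinery, and the result reduces to the filtered-complex spectral sequence applied to the resulting model, which is exactly what is carried out in \cite[Thm~1.2.2, Appendix]{Kovacs05a}.
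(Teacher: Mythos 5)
The paper itself provides no proof here; the \texttt{\textbackslash hfill\textbackslash qed} in the statement indicates that the citation to \cite[Thm~1.2.2, Appendix]{Kovacs05a} \emph{is} the proof. Your exact-couple construction is correct and is the standard way to obtain the spectral sequence, and it is consonant with the cited reference: the paper's own later remark (in the proof of \autoref{prop:coh-of-filt-surj}) that the abutment filtration is $F^p = \operatorname{im}\bigl(\myR^j\Phi(\bsfF^p)\to\myR^j\Phi(\sfA)\bigr)$ is precisely what the derived exact couple produces, so you and the reference are running the same argument.

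One small clarification on your ``main obstacle'' paragraph: there is no genuine coherence problem to solve here, so the worry is overstated. \autoref{def:hyperfiltrations} already fixes, as part of the data of a hyperfiltration, a specific cone $\bsfG^p=\cone(\phi^p)$ together with the structure maps $\varrho^p:\bsfF^p\to\bsfG^p$ and $\varepsilon^p:\bsfG^p\to\bsfF^{p+1}[1]$. Applying the triangulated functor $\myR\Phi$ and then the cohomological functor $h^\kdot$ turns each of these distinguished triangles into an honest long exact sequence in the abelian category $\mcB$, and the exact-couple axioms are nothing more than the exactness of these sequences at the three types of vertices --- a property, not a structure that needs to ``hold strictly rather than up to homotopy.'' No filtered K-injective model or compatible system of representatives is required; the only place where a choice is made is in the definition of the hyperfiltration itself, and it has already been made once and for all. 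So the passage starting ``This can be handled by\dots'' can safely be deleted.

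Your treatment of the abutment is correct: the extension convention $\bsfF^p=\sfA$, $\phi^p=\id$ for $p<0$ forces $\bsfG^p\simeq 0$ and stabilizes $D_1^{p,q}$ at $\myR^{p+q}\Phi(\sfA)$ from below, and finiteness of the hyperfiltration kills $D_1^{p,q}$ for $p\gg 0$, so the resulting filtration on $\myR^{p+q}\Phi(\sfA)$ is finite and exhaustive and the spectral sequence converges.
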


\noin
And we also have the following:

\begin{prop}\label{prop:coh-of-filt-surj}
  Using the notation and the assumptions of \autoref{prop:E_1-ss}, further let
  $j\in\bN$ and let $F^p\leteq F^p\myR^{j}\Phi(\sfA)$ for $0\leq p\leq j$ denote the
  filtration on $\myR^{j}\Phi(\sfA)$ corresponding to $E_\infty$. In other words, we
  have that $F^p/F^{p+1}\simeq E_\infty^{p,j-p}$.  Assume that the spectral sequence
  established in \autoref{prop:E_1-ss} degenerates at $E_1$.
  Then there exists a natural isomorphism
  $\myR^{j}\Phi(\bsfF^p)\overset\simeq\longrightarrow F^p\myR^{j}\Phi(\sfA)$.
\end{prop}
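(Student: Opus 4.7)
The plan is to prove the statement by a careful analysis of the long exact sequences associated to the distinguished triangles $\bsfF^{p+1}\to\bsfF^p\to\bsfG^p\overset{+1}{\to}$. Abbreviate $A^p_j\leteq\myR^{j}\Phi(\bsfF^p)$ and $G^p_j\leteq\myR^{j}\Phi(\bsfG^p)$, so that we have long exact sequences
\[
  \cdots\to A^{p+1}_{j}\xrightarrow{\imath^p_j} A^{p}_{j}\xrightarrow{\pi^p_j} G^{p}_{j}\xrightarrow{\delta^p_j} A^{p+1}_{j+1}\to\cdots
\]
with connecting morphisms $\delta^p_j$. The filtration $F^p$ on $\myR^j\Phi(\sfA)$ is by construction the one whose successive quotients are the $E_\infty^{p,j-p}$, and our goal is to identify $\myR^j\Phi(\bsfF^p)$ with $F^p$ via the natural map $A^p_j\to A^0_j=\myR^j\Phi(\sfA)$. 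It thus suffices to show two things: (a)~each $\imath^p_j$ is a monomorphism, so that $A^p_j\hookrightarrow A^0_j$ and the $A^\kdot_j$ form an honest decreasing filtration on $\myR^j\Phi(\sfA)$; and (b)~the successive quotients of this filtration are the $G^p_j$, so that it agrees with $F^\kdot$ term by term.

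The heart of the argument, and the main obstacle, is to establish that the connecting morphisms $\delta^p_j$ all vanish under the $E_1$-degeneration hypothesis. By the exact-couple construction of the spectral sequence of \autoref{prop:E_1-ss}, the differential $d_1^{p,q}$ is precisely the composition $\pi^{p+1}_{p+q+1}\circ\delta^p_{p+q}$, and more generally $d_r^{p,q}$ is computed by choosing a lift of $\delta^p_{p+q}(x)$ through the chain $A^{p+r}_{p+q+1}\to\cdots\to A^{p+1}_{p+q+1}$ and then projecting to $G^{p+r}_{p+q+1}$. Thus $d_1=d_2=\dots=0$ together imply, by successive lifting, that for every $x\in G^p_j$ the element $\delta^p_j(x)\in A^{p+1}_{j+1}$ lies in the image of $A^{p+r}_{j+1}\to A^{p+1}_{j+1}$ for every $r\geq 1$. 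Because $\cmx\bsfF$ is finite, $\bsfF^{p+r}\simeq 0$ for $r\gg 0$, so $A^{p+r}_{j+1}=0$ eventually, forcing $\delta^p_j(x)=0$. Hence $\delta^p_j=0$ for all $p,j$.

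With $\delta^p_{j-1}=0$, the long exact sequence at $A^p_j$ collapses to a short exact sequence
\[
  0\to A^{p+1}_j\xrightarrow{\imath^p_j} A^p_j\xrightarrow{\pi^p_j} G^p_j\to 0.
\]
In particular, each $\imath^p_j$ is injective. Arguing by downward induction on $p$, starting from $p\geq \length\cmx\bsfF$ where $A^p_j=0$, one concludes that the composition $A^p_j\to A^{p-1}_j\to\cdots\to A^0_j=\myR^j\Phi(\sfA)$ is injective for every $p$. The successive cokernels are the $G^p_j=E_1^{p,j-p}=E_\infty^{p,j-p}$, so the filtration of $\myR^j\Phi(\sfA)$ by the images of the $A^p_j$ coincides termwise with the filtration $F^\kdot$ coming from $E_\infty$. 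This yields the asserted natural isomorphism $\myR^j\Phi(\bsfF^p)\isomap F^p\myR^j\Phi(\sfA)$.
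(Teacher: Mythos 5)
Your proof is correct and takes a genuinely different route from the paper's. The paper runs a lexicographic double induction on the pairs $(j,p)$, concluding at each step via the $5$-lemma: it first shows the map $\sigma_p^j$ to the associated graded is surjective, then deduces injectivity of $\phi_p$ from the case $(j-1,p)$, and never states outright that all the connecting homomorphisms vanish. You instead isolate the cleaner and more global fact that $E_1$-degeneration together with the finiteness of $\cmx\bsfF$ forces $\delta^p_j=0$ for \emph{all} $p,j$ --- by successively lifting $\delta^p_j(x)$ along the compositions of $\imath$'s in the exact couple, modifying the lift by a boundary at each stage, and then invoking $A^{p+r}_{j+1}=0$ for $r\gg 0$ --- after which every long exact sequence splits into short ones and the filtration assembles itself by a one-line downward induction. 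This buys a somewhat shorter argument and makes explicit a structural consequence of degeneration that the paper only uses implicitly and piecemeal. One point to tighten: in the last paragraph you conclude that the two filtrations agree ``because the successive cokernels are the $G^p_j$,'' but two filtrations with isomorphic graded pieces need not coincide. What you actually need is that, by the construction underlying \autoref{prop:E_1-ss}, the filtration $F^p$ on the abutment \emph{is} the image of $\myR^j\Phi(\bsfF^p)\to\myR^j\Phi(\sfA)$. The paper invokes this explicitly by examining the proof of \cite[Thm~1.2.2, Appendix]{Kovacs05a}, and you should too; your opening paragraph signals you are aware of it, but the appeal should be made at the point where it is used.
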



\begin{proof}
  Recall that, by the defintion of abutting, there exists a \ses,
  \begin{equation}
    \label{eq:22}
    \xymatrix{%
      & 0 \ar[r] & F^{p+1} \ar[r] & F^{p} \ar[r] & E_\infty^{p,j-p}\ar[r] & 0.
    }
  \end{equation}
  By assumption, $E_\infty^{p,j-p}\simeq E_1^{p,j-p}=\myR^j\Phi(\bsfG^p)$.
  For every $p$, there exists a morphism, $\bsfF^p\to \sfA$, coming from the original
  hyperfiltration. This induces a morphism
  \begin{equation}
    \label{eq:18}
    \myR^j\Phi(\bsfF^p) \to \myR^j\Phi(\sfA).
  \end{equation}
  Examining the proof of \cite[Thm~1.2.2, p.28, Appendix]{Kovacs05a}, we see that
  $F^p$ is defined as the image of $\myR^j\Phi(\bsfF^p)$ in $\myR^j\Phi(\sfA)$.
  Actually, we only need that there exists a natural morphism
  \[
    \alpha_p:\myR^j\Phi( \bsfF^p) \to F^p,
  \]
  which follows from the fact that the filtration on $E_\infty$ comes from the
  original hyperfiltration.

  We claim that this natural morphism is an isomorphism. To do this, we will use
  induction on pairs of natural numbers $(j,p)$ with $p\leq j$, ordered
  lexicographically.  For $p=0$ and an arbitrary $j$, $F^p=\myR^j\Phi(\sfA)$ and
  $\bsfF^p=\sfA$. Hence, the statement for $(j,0)$ for any $j\in\bN$ is trivially
  true.

  Assume that we know the statement for pairs $(i,q)\leq (j,p)$ lexicographically,
  i.e., pairs such that either $i<j$ or $i=j$ and $q\leq p$.  We already know that
  the statement holds for $(j+1,0)$, so we only need to prove it for $(j,p+1)$.
  Consider the distinguished triangle 
  \begin{equation}
    \label{eq:1}
    \xymatrix{%
      \bsfF^{p+1} \ar[r] & \bsfF^p \ar[r] & \bsfG^p \ar[r]^-{+1} & , }
  \end{equation}
  and the following induced diagram 
  cf.~\autoref{eq:22}: 
  \[
    \xymatrix@R1.5em@C2em{%
      \dots \ar[r]^-{\sigma_p^{j-1}} & \myR^{j-1}\Phi(\bsfG^p) \ar[r] &
      \myR^{j}\Phi(\bsfF^{p+1}) \ar[d]_-{\alpha_{p+1}} \ar[r]^-{\phi_p} &
      \myR^{j}\Phi(\bsfF^{p}) \ar[r]^-{\sigma_p^j} \ar[d]_-{\alpha_p}^\simeq &
      \myR^{j}\Phi(\bsfG^p) \ar[r] \ar[d]^= & \dots \\
      & 0 \ar[r] & F^{p+1} \ar[r] & F^{p} \ar[r] & \myR^{j}\Phi(\bsfG^p)\ar[r] & 0 }
  \]
  By the construction of the vertical arrows, this is a commutative diagram.  The
  vertical arrow on the right is the identity, and the vertical arrow in the middle
  is an isomorphism by the inductive assumption. This implies that $\sigma_p^j$ is
  surjective. The same argument applied for the pair $(j-1,p)$, using the inductive
  assumption, shows that $\sigma_p^{j-1}$ is surjective, which implies that $\phi_p$
  is injective. Therefore the above commutative diagram is actually a diagram of
  \sess,
  and hence $\alpha_{p+1}$ is also an isomorphism by the $5$-lemma. This
  proves the desired statement.
\end{proof}

\noin In the proof above 
we encountered the following corollary, which is worth stating on its own.

\begin{cor}\label{cor:ss-surjectivity}
  Using the notation and the assumptions of \autoref{prop:E_1-ss}, further assume
  that the spectral sequence established in \autoref{prop:E_1-ss} degenerates at
  $E_1$.
  Then the natural morphism\linebreak
  $\sigma_p^j:\myR^{j}\Phi(\bsfF^{p})\onto\myR^{j}\Phi(\bsfG^p)$ is surjective.
\end{cor}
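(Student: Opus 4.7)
The plan is to derive this surjectivity as a direct consequence of \autoref{prop:coh-of-filt-surj}, essentially by isolating the relevant step from its inductive proof. First, I would apply $\myR\Phi$ to the distinguished triangle $\bsfF^{p+1} \to \bsfF^p \to \bsfG^p \xrightarrow{+1}$ to produce a long exact sequence
\[
  \cdots \longrightarrow \myR^{j}\Phi(\bsfF^{p+1}) \xrightarrow{\iota_p^j} \myR^{j}\Phi(\bsfF^{p}) \xrightarrow{\sigma_p^j} \myR^{j}\Phi(\bsfG^p) \longrightarrow \myR^{j+1}\Phi(\bsfF^{p+1}) \longrightarrow \cdots,
\]
and aim to show that $\sigma_p^j$ hits its entire codomain.

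Under the $E_1$-degeneration hypothesis, \autoref{prop:coh-of-filt-surj} provides natural isomorphisms $\alpha_p : \myR^j\Phi(\bsfF^p) \isomap F^p\myR^j\Phi(\sfA)$ and $\alpha_{p+1} : \myR^j\Phi(\bsfF^{p+1}) \isomap F^{p+1}\myR^j\Phi(\sfA)$. Since $\alpha_\bullet$ is built from the morphism $\bsfF^\bullet \to \sfA$ of the original hyperfiltration, and the morphism $\bsfF^{p+1}\to\sfA$ factors as $\bsfF^{p+1}\to \bsfF^p\to\sfA$, naturality identifies $\iota_p^j$, via $\alpha_p$ and $\alpha_{p+1}$, with the filtered inclusion $F^{p+1}\myR^j\Phi(\sfA) \hookrightarrow F^p\myR^j\Phi(\sfA)$. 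In particular $\iota_p^j$ is injective.

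Exactness of the long exact sequence at $\myR^j\Phi(\bsfF^p)$ then identifies the image of $\sigma_p^j$ with the quotient $F^p/F^{p+1}$. By the defining short exact sequence of $E_\infty$ used in the proof of \autoref{prop:coh-of-filt-surj}, this quotient is isomorphic to $E_\infty^{p,j-p}$, which under $E_1$-degeneration equals $E_1^{p,j-p} = \myR^j\Phi(\bsfG^p)$. Hence $\sigma_p^j$ surjects onto its entire codomain. The only point that requires some care is the naturality identification of $\iota_p^j$ with the filtered inclusion, but this is not a genuine obstacle since it follows directly from the construction of $\alpha_\bullet$ in the proof of \autoref{prop:coh-of-filt-surj}.
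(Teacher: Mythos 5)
Your approach is essentially the paper's own: both derive the surjectivity from the long exact sequence attached to the distinguished triangle $\bsfF^{p+1}\to\bsfF^p\to\bsfG^p\to$ together with the isomorphisms $\alpha_\bullet$ from \autoref{prop:coh-of-filt-surj}. However, the final step has a genuine gap. You have shown that $\iota_p^j$ is injective with image $\alpha_p^{-1}(F^{p+1})$, and hence that $\operatorname{im}\sigma_p^j$ is \emph{abstractly isomorphic} (via the first isomorphism theorem and $\bar\alpha_p$) to $F^p/F^{p+1}\simeq E_\infty^{p,j-p}=E_1^{p,j-p}=\myR^j\Phi(\bsfG^p)$. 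But a subobject of $\myR^j\Phi(\bsfG^p)$ being abstractly isomorphic to $\myR^j\Phi(\bsfG^p)$ does not force the inclusion to be an equality (compare $2\bZ\hookrightarrow\bZ$). What you actually need is that the induced injection $\overline{\sigma}_p^j:\operatorname{coker}\iota_p^j\hookrightarrow\myR^j\Phi(\bsfG^p)$ coincides, under the $\alpha$-identifications, with the natural isomorphism $F^p/F^{p+1}\xrightarrow{\sim}E_1^{p,j-p}$; i.e., you need the commutativity of the square involving $\sigma_p^j$ itself, not just the one involving $\iota_p^j$. This is precisely what the paper asserts with ``by the construction of the vertical arrows, this is a commutative diagram,'' and it is the piece you omit. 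Two quick fixes are available: either invoke the commutativity of the $\sigma_p^j$-square directly (it holds for the same naturality reasons you gave for $\iota_p^j$), or observe that your naturality argument also gives injectivity of $\iota_p^{j+1}$, which by exactness of the long exact sequence forces the connecting morphism $\myR^j\Phi(\bsfG^p)\to\myR^{j+1}\Phi(\bsfF^{p+1})$ to vanish, and hence $\sigma_p^j$ to be surjective, without ever passing through an abstract isomorphism of the image.
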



\subsection{Filtered connections}
\noindent

\begin{defini}
  Let $X$ be a complex scheme, $\sE$ a locally free $\sO_X$-module and $(\sfA,\bsfF)$
  a hyperfiltered object in $D_{\rm filt, coh}(X)$ with associated graded complexes
  $\bsfG^p_{\bsfF}(\sfA)$.  Then a \emph{hyperfiltered complex of connections on
    $\sfA$ with respect to $\sE$}, or simply a \emph{complex of filtered
    connections}, is a hyperfiltered object in $D_{\rm filt, coh}(X)$, denoted by
  $\sfA\otimes\sE$, with hyperfiltration $\bsfF\otimes\sE$ and associated graded
  complexes $\bsfG^p_{\bsfF\otimes\sE}(\sfA\otimes\sE)$ such that
  $\bsfG^p_{\bsfF\otimes\sE}(\sfA\otimes\sE)\simeq \bsfG^p_{\bsfF}(\sfA)\otimes\sE$.
  If such a complex exists, then we say that $\sfA$ \emph{admits a hyperfiltered
    complex of connections with respect to $\sE$}.  If $\phi:\sfA\to\sfB$ is a
  hyperfiltered morphism in $D_{\rm filt, coh}(X)$, and there exists a hyperfiltered
  morphism between $\sfA\otimes\sE$ and $\sfB\otimes\sE$ such that the induced map on
  $\bsfG^p(\sfA\otimes\sE)\to\bsfG^p(\sfB\otimes\sE)$ agrees with the induced map on
  $\bsfG^p(\sfA)\to\bsfG^p(\sfB)$ twisted by $\sE$, then we say that $\sfA$ and
  $\sfB$ \emph{admit compatible hyperfiltered complexes of connections with respect
    to $\sE$}.
\end{defini}

\begin{lem}\label{lem:connections-for-dts}
  Let $X$ be a complex scheme, $\sE$ a locally free $\sO_X$-module and
  $\xymatrix{\sfA\ar[r] & \sfB\ar[r] & \sfC\ar[r]^-{+1} & }$ a distinguished triangle
  of hyperfiltered objects in $D_{\rm filt, coh}(X)$. If two of these objects admit
  compatible filtered complexes of integrable connections with respect to $\sE$, then
  so does the third.
\end{lem}

\begin{proof}
  This is straightforward from the definition and basic properties of \dts.
\end{proof}


%


%

\numberwithin{equation}{subsection}
\section{Complexes of differential forms}\label{sec:compl-diff-forms}
\noindent
We will be studying several complexes of differential forms and their
interactions.
Throughout this section $X$ will denote a scheme that is essentially of finite type
over
$\bC$.
We use the usual convention that if $\sF$ is a sheaf on $X$, then it is considered a
complex with $\sF$ in degree $0$ and $0$ everywhere else. Then this complex maybe
shifted, so for instance $\sF[j]$ means a complex with $\sF$ in degree $-j$ and $0$
everywhere else.
In the rest of this section, unless otherwise stated, $p\in\bZ$ will denote an
arbitrary integer. We will see that we could restrict to natural numbers, but it will
be convenient later to allow $p$ to take negative values. This will not cause any
issues or a need to change anything.

\subsection{K\"ahler differentials; the de~Rham complex}

The sheaf of 
differentials 
and its exterior powers, $\Omega_X^p=\!\bigwedge^p\Omega_X$, give rise to the
\emph{de Rham complex} of $X$:
\[
  \Omega_X^\kdot= \dots\to 0\to \sO_X\to \Omega_X\to \dots \to \Omega_X^p \to \dots,
\]
where $\Omega_X^p$ sits at the degree $p$ position in the complex $\Omega_X^\kdot$.

\subsection{The filtration and co-filtration complexes of the de~Rham complex}

$\Omega_X^\kdot$ is a \emph{filtered complex}, with its ``filtration b\^ete'',
denoted by
\[
  \f^p_X\leteq F^p\Omega_X^\kdot \leteq \dots\to 0\to \dots\to 0\to \Omega_X^{p}\to
  \Omega_X^{p+1} \to \dots.
\]
We will call $\f_X^p$ the \emph{$p^\text{th}$-de~Rham filtration complex} of
$X$.
Note that $\f_X^p=0$ if $p>\dim X$ and $\f_X^p=\Omega_X^\kdot$ if $p\leq 0$.
Recall that $\f_X^{p+1}\subseteq \f_X^p$ is a subcomplex and the quotient is the
complex $\Omega_X^p[-p]$. In other words, we have a \ses of complexes:
\begin{equation}
  \label{eq:5}
  \xymatrix{%
    0\ar[r] & \f_X^{p+1} \ar[r] & \f_X^p \ar[r] &  \Omega_X^p[-p] \ar[r] & 0.
  }
\end{equation}
Next, apply \autoref{lem:filtr-co-filtr} to $(\Omega_X^\kdot, F)$, i.e., 
denote the quotient $\Omega_X^\kdot/\f_X^{p+1}$ by $\f^X_p$. It follows that
\[
  \f^X_p\simeq \dots\to 0\to \sO_X\to \Omega_X\to\dots\to \Omega_X^{p}\to 0 \to
  \dots,
\]
i.e., it consists of the first $p+1$ terms of $\Omega_X^\kdot$ (starting at degree
$0$). We will call $\f^X_p$ the \emph{$p^\text{th}$-de~Rham co-filtration complex} of
$X$.  Note 
that $\f^X_p=\Omega_X^\kdot$ if $p\geq\dim X$ and $\f^X_p=0$ if $p< 0$.

As in \autoref{lem:filtr-co-filtr}, we may encode the defining relationship in the
\ses,
\begin{equation}
  \label{eq:6}
  \xymatrix{%
    0\ar[r] & \f_X^{p+1} \ar[r] & \Omega_X^\kdot \ar[r] & \f^X_p \ar[r] & 0.  }
\end{equation}
Contrary to a filtration, by design, $\f^X_{p-1}$ is a quotient complex of $\f^X_{p}$
with kernel isomorphic to the complex $\Omega_X^p[-p]$. In other words, we also have
a \ses with the $\f^X_p$'s:
\begin{equation}
  \label{eq:7}
  \xymatrix{%
    0\ar[r] & \Omega_X^p[-p] \ar[r] & \f^X_{p} \ar[r] & \f^X_{p-1} \ar[r] \ar[r] & 0.  }
\end{equation}

\subsection{The de~Rham complex of a pair}
Let 
$\imath:\Sigma\into X$ denote a closed subscheme. Then there exists a natural
filtered morphism $\Omega_X^\kdot\to\Omega_\Sigma^\kdot$ and we define the de~Rham
complex of $(X,\Sigma)$, denoted by $\Omega_{X,\Sigma}^\kdot$, as the $(-1)$-shifted
mapping cone of this morphism, i.e., such that it fits into the \dt:
\[
  \xymatrix{%
    \Omega_{X,\Sigma}^\kdot \ar[r] & \Omega_{X}^\kdot \ar[r] &
    \Omega_{\Sigma}^\kdot\ar[r]^-{+1} & }
\]
\begin{example}\label{ex:snc-pair-dR}
  Let $(X,\Sigma)$ be an snc pair. Then
  $\Omega^\kdot_{X,\Sigma} \qis \Omega_X^\kdot(\log\Sigma)(-\Sigma)$.
\end{example}

We define the filtration and co-filtration complexes of $\Omega^\kdot_{X,\Sigma}$ the
same way as in the case of the de~Rham complex and denote them by $\f^p_{X,\Sigma}$
and $\f_p^{X,\Sigma}$.

\subsection{The Deligne-\DB complex}

Unfortunately, the nice properties of the de~Rham complex are not preserved if the
underlying space is singular.
The \DDB complex is a generalization of the de~Rham complex of a (complex) manifold
to arbitrary complex varieties.  This was first introduced in \cite{DuBois81}
following Deligne's ideas \cite{MR0441965,MR0498551,MR0498552}. For more details the
reader should consult \cite{Steenbrink85,GNPP88,MR2796408,SingBook}. Here we follow
the notation and terminology of \cite[\S6]{SingBook}.
In particular, we follow the usual convention of denoting the \DDB complex by
$\Om_X^\kdot$ %
and the shifted graded pieces of it by $\Om_X^p$. More precisely, recall that
$\Om_X^\kdot$ is defined as an object in the \emph{filtered} derived category of
quasi-coherent sheaves on $X$ with $\bC$-linear differentials that are differential
operators of order at most $1$ \cite[p.43]{DuBois81}, and there exists a natural
filtered morphism from the de~Rham complex to the \DDB complex.
\begin{equation}
  \label{eq:8}
  \Omega_X^\kdot \longrightarrow \Om_X^\kdot.
\end{equation}
We will follow the philosophy that led to the definition of hyperfiltrations
\autoref{def:hyperfiltrations} and treat filtrations as hyperfiltrations to avoid the
cumbersome technicalities stemming from defining filtrations in derived categories
and taking advantage of considering the members of the filtration as legitimate
objects in the derived category. Recall that each member of a filtration is itself a
filtered object given by the part of the filtration that comes ``after'' that member.

The construction of the \DDB complex is quite involved and the interested reader
should consult \cite{DuBois81,MR2393625,MR2796408}.  Nonetheless, let us recall that
if\/ $\varepsilon_\kdot\col X_\kdot\to X$ is a hyperresolution, then
\begin{equation}
  \label{eq:38}
  \Om_X^\kdot \filtqis \myR{\varepsilon_\kdot}_* \Omega^\kdot_{X_\kdot}.
\end{equation}
A variant of this will be useful later.
We will use the definitions of cubical varieties and related notions from
\cite[Chapter~5]{MR2393625}.
\begin{defini}
  Let $X$ be a scheme of finite type over $\bC$.  A \emph{cubical partial
    hyperresolution} of $X$ is a cubical variety $\varepsilon_\kdot\col X_\kdot\to X$
  that has all the properties of a \emph{cubical hyperresolution}
  \cite[Def.~5.10]{MR2393625}, except that the individual varieties $X_\alpha$ are
  not assumed to be non-singular.  More precisely, it is a
  \emph{$\square^+_r$-scheme} over $X$ for some $r\in\bN$ as in
  \cite[I.2.12]{GNPP88}, cf.~\cite[2.12]{MR2796408}, of \emph{cohomological descent}
  \cite[\S5.3]{MR0498552},
  \cite[Defs.~5.6,5.10]{MR2393625}.  This is called a
  \emph{polyhedral resolution} (as opposed to a \emph{smooth polyhedral resolution})
  in \cite[p.~596]{Carlson85}.
\end{defini}

\begin{example}
  Let $\Sigma\into X$ be a subscheme and $\pi_\kdot\col X_\kdot \to X$ a cubical
  hyperresolution. Let $\Sigma_\kdot\leteq X_\kdot\times_X\Sigma$. Then
  $\Sigma_\kdot\to\Sigma$ is a cubical partial hyperresolution.
\end{example}

\begin{lem}[\protect{\cite[V.3.6(5)]{GNPP88}}]\label{lem:Rnu-of-DDB-is-DDB}%
  Let $\varepsilon_\kdot\col X_\kdot\to X$ be a cubical partial hyperresolution. Then
  \[
    \Om_X^\kdot \filtqis \myR{\varepsilon_\kdot}_* \Om^\kdot_{X_\kdot}.
  \]
\end{lem}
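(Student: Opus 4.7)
The plan is to reduce the statement to the defining property~\eqref{eq:38} of the \DDB complex via \emph{smooth} cubical hyperresolutions, by refining $X_\kdot$ to an honest cubical hyperresolution of $X$. The key idea is that one can simultaneously hyperresolve each (possibly singular) vertex $X_\alpha$ of $X_\kdot$ in a manner compatible with the face morphisms, so as to produce a smooth cubical hyperresolution of $X$ sitting ``above'' $X_\kdot$, and then use the transitivity of derived direct images to compare the two descriptions of $\Om_X^\kdot$.

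Concretely, I would first invoke the existence theorem for cubical hyperresolutions (cf.~\cite[I.2]{GNPP88}) to choose, for each vertex $X_\alpha$ of $X_\kdot$, a cubical hyperresolution $\pi^\alpha_\kdot\col Y^\alpha_\kdot\to X_\alpha$ with each $Y^\alpha_\beta$ smooth, compatibly with the face maps of $X_\kdot$. This compatibility packages the $Y^\alpha_\kdot$ into a single cubical variety $\pi_\kdot\col Y_\kdot\to X_\kdot$ such that the composition $\widetilde\varepsilon_\kdot\leteq \varepsilon_\kdot\circ\pi_\kdot\col Y_\kdot\to X$ is, possibly after reindexing, a cubical hyperresolution of $X$ in the sense of \cite[Chapter 5]{MR2393625}. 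Then, by \eqref{eq:38} applied to $\widetilde\varepsilon_\kdot$, we have
\[
\Om_X^\kdot \filtqis \myR\widetilde\varepsilon_{\kdot*}\Omega_{Y_\kdot}^\kdot \filtqis \myR\varepsilon_{\kdot*}\myR\pi_{\kdot*}\Omega_{Y_\kdot}^\kdot,
\]
where the second filtered quasi-isomorphism is the composition of derived direct images. On the other hand, applying \eqref{eq:38} componentwise to each $\pi^\alpha_\kdot$ gives a filtered quasi-isomorphism $\Om_{X_\kdot}^\kdot \filtqis \myR\pi_{\kdot*}\Omega_{Y_\kdot}^\kdot$ of objects in the filtered derived category on $X_\kdot$. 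Pushing this forward by $\myR\varepsilon_{\kdot*}$ and combining with the previous display yields the desired filtered quasi-isomorphism $\Om_X^\kdot \filtqis \myR\varepsilon_{\kdot*}\Om_{X_\kdot}^\kdot$.

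The main obstacle is the \emph{compatible} construction of the hyperresolutions $Y^\alpha_\kdot\to X_\alpha$: one needs them to assemble into a single cubical scheme over $X_\kdot$ which is itself of cohomological descent with smooth vertices, rather than a disjoint collection of hyperresolutions at each vertex. This is precisely the content of the iterated (bicubical) construction in \cite{GNPP88}, where cubical hyperresolutions are built by induction on the depth of the cubical structure while keeping track of all face maps. Once this compatibility is in place, every remaining step is a formal manipulation of derived direct images and of the filtered quasi-isomorphism~\eqref{eq:38}.
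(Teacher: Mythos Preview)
The paper does not supply its own proof of this lemma; it is quoted directly from \cite[V.3.6(5)]{GNPP88} and left without argument. Your sketch is the standard one and is essentially how the result is obtained in \cite{GNPP88}: refine the partial hyperresolution $X_\kdot$ by a compatible system of smooth cubical hyperresolutions of the $X_\alpha$, assemble these into a bicubical diagram over $X$, and then use transitivity of $\myR\varepsilon_{\kdot*}$ together with the defining property \eqref{eq:38} on each level. You have also correctly isolated the only nontrivial point, namely the existence of a \emph{compatible} family of hyperresolutions of the $X_\alpha$ fitting into a single cubical object of cohomological descent over $X$; this is exactly what the iterated construction in \cite[I.2]{GNPP88} provides, and once it is granted the rest is formal.
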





\subsection{The filtration and co-filtration complexes of the \DDB complex}

As mentioned already, just as in the case of the de~Rham complex, $\Om_X^\kdot$ is a
filtered complex. The image of the members of its filtration in the filtered derived
category will be denoted by
\[
  \uf_X^p:= F^p\Om_X^\kdot,
\]
and called the \emph{$p^\text{th}$-\DDB filtration complex} of $X$.
As in the case of the de~Rham complex, we have that $\uf_X^p=0$ if $p>\dim X$ and
$\uf_X^p=\Om_X^\kdot$ if $p\leq 0$.
While this is an actual filtration,
it is sometimes more convenient to think of it as a \emph{hyperfiltration}
cf.~\S\S\ref{subsec:hyperf-co-hyperf}, \cite[\S\S1.2]{Kovacs05a}, \cite[\S2.2]{KT21}.
This will not change anything in the sequel.  By definition, the associated graded
complexes of this (hyper)filtration are the \DB analogues of the sheaves
$\Omega_X^p$. More precisely, we have the distinguished triangles,
\begin{equation}
  \label{eq:9}
  \xymatrix{%
    \uf_X^{p+1} \ar[r] & \uf_X^p \ar[r] &  \Om_X^p[-p] \ar[r]^-{+1} & ,
  }
\end{equation}
and with a slight abuse of language, $\Om_X^p$ will be referred to as the
\emph{$p^\text{th}$-associated graded \DDB complex}.
In addition, similarly to the de~Rham case, we define the \emph{$p^\text{th}$-\DDB
  co-filtration complex} of $X$ as the $p^\text{th}$ term of the associated
co-filtration of $\f_X^p$, i.e., the cone of the morphism
$\uf_X^{p+1}\to\Om_X^\kdot$, i.e., we have the distinguished triangle,
\begin{equation}
  \label{eq:10}
  \xymatrix{%
    \uf_X^{p+1} \ar[r] & \Om_X^\kdot \ar[r] & \uf^X_p \ar[r]^-{+1} & .  }
\end{equation}
And, again, we have that $\uf^X_p=\Om_X^\kdot$ if $p\geq\dim X$ and $\uf^X_p=0$ if
$p<0$.
We also have the following \dt, which is an analogue of \autoref{eq:7}:
\begin{equation}
  \label{eq:11}
  \xymatrix{%
    \Om_X^p[-p] \ar[r] & \uf^X_{p} \ar[r] & \uf^X_{p-1} \ar[r]^-{+1} & .  }
\end{equation}
This can be seen either by writing down the definition of the filtration, or
directly, by using the octahedral axiom, as in \autoref{eq:65}. 
\vskip-1.75em
\[
  \xymatrix@!C@R1.5em{%
    && \uf_X^{p}\ar[llddd]|!{[dll];[drr]}\hole|!{[dddd];[dll]}\hole
    \ar[drr]  && \\
    \uf_X^{p+1} \ar[rrrr]\ar@[][urr] &&&& \Om_X^\kdot
    \ar@[][dddll] \ar[dd] \\
    &&&& \\
    \Om_X^p[-p] \ar[uu]^{+1} \ar@{-->}[drr]
    \ar@{<--}[rrrr]|!{[uu];[drr]}\hole|!{[drr];[uurrrr]}\hole^{+1} &&&& \uf^X_{p-1}
    \ar@{<--}[llll]|!{[dll];[uu]}\hole|!{[uullll];[dll]}\hole
    \ar[lluuu]|!{[uu];[dll]}\hole|!{[uu];[uullll]}\hole^{+1}
    \\
    && \uf^X_{p} \ar@[][uuull]_{+1} \ar@{-->}@[mauve][urr] }
\]
\subsection{The \hz-complex of the \DDB complex}\label{subsec:hz-complex-ddb}
It will be useful later to have a notation for the $0^\text{th}$ cohomology sheaves
of the associated graded \DDB complexes. We will use the following notation:
\[
  \wt\Omega_X^p\leteq h^0(\Om_X^p).
\]

\begin{lem}\label{lem:hz-is-tf}
  Let $X$ be a variety. Then $\wt\Omega_X^p$ is torsion-free for all $p$.
\end{lem}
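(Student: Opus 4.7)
The plan is to use a cubical hyperresolution to embed $\wt\Omega_X^p$ into a torsion-free sheaf coming from a smooth (generalized) resolution of $X$.

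Pick a cubical hyperresolution $\varepsilon_\kdot\colon X_\kdot\to X$ with all $X_\alpha$ smooth. By \autoref{eq:38}, $\Om_X^p\qis\myR(\varepsilon_\kdot)_*\Omega^p_{X_\kdot}$, and the standard first-quadrant spectral sequence associated to the cubical structure,
\[
  E_1^{i,j}=\bigoplus_{|\alpha|=i+1}\myRi{j}(\varepsilon_\alpha)_*\Omega^p_{X_\alpha}\;\Longrightarrow\; h^{i+j}(\Om_X^p),
\]
yields the edge-map injection
\[
  \wt\Omega_X^p = h^0(\Om_X^p)\;\hookrightarrow\;\bigoplus_{|\alpha|=1}(\varepsilon_\alpha)_*\Omega^p_{X_\alpha}.
\]
Let $\pi\colon\tilde X\to X$ denote the proper surjective morphism from the smooth scheme $\tilde X=\bigsqcup_{\alpha \text{ dom.}}X_\alpha$ formed by those initial components whose image dominates a component of $X$; projecting onto these summands gives a natural morphism $\wt\Omega_X^p\to\pi_*\Omega^p_{\tilde X}$.

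The key claim is that this projection is injective. A section $(\sigma_\alpha)\in\wt\Omega_X^p$ whose dominant components vanish must have vanishing non-dominant components as well, because the cocycle condition $d^0\sigma=0$ in the simple complex couples each non-dominant $\sigma_\alpha$ to the dominant ones through pullbacks to intersection pieces $X_\beta$ ($|\beta|=2$). The relevant face-map pullbacks $\Omega^p_{X_\alpha}\to\Omega^p_{X_\beta}$ are injective: by the construction of a cubical hyperresolution, the face maps $X_\beta\to X_\alpha$ are generically surjective, and $\Omega^p_{X_\alpha}$ is locally free on the smooth $X_\alpha$.

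It remains to note that $\pi_*\Omega^p_{\tilde X}$ is torsion-free on $X$: the sheaf $\Omega^p_{\tilde X}$ is locally free on the smooth $\tilde X$, so its associated points are the generic points of $\tilde X$, and these map to generic points of $X$ under the dominant proper morphism $\pi$. Therefore, for any dense open $U\subseteq X$ whose preimage is dense in $\tilde X$, the restriction-to-$U$ map on $\pi_*\Omega^p_{\tilde X}$ is injective. Since subsheaves of torsion-free sheaves are torsion-free, $\wt\Omega_X^p$ is torsion-free, completing the argument. The main obstacle is the injectivity of the projection onto the dominant summands, which requires a careful cocycle/face-map analysis on the cubical hyperresolution; once that is in hand, the remainder of the argument is formal.
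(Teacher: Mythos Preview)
Your overall strategy---embed $\wt\Omega_X^p$ into a torsion-free pushforward from smooth pieces of a hyperresolution---matches the paper's. The paper, however, sidesteps your projection step entirely: it \emph{chooses} a hyperresolution in which one component $\pi_0\colon X_0\to X$ is already a resolution of singularities of $X$ (the standard discriminant-square construction produces these), and then the containment $\wt\Omega_X^p\subseteq\pi_{0*}\Omega^p_{X_0}$ follows from the construction. The proof is two sentences.

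The gap in your argument is exactly the step you flag as the main obstacle: the injectivity of the projection onto the dominant summands. Your justification has two holes. First, the claim that face maps $X_\beta\to X_\alpha$ into non-dominant pieces are generically surjective is not a formal consequence of the hyperresolution axioms; it depends on the particular iterated construction and you do not argue it. Second, and more seriously, the cocycle relation $d^0\sigma=0$ at a vertex $\beta$ with $|\beta|=2$ relates the \emph{two} faces of $\beta$; once one iterates past a single discriminant square there are $\beta$'s whose two faces are both non-dominant, and the vanishing of the dominant components imposes no constraint at those $\beta$. So the implication ``dominant components zero $\Rightarrow$ all components zero'' cannot be read off from a single pass through the cocycle conditions. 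A correct argument along your lines would proceed by induction on $\dim X$ via the discriminant-square triangle $\Om_X^p\to\myR\pi_*\Omega^p_{\tilde X}\oplus\Om^p_\Sigma\to\myR\pi_*\Om^p_E\overset{+1}\to$ (using that $\wt\Omega^p_\Sigma$ is torsion-free by the inductive hypothesis and that $E\to\Sigma$ is surjective), or else would require pinning down the combinatorics of a specific hyperresolution far more carefully than you do.
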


\begin{proof}
  Let $\pi_\kdot:X_\kdot\to X$ be a hyperresolution of $X$ such that there is an
  object, $X_0$ in $X_\kdot$ that the induced morphism, $\pi_0:X_0\to X$ is a
  resolution of \sings of $X$. This is not necessarily true for all hyperresolutions,
  but we may choose one with this property. Then by the construction of the \DDB
  complex, $\wt\Omega_X^p\subseteq \pi_{0*}\Omega_{X_0}^p$, and the latter sheaf is
  torsion-free. 
\end{proof}

\begin{lem}\label{cor:ltensor-of-twidle}
  Let $X$ be a variety and $H$ a Cartier divisor on $X$.  Then for each
  $p\in\bN$ there is a \ses,
  $\xymatrix{
      0 \ar[r] & \wt\Omega_X^p\otimes\sO_X(-H)\ar[r] & \wt\Omega_X^p \ar[r] &
      \wt\Omega_X^p\otimes\sO_H \ar[r] & 0, \text{ and hence}}$
    $\wt\Omega_X^p\lotimes\sO_H\simeq\wt\Omega_X^p\otimes\sO_H.$
\end{lem}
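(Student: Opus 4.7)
The plan is to assemble this directly from the three preceding lemmas: \autoref{lem:hz-is-tf} (torsion-freeness of $\wt\Omega_X^p$), \autoref{lem:torsion-free-tensor} (multiplication by an effective Cartier divisor is injective on a torsion-free sheaf), and \autoref{lem:ltimes-is-times} (injectivity upgrades the ordinary tensor product with $\sO_H$ to the derived one). So very little is actually left to do; the proof is really an orchestration.

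First I would invoke \autoref{lem:hz-is-tf} to conclude that $\wt\Omega_X^p = h^0(\Om_X^p)$ is torsion-free. Then, assuming $H$ is an effective Cartier divisor (which is how $\sO_H$ enters the statement and is the setting of the cited lemmas), \autoref{lem:torsion-free-tensor} applied to $\sF \leteq \wt\Omega_X^p$ yields that the natural morphism
\[
\wt\Omega_X^p \otimes \sO_X(-H) \longhookrightarrow \wt\Omega_X^p
\]
induced by $H$ is injective. Tensoring the locally free resolution
\[
0 \to \sO_X(-H) \to \sO_X \to \sO_H \to 0
\]
with $\wt\Omega_X^p$ and using this injectivity produces the claimed short exact sequence, whose cokernel is, by definition, $\wt\Omega_X^p \otimes \sO_H$.

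For the second assertion, I would apply \autoref{lem:ltimes-is-times} with $\sF=\wt\Omega_X^p$: its hypothesis is exactly the injectivity just established, and its conclusion is $\wt\Omega_X^p \lotimes \sO_H \simeq \wt\Omega_X^p \otimes \sO_H$. There is no real obstacle here; the only thing to notice is that the whole argument rests on the torsion-freeness of $\wt\Omega_X^p$, which is the genuine content, and this has already been done in \autoref{lem:hz-is-tf} by embedding $\wt\Omega_X^p$ into the pushforward of $\Omega_{X_0}^p$ from a resolution $\pi_0\colon X_0\to X$ appearing in a hyperresolution. Thus the proof is essentially a one-line composition of the preceding lemmas.
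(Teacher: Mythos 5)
Your proposal is correct and follows the paper's own proof exactly: the paper also deduces the statement directly from \autoref{lem:torsion-free-tensor} and \autoref{lem:ltimes-is-times}, with the torsion-freeness input coming from \autoref{lem:hz-is-tf}. You have merely spelled out the orchestration the paper leaves implicit.
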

\begin{proof}
  This follows directly from \autoref{lem:torsion-free-tensor} and
  \autoref{lem:ltimes-is-times}.
\end{proof}

\begin{cor}\label{cor:h0-of-lotimes}
  Let $X$ be a variety and $H$ a general member of a basepoint-free linear
  system.  Then for each $p\in\bN$,
  $h^0(\Om^p_X\lotimes\sO_H)\simeq \wt\Omega^p_X\otimes\sO_H$.
\end{cor}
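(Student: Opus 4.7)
The plan is to derive the isomorphism from the long exact sequence of cohomology sheaves associated to a two-term locally free resolution of $\sO_H$. Starting from the \ses
\[
  0\to \sO_X(-H)\to \sO_X \to \sO_H\to 0
\]
(which gives a locally free resolution of $\sO_H$ as in the proof of \autoref{lem:ltimes-is-times}), I would tensor with $\Om_X^p$ to obtain the distinguished triangle
\[
  \xymatrix{
    \Om_X^p\otimes\sO_X(-H)\ar[r] & \Om_X^p\ar[r] & \Om_X^p\lotimes\sO_H\ar[r]^-{+1} & ,
  }
\]
where I have used that $\sO_X(-H)$ is locally free to identify the ordinary tensor with the derived one on the left.

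Next, I would pass to the long exact sequence of cohomology sheaves. Around degree $0$, using $h^0(\Om_X^p)=\wt\Om_X^p$, this reads
\[
  \cdots\to \wt\Om_X^p\otimes\sO_X(-H) \xrightarrow{s\,\cdot\,} \wt\Om_X^p \to h^0(\Om_X^p\lotimes\sO_H) \to h^1(\Om_X^p)\otimes\sO_X(-H)\xrightarrow{s\,\cdot\,} h^1(\Om_X^p)\to\cdots,
\]
where $s$ is the defining section of $H$. The cohomology sheaves $h^i(\Om_X^p)$ are coherent, so the hypothesis that $H$ is a general member of a basepoint-free linear system puts us exactly in the setting of \autoref{cor:prime-avoidance--injectivity}: multiplication by $s$ is injective on each $h^i(\Om_X^p)$. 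In particular, $h^1(\Om_X^p)\otimes\sO_X(-H)\hookrightarrow h^1(\Om_X^p)$, which forces the connecting morphism $h^0(\Om_X^p\lotimes\sO_H)\to h^1(\Om_X^p)\otimes\sO_X(-H)$ to vanish.

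Consequently, the long exact sequence collapses to the \ses
\[
  0\to \wt\Om_X^p\otimes\sO_X(-H) \xrightarrow{s\,\cdot\,} \wt\Om_X^p \to h^0(\Om_X^p\lotimes\sO_H)\to 0,
\]
and by \autoref{cor:ltensor-of-twidle} (applied to the torsion-free sheaf $\wt\Om_X^p$, see \autoref{lem:hz-is-tf}) the cokernel is precisely $\wt\Om_X^p\otimes\sO_H$, as desired. The only delicate point is confirming that the prime-avoidance input is legitimate for all the relevant $h^i(\Om_X^p)$ simultaneously; this is fine since $\Om_X^p$ has coherent cohomology and only finitely many nonzero cohomology sheaves, so a single general $H$ in the linear system avoids all their associated points at once.
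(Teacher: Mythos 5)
Your proof is correct and follows essentially the same route as the paper's: apply $\Om_X^p\lotimes\blank$ to the short exact sequence defining $\sO_H$, take the long exact sequence of cohomology sheaves, use prime avoidance (\autoref{cor:prime-avoidance--injectivity}) to kill the connecting morphism into $h^1(\Om_X^p)\otimes\sO_X(-H)$, and identify the cokernel via \autoref{cor:ltensor-of-twidle}. Your closing remark about choosing $H$ to avoid the associated points of all the (finitely many) coherent sheaves $h^i(\Om_X^p)$ at once is a worthwhile clarification that the paper's terse proof leaves implicit.
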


\begin{proof}
  Apply $\Om_X^p\lotimes\blank$ to \autoref{eq:75} and consider the associated long
  exact cohomology sequence. Observe that
  $h^1(\Om_X^p\lotimes\sO_X(-H))\simeq h^1(\Om_X^p)\otimes\sO_X(-H)\to h^1(\Om_X^p)$ 
  is injective by \autoref{cor:prime-avoidance--injectivity} and hence the statement
  follows from \autoref{cor:ltensor-of-twidle}
\end{proof}

\noin Consider the following diagram, where the rows are distinguished triangles
cf.~\autoref{eq:9}:
\[
  \xymatrix@R1.25em{%
    \uf_X^{p+2} \ar[r] & \uf_X^{p+1} \ar[r] \ar@{=}[d] & \Om_X^{p+1}[-p-1]
    \ar[r]^-{+1} &
    \\
    & \uf_X^{p+1} \ar[r] & \uf_X^p \ar[r] & \Om_X^p[-p] \ar[r]^-{+1} & .  }
\]
Taking the long exact cohomology of both distinguished triangles we obtain natural
morphisms
\[
  \xymatrix{%
    \wt\Omega_X^p= h^p\left(\Om_X^p[-p]\right)\ar[r] &
    h^{p+1}\left(\uf_X^{p+1}\right) \ar[r] & h^{p+1}\left(\Om_X^{p+1}[-p-1]\right)
    =\wt\Omega_X^{p+1}.  }
\]
Hence there exists a natural morphism $\wt\Omega_X^p\to
\wt\Omega_X^{p+1}$
and we get the following factorization:
\[
  \xymatrix{%
    \wt\Omega_X^p\ar[r] & h^{p+1}\left(\uf_X^{p+1}\right) \ar[r] \ar@/_1em/[rr]_0 &
    \wt\Omega_X^{p+1}
    \ar[r] & h^{p+2}\left(\uf_X^{p+2}\right) \ar[r] & \wt\Omega_X^{p+2}.  }
\]
Observe that the composition of the two morphisms in the middle is $0$, because they
are consecutive morphisms in the long exact cohomology sequence of the first
distinguished triangle in the previous diagram. This shows that the composition
\[
  \xymatrix{%
    \wt\Omega_X^p\ar[r] & \wt\Omega_X^{p+1} \ar[r] & \wt\Omega_X^{p+2}.  }
\]
is the zero morphism. In other words, the sheaves $\wt\Omega_X^p$ form a complex,
which will be denoted by $\wt\Omega_X^\kdot$ and called the \emph{\hz-complex of the
  \DDB complex} of $X$.

\subsection{The filtration and co-filtration complexes of the \hz-complex}

As before, $\wt\Omega_X^\kdot$ is also a filtered complex with its own ``filtration
b\^ete'', denoted by
\[
  \tf^p_X\leteq F^p\wt\Omega_X^\kdot \leteq \dots\to 0\to \dots\to \wt\Omega_X^p\to
  \wt \Omega_X^{p+1} \to \dots.
\]
We will call $\tf_X^p$ the \emph{$p^\text{th}$-$h^0$-filtration complex} of $X$.
And, of course, $\tf_X^p=0$ if $p>\dim X$ and $\tf_X^p=\wt\Omega_X^\kdot$ if
$p\leq 0$.
Furthermore, $\tf_X^{p+1}\subseteq \tf_X^p$ is a subcomplex and the quotient is the
complex $\wt\Omega_X^p[-p]$. In other words, we have a distinguished triangle:
\begin{equation}
  \label{eq:5tf}
  \xymatrix{%
    \tf_X^{p+1} \ar[r] & \tf_X^p \ar[r] &  \wt\Omega_X^p[-p] \ar[r]^-{+1} & .
  }
\end{equation}
As $\tf_X^{p}\subseteq \wt\Omega_X^\kdot$ is again a subcomplex, we will denote the
quotient by $\tf^X_p$. It is straightforward that
\[
  \tf^X_p\simeq \dots\to 0\to \wt\Omega_X^0\to \wt\Omega_X^1 \dots\to
  \wt\Omega_X^{p}\to 0 \to \dots,
\]
i.e., it consists of the first $p+1$ terms of $\wt\Omega_X^\kdot$ (starting at degree
$0$). We will call $\tf^X_p$ the \emph{$p^\text{th}$-$h^0$-co-filtration complex} of
$X$.  Again, $\tf^X_p=\wt\Omega_X^\kdot$ if $p\geq\dim X$ and $\tf^X_p=0$ if $p< 0$,
and we may encode the defining relationship in the distinguished triangle:
\begin{equation}
  \label{eq:6tf}
  \xymatrix{%
    \tf_X^{p+1} \ar[r] & \wt\Omega_X^\kdot \ar[r] & \tf^X_p \ar[r]^-{+1} & .  }
\end{equation}
Finally, $\tf^X_{p-1}$ is a quotient complex of $\tf^X_p$ with kernel isomorphic to
the complex $\wt\Omega_X^p[-p]$. In other words, we have a distinguished triangle
with the $\tf^X_p$'s:
\vskip-1.5em
\begin{equation}
  \label{eq:7tf}
  \xymatrix{%
    \wt\Omega_X^p[-p] \ar[r] & \tf^X_{p} \ar[r] & \tf^X_{p-1} \ar[r] \ar[r]^-{+1} & .  }
\end{equation}

\subsection{The filtered \DDB complex of a pair}\label{sec:ddb-complex-pair}
Let $X$ be a complex scheme of finite type and let $\imath:\Sigma\into X$ denote a
closed subscheme. %
The \DDB-complex of the pair $(X,\Sigma)$ is defined by the following \dt
(cf.~\cite{Steenbrink85},\cite[3.9]{Kovacs10a}, \cite[6.4,6.5]{SingBook}):
\vskip-1.5em
\begin{equation}
  \label{eq:14}
  \xymatrix{%
    \Om_{X,\Sigma}^\kdot \ar[r] & \Om_X^\kdot \ar[r] & \Om_\Sigma^\kdot \ar[r]^-{+1} & 
  }%
\end{equation}
\begin{example}\cite[3.11]{Kovacs10a}\label{ex:snc-pair}
  Let $(X,\Sigma)$ be an snc pair. Then
  $\Om^\kdot_{X,\Sigma}\qis\Omega^\kdot_{X,\Sigma}\qis
  \Omega_X^\kdot(\log\Sigma)(-\Sigma)$.
\end{example}

\noin%
Assume that $\Sing X\subseteq \Sigma$, and let $\pi:Y\to X$ be a strong log
resolution of 
$(X,\Sigma)$. Let $\Gamma\leteq\pi^{-1}(\Sigma)$ with the
reduced induced scheme structure. Then there exists a commutative diagram
(cf.~\cite[3.11]{Kovacs10a}),
\[
  \xymatrix{%
    \Om_{X,\Sigma}^\kdot \ar[r]\ar[d] & \Om_X^\kdot \ar[r]\ar[d] & \Om_\Sigma^\kdot \ar[d]
    \ar[r]^-{+1} & \\
    \myR\pi_* \Om_{Y,\Gamma}^\kdot \ar[r] & \myR\pi_*\Om_Y^\kdot \ar[r] &
    \myR\pi_*\Om_\Gamma^\kdot \ar[r]^-{+1} & .}
\]
\cite[4.11]{DuBois81} and \cite[(2.1.4)]{KK10} imply that the morphism
$\Om_{X,\Sigma}^\kdot\to\myR\pi_*\Om_{Y,\Gamma}^\kdot$ is an isomorphism, and then
by \autoref{ex:snc-pair}
they are also isomorphic to
$\myR\pi_*\Omega_Y^\kdot(\log\Gamma)(-\Gamma)$.
In other words, we have following:
\begin{lem}
  \label{lem:DB-to-Irr}
  Let $X$ be a reduced scheme and $\Sigma\subseteq X$ a reduced subscheme such that
  $\Sing X\subseteq \Sigma$, and let $\pi:Y\to X$ be a strong log resolution of the
  pair $(X,\Sigma)$. Let $\Gamma\leteq\pi^{-1}(\Sigma)$ with the reduced induced
  scheme structure.  Then there exists a distinguished triangle,
  \[
    \xymatrix{%
      \myR\pi_*\Omega_Y^\kdot(\log\Gamma)(-\Gamma)\ar[r] & \Om_X^\kdot \ar[r] &
      \Om_\Sigma^\kdot \ar[r]^-{+1} & .}
  \]
\end{lem}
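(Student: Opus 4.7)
The plan is to simply repackage the discussion immediately preceding the statement. By the definition of the filtered \DDB complex of a pair \autoref{eq:14}, we already have the \dt
\[
\xymatrix{
\Om_{X,\Sigma}^\kdot \ar[r] & \Om_X^\kdot \ar[r] & \Om_\Sigma^\kdot \ar[r]^-{+1} &,
}
\]
so it suffices to produce an isomorphism $\Om_{X,\Sigma}^\kdot\qis \myR\pi_*\Omega_Y^\kdot(\log\Gamma)(-\Gamma)$ and substitute.

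First, I would establish the isomorphism $\Om_{X,\Sigma}^\kdot \isomap \myR\pi_*\Om_{Y,\Gamma}^\kdot$. For this, apply $\myR\pi_*$ to the defining \dt for $\Om_{Y,\Gamma}^\kdot$ on the log resolution and compare with \eqref{eq:14} via the morphism of \dts displayed right before the statement. Since $\pi\colon Y\to X$ is a proper birational morphism onto a reduced scheme, \cite[4.11]{DuBois81} gives $\Om_X^\kdot\qis \myR\pi_*\Om_Y^\kdot$, and \cite[(2.1.4)]{KK10} applied to $\Sigma$ and $\Gamma=\pi^{-1}(\Sigma)_\red$ (where $\pi|_\Gamma\colon\Gamma\to\Sigma$ is a proper surjection with the required descent property) gives $\Om_\Sigma^\kdot\qis \myR\pi_*\Om_\Gamma^\kdot$. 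Applying the standard $3$-out-of-$2$ argument to the map of distinguished triangles (i.e.\ the $5$-lemma in the triangulated setting), the remaining vertical morphism $\Om_{X,\Sigma}^\kdot\to\myR\pi_*\Om_{Y,\Gamma}^\kdot$ is also an isomorphism.

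Next, I would identify $\Om_{Y,\Gamma}^\kdot$ on the resolution. Because $\pi$ is a \emph{strong} log resolution of $(X,\Sigma)$, the pair $(Y,\Gamma)$ is snc, so \autoref{ex:snc-pair} (together with \autoref{ex:snc-pair-dR}) yields
\[
\Om_{Y,\Gamma}^\kdot \qis \Omega_{Y,\Gamma}^\kdot \qis \Omega_Y^\kdot(\log\Gamma)(-\Gamma).
\]
Applying $\myR\pi_*$ and combining with the isomorphism from the previous step gives
\[
\Om_{X,\Sigma}^\kdot \qis \myR\pi_*\Omega_Y^\kdot(\log\Gamma)(-\Gamma),
\]
and substituting the left-hand term of \eqref{eq:14} produces the asserted \dt.

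There is no real obstacle here: every ingredient is either an already cited theorem (\cite[4.11]{DuBois81}, \cite[(2.1.4)]{KK10}) or a reference made earlier in the excerpt (\autoref{ex:snc-pair}). The only mild subtlety is checking that the cited cohomological descent statement applies to the pair $\pi|_\Gamma\colon\Gamma\to\Sigma$; this is where the hypothesis $\Sing X\subseteq\Sigma$ is used, as it ensures that $\pi$ restricts to an isomorphism over the snc locus of $(X,\Sigma)$ and that $\Gamma\to\Sigma$ is a proper surjection of reduced schemes suitable for building a cubical (partial) hyperresolution, cf.\ \autoref{lem:Rnu-of-DDB-is-DDB}.
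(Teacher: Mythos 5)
Your overall plan matches the paper's: the statement is essentially a repackaging of the discussion immediately preceding it, and your identification $\Om_{Y,\Gamma}^\kdot\simeq\Omega_Y^\kdot(\log\Gamma)(-\Gamma)$ via \autoref{ex:snc-pair} is correct. However, the way you route the two citations through the map of distinguished triangles contains a genuine error.

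You claim that \cite[(2.1.4)]{KK10} gives $\Om_\Sigma^\kdot\simeq\myR\pi_*\Om_\Gamma^\kdot$, and you then conclude by the $5$-lemma. That intermediate claim is false in general, and a proper surjection $\pi|_\Gamma\colon\Gamma\to\Sigma$ is not a cubical (partial) hyperresolution, so \autoref{lem:Rnu-of-DDB-is-DDB} does not apply to it alone. Concretely, take $X$ the affine cone over an elliptic curve $E$, $\Sigma$ the vertex, and $\pi$ the blow-up; then $\Gamma\simeq E$ and $\Om_\Sigma^\kdot\simeq\bC$ is concentrated in degree $0$, while $\myR\pi_*\Om_\Gamma^\kdot\simeq\myR\Gamma(E,\bC)$ has nontrivial cohomology in degrees $1$ and $2$. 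So the rightmost vertical arrow in your diagram of triangles is not an isomorphism, and the $5$-lemma argument as you wrote it does not go through.

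What the cited \cite[(2.1.4)]{KK10} actually supplies (for a proper morphism that is an isomorphism away from $\Sigma$, with $\Gamma=\pi^{-1}(\Sigma)$) is that the square on $\Om_X^\kdot$, $\myR\pi_*\Om_Y^\kdot$, $\Om_\Sigma^\kdot$, $\myR\pi_*\Om_\Gamma^\kdot$ is homotopy cartesian, equivalently a Mayer--Vietoris distinguished triangle
\[
\xymatrix{\Om_X^\kdot \ar[r] & \myR\pi_*\Om_Y^\kdot\oplus\Om_\Sigma^\kdot \ar[r] & \myR\pi_*\Om_\Gamma^\kdot \ar[r]^-{+1} & .}
\]
Homotopy cartesianness says exactly that the cones of the two horizontal maps $\Om_X^\kdot\to\Om_\Sigma^\kdot$ and $\myR\pi_*\Om_Y^\kdot\to\myR\pi_*\Om_\Gamma^\kdot$ agree, i.e.\ $\Om_{X,\Sigma}^\kdot\simeq\myR\pi_*\Om_{Y,\Gamma}^\kdot$. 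This gives the needed isomorphism directly, without requiring (and not implying) that $\Om_\Sigma^\kdot\to\myR\pi_*\Om_\Gamma^\kdot$ be an isomorphism; \cite[4.11]{DuBois81} is the filtered counterpart of the same fact. Once this isomorphism is in hand, the rest of your proof is fine.
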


We define the filtration and co-filtration complexes for the \DDB complex of a pair
as before and denote them by $\uf_{X,\Sigma}^p$ and $\uf_p^{X,\Sigma}$. They satisfy
the same relations with respect to $\Om_{X,\Sigma}^\kdot$ as in the case of
$\Sigma=\emptyset$. We also define the $\hz$-complex of a pair and its filtration and
co-filtration complexes analogously to the $\Sigma=\emptyset$ case, which will be
denoted by $\tf_{X,\Sigma}^p$ and $\tf_p^{X,\Sigma}$. We have the same relations
between these filtration complexes as in the $\Sigma=\emptyset$ case.

Recall that Deligne's Hodge theory in this situation gives the following theorem:

\begin{thm}\cite{MR0498552}%
  \label{thm:hodge} 
  Let $X$ be a complex scheme of finite type, $\imath:\Sigma\into X$ a closed
  subscheme and $\jmath:U\leteq X\setminus \Sigma\into X$.  Then
  \begin{enumerate}
  \item\label{item:14} The natural composition map
    $\jmath_{!}\bC_{U}\to \sI_{\Sigma\subseteq X} \to \Om^\kdot_{X,\Sigma}$ is a
    quasi-isomorphism, i.e., $\Om^\kdot_{X,\Sigma}$ is a resolution of the sheaf
    $\jmath_{!}\bC_{U}$.
  \item\label{item:15} The natural map
    $H_{\rm c}^\kdot(U,\bC)\to \bH^\kdot(X, \Om^\kdot_{X,\Sigma})$ is an isomorphism.
  \item\label{item:9} If in addition $X$ is proper, then the spectral sequence,
    \[
      E_1^{p,q}= \bH^q(X, \Om^p_{X,\Sigma}) \Rightarrow H_{\rm c}^{p+q}(U,\bC)
    \]
    degenerates at $E_1$ and abuts to the Hodge filtration of Deligne's mixed Hodge
    structure. 
  \end{enumerate}
\end{thm}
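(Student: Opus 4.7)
The plan is to handle the three parts in order. Parts (i) and (ii) are essentially formal consequences of the defining distinguished triangle \autoref{eq:14} together with Du~Bois's resolution property, while part (iii) is the substantive content and requires reducing to Deligne's classical Hodge-to-de~Rham degeneration via a hyperresolution.

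For (i), the starting point is the fundamental theorem of Du~Bois that for any complex scheme $W$ of finite type the natural map $\bC_W \to \Om^\kdot_W$ is a quasi-isomorphism. Applied to $X$ and to $\Sigma$, this produces a morphism from the tautological short exact sequence of sheaves $0 \to \jmath_!\bC_U \to \bC_X \to \imath_*\bC_\Sigma \to 0$, viewed as a distinguished triangle, into the defining triangle \autoref{eq:14} of $\Om^\kdot_{X,\Sigma}$; two of the three vertical arrows are quasi-isomorphisms, so by the two-out-of-three principle the third is as well, and that third map is precisely the composition stated in (i). For (ii), apply $\myR\Gamma(X,\blank)$ to the quasi-isomorphism from (i); if $X$ is not already proper, first reduce to the proper case by choosing a Nagata compactification $X \hookrightarrow \bar X$ and setting $\bar\Sigma \leteq \bar X \setminus U$, using that the construction of the Du~Bois complex of a pair is local. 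Then $\myR\Gamma(\bar X, \jmath_!\bC_U) \simeq H^\kdot_c(U,\bC)$ by standard sheaf theory on proper spaces.

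For (iii), the plan is to choose a cubical hyperresolution $\varepsilon_\kdot\col (X_\kdot,\Sigma_\kdot) \to (X,\Sigma)$ in which each $X_\alpha$ is smooth and proper and each $\Sigma_\alpha \subseteq X_\alpha$ is a simple normal crossing divisor. Applying \autoref{eq:14} component-wise, together with \autoref{lem:Rnu-of-DDB-is-DDB} and \autoref{ex:snc-pair}, yields a filtered quasi-isomorphism
\[
\Om^\kdot_{X,\Sigma} \filtqis \myR{\varepsilon_\kdot}_*\Omega^\kdot_{X_\kdot}(\log\Sigma_\kdot)(-\Sigma_\kdot).
\]
The spectral sequence in (iii) then assembles from the classical Hodge-to-de~Rham spectral sequences of the smooth proper pairs $(X_\alpha,\Sigma_\alpha)$ via the simplicial structure: each individual piece $E_1$-degenerates by Deligne's classical theorem \cite{MR0498552}, and the assembled double-indexed spectral sequence degenerates by strictness of the weight filtration coming from the simplicial index. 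The abutment to the Hodge filtration on Deligne's mixed Hodge structure on $H^\kdot_c(U,\bC)$ is built into the construction, since $\uf^p_{X,\Sigma}$ is defined precisely to realize this Hodge filtration. The main obstacle is verifying that this filtration is independent of the chosen hyperresolution and genuinely coincides with Deligne's Hodge filtration, which requires unwinding the strictness compatibility across the bi-indexed spectral sequence in \cite{MR0498552}.
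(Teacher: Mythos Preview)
Your proposal is correct and follows essentially the same approach as the paper: both reduce everything to Deligne's results in \cite{MR0498552} via a hyperresolution of the pair. The paper's proof is terser---it takes an embedded hyperresolution $\varepsilon_\kdot\colon(X_\kdot,\Sigma_\kdot)\to(X,\Sigma)$, identifies $\Om^\kdot_{X,\Sigma}\qis\myR{\varepsilon_\kdot}_*\Omega^\kdot_{X_\kdot,\Sigma_\kdot}$ directly, and then cites \cite[8.1, 8.2, 9.3]{MR0498552} and \cite[IV.4]{GNPP88} for all three parts at once---whereas you handle (i) separately via the defining triangle \autoref{eq:14} and the absolute case $\bC_W\to\Om^\kdot_W$, which is a clean alternative that avoids needing the embedded hyperresolution for that step. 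For (iii) the two arguments coincide once one uses \autoref{ex:snc-pair} to identify $\Omega^\kdot_{X_\alpha,\Sigma_\alpha}$ with $\Omega^\kdot_{X_\alpha}(\log\Sigma_\alpha)(-\Sigma_\alpha)$ on each component; the ``main obstacle'' you flag about strictness and independence of the hyperresolution is exactly what the cited passages in Deligne and \cite{GNPP88} establish, so the paper's citation and your sketch point to the same content.
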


\begin{proof}
  Consider an embedded hyperresolution of $\Sigma\subseteq X$:
  $ \xymatrix{%
    \text{\phantom{$\kdot$}}\Sigma_\kdot \hskip-.5ex\ar[r]^-{\varrho_\kdot}
    \ar[d]_{\varepsilon_\kdot\hskip-.5ex} & X_\kdot \ar[d]^{\varepsilon_\kdot}
    \hskip-1ex \\
    \Sigma \ar[r]_\varrho & X }$
    
  \noin
  Then by \autoref{eq:8} and by definition
  $\Om_{X,\Sigma}^\kdot \qis \myR{\varepsilon_\kdot}_* \Omega^\kdot_{X_\kdot,
    \Sigma_\kdot}$.  The statements then follow from \cite[8.1, 8.2,
  9.3]{MR0498552}. See also \cite[IV.4]{GNPP88}.
\end{proof}

\subsection{The filtered log \DDB complex}

Let $X$ be a complex scheme of finite type and let $\imath:\Sigma\into X$ denote a
closed subscheme. %
Let $\pi_\kdot:X_\kdot\to X$ be a hyperresolution of $X$ such that for each $\alpha$,
$\Sigma_\alpha\leteq\pi_\alpha^*\Sigma\subseteq X_\alpha$ is either empty, equal to
$X_\alpha$, or is an snc divisor on $X_\alpha$.
The log \DDB complex of the pair $(X,\Sigma)$, $\Om^\kdot_X(\log \Sigma)$, as in
\cite[\S6]{DuBois81}, is defined as
$\myR(\pi_\kdot)_*\Omega_{X_\kdot}(\log\Sigma_\kdot)$ using a hyperresolution as the
above one and where $\Omega_{X_\alpha}(\log\Sigma_\alpha)\leteq 0$ for each $\alpha$
for which $\Sigma_\alpha=X_\alpha$.
In addition, we define the log filtration and co-filtration complexes and denote them
by $\uf_X^p(\log \Sigma)$ and $\uf_p^X(\log \Sigma)$. They satisfy the same relations
with respect to $\Om_X^\kdot(\log \Sigma)$ as in the case of $\Sigma=\emptyset$. We
also define the log $\hz$-complex and its log filtration and co-filtration complexes
analogously to the $\Sigma=\emptyset$ case, which will be denoted by
$\tf_X^p(\log \Sigma)$ and $\tf_p^X(\log \Sigma)$. Note that essentially identical
arguments imply that the results of
\autoref{lem:hz-is-tf},~\autoref{cor:ltensor-of-twidle}, and
\autoref{cor:h0-of-lotimes} remain true if one replaces $\wt\Omega_X^p$ with
$\wt\Omega_X^p(\log\Sigma)$.

\begin{notation}
  If $\Sigma$ is the union of two closed subsets, $\Sigma=\Sigma_1\cup\Sigma_2$, then
  instead of $\Om^\kdot_X\left(\log (\Sigma_1\cup\Sigma_2)\right)$, we will also
  write $\Om^\kdot_X\left(\log (\Sigma_1+\Sigma_2)\right)$.
\end{notation}

\subsection{The associated analytic space}

For $X$, a scheme as above, $X^{\an}$ will denote the \emph{associated complex
  analytic space} of $X$ \cite{MR0082175}, \cite[App.~B]{Hartshorne77}. We define the
above complexes with their filtrations and co-filtrations for $X^{\an}$ in place of
$X$.

\subsection{The irrationality complexes}

Let $X$ be a variety of pure dimension $n$ such that its irreducible components are
disjoint. (This happens for instance if $X$ is normal). We define the
\emph{$p^\text{th}$-irrationality complex} of $X$ by
\begin{equation}
  \label{eq:13}
  \Irr_X^p\leteq \bD_X(\Om_X^{n-p}) 
\end{equation}
We also define the \hz-complex of $\Irr^\kdot_X$ by
$\wt\Irr^p_X\leteq h^0(\Irr^p_X)$.

Let $\Sigma\leteq \Sing X\subsetneq X$ and consider a strong log resolution
$\pi:Y\to X$ of the pair $(X,\Sigma)$. Further let
$\Gamma\leteq \pi^{-1}\left(\Sigma\right)_\red$ which is an snc divisor by
assumption.  Then
\begin{equation}
  \label{eq:61}
  \wt\Irr^p_X\simeq\pi_*\Omega_Y^p(\log\Gamma)
\end{equation}
by \cite[Lemma~2.4(2)]{SVV23} and if $p<\codim_X\Sigma$, then
\begin{equation}
  \label{eq:62}
  \Irr^p_X\simeq\myR\pi_*\Omega_Y^p(\log\Gamma)
\end{equation}
by \cite[Lemma~3.14]{FL-lci}.
As a corollary to \autoref{eq:61} we obtain the following:
\begin{cor}\label{cor:tw-irr-p-is-tf}
  Let $X$ be a normal variety. Then $\wt\Irr^p_X$ is torsion-free for all $p$.
\end{cor}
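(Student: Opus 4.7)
The natural plan is to leverage the explicit description of $\wt\Irr^p_X$ provided by equation \eqref{eq:61}, in close analogy with the proof of \autoref{lem:hz-is-tf}.

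First, I would choose $\Sigma \leteq \Sing X$ and fix a strong log resolution $\pi:Y\to X$ of $(X,\Sigma)$, setting $\Gamma \leteq \pi^{-1}(\Sigma)_{\red}$, which is an snc divisor on the smooth variety $Y$. By \eqref{eq:61} we have an isomorphism
\[
\wt\Irr^p_X\simeq \pi_*\Omega_Y^p(\log\Gamma),
\]
so it suffices to verify that the right-hand side is torsion-free as an $\sO_X$-module.

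The sheaf $\Omega_Y^p(\log\Gamma)$ is locally free on $Y$, in particular torsion-free. Since $X$ is irreducible, any log resolution $\pi:Y\to X$ has $Y$ irreducible (and smooth, hence reduced), and $\pi$ is dominant. I would then invoke the general fact that the pushforward of a torsion-free sheaf under a dominant morphism from an integral scheme to an integral scheme is torsion-free: if $f\in\sO_X(U)$ is a nonzero local section and $s$ is a local section of $\pi_*\Omega_Y^p(\log\Gamma)$ over $U$ with $fs=0$, then $\pi^*f$ is a nonzero (hence non-zero-divisor) section of $\sO_Y$ on $\pi^{-1}(U)$ annihilating the corresponding section of the torsion-free sheaf $\Omega_Y^p(\log\Gamma)$, forcing $s=0$.

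There is no real obstacle here; the only thing to be mindful of is that the cited identification \eqref{eq:61} applies without any codimension restriction on $p$ (unlike \eqref{eq:62}, which is stated only for $p<\codim_X\Sigma$), so the conclusion holds for all $p$. This mirrors exactly the strategy used in \autoref{lem:hz-is-tf}, with $\pi_*\Omega_Y^p(\log\Gamma)$ playing the role that $\pi_{0*}\Omega_{X_0}^p$ played there.
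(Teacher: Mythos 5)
Your proposal is correct and takes essentially the same approach the paper intends: the paper simply states the corollary immediately after \eqref{eq:61} without giving a separate proof, and your argument spells out exactly why the right-hand side $\pi_*\Omega_Y^p(\log\Gamma)$ is torsion-free (locally free upstairs, pushforward along a dominant morphism between integral schemes). Your remark that \eqref{eq:61} holds for all $p$ (unlike \eqref{eq:62}) is the right thing to notice.
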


\begin{rem}
  It is possible to define a hyperfiltered complex $\Irr_X^\kdot$
  whose associated graded complexes are the $\Irr_X^p$, but in this article there is
  no need for them, so this is left to the interested reader.
\end{rem}

\subsection{Relations among the complexes of differential forms}

By the definition of the \DDB complex there exists a natural filtered morphism
$\Omega_X^\kdot \to \Om_X^\kdot$.  Being a filtered morphism, this induces natural
morphisms $\Omega_X^p \to \Om_X^p$ for each $p$.  The latter is still a morphism of
complexes, so it maps $\Omega_X^p$ to $h^{0}\left(\Om_X^{p}\right)$.
This morphism is compatible with the differentials of the complexes and hence we
obtain that the above natural filtered morphism factors through $\wt\Omega_X^\kdot$:
\begin{equation}
  \label{eq:12}
  \Omega_X^\kdot \to  \wt\Omega_X^\kdot \to \Om_X^\kdot.
\end{equation}
Note that these are filtered morphisms and hence co-filtered morphisms as well, and
so they induce similar morphisms on the respective filtration and co-filtration
complexes as well.


\begin{lem}\label{lem:Omega-is-S2}
  Assume that $X$ is normal and (for some $p$) $\Omega_X^p$ is reflexive. Then the
  natural morphism, $\eta:\Omega_X^p\to\wt\Omega_X^p$, induced by \autoref{eq:12}, is
  an isomorphism.
\end{lem}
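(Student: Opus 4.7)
The strategy is to sandwich $\wt\Omega_X^p$ between $\Omega_X^p$ and its pushforward from the smooth locus, and then use reflexivity to collapse the sandwich.

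First, I would let $U\subseteq X$ denote the smooth locus and $\jmath:U\hookrightarrow X$ the open immersion. Since $X$ is normal, $X\setminus U$ has codimension $\geq 2$, so $U$ is a ``big'' open subset. On $U$ the morphism $\Omega_X^\kdot\to\Om_X^\kdot$ is a quasi-isomorphism (the \DDB complex agrees with the de~Rham complex at smooth points), so $\wt\Omega_U^p=h^0(\Om_U^p)=\Omega_U^p$, and the restriction $\eta|_U:\Omega_U^p\to\wt\Omega_U^p$ is the identity.

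Next, I would exploit the two sheaf-theoretic inputs: by assumption $\Omega_X^p$ is reflexive, and by \autoref{lem:hz-is-tf} the sheaf $\wt\Omega_X^p$ is torsion-free. Reflexivity of $\Omega_X^p$ together with bigness of $U$ gives $\Omega_X^p\isomap \jmath_*(\Omega_X^p|_U)=\jmath_*\Omega_U^p$, since an $S_2$-sheaf coincides with the pushforward of its restriction to any big open subset. Torsion-freeness of $\wt\Omega_X^p$ gives that the natural adjunction morphism $\wt\Omega_X^p\hookrightarrow\jmath_*(\wt\Omega_X^p|_U)=\jmath_*\Omega_U^p$ is injective, because any local section in its kernel would be supported on the nowhere-dense set $X\setminus U$ and hence be torsion.

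Putting these together, I would consider the factorization
\[
  \xymatrix{
    \Omega_X^p \ar[r]^-{\eta} & \wt\Omega_X^p\, \ar@{^(->}[r] & \jmath_*\Omega_U^p,
  }
\]
whose composition is the canonical isomorphism from the previous step. A two-out-of-three argument (the composition is an isomorphism and the second arrow is injective, hence surjective) forces both arrows to be isomorphisms; in particular $\eta$ is an isomorphism. No serious obstacle is anticipated — the only subtle point is verifying that $\wt\Omega_X^p|_U=\Omega_U^p$ and that the canonical map $\Omega_X^p\to\jmath_*\Omega_U^p$ really is the composition written above, both of which follow from the naturality of the factorization $\Omega_X^\kdot\to\wt\Omega_X^\kdot\to\Om_X^\kdot$ in \autoref{eq:12}.
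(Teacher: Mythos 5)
Your proposal is correct and essentially matches the paper's proof. Both arguments are sandwich arguments that exploit the triple of facts (normality of $X$, torsion-freeness of $\wt\Omega_X^p$, reflexivity of $\Omega_X^p$) to trap $\wt\Omega_X^p$ between $\Omega_X^p$ and a reflexive hull; the only cosmetic difference is that you use $\jmath_*\Omega_U^p$ (pushforward from the smooth locus) as the reflexive hull, whereas the paper uses the double dual $(\wt\Omega_X^p)^{**}$ — on a normal variety these are canonically isomorphic for a torsion-free sheaf, so this is the same mechanism phrased differently. One small remark: the paper explicitly reduces to $X$ irreducible first; your version avoids that by working component-free with the smooth locus, which is fine, but it is worth pausing on the identification of the composite $\Omega_X^p\to\wt\Omega_X^p\to\jmath_*\Omega_U^p$ with the adjunction map — your justification via $\Hom(\sF,\jmath_*\sG)\simeq\Hom(\sF|_U,\sG)$ is exactly the right one.
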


\begin{proof}
  As $X$ is normal, we may assume that it is irreducible.  Recall that
  $\wt\Omega_X^p$ is torsion-free by \autoref{lem:hz-is-tf}, so the natural morphism
  $\nu: \wt\Omega_X^p\to \big(\wt\Omega_X^p\big)^{**}$ is injective. Similarly, as
  $\Omega_X^p$ is reflexive, it is also torsion-free and hence
  $\eta:\Omega_X^p\to\wt\Omega_X^p$ is also injective. However, as $X$ is normal,
  both $\eta$ and $\nu$ are isomorphisms in codimension $1$, and then it follows that
  $\nu\circ\eta$ is an injective morphism between two reflexive sheaves which is an
  isomorphism in codimension $1$, so it has to be an isomorphism.  Then $\eta$ and
  $\nu$, both being injective, have to be isomorphisms as well.
\end{proof}

\begin{prop}
  \label{prop:morphs-of-cxs}%
  Let $X$ be a scheme essentially of finite type over $\bC$, of dimension
  $n\leteq\dim X$ and $\pi:Y\to X$ a strong log resolution of \sings with
  $E\leteq\exc(\pi)$.  Then for every $p\in\bZ$ there exist natural morphisms as
  follow:
  \[
    \xymatrix{%
      \Omega_X^p\ar[r] & \wt\Omega_X^p\ar[r] & \Om_X^p\ar[r] &
      \myR\pi_*\Omega_Y^p\ar[r] & \bD_X(\Om_X^{n-p})\simeq \Irr_X^p \ar[r] &
      \myR\pi_*\Omega_Y^p(\log E).  }
  \]
  Furthermore, the isomorphism class of the object $\myR\pi_*\Omega_Y^p$ in $D(X)$
  and the composite morphism $\Om_X^p\to$ $\bD_X(\Om_X^{n-p})\simeq \Irr_X^p$ is
  independent of the choice of $\pi$. Similarly, the composition
  $\myR\pi_*\Omega_Y^p\to\myR\pi_*\Omega_Y^p(\log E)$ agrees with the usual natural
  morphism.  Note that it also follows trivially (by taking $h^0\!$ of both) that the
  induced morphism $\wt\Omega_X^p\to \Irr^p_X$ factors through $\wt\Irr^p_X$.
\end{prop}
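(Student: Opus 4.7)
The plan is to construct the four morphisms in succession and then separately address the two independence claims, which I expect to be the delicate part.

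For the construction, I would first obtain $\Omega_X^p \to \wt\Omega_X^p$ by taking the $p$-th graded piece of the natural filtered morphism $\Omega_X^\kdot \to \Om_X^\kdot$ of \autoref{eq:8}; since the source is concentrated in degree zero, it factors uniquely through $h^0(\Om_X^p) = \wt\Omega_X^p$. The morphism $\wt\Omega_X^p \to \Om_X^p$ is the canonical one from $h^0$ into the complex, which exists because $\Om_X^p$ lies in $D^{\geq 0}$. The morphism $\Om_X^p \to \myR\pi_*\Omega_Y^p$ is the $p$-th graded piece of the natural filtered morphism $\Om_X^\kdot \to \myR\pi_*\Om_Y^\kdot$ coming from the functoriality of the \DDB complex, combined with the filtered quasi-isomorphism $\Om_Y^\kdot \filtqis \Omega_Y^\kdot$ on the smooth $Y$. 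Finally, the morphism $\myR\pi_*\Omega_Y^p \to \Irr_X^p$ is obtained by applying $\bD_X$ to the preceding morphism in degree $n-p$: Grothendieck duality for the proper morphism $\pi$ gives $\bD_X(\myR\pi_*\Omega_Y^{n-p}) \simeq \myR\pi_*\bD_Y(\Omega_Y^{n-p})$, and Serre duality on smooth $Y$ identifies $\bD_Y(\Omega_Y^{n-p})$ with $\Omega_Y^p$, producing a map into $\bD_X(\Om_X^{n-p}) \simeq \Irr_X^p$.

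For the resolution-independence of $\myR\pi_*\Omega_Y^p$, given two strong log resolutions $\pi_i\colon Y_i \to X$ ($i=1,2$), I would produce (via Hironaka's theorem, or weak factorization) a third strong log resolution $\pi_3\colon Y_3 \to X$ dominating both via proper birational morphisms $\mu_i\colon Y_3 \to Y_i$ of smooth varieties. The classical birational invariance $\myR\mu_*\Omega_{Y'}^p \simeq \Omega_Y^p$ for a proper birational morphism $\mu\colon Y' \to Y$ between smooth complex varieties (provable by reducing via weak factorization to an explicit computation for smooth blowups) then yields canonical isomorphisms $\myR\pi_{1*}\Omega_{Y_1}^p \simeq \myR\pi_{3*}\Omega_{Y_3}^p \simeq \myR\pi_{2*}\Omega_{Y_2}^p$ in $D(X)$. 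The independence of the composite $\Om_X^p \to \Irr_X^p$ is then a consequence of naturality: the filtered morphism $\Om_X^\kdot \to \myR\pi_*\Om_Y^\kdot$ is natural in $\pi$, and both Grothendieck duality and Serre duality are compatible with the dominating maps $\mu_i$, so the composites built from $\pi_1$ and $\pi_2$ agree with the one built from $\pi_3$ under the canonical identifications of the previous step.

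The main obstacle is the classical invariance $\myR\mu_*\Omega_{Y'}^p \simeq \Omega_Y^p$, which, although standard, is genuinely non-trivial: it ultimately rests on the weak factorization theorem and on the explicit pushforward computation for smooth blowups. Everything else---both the constructions and the compatibilities---should reduce to unwinding the definitions and invoking the naturality of Grothendieck duality as set up in \autoref{subsec:groth-dual}.
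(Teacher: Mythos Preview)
Your proposal is correct and follows essentially the same approach as the paper. The only cosmetic difference is that where you spell out the independence argument directly (common refinement plus the birational invariance $\myR\mu_*\Omega_{Y'}^p \simeq \Omega_Y^p$ for $\mu$ proper birational between smooth varieties), the paper instead cites \cite[Lemma~1.6]{MP22}, whose proof is precisely that argument; your remark that this invariance ultimately rests on weak factorization is one route, though it can also be deduced more classically via Hodge theory without invoking weak factorization.
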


\begin{rem}
  The existence and independece from $\pi$ of the natural morphism
  $\Om_X^p\to \bD_X(\Om_X^{n-p})$ was already observed in \cite[Lemma~3.11]{FL-lci}.
\end{rem}

\begin{proof}
  The existence of the first two morphisms follows from \autoref{eq:12}. The third
  morphism follows from the functoriality of the \DDB complex and the fact that
  $\Om_Y^p\simeq \Omega_Y^p$.
  
  Note that $\Omega_Y^p\simeq\bD_Y(\Omega_Y^{n-p})$ and by Grothendieck duality,
  $\myR\pi_*\bD_Y(\Omega_Y^{n-p})\simeq \bD_X(\myR\pi_*\Omega_Y^{n-p})$.  Applying
  $\bD_X$ to the morphism $\Om_X^{n-p}\to\myR\pi_*\Omega_Y^{n-p}$ yields
  $\myR\pi_*\Omega_Y^p \simeq \bD_X(\myR\pi_*\Omega_Y^{n-p})\to \bD_X(\Om_X^{n-p})$,
  which is the desired fourth morphism in the diagram. The independence of the
  induced morphism $\Om_X^p\to \bD_X(\Om_X^{n-p})$ follows from the proof of
  \cite[Lemma~1.6]{MP22}. More precisely, the first paragraph of that proof starts by
  stating a slightly more general statetement, which implies that
  $\myR\pi_*\Omega_Y^p$ is independent of the choice of $\pi$. It is easy to see from
  the construction that then so is the composition morphism
  $\Om_X^p\to \bD_X(\Om_X^{n-p})$.
  
  The existence of the last morphism follows by applying $\bD_X$ to the first
  morphism of the distinguished triangle in \autoref{lem:DB-to-Irr} (with $\Gamma=E$)
  and noting that
  $\bD_X(\myR\pi_*\Omega_Y^{n-p}(\log E)(-E))\simeq
    \myR\pi_*\Omega_Y^p(\log E).$ 
\end{proof}

\noin
We also obtain the following simple observation:

\begin{cor}\label{cor:wtOm-of-rtl-sing}
  If $X$ is normal and $\wt\Omega_X^p$ is reflexive, then
  $\wt\Omega_X^p\simeq\Omega_X^{[p]} \simeq\wt\Irr_X^p$. This holds for instance if
  $X$ has rational \sings.
\end{cor}

\begin{proof}
  If $\wt\Omega_X^p$ is reflexive, then the natural morphism
  $\Omega_X^p\to\Omega_X^{[p]}$ factors through the natural morphism
  $\Omega_X^p\to\wt\Omega_X^p$. Therefore, by \autoref{prop:morphs-of-cxs} \te
  morphisms,
  \[
    \xymatrix{%
      \Omega_X^{[p]} \ar[r] & \wt\Omega_X^p \ar[r] & \wt\Irr_X^p\ar[r] &
      \left(\wt\Irr_X^p\right)^{\vee\vee}.  }%
  \]
  As $X$ is normal, it is nonsingular in codimension $1$, and these morphisms are
  isomorphisms on the complement of $\Sing X$.  Each sheaf in this diagram is
  torsion-free by \autoref{lem:hz-is-tf} and \autoref{cor:tw-irr-p-is-tf}, so the
  morphisms are injective.  The composition is between two reflexive sheaves, which
  is an isomorphism on the complement of a closed subset of codimension at least $2$,
  and hence it is an isomorphism on the entire $X$. Then the intermediate morphisms,
  which are injective, are also isomorphisms on the entire $X$.  This proves the
  first statement.
  If $X$ has \rtl \sings, then $\wt\Omega_X^p\simeq\Omega_X^{[p]}$ is reflexive by
  \cite[7.12]{MR3272910} and \cite[1.11]{MR4280862}.
  This has already been observed in \cite[2.5]{SVV23}.
\end{proof}


\numberwithin{equation}{thm}

\section{Singularities}\label{sec:singularities}
\noindent
\subsection{Defintions}
We are now ready to define the singularities we want to work with and prove our main
results.
\begin{rem}
  \emph{Rational} singularities were defined by Artin \cite{MR0199191}, initially for
  surfaces and then his definition was extended to higher dimensions. \emph{\DB}
  singularities were defined by Steenbrink \cite{Steenbrink83} and this notion may be
  viewed as a generalization of the notion of rational \sings. These were generalized
  to \emph{higher rational} and \emph{higher \DB} singularities in increasing
  generality in a series of papers by several authors
  \cite{MR4583654,MR4480883,MP22,FL-isolated,FL-lci}.  These definitions were
  initially made for lci singularities.
  Then it was pointed out by Shen, Venkatesh, and Vo \cite{SVV23}, and Tighe
  \cite{Tighe23} that these original definitions were too restrictive in the non-lci
  case. This observation opens the door to several alternatives in the non-lci case.
  %
  Several arguments, already in the rational and \DB case, only require the vanishing
  of higher cohomologies. Hence, it makes sense to study singularities satisfying
  that condition. This leads to the notions of \emph{\premrtl} and \emph{\premdb}.
  These might prevail as the most important class of those discussed here.

  Shen, Venkatesh, and Vo \cite{SVV23}, and Tighe \cite{Tighe23} also suggested
  alternative definitions in general which reduce to the previously used versions of
  these higher \DB and higher \rtl \sings in the lci case.  Unfortunately, in order
  to preserve compatibility of these new definitions with the original lci
  definitions, one also needs to impose a condition on the codimension of the
  singular set in general.  It seems reasonable that the same notions without this
  additional restriction are also worth studying. It turns out that in the case of
  higher rational singularities this is already taken care of by the notion of
  {\premrtl} \sings, but in the higher \DB case there seems to be another set of
  conditions that deserves its own name. 
  This additional notion, christened \emph{\wmdb} is added below.

  Finally, note that these higher rational and higher \DB \sings have been called
  $p$-\rtl, $p$-\DB, and more recently $k$-\rtl and $k$-\DB. Neither of these are
  perfect. First of all, there had already been a notion of $k$-\rtl \sings which was
  a weakening of the notion of \rtl \sings, only requiring the vanishing of the
  higher direct images of the sturcture sheaf of a resolution up to $k$, see e.g.,
  \cite{MR1891205,MR2856154}.  It seems that this new usage has already taken over,
  but one has to be careful when consulting references (currently) older than five
  years.  The actual letter to use could also be troublesome. As ``$k$'' is often
  used to denote the base field, ``$k$-\rtl'' overwhelmingly refers to \emph{rational
    points over the field $k$}. This makes searching for results in this area very
  difficult.
  To remedy this situation, I suggest the use of $m$-\rtl and $m$-\DB.
\end{rem}

In the following, we will use the extended definitions that $\Omega^p\leteq 0$ for
$p<0$.

\begin{defini}\label{def:mdb-mrtl}%
  cf.~\cite{SVV23}.
  Let $X$ be a reduced scheme of finite type over $\bC$ (or more generally over an
  algebraically closed field of characteristic zero) and let $m\in\bN$.
  \begin{enumerate}
  \item $X$ is said to have \emph{\premdb} \sings if the natural morphism
    $\wt\Omega^p_X\isomap\Om^p_X$ is an isomorphism for each $p\leq m$.
    This
    is equivalent to requiring that $h^i(\Om^p_X)=0$ for each $i>0$ and
    $p\leq m$.
  \end{enumerate}
  \begin{enumerate}[resume]
  \item $X$ is said to have \emph{\wmdb} \sings if
    \begin{itemize}
    \item $X$ is semi-normal,
    \item $X$ is \premdb, and
    \item $\wt\Omega^p_X$ is $S_2$ for each $1\leq p\leq m$. (If
      $m=0$, then there is no such $p$, of course.)
    \end{itemize}
  \end{enumerate}
  \begin{enumerate}[resume]
  \item $X$ is said to have \emph{\mdb} \sings if
    \begin{itemize}
    \item $X$ is {\wmdb}, 
    \item $\wt\Omega^p_X$ is reflexive for each $1\leq p\leq m$, and
    \item $\codim_X\Sing X\geq 2m+1$.
    \end{itemize}
  \end{enumerate}
  \begin{enumerate}[resume]
  \item $X$ is said to have \emph{\smdb} \sings if
    \begin{itemize}
    \item the natural morphism $\Omega_X^p\to \Om_X^p$ is an isomorphism for
      each $p\leq m$.
    \end{itemize}
  \end{enumerate}
  \begin{enumerate}[resume]
  \item $X$ is said to have \emph{\premrtl} \sings if
    \begin{itemize}
    \item the natural morphism $\wt\Irr^p_X\isomap \Irr^p_X$ is an isomorphism for
      each $p\leq m$.
    \end{itemize}
    As above, this is equivalent to requiring that $h^i(\Irr^p_X)=0$ for each $i>0$
    and $p\leq m$.
  \end{enumerate}
  \begin{enumerate}[resume]
  \item $X$ is said to have \emph{\mrtl} \sings if
    \begin{itemize}
    \item $X$ is normal, {\premrtl}, and
    \item $\codim_X\Sing X\geq 2m+2$.
    \end{itemize}
  \end{enumerate}
  \begin{enumerate}[resume]
  \item $X$ is said to have \emph{\smrtl} \sings if
    \begin{itemize}
    \item the natural morphism $\Omega_X^p\to \Irr_X^p$ is an isomorphism for each
      $p\leq m$.
    \end{itemize}
  \end{enumerate}  
\end{defini}

\begin{rem}\label{rem:for-normal-S2+tf=refl}
  By \autoref{lem:hz-is-tf}, $\wt\Omega_X^p$ is always torsion-free. This implies
  that if $X$ is normal, then $\wt\Omega_X^p$ is $S_2$ if and only if it is reflexive
  \cite[\href{https://stacks.math.columbia.edu/tag/0AVB}{Tag
    0AVB}]{stacks-project}.
\end{rem}

\begin{rem}
  As it has already been pointed out by several authors, if $X$ has rational
  singularities, then \cite{MR4280862} implies that
  $\wt\Irr^p_X\simeq \Omega^{[p]}_X$ is reflexive, which is the reason there is no
  need for an analogue of \emph{\wmdb} in the higher \rtl case.
\end{rem}

\begin{prop}
  For $m=0$, the notions of weakly $0$-\DB, $0$-\DB, strict $0$-\DB and \DB agree.
  If $X$ is semi-normal then they also agree with pre-0-\DB \sings. Similarly, if $X$
  is normal, the notions of pre-$0$-\rtl, $0$-rational, strict-$0$-\rtl, and rational
  agree. (Three of these notions, other than pre-$0$-\rtl, imply that $X$ is normal).
\end{prop}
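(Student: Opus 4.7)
The plan is to unravel each condition in \autoref{def:mdb-mrtl} at $m=0$ and observe that, beyond pre-$0$-\DB and pre-$0$-\rtl respectively, the extra requirements either become vacuous, automatic from the reduced/normal hypothesis, or reduce to one classical identification. The whole argument is essentially bookkeeping once that identification is in hand.

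For the \DB statements, I would first note that when $m=0$ the range ``$1\le p\le m$'' is empty, so the $S_2$ condition in the definition of \wmdb and the reflexivity condition in the definition of \mdb are vacuous; moreover the codimension bound $\codim_X\Sing X\ge 2m+1=1$ holds for every reduced complex variety, since the smooth locus is open and dense. Consequently, \wmdb, \mdb, and \smdb collapse to (semi-normality) $+$ (pre-$0$-\DB) $+$ (only in the strict case) the isomorphism $\sO_X=\Omega_X^0\to\wt\Omega_X^0$. To complete the circle with classical \DB \sings, i.e.\ $\sO_X\qis\Om_X^0$, I would invoke M.~Saito's identification $\wt\Omega_X^0=h^0(\Om_X^0)\simeq\sO_{X^{sn}}$ where $X^{sn}$ is the semi-normalization of $X$. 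From this, semi-normality of $X$ is equivalent to $\sO_X\isomap\wt\Omega_X^0$ being an isomorphism, so (pre-$0$-\DB) $+$ (semi-normal) $\Leftrightarrow$ $\sO_X\qis\Om_X^0$, and conversely classical \DB forces $\sO_X\simeq\sO_{X^{sn}}$ and hence semi-normality. All four notions therefore agree and, under the semi-normality assumption, coincide with pre-$0$-\DB.

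For the \rtl statements I would exploit the explicit descriptions in \autoref{eq:61} and \autoref{eq:62} at $p=0$. For a strong log resolution $\pi:Y\to X$ of $(X,\Sing X)$ with reduced exceptional snc divisor $\Gamma$, \autoref{eq:61} gives $\wt\Irr_X^0\simeq\pi_*\Omega_Y^0(\log\Gamma)=\pi_*\sO_Y$, which equals $\sO_X$ since normality is built into the definition of pre-$0$-\rtl. Normality also forces $0<\codim_X\Sing X$, so the hypothesis of \autoref{eq:62} applies at $p=0$ and yields $\Irr_X^0\simeq\myR\pi_*\sO_Y$. The defining isomorphism $\wt\Irr_X^0\isomap\Irr_X^0$ for pre-$0$-\rtl therefore becomes $\sO_X\isomap\myR\pi_*\sO_Y$, which is the classical definition of \rtl \sings. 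The remaining conditions $\codim_X\Sing X\ge 2$ in the definition of $0$-\rtl and $\Omega_X^0\isomap\wt\Irr_X^0$ in the definition of \smrtl are then automatic, by Serre's criterion for normality and by the identification $\wt\Irr_X^0\simeq\sO_X$ obtained above.

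The only step that is not pure bookkeeping is the input $h^0(\Om_X^0)\simeq\sO_{X^{sn}}$; this is the single geometric fact driving the \DB half. Since it is classical and already used elsewhere in the literature, the main obstacle reduces to citing it correctly and keeping the vacuous/automatic conditions straight.
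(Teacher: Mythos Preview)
Your proof is correct and follows the same approach as the paper: the paper's proof consists solely of citing Saito's identification $\wt\Omega_X^0\simeq\sO_{X_{\text{sn}}}$, leaving all the bookkeeping and the entire rational half implicit. Your write-up is a faithful and more explicit unpacking of that same idea, and your use of \autoref{eq:61} and \autoref{eq:62} to handle the rational case is a natural elaboration of what the paper leaves to the reader.
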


\begin{proof}
  By \cite[5.2]{MR1741272}, $\wt\Omega_X^0\simeq\sO_{X_{\text{sn}}}$, where
  $X_{\text{sn}}$ is the semi-normalization of $X$. This implies the \DB case.
  Regarding the \rtl case, recall that $\Irr_X^0=\bD_X(\Om_X^n)$ and if
  $\pi:\wt X\to X$ is a resolution of \sings, then
  $\Om_X^n\simeq\pi_*\omega_{\wt X}$. It follows that if $X$ has pre-$0$-\rtl \sings,
  then
  $\pi_*\omega_{\wt X}\simeq\Om_X^n\simeq\bD_X(\Irr_X^0)\simeq\bD_X(\wt\Irr_X^0)$
\end{proof}


\begin{rem}
  The defintions of \premdb, \mdb, strict-$m$-\DB, \premrtl, \mrtl, and
  strict-$m$-\rtl here agree with the defintions of the corresponding notions (with
  $k\leteq m$) in \cite{SVV23}.  The defintion of \wmdb does not appear in
  \cite{SVV23}.
  %
  %
  %
  %
  It is suggested here as a potentially good notion that resembles the original \mdb
  definition in the lci case without the unnatural codimension condition. In this
  definition $\wt\Omega_X^p$ is required to be $S_2$ as opposed to the definition of
  \mdb that $\wt\Omega_X^p$ be reflexive. As $\wt\Omega_X^p$ is always torsion-free
  cf.~\autoref{lem:hz-is-tf}, these two requirements agree when $X$ is normal
  cf.~\autoref{rem:for-normal-S2+tf=refl}.
\end{rem}

\subsection{Equivalent characterization}

The class of \premdb \sings may be defined slightly differently. It may seem more
technical at first, but this is arguably the more natural way of thinking about these
singularities. The main theme of this equivalent characterization is to move the
focus from the associated graded quotients to the terms of the (hyper)filtrations.
\begin{lem}\label{lem:equiv-char-of-def-mdb-etc}
  Let $X$ be a reduced scheme of finite type over $\bC$ (or more generally over an
  algebraically closed field of characteristic zero) and let $m\in\bN$.  Then $X$ has
  \premdb \sings \ifft
  the natural morphism induced by the co-hyperfiltered morphism in \autoref{eq:12},
  $\tf^X_{m}\longrightarrow\uf^X_{m}$, is a co-hyperfiltered isomorphism.
\end{lem}
\begin{proof}
  This is straightforward from the definition and a repeated use of the $5$-lemma. 
\end{proof}

\subsection{Examples}

\begin{prop}\label{prop:wmdb+rtl=mdb}
  If $X$ has \premdb and \rtl \sings, then it is \wmdb.
\end{prop}

\begin{proof}
  The sheaf $\wt\Omega_X^p$ is reflexive for each $p$ by
  \autoref{cor:wtOm-of-rtl-sing}.
\end{proof}

%

\begin{prop}
  Let $X$ be 
  an snc variety. Then $X$ has \wmdb \sings for all $m$.
\end{prop}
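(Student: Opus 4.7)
The plan is to verify the three conditions of \autoref{def:mdb-mrtl} defining \wmdb singularities: (i) $X$ is semi-normal; (ii) $X$ is \premdb; and (iii) $\wt\Omega^p_X$ is $S_2$ for each $1 \leq p \leq m$. Condition (i) is standard for snc varieties: one may see, for example, from \cite[5.2]{MR1741272} that $\wt\Omega^0_X \simeq \sO_{X_\mathrm{sn}}$, and for snc $X$ the explicit local computation below will show $\wt\Omega^0_X \simeq \sO_X$, so $X = X_\mathrm{sn}$. Since (ii) and (iii) are local on $X$, fix $x \in X$ and pass to an \'etale (or analytic) neighborhood where $X \cong Z := V(x_1 \cdots x_k) \subseteq \bA^n$ with smooth components $Z_i = V(x_i)$ for $i = 1, \ldots, k$.

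The main tool is the natural cubical hyperresolution of $Z$ by its strata: for each nonempty $I \subseteq \{1,\ldots,k\}$ set $Z_I := \bigcap_{i \in I} Z_i$ (which is smooth), with face morphisms given by the transverse closed immersions $Z_I \hookrightarrow Z_{I'}$ for $I' \subset I$. All face morphisms are finite closed immersions, so $\myR\pi_{\kdot*} = \pi_{\kdot*}$; by \autoref{lem:Rnu-of-DDB-is-DDB},
\[
  \Om^p_Z \qis \pi_{\kdot *} \Omega^p_{Z_\kdot},
\]
an honest complex of $\sO_Z$-modules.

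Next, use the decomposition $\Omega^p_{Z_I} = \bigoplus_{J \subseteq \{1,\ldots,n\}\setminus I,\,|J|=p} \sO_{Z_I}\, dx_J$. The face maps preserve this $J$-grading: for $I' \subset I$ the pullback sends $dx_J$ to $dx_J$ when $J \cap I = \emptyset$ and to $0$ otherwise (since $dx_i$ restricts to $0$ on $Z_I$ for $i \in I$). Regrouping by $J$, the complex $\pi_{\kdot *}\Omega^p_{Z_\kdot}$ splits as a direct sum, indexed by $p$-subsets $J \subseteq \{1,\ldots,n\}$, of structure-sheaf Mayer--Vietoris (\v{C}ech) complexes of the snc sub-varieties $Y_J := \bigcup_{i \in \{1,\ldots,k\}\setminus J} Z_i$. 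Each such $J$-summand is quasi-isomorphic to $\sO_{Y_J}$ concentrated in degree zero, since $Y_J$ is itself snc and hence classically \DB. Conclude
\[
  \Om^p_Z \qis \bigoplus_{|J|=p} \sO_{Y_J}\, dx_J,
\]
a sheaf concentrated in degree zero, which establishes (ii) and identifies $\wt\Omega^p_Z$ with this direct sum. Each $\sO_{Y_J}$ is Cohen--Macaulay of dimension $\dim Z$ (as an snc variety all of whose irreducible components have dimension $\dim Z$), so the finite direct sum is Cohen--Macaulay, in particular $S_2$, yielding (iii).

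The main obstacle is the bookkeeping in the decomposition step: one must verify precisely that the face differentials of $\pi_{\kdot *}\Omega^p_{Z_\kdot}$ restrict on each $J$-summand to the alternating-sum \v{C}ech differential of the Mayer--Vietoris resolution of $\sO_{Y_J}$, and that each $Y_J$ inherits the reduced induced snc structure so that the classical $0$-\DB property applies. Both are direct computations on the local model; once they are in place, (ii) and (iii) propagate to the global $X$ because both conditions are local.
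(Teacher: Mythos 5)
Your proof is correct and arrives at the same conclusion, but by a genuinely different route. The paper's proof is short: it cites the known fact (PS08, [7.23]) that for snc $X$ one has $\Om_X^\kdot \simeq \Omega_X^\kdot/\text{torsion}$ to get \premdb at once, then invokes the exact \v{C}ech sequence $0\to\wt\Omega^p_X\to\oplus_i\Omega^p_{X_i}\to\oplus_{i,j}\Omega^p_{X_i\cap X_j}\to\dots\to 0$ and applies the ad hoc depth lemma \autoref{lem:s2-for-les} to conclude that $\wt\Omega^p_X$ is CM and torsion-free. You instead avoid the PS08 citation entirely: by grading the cubical complex $\pi_{\kdot*}\Omega^p_{Z_\kdot}$ over the $p$-element sets $J$ of coordinates, you split it into Mayer--Vietoris complexes of structure sheaves of the smaller snc unions $Y_J$, and each of those is concentrated in degree zero by the classical $m=0$ (DB) case. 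This simultaneously proves \premdb for all $p$ and produces the explicit isomorphism $\wt\Omega^p_Z \simeq \bigoplus_{|J|=p}\sO_{Y_J}$, from which CM-ness (hence $S_2$) and torsion-freeness are immediate, bypassing the depth lemma. Your argument is thus more self-contained (its only external input is the well-established $0$-DB property of snc varieties) and yields strictly more information than the paper's. You also address semi-normality explicitly, which the paper leaves implicit. The one place you should be a little careful is the step asserting the $J$-summand computes $\Om^0_{Y_J}$: one needs that the sub-cubical variety $\{Z_I : \emptyset\neq I \subseteq \{1,\ldots,k\}\setminus J\}$ is itself a hyperresolution of $Y_J$ (it is, being the canonical \v{C}ech hyperresolution of $Y_J$ by its components), but that point deserves a sentence.
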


\begin{proof}
  It is well known (cf.~\cite[7.23]{MR2393625}) that in this case,
  $\Om_X^\kdot\simeq \Omega_X^\kdot/\text{torsion}$, which implies immediately that
  $X$ has \premdb \sings.  Furthermore, the \v Cech resolution induced by the
  irreducible components of $X=\cup_iX_i$ give a cubic hyperresolution of $X$, which
  means that for each $p\in\bN$, \tes an exact sequence
  \begin{equation}
    \label{eq:59}
    \xymatrix{%
      0\ar[r] & \wt\Omega_X^p\ar[r] & \oplus_i \Omega_{X_i}^p \ar[r] &
      \oplus_{i,j}\Omega_{X_i\cap X_j}^p\ar[r] & \dots \ar[r] & \Omega_{\cap X_i}^p
      \ar[r] & 0.  }
  \end{equation}
  Then it follows from \autoref{lem:s2-for-les} (below) that $\wt\Omega_X^p$ is
  \CM 
  and torsion-free, and hence $X$ has \wmdb \sings for any $m\in\bN$.
\end{proof}

\begin{lem}\label{lem:s2-meta} Let
  $\xymatrix{%
      0\ar[r] & M \ar[r] & M_0\ar[r] & M_1\ar[r] & 0  }$
  be a short exact sequence of finite modules over a noetherian local ring $A$.  Then
  \begin{enumerate}
  \item\label{eq:58} $\depth M\geq \min ( \depth M_0, \depth M_1+1 )$.
  \item\label{eq:55} If $\dim M_1\geq\dim M_0-1$, then
    $\min \big(\min(n,\dim M_0), \min(n,\dim M_1+1) \big) \geq \min(n,\dim M)$.
  \item\label{item:24} If $M_0$ is $S_n$ and $M_1$ is $S_{n-1}$ for some $n\in\bN$,
    and $\dim (M_1)_{\frp}\geq\dim (M_0)_{\frp}-1$ for every prime ideal
    $\frp\ideal A$, then $M$ is also $S_n$.
  \end{enumerate}
\end{lem}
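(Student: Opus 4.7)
The plan is to prove the three parts in the order stated, with parts (ii) and (iii) leaning on part (i). Part (i) is standard and follows from the long exact sequence of local cohomology with support in the maximal ideal $\fm$ of $A$ associated to the given short exact sequence. That sequence yields $H^i_\fm(M) = 0$ whenever both $H^{i-1}_\fm(M_1)$ and $H^i_\fm(M_0)$ vanish. Since $\depth N$ equals the smallest index $i$ for which $H^i_\fm(N)$ is nonzero, the claimed depth inequality drops out immediately.

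For part (ii), the strategy is to reduce the claim to two dimension inequalities and then do a brief case split. The injection $M \hookrightarrow M_0$ forces $\dim M \leq \dim M_0$. The identity $\dim M_0 = \max(\dim M, \dim M_1)$, combined with the hypothesis $\dim M_1 \geq \dim M_0 - 1$, forces $\dim M_1 + 1 \geq \dim M$. With both inequalities in hand, splitting on whether $n \leq \dim M$ or $n > \dim M$ finishes the argument: in the former case each of the quantities $\min(n, \dim M_0)$ and $\min(n, \dim M_1 + 1)$ equals $n = \min(n, \dim M)$; in the latter each is at least $\dim M = \min(n, \dim M)$.

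For part (iii), I would localize the entire situation at an arbitrary prime $\frp \ideal A$ and combine (i) and (ii) with the Serre-type hypotheses. Applying (i) to the localized sequence gives
\[
\depth M_\frp \geq \min\bigl(\depth (M_0)_\frp,\ \depth(M_1)_\frp + 1\bigr).
\]
The $S_n$-ness of $M_0$ bounds $\depth(M_0)_\frp$ below by $\min(n, \dim (M_0)_\frp)$, and the $S_{n-1}$-ness of $M_1$ bounds $\depth(M_1)_\frp + 1$ below by $\min(n, \dim (M_1)_\frp + 1)$. Applying (ii) to the localized sequence (whose dimension hypothesis is exactly what is assumed in (iii)) converts the resulting joint lower bound into $\min(n, \dim M_\frp)$, which is the required $S_n$ inequality.

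Everything here is formal once (i) is established; the only place that demands real care is the $\min$-arithmetic in (ii), where one needs to see that the seemingly weak dimension hypothesis $\dim M_1 \geq \dim M_0 - 1$ is enough to prevent the right-hand side from dropping below $\min(n, \dim M)$. That bookkeeping is the main, albeit mild, obstacle.
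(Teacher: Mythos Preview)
Your proof is correct and follows the same strategy as the paper's: the standard depth lemma for (i), the observation $\dim M \leq \dim M_0$ together with the dimension hypothesis for (ii), and localization combined with (i)--(ii) for (iii). The paper's treatment of (ii) is marginally slicker---it observes that the hypothesis $\dim M_1 \geq \dim M_0 - 1$ makes the left-hand side collapse to $\min(n,\dim M_0)$ outright, after which $\dim M \leq \dim M_0$ finishes immediately---but the content is the same.
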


\begin{proof}
  The first statement, \autoref{eq:58}, is well-known. See for instance
  \cite[1.2.9]{MR1251956}.

  For the second statement, note that by the assumption the left hand side of
  \autoref{eq:55} equals $\min(n,\dim M_0)$. As $M\subseteq M_0$, it follows that
  $\dim M\leq \dim M_0$. This proves \autoref{eq:55}.

  Finally, let $\frp\ideal A$ be a prime ideal. Replace $A$ with $A_\frp$ and the
  modules $M,M_0$, and $M_1$ with their localization at $\frp$. Then
  $\depth M_0\geq \min(n,\dim M_0)$ and $\depth M_1+1\geq \min(n,\dim M_1+1)$ by
  assumption, so $\depth M\geq \min(n,\dim M)$ by \autoref{eq:58} and
  \autoref{eq:55}.
\end{proof}

\begin{lem}\label{lem:s2-for-les}
  Consider an exact sequence of modules or sheaves,
  \[
    \xymatrix{%
      0\ar[r] & M \ar[r] & M_0 \ar[r] & M_1\ar[r] & \dots \ar[r] & M_r\ar[r] & 0.  }
  \]
  Assume that $\dim M_i\geq\dim M_0-i$ for each $i=0,\dots,r$ and this remains true
  after localization at any prime.  Further assume that $M_i$ is $S_{n-i}$ for each
  $i=0,\dots,r$ for some $n$.  Then $M$ is $S_n$. Furthermore, if $M_0$ is
  torsion-free, then so is $M$.
\end{lem}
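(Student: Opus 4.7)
The plan is to split the long exact sequence into a chain of short exact sequences and iterate the depth inequality of \autoref{lem:s2-meta}\,(i) by downward induction. Set $C_{-1}\leteq M$ and, for $0\leq i\leq r-1$, put $C_i\leteq\im(M_i\to M_{i+1})$, so that $C_{r-1}=M_r$ and the long exact sequence breaks into short exact sequences
\[
  E_i\col\ 0\longrightarrow C_{i-1}\longrightarrow M_i\longrightarrow C_i\longrightarrow 0,\qquad 0\leq i\leq r-1.
\]

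Fix a prime $\frp$ and localize at it, suppressing $\frp$ from the notation. I would prove by downward induction on $i$ that
\[
  \depth C_{i-1}\geq \min\Bigl(n-i,\ \min_{j\geq i}(\dim M_j + j - i)\Bigr).
\]
The base case $i=r$ is exactly the assumption that $M_r=C_{r-1}$ is $S_{n-r}$. The inductive step combines the Serre condition $\depth M_i\geq \min(n-i,\dim M_i)$, the inductive hypothesis (shifted by one to bound $\depth C_i+1$), and \autoref{lem:s2-meta}\,(i) applied to $E_i$; the three minima collapse in the predicted way.

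Specializing to $i=0$ yields $\depth M\geq \min\bigl(n,\,\min_{j\geq 0}(\dim M_j+j)\bigr)$. The hypothesis $\dim M_j\geq \dim M_0-j$, which is assumed to persist after localization, gives $\dim M_j+j\geq \dim M_0$, and the injection $M\hookrightarrow M_0$ gives $\dim M_0\geq \dim M$. Therefore $\depth M\geq \min(n,\dim M)$ at every prime, so $M$ is $S_n$. The torsion-free statement is immediate from $M\hookrightarrow M_0$.

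The main obstacle worth highlighting is that the naive strategy of iteratively applying \autoref{lem:s2-meta}\,(iii) to the sequences $E_i$ fails: that version requires the dimension inequality $\dim C_i\geq \dim M_i-1$, which does not follow from the hypothesis, since consecutive dimensions $\dim M_i$ and $\dim M_{i+1}$ are allowed to drop by more than one. Tracking depth through part (i) instead sidesteps this precisely because the accumulated slack $+j$ in the bound above absorbs any such dimension drops globally, letting the single condition $\dim M_j\geq\dim M_0-j$ do all the work at the final step.
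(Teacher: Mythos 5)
Your proof is correct, and it is noticeably more careful than the paper's two-sentence argument, which says only to break the long exact sequence into short ones "by induction" and then invoke \autoref{lem:s2-meta}. As you observe, the naive reduction to the short-exact-sequence case via \autoref{lem:s2-meta}\,(iii) does not go through as stated: the hypothesis $\dim M_i\geq\dim M_0-i$ is anchored at $M_0$, so when one passes to the truncated sequence $0\to C_0\to M_1\to\dots\to M_r\to 0$ the corresponding dimension condition $\dim M_i\geq\dim M_1-(i-1)$ need not hold (there is no upper bound on $\dim M_1$); and independently, applying (iii) to $0\to M\to M_0\to C_0\to 0$ would require $\dim (C_0)_\frp\geq\dim (M_0)_\frp-1$, which the hypotheses do not supply either, since one only controls $\dim C_0\leq\min(\dim M_0,\dim M_1)$ from above. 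Your fix — iterating only the elementary depth inequality of \autoref{lem:s2-meta}\,(i) across the chain of cokernels, collecting the cumulative bound, and spending the hypothesis $\dim M_j+j\geq\dim M_0\geq\dim M$ exactly once at the very end — is precisely what makes the lemma work. You could phrase it a little more economically by unwinding (i) directly to $\depth M\geq\min_{0\leq j\leq r}\bigl(\depth M_j+j\bigr)$ after localization, then estimating each term via the Serre condition, without naming the inductive invariant on the $C_i$'s, but the content is identical. The torsion-freeness conclusion is, as you say, immediate from the injection $M\hookrightarrow M_0$.
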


\begin{proof}
  The above exact sequence may be broken up into \sess and hence, by induction, it is
  enough to prove the statement for a \ses,
  $\xymatrix{%
      0\ar[r] & M_0 \ar[r] & M_1\ar[r] & M_2\ar[r] & 0.
    }$
  This case follows from \autoref{lem:s2-meta}.
  The last statement about torsion-freeness is trivial.
\end{proof}






\section{Hyperplane sections}\label{sec:hyperplane-sections}
\noindent
The following assumptions will be in effect for the entire section. 
\begin{assume}\label{assume-XHSL}
  Let $X$ be a reduced scheme essentially of finite type over $\bC$,
  $\Sigma\subseteq X$ a closed subset, $H$ a general member of a basepoint-free
  linear system, and $L$ an effective Cartier divisor.
\end{assume}

\noin Let us first record a few well-known observations.

\begin{lem}\label{lem:Bertini-for-Sing}
  $\Sing H=H\cap\Sing X$.
\end{lem}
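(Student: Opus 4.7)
The statement is an equality of closed subsets, so the plan is to prove the two inclusions $H\cap\Sing X\subseteq \Sing H$ and $\Sing H\subseteq H\cap\Sing X$ separately. Neither direction uses the generality of $H$ in a symmetric way, so they need somewhat different arguments. Note that $X$ is only assumed reduced (not irreducible), so I will be careful to work componentwise when needed and to rely only on local properties at points.

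For the inclusion $H\cap\Sing X\subseteq \Sing H$, the plan is to use the fact that $H$ is a Cartier divisor on $X$ (it belongs to a linear system of Cartier divisors) and apply the standard dimension/embedding-dimension comparison. Let $x\in H\cap\Sing X$; locally near $x$ the divisor $H$ is cut out by a single equation, so $\dim_x H=\dim_x X-1$ while $\operatorname{emb.dim}_x H\geq \operatorname{emb.dim}_x X-1$. Since $X$ is singular at $x$, we have $\operatorname{emb.dim}_x X>\dim_x X$, and therefore $\operatorname{emb.dim}_x H>\dim_x H$, so $H$ is singular at $x$. This direction does not use the generality of $H$ at all, only that it is an effective Cartier divisor through $x$.

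For the reverse inclusion $\Sing H\subseteq H\cap \Sing X$, the plan is to invoke Bertini's theorem in the form applicable to a basepoint-free linear system on a smooth (possibly reducible) variety over a field of characteristic zero. Let $U\leteq X\setminus\Sing X=X_{\rm reg}$, which is smooth; the restriction of the basepoint-free linear system to $U$ is still basepoint-free, so Bertini guarantees that its general member $H\cap U$ is smooth. Hence the singular points of $H$ must lie outside $U$, i.e., in $H\cap\Sing X$, which is exactly the desired inclusion. Here the generality assumption on $H$ is essential.

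I expect no serious obstacle, as this is the classical Bertini-type statement for singular loci under Cartier hypersurface sections; the only small point of care is that $X$ need not be irreducible, but both arguments above are local at a point and apply equally well to each irreducible component meeting $x$. If one wants a fully self-contained reference, the second inclusion can alternatively be phrased by quoting Bertini's smoothness theorem on $X_{\rm reg}$ directly, and the first inclusion is simply a remark on Cartier divisors.
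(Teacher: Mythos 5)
Your proof is correct and follows essentially the same route as the paper: one inclusion by Bertini's theorem applied away from $\Sing X$, and the other by the local-algebra fact that a regular quotient by a regular element forces the ambient local ring to be regular. The paper states this local step as a direct implication ($\sO_{H,x}$ regular $\Rightarrow$ $\sO_{X,x}$ regular), while you phrase its contrapositive via the embedding-dimension inequality, but the underlying argument is identical.
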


\begin{proof}
  By the second theorem of Bertini cf.~\cite[Cor.~1]{MR0825140}
  $\Sing H\subseteq H\cap\Sing X$. On the other hand, let $x\in H\setminus\Sing H$
  and $f\in\sO_{X,x}$ be a local defining equation for $H$. Then $f$ is a regular
  element of $\sO_{X,x}$ and by the choice of $x$, the quotient ring
  $\sO_{X,x}/(f)\simeq \sO_{H,x}$ is regular. It follows that then $\sO_{X,x}$ is
  regular. This implies that $H\setminus\Sing H\subseteq X\setminus\Sing X$, i.e.,
  that $H\cap\Sing X\subseteq \Sing H$.
\end{proof}

\noin The following must be known to experts. It is included for completeness.

\begin{lem}\label{lem:ses-for-CM}
  If $X$ is connected and \CM, then there is a \ses,
  \[
    \xymatrix{%
      0\ar[r] & \omega_X \ar[r] & \omega_X(L) \ar[r] & \omega_L \ar[r] & 0.  }
  \]
\end{lem}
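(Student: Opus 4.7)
The plan is to realize the desired \ses by tensoring the structure sequence of the Cartier divisor $L$,
\[
0\longrightarrow \sO_X(-L)\longrightarrow \sO_X\longrightarrow \sO_L\longrightarrow 0,
\]
with the invertible twist $\omega_X(L)\leteq\omega_X\otimes\sO_X(L)$. Because the above is a length-one locally free resolution of $\sO_L$, tensoring with $\omega_X(L)$ produces a four-term exact sequence whose last three terms are
\[
\omega_X \overset{\cdot f}\longrightarrow \omega_X(L) \longrightarrow \omega_X(L)\otimes\sO_L \longrightarrow 0,
\]
where the leftmost arrow is, locally, multiplication by a defining equation $f$ of $L$. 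Thus to obtain a \ses it suffices to verify that $\cdot f\col\omega_X\to\omega_X(L)$ is injective, equivalently that $\Tor_1^{\sO_X}(\omega_X,\sO_L)$ vanishes (here $\sO_X(L)$ is locally free, so $\Tor_1^{\sO_X}(\omega_X(L),\sO_L)\simeq\Tor_1^{\sO_X}(\omega_X,\sO_L)$).

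This is the unique step where the \CM hypothesis is used. On a connected \CM scheme $\omega_X$ is a maximal \CM sheaf, so its associated primes coincide with the minimal primes of $\sO_X$; hence any nonzerodivisor on $\sO_X$---in particular the local equation of a Cartier divisor---remains a nonzerodivisor on $\omega_X$. (Equivalently, $f$ extends to a system of parameters, which is automatically an $\omega_X$-regular sequence on a maximal \CM module.) This yields the \ses $0\to\omega_X\to\omega_X(L)\to\omega_X(L)\otimes\sO_L\to 0$.

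It remains to identify the rightmost term with $\omega_L$. Since $X$ is \CM and $L$ is locally cut out by the regular element $f$, $L$ is itself \CM, so its dualizing complex is concentrated in a single degree. The adjunction formula for the closed immersion $\imath\col L\into X$---which for a Cartier divisor takes the form $\imath^!\sF\simeq \imath^*\sF\otimes\sO_X(L)|_L[-1]$---then yields $\omega_L\simeq \omega_X(L)\otimes\sO_L$. Substituting this into the \ses above gives the desired sequence. I do not anticipate a genuine obstacle: the argument is one routine application of derived tensor combined with one routine application of Cartier-divisor adjunction, with the \CM hypothesis used exclusively to kill the single relevant $\Tor_1$.
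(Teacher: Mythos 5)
Your argument is correct. It is, however, roughly Grothendieck--dual to the one the paper uses, so it is worth noting the difference. The paper applies $\myR\sHom_X(\blank,\dcx X)$ directly to the structure sequence of $L$, obtaining the distinguished triangle $\dcx L\to\dcx X\to\dcx X(L)\to +1$; from the long exact cohomology sequence, together with the fact that on a \CM scheme $\dcx X$ (hence also $\dcx X(L)$) is concentrated in one degree and $\dcx L$ lives in degrees $\geq -(\dim X-1)$, the whole \ses falls out in one stroke --- the injectivity of $\omega_X\to\omega_X(L)$ and the identification of the cokernel with $\omega_L$ are both automatic. You instead tensor the structure sequence with $\omega_X(L)$, verify injectivity by hand (via $\omega_X$ being maximal \CM so that the local equation $f$ of $L$ is a nonzerodivisor on it), and then invoke the Cartier-divisor adjunction $\omega_L\simeq\omega_X(L)\otimes\sO_L$ as a separate ingredient. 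Your route is a bit more elementary in feel and makes it very explicit where \CM enters (killing the one relevant $\Tor_1$); the paper's route is slicker in that it does not need to quote the adjunction formula separately --- that identification is itself read off from the same duality computation. Both are fine, and the two arguments are in fact exchanged under $\bD_X$: the $\Tor_1$ vanishing you prove is exactly dual to the vanishing $h^{-\dim X}(\dcx L)=0$ that makes the paper's long exact sequence close up.
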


\begin{proof}
  Let $\dcx X$ denote the dualizing complex of $X$ and consider the \ses,
  \begin{equation}
    \label{eq:74}
    \xymatrix{%
      0\ar[r] & \sO_X(-L) \ar[r] & \sO_X \ar[r] & \sO_L \ar[r] & 0.
    }
  \end{equation}
  Apply the duality functor $\myR\sHom_X(\blank, \dcx X)$ to this to obtain the \dt,
  \[
    \xymatrix{%
       \dcx L \ar[r] &  \dcx X \ar[r] & \dcx X (L) \ar[r]^-{+1} & .
    }
  \]
  Considering the long exact cohomology sequence associated to this \dt combined with
  the fact that $\dcx X$ has only one non-zero cohomology yields the desired
  statement.
\end{proof}

The following is used ubiquitously, yet it is difficult to find a simple reference
for it. It follows from the more general (and more complicated) Th\'eor\'eme 4 of
\cite{Elkik78}. Below is a simple proof.

\begin{lem}\label{lem:hyperplane-sections-of-rtl}
  If $X$ has \rtl \sings then so does $H$.
\end{lem}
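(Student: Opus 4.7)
The plan is to produce a resolution of $H$ by restricting a suitable log resolution of $X$, and to propagate the rationality condition via the projection formula together with the obvious short exact sequence of the divisor.

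First I would fix a log resolution $\pi\col Y\to X$. Since $|H|$ is basepoint-free on $X$, the pulled-back linear system $|\pi^{*}H|$ is basepoint-free on the smooth variety $Y$, so by Bertini the general member $H_{Y}\leteq \pi^{*}H$ is smooth; because $H$ is Cartier, $H_{Y}$ coincides with the scheme-theoretic preimage $\pi^{-1}(H)$. The restriction $\pi_{H}\leteq \pi|_{H_{Y}}\col H_{Y}\to H$ is then proper with $H_{Y}$ smooth, and it is an isomorphism over $H\setminus (H\cap\Sing X)=H\setminus\Sing H$ by \autoref{lem:Bertini-for-Sing}, so $\pi_{H}$ is a resolution of singularities of $H$. (Note that $X$ is normal and Cohen--Macaulay since it has \rtl \sings, so $H$ inherits these properties, and is in particular reduced.)

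Next I would run the standard ``adjunction + projection formula'' argument. Starting from the locally free resolution
\[
  \xymatrix{
    0\ar[r] & \sO_{Y}(-H_{Y})\ar[r] & \sO_{Y}\ar[r] & \sO_{H_{Y}}\ar[r] & 0
  }
\]
on $Y$ and applying $\myR\pi_{*}$ gives a distinguished triangle
\[
  \xymatrix{
    \myR\pi_{*}\sO_{Y}(-H_{Y})\ar[r] & \myR\pi_{*}\sO_{Y}\ar[r]
    & \myR\pi_{*}\sO_{H_{Y}}\ar[r]^-{+1} & .
  }
\]
Since $X$ has \rtl \sings, $\myR\pi_{*}\sO_{Y}\simeq\sO_{X}$. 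Combining this with the projection formula and the identity $\sO_{Y}(-H_{Y})\simeq \pi^{*}\sO_{X}(-H)$ yields $\myR\pi_{*}\sO_{Y}(-H_{Y})\simeq\sO_{X}(-H)$, and the morphism appearing in the triangle identifies with the natural inclusion $\sO_{X}(-H)\hookrightarrow\sO_{X}$ coming from the Cartier divisor $H$.

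Comparing the resulting triangle with the \ses $0\to\sO_{X}(-H)\to\sO_{X}\to\sO_{H}\to 0$ gives $\myR\pi_{*}\sO_{H_{Y}}\simeq\sO_{H}$. Since $H_{Y}\into Y$ is a closed embedding, $\myR\pi_{*}\sO_{H_{Y}}\simeq \myR(\pi_{H})_{*}\sO_{H_{Y}}$, so $\myR(\pi_{H})_{*}\sO_{H_{Y}}\simeq\sO_{H}$, which is precisely the statement that $H$ has \rtl \sings. I expect the only delicate point to be the Bertini step ensuring that $H_{Y}$ is smooth and equals $\pi^{-1}(H)$; once that is in place, the rest is purely formal.
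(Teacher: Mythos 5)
Your proof is correct, but it takes a genuinely different route from the paper's. You verify rationality of $H$ directly at the level of structure sheaves: after arranging (via Bertini and the identification $H^0(X,\sO_X(H))=H^0(Y,\pi^*\sO_X(H))$) that $H_Y=\pi^*H=\pi^{-1}(H)$ is smooth, so that $\pi_H$ is a resolution of $H$, you read off $\myR(\pi_H)_*\sO_{H_Y}\simeq\sO_H$ from the projection formula and a comparison of distinguished triangles. The paper instead works on the Grothendieck-dual, $\omega$-sheaf side via Kempf's criterion: it first records that $H$ is normal and Cohen--Macaulay, then pushes forward the adjunction map $\omega_{\tilde X}(\tilde H)\to\omega_{\tilde H}$, uses that $\pi_*\omega_{\tilde X}\to\omega_X$ is an isomorphism together with \autoref{lem:ses-for-CM} to show the trace map $\pi'_*\omega_{\tilde H}\to\omega_H$ is surjective, and combines this with its injectivity (torsion-freeness) to conclude. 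Your approach is the dual argument and is arguably the more direct one: it bypasses Kempf's criterion, Grauert--Riemenschneider, and the auxiliary \autoref{lem:ses-for-CM}, and it yields $\pi_{H*}\sO_{H_Y}=\sO_H$ (hence normality of $H$) as a byproduct rather than requiring it as an input. One small imprecision worth noting: you assert $\pi$ is an isomorphism over $H\setminus\Sing H$, which requires $\pi$ to be an isomorphism over $X\setminus\Sing X$ (a \emph{strong} log resolution in the paper's terminology), not an arbitrary log resolution; but this is harmless because your argument only needs $\pi_H$ to be proper and birational, which holds for any resolution of $X$.
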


\begin{proof}
  As $X$ is normal, so is $H$ by \cite[3.4.9]{Flenner-OCarrol-Vogel} and $H$ is \CM
  trivially.
  Let $\pi:\wt X\to X$ be a resolution of \sings and let $\wt H\leteq
  \pi^{-1}H$. Then $\pi'\leteq\pi\resto{\wt H}\col \wt H\to H$ is a resolution of
  singularities of $H$ and as a divisor $\wt H\subseteq \wt X$ is linearly equivalent
  to $\pi^*H$.  Consider the pushforward of the adjunction morphism,
  $\omega_{\wt X}(\wt H)\to \omega_{\wt H}$ and its natural $2$-morphism to the
  corresponding morphism on $X$:
  \[
    \xymatrix{%
      \pi_*\omega_{\wt X}(\wt H) \simeq \left(\pi_*\omega_{\wt X}\right)(H)
      \ar[d]^\alpha \ar[r]
      &  \pi'_*\omega_{\wt H} \ar[d]^\beta \\
      \omega_{X}(H) \ar[r]^-\varrho & \omega_{H} }
  \]
  By assumption, $\alpha$ is an isomorphism, and as $X$ is \CM, $\varrho$ is
  surjective by \autoref{lem:ses-for-CM}. It follows that then $\varrho\circ\alpha$
  is surjective, which implies that then so is $\beta$. However, $\beta$ is
  injective, because it is generically injective and $\omega_{\wt H}$ is
  torsion-free, so $\beta$ is an isomorphism and hence $H$ has rational singularities
  by Kempf's criterion.
\end{proof}

\begin{lem}\label{lem:ses-for-hyperplane-section}
  For each $p\in\bN$ there is a distinguished triangle,
  \[
    \xymatrix{%
      \Om^{p-1}_H(\log\Sigma\resto H)\otimes\sO_H(-H) \ar[r] &
      \Om^p_X(\log\Sigma)\lotimes\sO_H \ar[r] & \Om^p_H(\log\Sigma\resto H)
      \ar[r]^-{+1} & .}
  \]
\end{lem}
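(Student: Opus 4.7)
My plan is to reduce the statement to the classical conormal exact sequence on a suitable cubical hyperresolution of $X$ and then push forward via the derived projection formula.

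First I would choose a cubical hyperresolution $\pi_\kdot\col X_\kdot\to X$ such that each $X_\alpha$ is smooth and $\Sigma_\alpha\leteq \pi_\alpha^{-1}(\Sigma)_{\red}$ is SNC on $X_\alpha$. Since $H$ is a general member of a basepoint-free linear system, applying Bertini componentwise to the smooth $X_\alpha$ (cf.~\autoref{lem:Bertini-for-Sing}), I may further assume that $H_\alpha\leteq\pi_\alpha^*H$ is smooth and that $\Sigma_\alpha+H_\alpha$ is SNC on $X_\alpha$. With these choices the restriction $\pi_\kdot\resto{H_\kdot}\col H_\kdot\to H$ is a cubical hyperresolution of $H$ adapted to $\Sigma\cap H$, so by construction it computes $\Om^p_H(\log\Sigma\resto H)\simeq\myR(\pi_\kdot\resto{H_\kdot})_*\Omega^p_{H_\kdot}(\log\Sigma_\kdot\resto{H_\kdot})$.

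Next, on each smooth $X_\alpha$ the logarithmic conormal sequence
\[
0\to\sO_{H_\alpha}(-H_\alpha)\to \Omega_{X_\alpha}(\log\Sigma_\alpha)\otimes\sO_{H_\alpha}\to \Omega_{H_\alpha}(\log\Sigma_\alpha\resto{H_\alpha})\to 0
\]
induces, via its Koszul filtration on the $p$-th exterior power, the short exact sequence
\[
0\to \Omega^{p-1}_{H_\alpha}(\log\Sigma_\alpha\resto{H_\alpha})\otimes\sO_{H_\alpha}(-H_\alpha)\to \Omega^p_{X_\alpha}(\log\Sigma_\alpha)\otimes\sO_{H_\alpha}\to \Omega^p_{H_\alpha}(\log\Sigma_\alpha\resto{H_\alpha})\to 0.
\]
Because $\Omega^p_{X_\alpha}(\log\Sigma_\alpha)$ is locally free and $H_\alpha$ is Cartier, the tensor product here agrees with the derived tensor product, and these sequences assemble into a distinguished triangle of cubical complexes on $H_\kdot$.

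Finally, apply $\myR(\pi_\kdot\resto{H_\kdot})_*$ to the assembled triangle. The third term gives $\Om^p_H(\log\Sigma\resto H)$ by the identification above. Since $H$ is general, $\pi_\kdot^*H=H_\kdot$ as Cartier divisors, so $\sO_{H_\kdot}(-H_\kdot)=(\pi_\kdot\resto{H_\kdot})^*\sO_H(-H)$, and the derived projection formula turns the first term into $\Om^{p-1}_H(\log\Sigma\resto H)\otimes\sO_H(-H)$. For the middle term, the projection formula applied to $\pi_\kdot$, using that $\sO_H$ has finite Tor-dimension on $X$ via the two-term resolution $\sO_X(-H)\to\sO_X$, yields $\myR\pi_{\kdot *}\bigl(\Omega^p_{X_\kdot}(\log\Sigma_\kdot)\lotimes\sO_{H_\kdot}\bigr)\simeq \Om^p_X(\log\Sigma)\lotimes\sO_H$. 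The main obstacle is securing a single cubical hyperresolution that simultaneously resolves $\Sigma+H$ and whose pullback to $H$ is itself a hyperresolution of $H$ adapted to $\Sigma\resto H$; this is where the generality of $H$ is essential. Once that simultaneous compatibility is in place, the rest is a formal combination of the classical conormal sequence and the derived projection formula.
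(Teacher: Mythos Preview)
Your proposal is correct and follows essentially the same route as the paper: choose a hyperresolution adapted to $\Sigma$ so that the pullback of $H$ remains a hyperresolution of $H$, use the conormal exact sequence on each smooth component, and push forward using the projection formula together with the two-term resolution $\sO_X(-H)\to\sO_X$ to identify the middle term with $\Om^p_X(\log\Sigma)\lotimes\sO_H$. The only cosmetic difference is that the paper explicitly allows $\Sigma_\alpha=X_\alpha$ (with the log complex set to zero there), which you should also accommodate.
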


\begin{proof}
  Let $\pi_\kdot:X_\kdot\to X$ be a hyperresolution of $X$ such that for each
  $\alpha$, $\Sigma_\alpha\leteq\pi_\alpha^*\Sigma\subseteq X_\alpha$ is either
  empty, equal to $X_\alpha$, or is an snc divisor on $X_\alpha$. As $H$ is a general
  member of a basepoint-free linear system,
  $\pi^H_\kdot: H_\kdot\leteq H\times_XX_\kdot\to H$ is also a hyperresolution.
  Furthermore, $H_\alpha+\Sigma_\alpha$ is an snc divisor on $X_\alpha$ for each
  $\alpha$ for which $\Sigma_\alpha\neq X_\alpha$.  Then on each component of the
  hyperresolution for which $\Sigma_\alpha\neq X_\alpha$, there is a \ses,
  \[
    \xymatrix{%
      0\ar[r] & \Omega^{p-1}_{H_\alpha}(\log{\Sigma_\alpha}\resto{H_\alpha})
      \otimes\sO_{H_\alpha}(-{H_\alpha}) \ar[r] &
      \Omega^p_{X_\alpha}(\log\Sigma_\alpha) \otimes\sO_{H_\alpha} \ar[r] &
      \Omega^p_{H_\alpha}(\log{\Sigma_\alpha}\resto{H_\alpha}) \ar[r] & 0.}
  \]
  Applying $\myR(\pi_\kdot)_*$ to this 
  leads to the desired distinguished triangle once we establish that
  \begin{equation}
    \label{eq:15}
    \myR(\pi_\kdot)_*\left(\Omega^p_{X_\kdot}(\log\Sigma_\kdot)
      \otimes\sO_{H_\kdot}\right)\simeq\Om^p_X(\log\Sigma)\lotimes\sO_H.
  \end{equation}
  This follows easily by considering the system of short exact sequences on
  $X_\kdot$, 
  \[
    \xymatrix{%
      0\ar[r] &
      \Omega^p_{X_\kdot}(\log\Sigma_\kdot)\otimes\sO_{X_\kdot}(-H_\kdot)\ar[r] &
      \Omega^p_{X_\kdot}(\log\Sigma_\kdot) \ar[r] &
      \Omega^p_{X_\kdot}(\log\Sigma_\kdot)\otimes\sO_{H_\kdot} \ar[r] & 0. }
  \]
  Applying $\myR(\pi_\kdot)_*$ to this \ses and using the projection formula for the
  first term gives
  \[
    \xymatrix{%
      \Om^p_X(\log\Sigma)\otimes \sO_X(-H) \ar[r] & \Om^p_X(\log\Sigma) \ar[r] &
      \myR(\pi_\kdot)_*\left(\Omega^p_{X_\kdot}(\log\Sigma_\kdot)\otimes\sO_{H_\kdot}\right)
      \ar[r]^-{+1} &. }
  \]
  This proves the required isomorphism \autoref{eq:15} and hence the desired
  statement.
\end{proof}

\begin{cor}\label{cor:premmdb-ses}
  If $H$ has \premmdb \sings, then for each $p\leq m$ there is a \ses,
  \[
    \xymatrix{%
      0\ar[r] & \wt\Omega_H^{p-1} \otimes \sO_H(-H) \ar[r] &
      \wt\Omega_X^p\otimes\sO_H \ar[r] & \wt\Omega_H^p \ar[r] & 0.  }
  \]
\end{cor}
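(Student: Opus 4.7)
The plan is to derive the asserted short exact sequence by taking the long exact cohomology sequence of the distinguished triangle furnished by \autoref{lem:ses-for-hyperplane-section}. Specializing that lemma to $\Sigma = \emptyset$ yields the distinguished triangle
\[
  \Om^{p-1}_H \otimes \sO_H(-H) \longrightarrow \Om^p_X \lotimes \sO_H \longrightarrow \Om^p_H \overset{+1}{\longrightarrow}.
\]
Since $p\leq m$, we have $p-1\leq m-1$, and the hypothesis that $H$ is \premmdb gives, by \autoref{def:mdb-mrtl}, an isomorphism $\Om^{p-1}_H \simeq \wt\Omega^{p-1}_H$. As $\sO_H(-H)$ is a line bundle, tensoring with it is exact, so the leftmost term of this triangle is the sheaf $\wt\Omega_H^{p-1}\otimes\sO_H(-H)$ concentrated in degree zero.

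Next, I would pass to the associated long exact cohomology sequence. The relevant identifications are: $h^0(\Om^p_X \lotimes \sO_H) \simeq \wt\Omega_X^p\otimes\sO_H$ by \autoref{cor:h0-of-lotimes}, and $h^0(\Om^p_H) = \wt\Omega^p_H$ by the definition in \autoref{subsec:hz-complex-ddb}. In order to collapse the relevant five-term piece
\[
  h^{-1}(\Om^p_H) \longrightarrow \wt\Omega_H^{p-1}\otimes\sO_H(-H) \longrightarrow \wt\Omega_X^p\otimes\sO_H \longrightarrow \wt\Omega_H^p \longrightarrow h^{1}\bigl(\wt\Omega_H^{p-1}\otimes\sO_H(-H)\bigr)
\]
into a short exact sequence, two vanishings are required. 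The rightmost $h^1$ vanishes because its argument is a plain sheaf in degree zero. The leftmost $h^{-1}(\Om^p_H)$ vanishes because each graded piece $\Om^p_Y$ of the \DDB complex of any complex scheme $Y$ is cohomologically supported in non-negative degrees; this is intrinsic to the construction via \autoref{eq:38} (or \autoref{lem:Rnu-of-DDB-is-DDB}) as higher direct images of sheaves of K\"ahler forms on a (cubical partial) hyperresolution.

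The main obstacle, modest as it is, is precisely recording these two vanishings so that the five-term piece of the LES collapses to the desired SES. Once this is done, the LES produces exactly the sequence in the statement; the naturality of the maps is inherited from the naturality of the distinguished triangle of \autoref{lem:ses-for-hyperplane-section}, and nothing further is required.
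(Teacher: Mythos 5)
Your proof is correct and follows the paper's own approach: specialize the distinguished triangle of \autoref{lem:ses-for-hyperplane-section} to $\Sigma=\emptyset$, take the long exact cohomology sequence, identify the $h^0$ terms via \autoref{cor:h0-of-lotimes}, and use the \premmdb hypothesis on $H$ to kill $h^1$ of the first term. You are slightly more explicit than the paper in recording the (true, but left implicit there) vanishing $h^{-1}(\Om^p_H)=0$, which makes the argument a bit more self-contained.
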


\begin{proof}
  Consider the long exact cohomology sequence associated to the \dt in
  \autoref{lem:ses-for-hyperplane-section} with $\Sigma=\emptyset$. The sheaves in
  the above sequence are the $h^0$ sheaves of those complexes by
  \autoref{cor:h0-of-lotimes} and $h^1(\Om_H^{p-1}\otimes\sO_H(-H))=0$ by assumption.
\end{proof}

\noin We can also compare the logarithmic complexes with different loci of poles.

\begin{lem}\label{lem:ses-for-log}
  For each $p\in\bN$ there exist the following distinguished triangles:
  \begin{gather}
    \label{eq:20}\xymatrix{%
      \Om^p_X(\log\Sigma) \ar[r]^-\phi & \Om^p_X\left(\log (\Sigma+H)\right)\ar[r] &
      \Om^{p-1}_H(\log\Sigma\resto H)
      \ar[r]^-{+1} & 
    }\\
    \xymatrix{%
      \Om^p_X\left(\log (\Sigma+H)\right)\otimes\sO_X(-H) \ar[r]^-\psi &
      \Om^p_X(\log\Sigma)\ar[r] & \Om^{p}_H(\log\Sigma\resto H) \ar[r]^-{+1} & .}
  \end{gather}
  Furthermore, the composition of the morphisms $\phi$ and $\psi$ in the above
  diagrams agree with the morphism induced by the embedding $\sO_X(-H)\into\sO_X$:
  \begin{equation}
    \label{eq:50}
    \phi\circ\psi: \Om^p_X\left(\log (\Sigma+H)\right)\otimes\sO_X(-H)
    \longrightarrow
    \Om^p_X\left(\log (\Sigma+H)\right).
  \end{equation}
\end{lem}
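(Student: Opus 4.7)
The plan is to mimic the strategy used in \autoref{lem:ses-for-hyperplane-section}: choose a hyperresolution on which everything becomes smooth and snc, write down the two classical short exact sequences of logarithmic differentials on each component, and push forward.

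More precisely, let $\pi_\kdot\col X_\kdot\to X$ be a hyperresolution of $X$ such that for each index $\alpha$ both $\Sigma_\alpha\leteq\pi_\alpha^{-1}\Sigma$ and the (pulled-back) divisor $H_\alpha\leteq\pi_\alpha^*H$ are empty, equal to $X_\alpha$, or snc, and such that $(\Sigma_\alpha+H_\alpha)_\red$ is snc wherever it is a divisor. Since $H$ is a general member of a basepoint-free linear system, such a hyperresolution is available (by taking a fixed hyperresolution first and then choosing $H$ generally, which preserves snc transversality by Bertini). As in the proof of \autoref{lem:ses-for-hyperplane-section}, $\pi^H_\kdot\col H_\kdot\leteq H\times_XX_\kdot\to H$ is then a hyperresolution of $H$.

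On each $X_\alpha$ with $\Sigma_\alpha\neq X_\alpha$ the two standard local residue/restriction short exact sequences for snc divisors hold:
\[
  \xymatrix@R.5em{
    0\ar[r] & \Omega^p_{X_\alpha}(\log\Sigma_\alpha)\ar[r] &
    \Omega^p_{X_\alpha}\bigl(\log(\Sigma_\alpha+H_\alpha)\bigr)\ar[r] &
    \Omega^{p-1}_{H_\alpha}(\log\Sigma_\alpha\resto{H_\alpha})\ar[r] & 0, \\
    0\ar[r] & \Omega^p_{X_\alpha}\bigl(\log(\Sigma_\alpha+H_\alpha)\bigr)\otimes
    \sO_{X_\alpha}(-H_\alpha)\ar[r] &
    \Omega^p_{X_\alpha}(\log\Sigma_\alpha)\ar[r] &
    \Omega^p_{H_\alpha}(\log\Sigma_\alpha\resto{H_\alpha})\ar[r] & 0.
  }
\]
Applying $\myR(\pi_\kdot)_*$ to the first \ses directly yields the first \dt. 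For the second, the only nontrivial point is to identify $\myR(\pi_\kdot)_*\bigl(\Omega^p_{X_\kdot}(\log(\Sigma_\kdot+H_\kdot))\otimes\sO_{X_\kdot}(-H_\kdot)\bigr)$ with $\Om^p_X(\log(\Sigma+H))\otimes\sO_X(-H)$. Because $H_\kdot=\pi_\kdot^*H$, this is exactly the projection formula applied to the line bundle $\sO_X(-H)$, completely parallel to the argument given in \eqref{eq:15} of \autoref{lem:ses-for-hyperplane-section}; one simply writes down the system of \sess obtained by tensoring with $\sO_{X_\kdot}(-H_\kdot)\to\sO_{X_\kdot}\to\sO_{H_\kdot}\to 0$ and applies $\myR(\pi_\kdot)_*$.

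For the compatibility statement \eqref{eq:50} it suffices to check the identity component by component on the hyperresolution. On each snc component, the composite
\[
  \Omega^p_{X_\alpha}\bigl(\log(\Sigma_\alpha+H_\alpha)\bigr)\otimes
  \sO_{X_\alpha}(-H_\alpha)\xrightarrow{\psi_\alpha}
  \Omega^p_{X_\alpha}(\log\Sigma_\alpha)\xrightarrow{\phi_\alpha}
  \Omega^p_{X_\alpha}\bigl(\log(\Sigma_\alpha+H_\alpha)\bigr)
\]
is the multiplication-by-$H_\alpha$ morphism: locally, if $f$ is a defining equation of $H_\alpha$ and $\om=\beta\wedge\tfrac{df}{f}+\gamma$ is a local section of the first sheaf (with $\beta,\gamma$ regular in the direction of $H_\alpha$ and with log poles along $\Sigma_\alpha$), then $f\cdot\om=\beta\wedge df+f\gamma$ lies in $\Omega^p_{X_\alpha}(\log\Sigma_\alpha)$, and its image back in $\Omega^p_{X_\alpha}(\log(\Sigma_\alpha+H_\alpha))$ is visibly $f\cdot\om$. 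Pushing this equality forward to $X$ under $\myR(\pi_\kdot)_*$ and using the identification from the projection formula above gives \eqref{eq:50}.

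The only real obstacle is bookkeeping: ensuring that the chosen hyperresolution makes $\Sigma_\alpha$, $H_\alpha$, and $(\Sigma_\alpha+H_\alpha)_\red$ simultaneously snc (which is where the generality of $H$ and Bertini enter), and verifying that the projection-formula identification intertwines $\psi$ with the tensor map induced by $\sO_X(-H)\into\sO_X$ in a way compatible with the local computation, so that \eqref{eq:50} is not merely a local but a global assertion on $X$.
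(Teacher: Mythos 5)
Your proof is correct and follows essentially the same route as the paper: push forward the two classical residue/restriction short exact sequences for snc log forms from a suitable hyperresolution, and verify the composition identity locally. You supply slightly more detail than the paper (making the projection-formula identification and the local computation of $\phi_\alpha\circ\psi_\alpha$ explicit, where the paper just says "by construction"), but the argument is the same.
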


\begin{proof}
  Let $\pi_\kdot:X_\kdot\to X$ be a hyperresolution of $X$ such that for each
  $\alpha$, $\Sigma_\alpha\leteq\pi_\alpha^*\Sigma\subseteq X_\alpha$ is either
  empty, equal to $X_\alpha$, or is an snc divisor on $X_\alpha$. As $H$ is a general
  member of a basepoint-free linear system,
  $\pi^H_\kdot: H_\kdot\leteq H\times_XX_\kdot\to H$ is also a hyperresolution.
  Furthermore, $\Sigma_\alpha+H_\alpha$ is an snc divisor on $X_\alpha$ for each
  $\alpha$ for which $\Sigma_\alpha\neq X_\alpha$.  Then on each component of the
  hyperresolution for which $\Sigma_\alpha\neq X_\alpha$, there exist \sess
  (cf.\cite[2.3]{EV92}),
  \begin{equation}
    \label{eq:24}
    \begin{gathered}
    \xymatrix{%
      0\ar[r] & \Omega^p_{X_\alpha}(\log\Sigma_\alpha) \ar[r]^-{\phi_\alpha} &
      \Omega^p_{X_\alpha}\left(\log ({\Sigma_\alpha+H_\alpha})\right)\ar[r] &
      \Omega^{p-1}_{H_\alpha}(\log ({\Sigma_\alpha}\resto {H_\alpha}) \ar[r] & 0,
    }\\
    \xymatrix{%
      0\ar[r] & \Omega^p_{X_\alpha}\left(\log
        ({\Sigma_\alpha+H_\alpha})\right)\otimes\sO_{X_\alpha}(-{H_\alpha})
      \ar[r]^-{\psi_\alpha} & \Omega^p_{X_\alpha}(\log\Sigma_\alpha)\ar[r] &
      \Omega^{p}_{H_\alpha}(\log ({\Sigma_\alpha}\resto {H_\alpha}) \ar[r] & 0. }      
    \end{gathered}
    \end{equation}
  Applying $\myR(\pi_\kdot)_*$ to these \sess gives the desired \dts.

  The composition of the morphisms $\phi_\alpha$ and $\psi_\alpha$ in the above
  diagrams agree with the morphism induced by the embedding $\sO_X(-H)\into\sO_X$ by
  construction, so the same is true for $\phi\circ\psi$.
\end{proof}
\noin
We obtain similar \dts for the corresponding filtration complexes.

\begin{lem}\label{cor:ses-for-filt-of-log}
  For each $p\in\bN$ there exist the following \dts:
  \begin{gather}
    \label{eq:25}
    \xymatrix{%
      \uf^p_X(\log\Sigma) \ar[r]^-{\phi_1}
      & \uf^p_X\left(\log (\Sigma+H)\right)
      \ar[r]& \uf^{p-1}_H(\log\Sigma\resto H)[-1] \ar[r]^-{+1} &
    }\\
     \label{eq:23}
    \xymatrix{%
      \uf_p^X(\log\Sigma) \ar[r]^-{\phi_2} & \uf_p^X\left(\log (\Sigma+H)\right)
      \ar[r]& \uf_{p-1}^H(\log\Sigma\resto H)[-1] \ar[r]^-{+1} &
    }
  \end{gather}
\end{lem}

\begin{proof}
  We will prove \autoref{eq:25}, and
  \autoref{eq:23} follows essentially the same way.

  As in the proof of \autoref{lem:ses-for-log}, let $\pi_\kdot:X_\kdot\to X$ be a
  hyperresolution of $X$ such that for each $\alpha$,
  $\Sigma_\alpha\leteq\pi_\alpha^*\Sigma\subseteq X_\alpha$ is either empty, equal to
  $X_\alpha$, or is an snc divisor on $X_\alpha$. As $H$ is a general member of a
  basepoint-free linear system, $\pi^H_\kdot: H_\kdot\leteq H\times_XX_\kdot\to H$ is
  also a hyperresolution.  Furthermore, $\Sigma_\alpha+H_\alpha$ is an snc divisor on
  $X_\alpha$ for each $\alpha$ for which $\Sigma_\alpha\neq X_\alpha$.  Then on each
  component of the hyperresolution for which $\Sigma_\alpha\neq X_\alpha$, there
  exists a diagram:  
  \[
    \xymatrix@R01.5em{%
      & 0\ar[d] & 0\ar[d] & 0\ar[d] &\\
      0\ar[r] & \ar[d] \f^{p+1}_{X_\alpha}(\log\Sigma_\alpha) \ar@{..>}[r] & \ar[d]
      \f^{p+1}_{X_\alpha}(\log(\Sigma_\alpha+H_\alpha)) \ar@{..>}[r]& \ar[d]
      \f^{p}_{H_\alpha}(\log{\Sigma_\alpha}\resto{H_\alpha})[-1] \ar[r] & 0\\
      0\ar[r] & \f^{p}_{X_\alpha}(\log\Sigma_\alpha) \ar[d] \ar[r] &
      \f^{p}_{X_\alpha}(\log(\Sigma_\alpha+H_\alpha)) \ar[d] \ar[r]&
      \f^{p-1}_{H_\alpha}(\log{\Sigma_\alpha}\resto{H_\alpha})[-1] \ar[r]\ar[d]
      & 0. \\
      0\ar[r] & \Omega^p_{X_\alpha}(\log\Sigma_\alpha)[-p] \ar[r]\ar[d] &
      \Omega^p_{X_\alpha}(\log(\Sigma_\alpha+H_\alpha))[-p] \ar[r]\ar[d] &
      \Omega^{p-1}_{H_\alpha}(\log{\Sigma_\alpha}\resto{H_\alpha})[-p] \ar[r]\ar[d]  & 0 \\
      & 0 & 0 & 0 &}
  \]
  Here the last row exists and is a \ses by \autoref{eq:24}, the columns are \sess,
  and come from \autoref{eq:5} and its log analogue, while the second row is a \ses
  by induction on $p$.
  Then the morphisms indicated by the dotted arrows exist and the first row is also a
  \ses by the 9-lemma.  Applying $\myR(\pi_\kdot)_*$ to this \ses (the first row)
  gives the desired \dt.
\end{proof}

\noin For the previous statement to be useful, we need a similar comparison for the
\hz-complexes.

\begin{lem}\label{cor:log-sequence-for-wt}
  If $X$ has \premdb \sings. Then
  \begin{enumerate}
  \item\label{eq:28} $\xymatrix{%
      0\ar[r] & \wt\Omega^p_X \ar[r] & \wt\Omega^p_X\left(\log H\right) \ar[r] &
      \wt\Omega^{p-1}_H \ar[r] & 0}$ is a \ses for each $p\leq m$,
  \item\label{eq:30} $\xymatrix{%
      \tf_{p}^X \ar[r] & \tf_{p}^X\left(\log H\right) \ar[r]& \tf_{p-1}^H[-1]
      \ar[r]^-{+1} & }$ is a \dt for each $p\leq m$, and
  \item\label{eq:68} \
    $\tf_{p}^X\left(\log H\right)\simeq \uf_{p}^X\left(\log H\right)$ for each
    $p\leq m$.
  \end{enumerate}
\end{lem}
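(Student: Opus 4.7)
The plan is to derive all three parts from the distinguished triangles already established in \autoref{lem:ses-for-log} and \autoref{cor:ses-for-filt-of-log}, combined with the vanishing of higher cohomology sheaves of the \DDB complex afforded by the \premdb hypothesis. I would treat the parts in the order (iii), (i), (ii), since (iii) contains the main technical content and the others drop out from it.

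The key step for (iii) is to show that the \premdb hypothesis on $X$ propagates to the logarithmic complex, namely that $h^i\bigl(\Om^p_X(\log H)\bigr)=0$ for $i>0$ and $p\le m$. Once this is known, the natural map $\wt\Omega_X^{\kdot}(\log H)\to\Om_X^{\kdot}(\log H)$ is a quasi-isomorphism in the range $[0,m]$, and (iii) then follows directly from the definitions of $\tf_p^X(\log H)$ and $\uf_p^X(\log H)$ as the bottom $p{+}1$ pieces of the respective co-filtrations. To establish the logarithmic vanishing I would feed the distinguished triangle \autoref{eq:20} with $\Sigma=\emptyset$,
\[
  \xymatrix{\Om^p_X \ar[r] & \Om^p_X(\log H) \ar[r] & \Om^{p-1}_H \ar[r]^-{+1} &,}
\]
into the long exact sequence of cohomology sheaves. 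The terms $h^i(\Om^p_X)$ vanish for $i>0$, $p\le m$ by the \premdb hypothesis; the terms $h^i(\Om^{p-1}_H)$ vanish for $i>0$, $p-1\le m-1$ provided one knows that the general hyperplane section $H$ is \premmdb. This inheritance of the \premdb property by a general hyperplane section is the main obstacle in the argument; I would deduce it in standard fashion from \autoref{lem:ses-for-hyperplane-section} with $\Sigma=\emptyset$, inducting on $p$ using the \dt $\Om^{p-1}_H\otimes\sO_H(-H)\to\Om^p_X\lotimes\sO_H\to\Om^p_H\to+1$, together with \autoref{cor:h0-of-lotimes} and the prime-avoidance input \autoref{cor:prime-avoidance--injectivity} applied to the coherent sheaves $h^i(\Om_X^p)$ to convert $\lotimes\sO_H$ into $\otimes\sO_H$.

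For (i), I would take the long exact sequence of $h^{\kdot}$ of the same \dt \autoref{eq:20}. The only non-trivial vanishings required are $h^{-1}(\Om^{p-1}_H)=0$, which is automatic since the \DDB complexes live in non-negative degrees, and $h^1(\Om^p_X)=0$, which is immediate from the \premdb hypothesis for $p\le m$. The resulting four-term sequence
\[
  \xymatrix@C1.3em{0\ar[r] & h^0(\Om_X^p) \ar[r] & h^0(\Om_X^p(\log H))\ar[r] &
    h^0(\Om_H^{p-1})\ar[r] & h^1(\Om_X^p)=0}
\]
is exactly the asserted \ses once one identifies the terms with $\wt\Omega_X^p$, $\wt\Omega_X^p(\log H)$ and $\wt\Omega_H^{p-1}$.

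For (ii), I would start from the distinguished triangle \autoref{eq:23} with $\Sigma=\emptyset$,
\[
  \xymatrix{\uf_p^X \ar[r] & \uf_p^X(\log H)\ar[r] & \uf_{p-1}^H[-1] \ar[r]^-{+1} &,}
\]
and rewrite each term using the identifications $\tf_p^X\simeq\uf_p^X$ (valid for $p\le m$ by $X$ being \premdb, via \autoref{lem:equiv-char-of-def-mdb-etc}), $\tf_p^X(\log H)\simeq\uf_p^X(\log H)$ (this is part (iii), just proved), and $\tf_{p-1}^H\simeq\uf_{p-1}^H$ (valid for $p-1\le m-1$ because $H$ is \premmdb, as established in the course of proving (iii)). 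Substituting these isomorphisms in yields the desired \dt in the $\tf$-world, completing the proof.
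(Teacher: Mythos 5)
Your proposal is correct, but the route differs from the paper's. The paper proves (i) directly by taking the long exact cohomology-sheaf sequence of the triangle in \autoref{eq:20} (with $\Sigma=\emptyset$) and using $h^1(\Om_X^p)=0$ from the \premdb hypothesis; (ii) then drops out at once because (i) is precisely the termwise short exact sequence of the complexes $\tf_p^X$, $\tf_p^X(\log H)$, $\tf_{p-1}^H[-1]$, and a termwise \ses of complexes yields a distinguished triangle; finally (iii) is obtained by mapping the triangle (ii) into \autoref{eq:23} and applying the derived-category five-lemma. You instead start with (iii), reducing it to the vanishing $h^i\bigl(\Om_X^p(\log H)\bigr)=0$ for $i>0$, $p\le m$, which you extract from \autoref{eq:20} together with the \premmdb property of $H$; you then get (i) just as the paper does, and recover (ii) by substituting the isomorphisms from $X$ and $H$ being \premdb/\premmdb and from (iii) into \autoref{eq:23}. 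Both routes in fact rely on $H$ inheriting the \premdb property from $X$: the paper uses it implicitly at the five-lemma step to identify $\tf_{p-1}^H\simeq\uf_{p-1}^H$, and packages it as the separate \autoref{prop:hyperplane-sections-of-premdb} (stated after this lemma, though its proof is independent); you correctly flag it as the crux of the argument and propose to prove it from \autoref{lem:ses-for-hyperplane-section}, \autoref{cor:h0-of-lotimes}, and prime avoidance, which is essentially the paper's own argument for that proposition. Your version has the merit of making the dependence on the hyperplane section being \premmdb fully explicit; the paper's ordering is marginally tighter in that its proof of (ii) needs nothing beyond (i).
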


\begin{proof}
  Consider the long exact sequence of cohomology for the \dt in \autoref{eq:20} (with
  $\Sigma=\emptyset$) and observe that $h^1$ of the first term is zero by the
  assumption and otherwise the sheaves in \autoref{eq:28} are the $h^0$ sheaves of
  the respective complexes. This proves \autoref{eq:28} and \autoref{eq:30}. Then
  \autoref{eq:68} follows from \autoref{eq:25}, \autoref{eq:30},
  \autoref{prop:hyperplane-sections-of-premdb}, and the derived category $5$-lemma.
\end{proof}

\begin{prop}\label{prop:hyperplane-sections-of-premdb}
  If $X$ has \premdb \sings then so does $H$.
\end{prop}

\begin{proof}
  Use induction on $p\leq m$ and prove that the conditions in \autoref{def:mdb-mrtl}
  are satisfied. The $p=-1$ case is trivially true. Assume that $p\geq 0$.  Applying
  \autoref{cor:prime-avoidance--injectivity} and \autoref{lem:ltimes-is-times} to the
  cohomology sheaves of $\Om^p_X$ implies that
  \[
    h^i(\Om^p_X\lotimes\sO_H)\simeq h^i(\Om^p_X)\lotimes\sO_H
  \]
  It follows that if $X$ has \premdb \sings, then $h^i(\Om^p_X\lotimes\sO_H)=0$ for
  $i>0$.

  Next, consider the distinguished triangle from
  \autoref{lem:ses-for-hyperplane-section}:
  \[
    \xymatrix{%
      \Om^{p-1}_H\otimes\sO_H(-H) \ar[r] & \Om^p_X\lotimes\sO_H \ar[r] & \Om^p_H
      \ar[r]^-{+1} & .}
  \]
  By induction $h^i(\Om^{p-1}_H)=0$ for $i>0$,
  so the desired statement follows.
\end{proof}

\begin{prop}\label{prop:hyperplane-sections-of-mdb}
  Assume that $X$ has \rtl \sings. Then, if $X$ has \wmdb (respectively \mdb) \sings,
  then so does $H$.
\end{prop}

\begin{proof}
  $H$ has \premdb \sings by \autoref{prop:hyperplane-sections-of-premdb}.  If $X$ has
  \rtl \sings, then $H$ has \rtl \sings by \autoref{lem:hyperplane-sections-of-rtl}
  and consequently, it has \wmdb \sings by \autoref{prop:wmdb+rtl=mdb}.  The \mdb
  case follows from \autoref{lem:Bertini-for-Sing}.
\end{proof}

\begin{rem}
  Note that \autoref{prop:hyperplane-sections-of-premdb} and the \mdb case of
  \autoref{prop:hyperplane-sections-of-mdb} were already obtained in \cite[Thm.~A,
  Thm.~D(1)]{SVV23}. It is included here for completeness, because we use slightly
  different definitions of some of these \sings.
\end{rem}

\noin The conclusion of \autoref{prop:hyperplane-sections-of-mdb} also
holds under different conditions.

\begin{lem}\label{claim:Sq}
  If $H$ has \premdb \sings and for some $p\leq m+1$ and for some $r$,
  $\wt\Omega_X^p$ is $S_r$ and $\wt\Omega_H^{p-1}$ is $S_{r+1}$. Then $\wt\Omega_H^p$
  is $S_{r}$.
\end{lem}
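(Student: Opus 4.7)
The plan is to combine the short exact sequence provided by \autoref{cor:premmdb-ses} with a local depth analysis. Since $H$ is \premdb and $p\leq m+1$, that corollary (applied with the parameter ``$m$'' of its statement taken to be $m+1$) furnishes the short exact sequence
\[
0 \longrightarrow \wt\Omega_H^{p-1}\otimes\sO_H(-H) \longrightarrow \wt\Omega_X^p\otimes\sO_H \longrightarrow \wt\Omega_H^p \longrightarrow 0
\]
on $H$. I will then read off the $S_r$-property of $\wt\Omega_H^p$ from the $S_{r+1}$-property of the left-hand term together with a suitable depth bound on the middle term.

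First, the left-hand term $\wt\Omega_H^{p-1}\otimes\sO_H(-H)$ is $S_{r+1}$, because tensoring with a line bundle does not affect depth or dimension at any prime. For the middle term I use that $\wt\Omega_X^p$ is torsion-free by \autoref{lem:hz-is-tf}, so that any local equation $f$ of $H$ is a non-zerodivisor on $\wt\Omega_X^p$ and the middle term coincides with $\wt\Omega_X^p/f\wt\Omega_X^p$ (cf.~\autoref{cor:ltensor-of-twidle}). Hence, for any prime $\frp$ of $H$ viewed inside $X$,
\[
\depth_{\sO_{H,\frp}}(\wt\Omega_X^p\otimes\sO_H)_\frp \;=\; \depth_{\sO_{X,\frp}}(\wt\Omega_X^p)_\frp - 1,
\]
so the hypothesis that $\wt\Omega_X^p$ is $S_r$ on $X$, combined with $\dim\sO_{X,\frp}=\dim\sO_{H,\frp}+1$, translates into a usable depth bound for the middle term on $H$.

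With these two depth bounds in hand, I will chase the long exact sequence of local cohomology at $\frp$ associated to the above short exact sequence. The desired vanishing $H^i_\frp(\wt\Omega_H^p)=0$ for $i<\min(r,\dim\sO_{H,\frp})$ follows by combining the vanishing of $H^i_\frp$ of the middle term (in the relevant range) with the vanishing of $H^{i+1}_\frp$ of the left-hand term, in the same spirit as the proof of \autoref{lem:s2-meta}.

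The main technical obstacle lies in the bookkeeping: the naive depth bound for $\wt\Omega_X^p\otimes\sO_H$ is one less than that coming from $\wt\Omega_H^{p-1}$, so one must exploit the extra degree of depth supplied by the $S_{r+1}$-hypothesis on $\wt\Omega_H^{p-1}$ to close the gap in the long exact sequence. This is precisely the kind of argument encapsulated in \autoref{lem:s2-meta} and \autoref{lem:s2-for-les}, which should apply (after possibly reindexing) to our three-term sequence.
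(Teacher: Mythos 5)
Your approach is the same as the paper's: the proof in the paper also consists of writing down the short exact sequence from \autoref{cor:premmdb-ses} and then citing \cite[1.2.9]{MR1251956}. So you have identified the right sequence and the right tool. However, you have correctly flagged but not actually closed the off-by-one problem, and the escape route you point to cannot close it.

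Concretely, at a prime $\bar\frp$ of $H$ with $d=\dim\sO_{H,\bar\frp}$ and preimage $\frp$ in $X$, the middle term of the sequence satisfies
\[
\depth_{\bar\frp}\bigl(\wt\Omega_X^p\otimes\sO_H\bigr)=\depth_\frp\wt\Omega_X^p-1\;\geq\;\min(r,d+1)-1,
\]
which is $r-1$ whenever $d\geq r$. In other words, restricting a torsion-free $S_r$-sheaf to a Cartier divisor yields in general only an $S_{r-1}$-sheaf on the divisor, not $S_r$. Feeding $\depth A\geq\min(r+1,d)$ and $\depth B\geq\min(r-1,d)$ into \cite[1.2.9(c)]{MR1251956}, i.e.\ $\depth(\mathrm{coker})\geq\min(\depth(\mathrm{ker})-1,\depth(\mathrm{mid}))$, gives $\depth_{\bar\frp}\wt\Omega_H^p\geq\min(r-1,d-1)$ in \emph{every} case --- one short of the target $\min(r,d)$. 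The ``extra degree of depth'' from the $S_{r+1}$-hypothesis on $\wt\Omega_H^{p-1}$ cannot compensate here, because the inequality takes a minimum and the bottleneck is the middle term, not the subobject. Moreover, \autoref{lem:s2-meta}\autoref{item:24} and \autoref{lem:s2-for-les} bound the Serre condition of the \emph{kernel} (leftmost term) of an exact sequence, whereas $\wt\Omega_H^p$ sits on the right as the \emph{cokernel}; no reindexing converts a bound on the one into a bound on the other.

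What is missing is an additional input about the middle term: one needs to know that $\wt\Omega_X^p\otimes\sO_H$ is genuinely $S_r$ on $H$ (for instance, a Bertini-type statement asserting that $\wt\Omega_X^p$ satisfies $S_{r+1}$ at every prime containing the equation of a general $H$), after which \cite[1.2.9(c)]{MR1251956} does close the argument. Your sketch, like the paper's one-line citation, does not supply this ingredient.
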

\begin{proof}
  Consider the \ses from \autoref{cor:premmdb-ses}:
  \[
    \xymatrix{%
      0\ar[r] & \wt\Omega_H^{p-1} \otimes \sO_H(-H) \ar[r] &
      \wt\Omega_X^p\otimes\sO_H \ar[r] & \wt\Omega_H^p \ar[r] & 0.  }
  \]
  The statement follows from \cite[1.2.9]{MR1251956}.
\end{proof}

\begin{lem}\label{cor:SQ}
  If there is an $a\in\bN$ such that $\wt\Omega_X^p$ is $S_{a-p}$ for $p\leq m$.
  Then $\wt\Omega_H^p$ is $S_{a-p}$ for $p\leq m$.
\end{lem}

\begin{proof}
  For $p=0$, $\wt\Omega_X^0\simeq\sO_{X_{\text{sn}}}$, by
  \cite[5.2]{MR1741272}, where $X_{\text{sn}}$ is the
  semi-normalization of $X$.
  The pre-image of $H$ in $X_{\text{sn}}$ is the semi-normalization of
  $H$ by \cite[2.5]{MR1017925}, and hence the statement in this case
  follows from \cite[12.1.6]{EGAIV3}.
  By induction on $p$ we may assume that $\wt\Omega_H^{p-1}$ is
  $S_{a+1-p}$ and then the statement follows from \autoref{claim:Sq}.
\end{proof}

\begin{prop}\label{prop:wmdb-for-non-normal}
  If $X$ has \wmdb (respectively \mdb) \sings and $\wt\Omega_X^p$ is $S_{m+2-p}$ for
  $p\leq m$ (e.g., it is \CM), then so does $H$.
\end{prop}


\begin{proof}
  $H$ has \premdb \sings by
  \autoref{prop:hyperplane-sections-of-premdb} and it is semi-normal
  by \cite[2.5]{MR1017925}. Furthermore, $\wt\Omega_H^p$ is $S_2$ by
  \autoref{cor:SQ} for $p\leq m$, so $H$ has \wmdb \sings.
  The \mdb case, as before, follows from \autoref{lem:Bertini-for-Sing}.
  %
\end{proof}

\begin{rem}
  As opposed to \autoref{prop:hyperplane-sections-of-mdb}, 
  \autoref{prop:wmdb-for-non-normal} works even when $X$ is not normal.
\end{rem}




\section{Cyclic covers}\label{sec:cyclic-covers}
\noindent
\subsection{The effect of cyclic covers on complexes of differential forms}
\begin{notation}\label{setup}
  Let $X$ be a complex scheme, $\imath:\Sigma\into X$ a closed subset,
  $\jmath:V\leteq X\setminus\Sigma\into X$ the corresponding open embedding, $\sL$ a
  semi-ample line bundle on $X$, and $s\in H^0(X,\sL^N)$ a general section for some
  $N\gg 0$.  Let $H=(s=0)$ be its zero locus and let
  \[
    \eta: Y\leteq \Spec \oplus _{i=0}^{N-1}\sL^{-i} \to
    X
  \]
  denote the cyclic cover corresponding to the section $s$. Finally, let
  $H'\leteq (\eta^*H)_\red$, $\Sigma'\leteq\eta^{-1}\Sigma$,
  $V'\leteq\eta^{-1}V=Y\setminus\Sigma'$, and $p\in\bN$.
\end{notation}

\noin %
The following is a slightly generalized singular version of \cite[3.16]{EV92}:

\begin{thm}\label{lem:finite-cover}
  Using the notation and setup of \eqref{setup}, we have the following:
  \begin{enumerate}
  \item\label{item:10} The functor $\eta_*=\myR\eta_*$ is exact,
  \item\label{item:11}
    $\eta_*\Om_Y^p\left(\log (\Sigma'+H')\right)\simeq \oplus_{i=0}^{N-1}\Om_X^p\left(\log
      (\Sigma+H)\right) \otimes \sL^{-i}$, and
  \item\label{item:13}
    $\eta_*\Om_Y^p(\log \Sigma')\simeq \Om_X^p(\log \Sigma) \oplus
    \left(\oplus_{i=1}^{N-1}\Om_X^p\left(\log (\Sigma+H)\right) \otimes
      \sL^{-i}\right)$.
  \item\label{item:12}
    $\eta_*\Om_Y^p\simeq \Om_X^p \oplus \left(\oplus_{i=1}^{N-1}\Om_X^p(\log H)
      \otimes \sL^{-i}\right)$.
  \end{enumerate}
\end{thm}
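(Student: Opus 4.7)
The plan is to first dispatch part (i) as an immediate consequence of finiteness, and then derive (ii)--(iv) from the smooth analogue, which is the well-known Esnault--Viehweg calculation \cite[3.16]{EV92}, by working on a hyperresolution of $X$ pulled back through the cyclic cover.

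For part (i), since $\eta$ is a finite morphism (the cyclic cover is affine and of finite type, locally free of rank $n$), the pushforward $\eta_*$ is exact on quasi-coherent sheaves, so $\myR^{q}\eta_{*}=0$ for $q>0$ and hence $\myR\eta_{*}=\eta_{*}$ term-by-term on bounded complexes with quasi-coherent cohomology. This also shows that $\eta_{*}\sO_{Y}\simeq\bigoplus_{i=0}^{n-1}\sL^{-i}$ and that $\eta$ is flat.

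For parts (ii)--(iv), I would choose a cubical hyperresolution $\pi_\kdot\col X_\kdot\to X$ so that on each component $X_\alpha$ (smooth) the preimages $\Sigma_\alpha$, $H_\alpha$, and $\Sigma_\alpha+H_\alpha$ are either empty, all of $X_\alpha$, or snc divisors. This is possible by the generality of $s$ and \autoref{lem:prime-avoidance}. Form the fiber product $Y_\kdot\leteq Y\times_{X}X_\kdot$ with induced cubical morphisms $\pi^{Y}_\kdot\col Y_\kdot\to Y$ and $\eta_\kdot\col Y_\kdot\to X_\kdot$. Since $\eta$ is flat, flat base change preserves cohomological descent, so $\pi^{Y}_\kdot$ is a cubical partial hyperresolution in the sense of this paper. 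Then \autoref{lem:Rnu-of-DDB-is-DDB} yields
\[
  \Om^{\kdot}_{Y}\bigl(\log(\Sigma'+H')\bigr)\filtqis\myR(\pi^{Y}_\kdot)_{*}\Om^{\kdot}_{Y_\kdot}\bigl(\log(\Sigma'_\kdot+H'_\kdot)\bigr),
\]
and the analogous formulas for the log structures appearing in (iii) and (iv).

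On each component, $\eta_\alpha\col Y_\alpha\to X_\alpha$ is the cyclic cover of a smooth variety determined by a section of $\sL_\alpha^{n}$ whose zero locus is snc, which is exactly the setting of \cite[3.16]{EV92}; at the non-smooth points of $Y_\alpha$ (arising where $H_\alpha$ has several snc components meeting) one further resolves to apply the smooth Esnault--Viehweg formula, and the resolution collapses under $\myR(\pi^{Y}_\kdot)_*$ back to the \DDB complex of $Y_\alpha$. Applying the Esnault--Viehweg decomposition componentwise and then pushing forward along the cartesian identity $\eta\circ\pi^{Y}_\kdot=\pi_\kdot\circ\eta_\kdot$ gives
\[
  \eta_{*}\Om_{Y}^{p}\bigl(\log(\Sigma'+H')\bigr)\simeq\myR(\pi_\kdot)_{*}\bigoplus_{i=0}^{n-1}\Omega^{p}_{X_\kdot}\bigl(\log(\Sigma_\kdot+H_\kdot)\bigr)\otimes\sL_\kdot^{-i}\simeq\bigoplus_{i=0}^{n-1}\Om^{p}_{X}\bigl(\log(\Sigma+H)\bigr)\otimes\sL^{-i}
\]
by the projection formula (the $\sL^{-i}$ being pulled back from $X$) and the definition of $\Om^{p}_{X}\bigl(\log(\Sigma+H)\bigr)$ via hyperresolutions. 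Parts (iii) and (iv) follow in exactly the same way, the only difference being that the $\mu_{n}$-character-$i$ summand on $X$ picks up log poles along $H$ for $i\geq 1$ because a holomorphic form on $Y$ locally looks like $z^{j}\,dz\wedge\cdots=\tfrac{1}{n}t^{(j+1-n)/n}\,dt\wedge\cdots$ in terms of a local parameter $t$ for $H$, which contributes a log pole along $H$ upon descent. The main obstacle is verifying that the $\mu_{n}$-equivariant Esnault--Viehweg decomposition is compatible with the hyperresolution structure at the components where $Y_\alpha$ is singular; this reduces to a local toric computation at points where $H_\alpha$ is snc with two or more components meeting, and is handled by a further equivariant resolution.
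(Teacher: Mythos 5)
Your overall strategy — pull the cyclic cover back through a cubical hyperresolution $\pi_\kdot\colon X_\kdot\to X$, apply the Esnault--Viehweg decomposition on each smooth component, and descend along $\eta\circ\varrho_\kdot=\pi_\kdot\circ\eta_\kdot$ — is essentially the paper's. But there is a genuine gap in the middle. You treat the fibered cover $Y_\alpha\to X_\alpha$ as a cyclic cover branched along an snc divisor that may have several components meeting, worry that $Y_\alpha$ is then singular, and propose to fix this by ``one further resolves \dots\ and the resolution collapses under $\myR(\pi^{Y}_\kdot)_*$ back to the \DB complex of $Y_\alpha$.'' That last step is not justified: it mixes up a resolution of a single $Y_\alpha$ with the total pushforward of the whole cubical object, and would require a nontrivial comparison between the de~Rham complex of an auxiliary resolution of $Y_\alpha$ and $\Om^\kdot_{Y_\alpha}$ before you can invoke \autoref{lem:Rnu-of-DDB-is-DDB}. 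As written, the ``local toric computation'' / ``further equivariant resolution'' sentence is a placeholder, not an argument.

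The point you are missing is that this difficulty does not arise at all. Since $s$ is a \emph{general} section of the globally generated $\sL^n$ (with $n\gg 0$), its pullback $H_\alpha=\pi_\alpha^{-1}(H)$ to the smooth $X_\alpha$ is, by Bertini, a \emph{smooth} divisor — not merely snc — and hence the degree-$n$ cyclic cover $Y_\alpha\to X_\alpha$ branched along $H_\alpha$ is itself smooth. (The snc condition is needed for $\Sigma_\alpha+H_\alpha$ so that the log complexes make sense; the branch divisor of the cover is $H_\alpha$ alone, and it is smooth.) Consequently $Y_\kdot=Y\times_X X_\kdot$ is a genuine cubical hyperresolution of $Y$, not just a partial one, the smooth Esnault--Viehweg formula \cite[3.16]{EV92} applies directly on each $Y_\alpha$, and no auxiliary resolution is needed. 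Incidentally, the tool that gives this is Bertini (smoothness of the general member of a basepoint-free system on a smooth base), not \autoref{lem:prime-avoidance}; the latter controls associated points of a coherent sheaf, which is not what you need here. Once you make that correction the rest of your argument (flat base change for cohomological descent, the projection formula, and setting $\Sigma=\emptyset$ for part (iv)) goes through and matches the paper's proof.
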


\begin{proof}
  As $\eta$ is finite, statement \autoref{item:10} is trivial.
  
  Let $\pi_\kdot:X_\kdot\to X$ be a hyperresolution of $X$ such that for each
  $\alpha$, $\Sigma_\alpha\leteq\pi_\alpha^*\Sigma\subseteq X_\alpha$ is either
  empty, equal to $X_\alpha$, or is an snc divisor on $X_\alpha$. As $H$ is a general
  member of a basepoint-free linear system,
  $\pi^H_\kdot: H_\kdot\leteq H\times_XX_\kdot\to H$ is also a hyperresolution.
  Furthermore, $\Sigma_\alpha+H_\alpha$ is an snc divisor on $X_\alpha$ for each
  $\alpha$ for which $\Sigma_\alpha\neq X_\alpha$.

  Recall that $s$ is a general section of a globally generated line bundle and hence
  for each $\alpha$ there exists a finite cyclic cover
  $\eta_\alpha:Y_\alpha\to X_\alpha$ given by
  $\pi_\alpha^*s\in H^0(X_\alpha,\pi_\alpha^*\sL^N)$.  Let
  $\Sigma_\alpha\leteq \pi_\alpha^{-1}\Sigma$, $H_\alpha\leteq\pi_\alpha^*H$,
  $\Sigma'_\alpha\leteq \eta_\alpha^{-1}\Sigma_\alpha$ and
  $H'_\alpha\leteq(\eta_\alpha^*H_\alpha)_\red$. The $Y_\alpha$ are smooth by
  construction and hence they form a cubic hyperresolution of $Y$:
  $\varrho_\kdot:Y_\kdot\to Y$. Furthermore, $\Sigma_\alpha+H_\alpha$ and
  $\Sigma'_\alpha+H'_\alpha$ are snc divisors on $X_\alpha$ and $Y_\alpha$
  respectively for each $\alpha$ for which $\Sigma_\alpha\neq X_\alpha$, and for
  those $\alpha$ there exists a natural embedding
  \begin{equation}
    \label{eq:43}
    \eta_\alpha^*\Omega_{X_\alpha}^p\left(\log (\Sigma_\alpha+H_\alpha)\right) \into
    \Omega_{Y_\alpha}^p\left(\log (\Sigma'_\alpha+H'_\alpha)\right),
  \end{equation}
  which is an isomorphism outside of $\Sigma'_\alpha\cap H'_\alpha$. This can be confirmed
  by a simple local calculation, cf.~\cite[1.6, Remark]{Viehweg82b},
  \cite[1.2]{Esn-Vie82}.
  Applying the exact functor $(\eta_\alpha)_*$ to \autoref{eq:43} yields an embedding
  which is an isomorphism in codimension $1$ and since the sheaf on the left hand
  side is locally free and hence reflexive, that embedding is actually an
  isomorphism:
  \begin{equation}
    \label{eq:44}
    (\eta_\alpha)_*    \eta_\alpha^*\Omega_{X_\alpha}^p\left(\log
      (\Sigma_\alpha+H_\alpha)\right) \simeq
    (\eta_\alpha)_*  \Omega_{Y_\alpha}^p\left(\log (\Sigma'_\alpha+H'_\alpha)\right),
  \end{equation}
  which implies that
  \begin{equation}
    \label{eq:54}
    (\eta_\alpha)_*  \Omega_{Y_\alpha}^p\left(\log
      (\Sigma'_\alpha+H'_\alpha)\right)\simeq
    \Omega_{X_\alpha}^p\left(\log
      (\Sigma_\alpha+H_\alpha)\right) \otimes (\eta_\alpha)_*  \sO_{Y_\alpha}.
  \end{equation}
  By construction, $\eta\circ\varrho_\kdot= \pi_\kdot\circ \eta_\kdot$, so it
  follows, using \autoref{eq:54}, that
  \begin{multline*}
    \eta_*\Om_Y^p\left(\log(\Sigma'+H')\right) =
    \myR\eta_*\Om_Y^p\left(\log(\Sigma'+H')\right) \simeq
    \myR\eta_*\myR\varrho_{\kdot
      *}\Omega_{Y_\kdot}^p\left(\log(\Sigma'_\kdot+H'_\kdot)\right)     \simeq \\
    \simeq \myR\pi_{\kdot *} \myR\eta_{\kdot
      *}\Omega_{Y_\kdot}^p\left(\log(\Sigma'_\kdot+H'_\kdot)\right) \simeq
    \myR\pi_{\kdot *}\left(\oplus_{i=0}^{N-1}\Omega_{X_\kdot}^p\left(\log
        (\Sigma_{\kdot}+H_\kdot)\right) \otimes\pi_\kdot^*\sL^{-i}\right) \simeq \\
    \simeq \oplus_{i=1}^{N-1}\myR\pi_{\kdot *}\Omega_{X_\kdot}^p\left(\log
      (\Sigma_{\kdot}+H_\kdot)\right) \otimes \sL^{-i} \simeq
    \oplus_{i=0}^{N-1}\Om_X^p\left(\log (\Sigma+H)\right) \otimes\sL^{-i}.
  \end{multline*}
  This proves \autoref{item:11}.
  Next, the local computation in the proof of \cite[3.16(d),p.29]{EV92}, accomodating
  for the logarithmic poles along $\Sigma_\alpha$ shows that
  \begin{equation}
    \label{eq:16}
    \eta_{\alpha*}\Omega_{Y_\alpha}^p(\log \Sigma'_\alpha)
    \simeq
    \Omega_{X_\alpha}^p(\log \Sigma_\alpha)\oplus
    \left(\oplus_{i=1}^{N-1}\Omega_{X_\alpha}^p\left(\log 
        (\Sigma_\alpha+H_\alpha)\right) 
      \otimes \pi_\alpha^*\sL^{-i}\right)
  \end{equation}
  Similarly to the above calculation, we have that
  \begin{multline*}
    \eta_*\Om_Y^p(\log \Sigma') = \myR\eta_*\Om_Y^p(\log \Sigma') \simeq
    \myR\eta_*\myR\varrho_{\kdot *}\Omega_{Y_\kdot}^p(\log \Sigma'_\kdot) \simeq
    \myR\pi_{\kdot *}\myR\eta_{\kdot *}\Omega_{Y_\kdot}^p(\log \Sigma'_\kdot) \simeq \\
    \simeq \myR\pi_{\kdot *}\left(\Omega_{X_\kdot}^p(\log
      \Sigma_\kdot)\oplus\left(\oplus_{i=1}^{N-1}\Omega_{X_\kdot}^p\left(\log
          (\Sigma_{\kdot}+H_\kdot)\right) \otimes\pi_\kdot^*\sL^{-i}\right)\right)
    \simeq \qquad \\ \qquad \simeq \myR\pi_{\kdot *}\Omega_{X_\kdot}^p (\log
    \Sigma_\kdot)\oplus \left( \oplus_{i=1}^{N-1}\myR\pi_{\kdot
        *}\Omega_{X_\kdot}^p\left(\log (\Sigma_{\kdot}+H_\kdot)\right) \otimes
      \sL^{-i}\right) \simeq \\ \simeq \Om_X^p(\log \Sigma) \oplus
    \left(\oplus_{i=1}^{N-1}\Om_X^p\left(\log (\Sigma+H)\right)
      \otimes\sL^{-i}\right),
  \end{multline*}
  which proves \autoref{item:13}. Choosing $\Sigma=\emptyset$ gives
  \autoref{item:12}.
\end{proof}

\begin{cor}\label{cor:finite-cover}
  We also have that
  \begin{enumerate}
  \item $\eta_*\wt\Omega_Y^p\left(\log(\Sigma'+H')\right) \simeq
    \oplus_{i=0}^{N-1} \wt\Omega_X^p\left(\log(\Sigma+H)\right) \otimes \sL^{-i}$,

    \noin
    and the direct sum decomposition $\eta_*\sO_Y\simeq
    \oplus_{i=0}^{n+1}\sL^{-i}$ is compatible with the natural morphism
    \begin{multline*}
      \eta_*\wt\Omega_Y^p\left(\log(\Sigma'+H')\right)\simeq \oplus_{i=0}^{N-1}
      \wt\Omega_X^p\left(\log(\Sigma+H)\right) \otimes \sL^{-i} \longrightarrow \\
      \longrightarrow \eta_*\Om_Y^p\left(\log(\Sigma'+H')\right)
      \simeq\oplus_{i=0}^{N-1}\Om_X^p\left(\log(\Sigma+H)\right) \otimes \sL^{-i},
    \end{multline*}
  \item $\eta_*\wt\Omega_Y^p(\log \Sigma')\simeq \wt\Omega_X^p(\log
    \Sigma)\oplus\left(\oplus_{i=1}^{N-1} \wt\Omega_X^p\left(\log (\Sigma+H)\right)
      \otimes \sL^{-i}\right)$,

    \noin and the direct sum decomposition
    $\eta_*\sO_Y\simeq \oplus_{i=0}^{n+1}\sL^{-i}$ is compatible with the natural
    morphism
    \begin{multline*}
      \eta_*\wt\Omega_Y^p(\log \Sigma')\simeq \wt\Omega_X^p(\log
      \Sigma)\oplus\left(\oplus_{i=1}^{N-1} \wt\Omega_X^p\left(\log (\Sigma+H)\right)
        \otimes \sL^{-i}\right) \longrightarrow \\ \longrightarrow \eta_*\Om_Y^p(\log
      \Sigma')\simeq \Om_X^p(\log \Sigma) \oplus
      \left(\oplus_{i=1}^{N-1}\Om_X^p\left(\log (\Sigma+H)\right) \otimes
        \sL^{-i}\right),
    \end{multline*}
  \item $\eta_*\wt\Omega_Y^p\simeq \wt\Omega_X^p\oplus\left(\oplus_{i=1}^{N-1}
      \wt\Omega_X^p(\log H) \otimes
      \sL^{-i}\right)$,

    \noin
    and the direct sum decomposition $\eta_*\sO_Y\simeq
    \oplus_{i=0}^{n+1}\sL^{-i}$ is compatible with the natural morphism
    \[
      \eta_*\wt\Omega_Y^p\simeq \wt\Omega_X^p\oplus\left(\oplus_{i=1}^{N-1}
        \wt\Omega_X^p(\log H) \otimes \sL^{-i}\right) \longrightarrow
      \eta_*\Om_Y^p\simeq \Om_X^p \oplus \left(\oplus_{i=1}^{N-1}\Om_X^p(\log H)
        \otimes \sL^{-i}\right),
    \]
  \end{enumerate}  
\end{cor}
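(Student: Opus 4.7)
The plan is to deduce each of the three statements by applying $h^0$ to the corresponding isomorphism of Theorem~\ref{lem:finite-cover}. The key observation is that since $\eta\colon Y\to X$ is finite, $\eta_*=\myR\eta_*$ is exact on coherent (or quasi-coherent) sheaves, and hence it commutes with taking cohomology sheaves of complexes. In particular, for each complex $\cx{K}$ on $Y$ one has a natural isomorphism $h^0(\eta_*\cx{K})\simeq \eta_*h^0(\cx{K})$. Applied to $\cx{K}=\Om_Y^p\bigl(\log(\Sigma'+H')\bigr)$, $\Om_Y^p(\log\Sigma')$, and $\Om_Y^p$ respectively, and using the definition $\wt\Omega_Y^p(\log\cdot)\leteq h^0\bigl(\Om_Y^p(\log\cdot)\bigr)$ from \S\ref{subsec:hz-complex-ddb}, the left hand sides of the three statements become the $h^0$ of the corresponding left hand sides in Theorem~\ref{lem:finite-cover}.

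For the right hand sides, I would use that $\sL^{-i}$ is locally free, so that tensoring with it is exact and commutes with cohomology sheaves, and that $h^0$ commutes with finite direct sums. Therefore, applying $h^0$ to
\[
  \oplus_{i=0}^{n-1}\Om_X^p\bigl(\log(\Sigma+H)\bigr)\otimes\sL^{-i}
\]
and to the analogous right hand sides in parts \eqref{item:13} and \eqref{item:12} of Theorem~\ref{lem:finite-cover} yields exactly the right hand sides of the corollary, with $\Om^p$ replaced by $\wt\Omega^p$.

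The only remaining point is the compatibility assertion: that under the chosen decomposition $\eta_*\sO_Y\simeq \oplus_{i=0}^{n-1}\sL^{-i}$, the natural morphism $\eta_*\wt\Omega_Y^p(\log\cdot)\to \eta_*\Om_Y^p(\log\cdot)$ restricts to the natural morphism $\wt\Omega_X^p(\log\cdot)\otimes\sL^{-i}\to \Om_X^p(\log\cdot)\otimes\sL^{-i}$ summand by summand. This is essentially automatic: the isomorphisms of Theorem~\ref{lem:finite-cover} are obtained from the local splittings described in the proof (following \cite[1.6]{Viehweg82b}, \cite[3.16]{EV92}), which are induced by the $\bZ/n$-eigenspace decomposition of $\eta_*\sO_Y$, and this decomposition is canonical and functorial. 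Consequently, applying the natural transformation $\wt\Omega^p\to\Om^p$ (which is by construction a morphism of $\sO_X$-modules, \autoref{eq:12}) respects the eigenspace splitting on both sides. In particular, there is no technical obstacle; the corollary follows directly by taking $h^0$ of the isomorphisms of Theorem~\ref{lem:finite-cover}, together with the exactness of $\eta_*$ and of $(\blank)\otimes\sL^{-i}$.
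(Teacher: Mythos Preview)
Your proof is correct and follows essentially the same approach as the paper: take $h^0$ of the isomorphisms in Theorem~\ref{lem:finite-cover}, using that $\eta_*$ is exact (so commutes with $h^0$) and that $\sL^{-i}$ is locally free (so tensoring with it commutes with $h^0$). The paper's compatibility justification is the terse ``straightforward from the proof of \autoref{lem:finite-cover}'', which amounts to exactly the eigenspace-decomposition functoriality you spell out.
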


\begin{proof}
  In the following, $(\ast)$ stands for one of
  $\Sigma,\Sigma',\Sigma+H,\Sigma'+H'$ or
  $\emptyset$, whichever makes sense in the context. The morphism
  $\eta_*$ is exact, hence
  \[
    h^0(\eta_*\Om_Y^p\left(\log(\ast)\right))\simeq
    \eta_*\wt\Omega_Y^p\left(\log(\ast)\right),
  \]
  and the sheaves $\sL^{-i}$ are locally free, so
  \begin{equation*}
    h^0(\Om_X^p(\log (\ast)) \otimes \sL^{-i})
    \simeq \wt\Omega_X^p(\log (\ast)) \otimes
    \sL^{-i} 
  \end{equation*}
  The compatibility with the direct sum decomposition follows from the proof of
  \autoref{lem:finite-cover}.
\end{proof}

\subsection{The effect of cyclic covers on filtrations and co-filtrations}

\begin{prop}\label{cor:connection-for-f}
  Continuing to use the notation and setup of \eqref{setup}, we have the following:
  For each $0<i<N$, there exist compatible hyperfiltered complexes of connections and
  a filtered morphism
  \[
    \wt\Omega_X^\kdot(\log(\Sigma+H))\otimes\sL^{-i}\to
    \Om_X^\kdot(\log(\Sigma+H))\otimes\sL^{-i},
  \]
  where the induced morphism on the associated graded objects agrees with the
  morphism induced by taking the $0^\text{th}$ cohomology sheaf of the complex on the
  right.
\end{prop}

\begin{proof}
  This follows from \autoref{lem:finite-cover} and \autoref{cor:finite-cover} applied
  to the pushforward of the filtered morphism
  $\wt\Omega_Y^\kdot(\log(\Sigma'+H'))\to \Om_Y^\kdot(\log(\Sigma'+H'))$,
  cf.~\cite[3.16]{EV92}. 
\end{proof}

\begin{rem}\label{rem:connection-for-f}
  \autoref{cor:connection-for-f} implies that there exist compatible hyperfiltered
  complexes of connections, $\tf_X^p\left(\log(\Sigma+H)\right)\otimes\sL^{-i}$,
  $\uf_X^p\left(\log(\Sigma+H)\right)\otimes\sL^{-i}$,
  $\tf^X_p\left(\log(\Sigma+H)\right)\otimes\sL^{-i}$, and
  $\uf^X_p\left(\log(\Sigma+H)\right)\otimes\sL^{-i}$ for each $p\in\bZ$ and $0<j<N$
  with the expected hyperfiltered morphisms among them.
\end{rem}

\begin{cor}\label{cor:finite-cover-filt} 
  For each $p$,
  \begin{align*}
    \eta_*\tf^Y_p\left(\log(\Sigma'+H')\right)
    &\simeq \oplus_{i=1}^{N-1}\tf^X_p\left(\log(\Sigma+H)\right)\otimes\sL^{-i}, \\  
    \eta_*\uf^Y_p\left(\log(\Sigma'+H')\right)
    &\simeq \oplus_{i=1}^{N-1}\uf^X_p\left(\log(\Sigma+H)\right)
      \otimes \sL^{-i}\\
    \eta_*\tf^Y_p(\log \Sigma')
    &\simeq \tf^X_p(\log \Sigma)\oplus\left(\oplus_{i=1}^{N-1}\tf^X_p\left(\log
      (\Sigma+H)\right)  
      \otimes\sL^{-i}\right),\\
    \eta_*\uf^Y_p(\log \Sigma')
    &\simeq \uf^X_p(\log \Sigma)\oplus\left(\oplus_{i=1}^{N-1}\uf^X_p\left(\log
      (\Sigma+H)\right)\otimes        \sL^{-i}\right)\\
    \eta_*\tf^Y_p &\simeq \tf^X_p\oplus\left(\oplus_{i=1}^{N-1}\tf^X_p(\log H)
                    \otimes\sL^{-i}\right), \text{and}\\ 
    \eta_*\uf^Y_p &\simeq \uf^X_p\oplus\left(\oplus_{i=1}^{N-1}\uf^X_p(\log H)
                    \otimes \sL^{-i}\right). 
  \end{align*}
\end{cor}

\begin{proof}
  The objects in the statement exist by \autoref{cor:connection-for-f} and
  \autoref{rem:connection-for-f}.
  We use induction on $p$ and prove the last statement. The proofs of the other
  statements are essentially identical. For $p=0$, $\uf_0^Y=\Om_Y^0$ and hence the
  statement follows from \autoref{lem:finite-cover}. Suppose we know that the
  statement holds for $p-1$ and consider the commutative diagram of distinguished
  triangles:
  \begin{equation}
    \label{eq:80}
    \begin{aligned}
      \xymatrix@R1.25em{%
        \eta_*\Om_Y^p[-p] \ar[r]^-\alpha \ar[d] & \Om_X^p \oplus
        \left(\oplus_{i=1}^{N-1}\Om_X^p(\log H) \otimes
          \sL^{-i}\right)[-p] \ar[d] \\
        \eta_*\uf_{p+1}^Y \ar[r]^-\beta \ar[d] &
        \uf^X_{p+1}\oplus\left(\oplus_{i=1}^{N-1}\uf^X_{p+1}(\log H) \otimes
          \sL^{-i}\right) \ar[d] \\
        \eta_*\uf_{p}^Y \ar[r]^-\gamma \ar[d]^-{+1} &
        \uf^X_{p}\oplus\left(\oplus_{i=1}^{N-1}\uf^X_{p}(\log H) \otimes
          \sL^{-i}\right) \ar[d]^-{+1} \\ & .}
    \end{aligned}
  \end{equation}
  Then $\alpha$ is an isomorphism by \autoref{lem:finite-cover} and $\gamma$ is an
  isomorphism by the inductive hypothesis. It follows that then $\beta$ is also an
  isomorphism, which is the desired statement.
\end{proof}

\begin{cor}\label{cor:finite-cover-upper-filt}
  For each $p$,
  \begin{align*}
    \eta_*\tf_Y^p\left(\log(\Sigma'+H')\right)
    &\simeq \oplus_{i=1}^{N-1}\tf_X^p\left(\log(\Sigma+H)\right)\otimes\sL^{-i}, \\  
    \eta_*\uf_Y^p\left(\log(\Sigma'+H')\right)
    &\simeq \oplus_{i=1}^{N-1}\uf_X^p\left(\log(\Sigma+H)\right)
      \otimes \sL^{-i}\\
    \eta_*\tf_Y^p(\log \Sigma')
    &\simeq \tf_X^p(\log \Sigma)\oplus\left(\oplus_{i=1}^{N-1}\tf_X^p\left(\log
      (\Sigma+H)\right)  \otimes\sL^{-i}\right),\\
    \eta_*\uf_Y^p(\log \Sigma')
    &\simeq \uf_X^p(\log \Sigma)\oplus\left(\oplus_{i=1}^{N-1}\uf_X^p\left(\log
      (\Sigma+H)\right)       \otimes \sL^{-i}\right)\\
    \eta_*\tf_Y^p &\simeq \tf_X^p\oplus\left(\oplus_{i=1}^{N-1}\tf_X^p(\log H)
                    \otimes\sL^{-i}\right), \text{and}\\ 
    \eta_*\uf_Y^p &\simeq \uf_X^p\oplus\left(\oplus_{i=1}^{N-1}\uf_X^p(\log H)
                    \otimes \sL^{-i}\right). 
  \end{align*}
\end{cor}

\begin{proof}
  Essentially the same proof as above works, except that we need to start with
  $p=\dim X$ and use descending induction.
\end{proof}


\section{The filtered Deligne-\DB complex of a pair and
  cohomology}\label{sec:filtered-deligne-db} 
\noindent
\noin We will need the following lemma. It is probably known to experts, but I do not
know an available reference.  In the smooth case it follows from
\cite[Prop~7.5]{MR2451566} and otherwise from the more general
\autoref{prop:coh-of-filt-surj}. It is stated for ease of reference.

\begin{lem}\label{lem:coh-of-filt-surj}
  Let $X$ be a proper variety, $\imath:\Sigma\into X$ a closed subset,
  $\jmath:V\leteq X\setminus\Sigma\into X$ the corresponding open embedding,
  $q\in\bN$, and let $F^pH_{\rm c}^q (V,\bC)$ for $0\leq p\leq q$ denote the
  filtration on $H_{\rm c}^q(V,\bC)$ coming from the degeneration of the
  Hodge-to-de~Rham spectral sequence of a pair
  (cf.~\autoref{thm:hodge}\autoref{item:9}), i.e., the filtration for which
  \[
    F^pH_{\rm c}^q(V,\bC)/F^{p+1}H_{\rm c}^q(V,\bC)\simeq
    \bH^{q-p}\left(X,\Om_{X,\Sigma}^p\right).
  \]
  Then there exists a natural isomorphism
  \[
    \bH^{q}\left(X,\uf^p_{X,\Sigma}\right)\overset\simeq\longrightarrow
    F^p\bH^q\left(X,\Om^\kdot_{X,\Sigma}\right)\simeq F^pH^q_{\rm c}(V,\bC).
  \]
\end{lem}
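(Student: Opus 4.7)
The plan is to deduce this lemma as a direct application of the machinery developed earlier, namely \autoref{prop:E_1-ss} and \autoref{prop:coh-of-filt-surj}, combined with the Hodge-theoretic input recorded in \autoref{thm:hodge}. No new constructions are needed; the content is essentially a bookkeeping exercise that identifies the abstract spectral sequence of the hyperfiltration $\uf_{X,\Sigma}^\kdot$ with the (twisted) Hodge-to-de~Rham spectral sequence of the pair $(X,\Sigma)$.

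First I would set $\sfA\leteq \Om^\kdot_{X,\Sigma}$ and take the hyperfiltration $\bsfF^p\leteq \uf^p_{X,\Sigma}$ whose associated graded complexes are, by the definition recalled in \autoref{sec:ddb-complex-pair}, $\bsfG^p\simeq \Om^p_{X,\Sigma}[-p]$. Applying \autoref{prop:E_1-ss} to the left exact functor $\Phi\leteq \Gamma(X,\blank)$ on the abelian category of sheaves of $\bC$-vector spaces on $X$ produces an $E_1$ spectral sequence
\[
  E_1^{p,q}=\bH^{q}\left(X,\Om^p_{X,\Sigma}\right)\Longrightarrow \bH^{p+q}\left(X,\Om^\kdot_{X,\Sigma}\right),
\]
which, once we invoke \autoref{thm:hodge}\autoref{item:15}, is the usual Hodge-to-de~Rham spectral sequence of the pair $(X,\Sigma)$ abutting to $H_{\rm c}^\kdot(V,\bC)$, and, once we invoke \autoref{thm:hodge}\autoref{item:9}, degenerates at $E_1$.

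With degeneration in hand, I would invoke \autoref{prop:coh-of-filt-surj} verbatim with the above choices to obtain a natural isomorphism
\[
  \bH^{q}\left(X,\uf^p_{X,\Sigma}\right)\overset\simeq\longrightarrow
  F^p\bH^{q}\left(X,\Om^\kdot_{X,\Sigma}\right),
\]
where $F^p$ is the filtration on the abutment induced by the hyperfiltration. The only remaining point is to identify this $F^p$ with the Hodge filtration on $H_{\rm c}^q(V,\bC)$; but by construction the filtration on $E_\infty$ corresponding to the abutment is precisely the filtration whose graded pieces are $\bH^{q-p}(X,\Om^p_{X,\Sigma})$, which is by definition (cf.\ \autoref{thm:hodge}\autoref{item:9}) the Hodge filtration $F^pH_{\rm c}^q(V,\bC)$ transported along the isomorphism of \autoref{thm:hodge}\autoref{item:15}.

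There is no serious obstacle here; the only mildly delicate point is a notational one, namely making sure that the shift $[-p]$ in $\bsfG^p\simeq \Om_{X,\Sigma}^p[-p]$ is correctly absorbed when we pass from the general $E_1^{p,q}$ of \autoref{prop:E_1-ss} to the Hodge-theoretic indexing used in \autoref{thm:hodge}\autoref{item:9}, but this is straightforward and amounts to reindexing $(p,q)\mapsto (p,q-p)$.
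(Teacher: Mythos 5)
Your argument follows the same route as the paper's: apply the general spectral-sequence machinery of \autoref{prop:E_1-ss} and \autoref{prop:coh-of-filt-surj} to the (co-)hyperfiltration $\uf^\kdot_{X,\Sigma}$ of $\Om^\kdot_{X,\Sigma}$, identify the resulting $E_1$ page with the Hodge-to-de~Rham spectral sequence of the pair, and feed in \autoref{thm:hodge}\autoref{item:15} and \autoref{thm:hodge}\autoref{item:9} for the abutment and the $E_1$-degeneration. That is exactly the paper's proof, which cites precisely \autoref{prop:coh-of-filt-surj} together with Serre's GAGA.

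The one ingredient you omit, and the paper does not, is GAGA, and it is doing real work here. The Hodge-theoretic input in \autoref{thm:hodge} lives on the associated analytic space $X^{\an}$: the sheaf $\jmath_!\bC_V$ and constant coefficients only make sense in the classical topology, so the degeneration at $E_1$ and the identification of the abutment with $H^\kdot_{\rm c}(V,\bC)$ are statements about $\bH^q(X^{\an},\blank)$. When you write ``apply \autoref{prop:E_1-ss} with $\Phi=\Gamma(X,\blank)$ on the abelian category of sheaves of $\bC$-vector spaces on $X$'', you are implicitly in the analytic setting; but the conclusion of the lemma concerns the algebraic hypercohomology $\bH^q(X,\uf^p_{X,\Sigma})$. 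Since $X$ is proper and $\uf^p_{X,\Sigma}$ is a bounded complex with coherent cohomology sheaves, Serre's GAGA principle provides the natural isomorphism $\bH^q(X,\uf^p_{X,\Sigma})\simeq\bH^q\bigl(X^{\an},(\uf^p_{X,\Sigma})^{\an}\bigr)$ compatibly with the maps in the hyperfiltration, and this is what lets you transport the conclusion of \autoref{prop:coh-of-filt-surj} from the analytic to the algebraic side. The fix is a one-line addition, but without it the argument as written proves a statement about $X^{\an}$, not the one claimed.

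A minor cosmetic point: the reindexing you worry about at the end is already absorbed into the computation $\myR^{p+q}\Phi\bigl(\Om^p_{X,\Sigma}[-p]\bigr)=\bH^q(X,\Om^p_{X,\Sigma})$, so no further relabeling $(p,q)\mapsto(p,q-p)$ is needed; the $E_1$ pages literally agree.
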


\begin{proof}
  This follows directly from \autoref{prop:coh-of-filt-surj} and Serre's GAGA
  principle \cite{MR0082175}.
\end{proof}

\begin{rem}
  Note that in \autoref{lem:coh-of-filt-surj} and in all following statements we may
  choose $\Sigma=\emptyset$ in which case $X=V$. In fact, in this paper we will only
  use that case.
\end{rem}

\begin{cor}\label{cor:coh-of-filt-to-Om}
  Using the notation and assumptions of \autoref{lem:coh-of-filt-surj}, the natural
  morphism induced by the pair analogue of the morphism in \autoref{eq:9} is
  surjective:
  \[
    \bH^{q}\left(X,\uf^p_{X,\Sigma}\right) \onto
    \bH^{q-p}\left(X,\Om^p_{X,\Sigma}\right).
  \]
\end{cor}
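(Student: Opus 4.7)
My plan is to deduce \autoref{cor:coh-of-filt-to-Om} directly from \autoref{cor:ss-surjectivity} applied to the hyperfiltration $\uf^{\kdot}_{X,\Sigma}$ of $\Om^\kdot_{X,\Sigma}$ (viewed in the derived category) and the left exact functor $\Phi = \Gamma(X,\blank)$. By construction, the associated graded complexes of this hyperfiltration are the shifted pieces $\Om^p_{X,\Sigma}[-p]$, i.e., the pair analogue of the \dt in \autoref{eq:9} reads
\[
  \xymatrix{
    \uf^{p+1}_{X,\Sigma} \ar[r] & \uf^{p}_{X,\Sigma} \ar[r] &
    \Om^p_{X,\Sigma}[-p] \ar[r]^-{+1} & .
  }
\]
Thus $\myR^{j}\Phi\bigl(\Om^p_{X,\Sigma}[-p]\bigr)\simeq \bH^{j-p}(X,\Om^p_{X,\Sigma})$, and the associated $E_1$ spectral sequence of \autoref{prop:E_1-ss} is (up to reindexing) exactly the Hodge-to-de~Rham spectral sequence of the pair appearing in \autoref{thm:hodge}\autoref{item:9}.

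The key input, which is precisely the hypothesis needed for \autoref{cor:ss-surjectivity}, is that this spectral sequence degenerates at $E_1$. This is guaranteed by \autoref{thm:hodge}\autoref{item:9}, since $X$ is assumed to be proper and $V=X\setminus\Sigma$. Applying \autoref{cor:ss-surjectivity} with $j=q$ then yields the surjectivity of the natural morphism
\[
  \myR^{q}\Phi(\uf^{p}_{X,\Sigma}) \onto \myR^{q}\Phi\bigl(\Om^p_{X,\Sigma}[-p]\bigr),
\]
which unravels to the desired surjection $\bH^{q}(X,\uf^p_{X,\Sigma}) \onto \bH^{q-p}(X,\Om^p_{X,\Sigma})$.

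There is essentially no obstacle here, since the technical work has already been done in \autoref{prop:coh-of-filt-surj}/\autoref{cor:ss-surjectivity} and the Hodge-theoretic degeneration is supplied by Deligne's theory via \autoref{thm:hodge}. The only point to check is the bookkeeping of indices between the abstract $\bsfG^p$ of \autoref{def:hyperfiltrations} and the shifted graded piece $\Om^p_{X,\Sigma}[-p]$ used above. Alternatively, one can give a self-contained proof: from the long exact hypercohomology sequence of the above \dt, the cokernel of $\bH^{q}(X,\uf^p_{X,\Sigma})\to \bH^{q-p}(X,\Om^p_{X,\Sigma})$ injects into the kernel of $\bH^{q+1}(X,\uf^{p+1}_{X,\Sigma})\to \bH^{q+1}(X,\uf^{p}_{X,\Sigma})$, which vanishes because, by \autoref{lem:coh-of-filt-surj}, this latter map is identified with the inclusion $F^{p+1}H^{q+1}(V,\bC)\hookrightarrow F^{p}H^{q+1}(V,\bC)$ of successive steps of the Hodge filtration.
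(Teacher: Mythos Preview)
Your proposal is correct and takes essentially the same approach as the paper, which simply says ``This is a direct consequence of \autoref{cor:ss-surjectivity}.'' Your additional alternative argument via \autoref{lem:coh-of-filt-surj} and the long exact sequence is also valid and amounts to unwinding the proof of \autoref{cor:ss-surjectivity} in this specific case.
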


\begin{proof}
  This is a direct consequence of \autoref{lem:coh-of-filt-surj} and
  \autoref{cor:ss-surjectivity}.
\end{proof}

\noin
And we also obtain the following important consequence.

\begin{cor}\label{cor:surjective-cohomology}
  Using the notation and assumptions of \autoref{lem:coh-of-filt-surj}, we also have
  another natural morphism which is surjective: For every $q$ and $p$, the following
  morphism is surjective:
  \[
    H_{\rm c}^q(V,\bC)\simeq\bH^{q}\left(X,\Om^\kdot_{X,\Sigma}\right) \onto
    \bH^{q}\left(X,\uf_p^{X,\Sigma}\right).
  \]
\end{cor}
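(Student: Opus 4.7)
The plan is to apply hypercohomology to the pair analogue of the defining distinguished triangle of the co-filtration complex (cf.~\autoref{eq:10}),
\[
\uf^{p+1}_{X,\Sigma} \longrightarrow \Om^\kdot_{X,\Sigma} \longrightarrow \uf_p^{X,\Sigma} \xrightarrow{+1},
\]
and extract the surjectivity from the resulting long exact sequence. The morphism whose surjectivity is in question sits inside
\[
\cdots \to \bH^q(X,\Om^\kdot_{X,\Sigma}) \to \bH^q(X,\uf_p^{X,\Sigma}) \xrightarrow{\delta} \bH^{q+1}(X,\uf^{p+1}_{X,\Sigma}) \xrightarrow{\iota} \bH^{q+1}(X,\Om^\kdot_{X,\Sigma}) \to \cdots,
\]
so it is surjective if and only if $\iota$ is injective.

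This injectivity is exactly what \autoref{lem:coh-of-filt-surj} (applied with indices $p+1$ and $q+1$) provides: it identifies $\iota$ with the inclusion $F^{p+1}H^{q+1}_{\rm c}(V,\bC) \hookrightarrow H^{q+1}_{\rm c}(V,\bC)$ of the Hodge filtration piece into the full cohomology, which is injective by the very definition of a filtration. The corollary therefore follows formally from \autoref{lem:coh-of-filt-surj}; the only thing to check is that the defining triangle \autoref{eq:10} genuinely carries over from the $\Sigma=\emptyset$ case to an arbitrary pair, which is already guaranteed by the construction in \autoref{sec:ddb-complex-pair}. I do not anticipate any substantive obstacle.
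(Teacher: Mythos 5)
Your proof is correct and matches the paper's argument exactly: both apply the long exact hypercohomology sequence of the pair analogue of the triangle in \autoref{eq:10} and invoke \autoref{lem:coh-of-filt-surj} to get injectivity of $\bH^{q+1}(X,\uf^{p+1}_{X,\Sigma})\to\bH^{q+1}(X,\Om^\kdot_{X,\Sigma})$, from which the surjectivity follows. You have merely spelled out the connecting morphism and the index shift more explicitly than the paper does.
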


\begin{proof}
  By \autoref{lem:coh-of-filt-surj} for every $q$ and $p$ the following natural
  morphism is an injection:
  \[
    \bH^{q}\left(X,\uf^p_{X,\Sigma}\right) \into \bH^{q}\left(X,\Om^\kdot_{X,\Sigma}\right).
  \]
  Then the long exact sequence of cohomology associated to the pair analogue of the
  distinguished triangle in
  \autoref{eq:10} 
  implies the statement.
\end{proof}

\begin{cor}\label{cor:surj-coh-2}
  Under the same assumptions the following morphisms are also surjective:
  \begin{align*}
    & \bH^q\left(X,\f_p^{X,\Sigma}\right) \onto
      \bH^{q}\left(X,\uf_p^{X,\Sigma}\right), &  
    & \bH^q\left(X,\tf_p^{X,\Sigma}\right) \onto
      \bH^{q}\left(X,\uf_p^{X,\Sigma}\right) 
  \end{align*}
\end{cor}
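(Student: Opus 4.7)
The plan is to derive both surjections from Corollary \ref{cor:surjective-cohomology} by exploiting the commutative diagram coming from the factorization \autoref{eq:12} extended to pairs, together with the $3$-lemma (\ref{thm:3-lemma}).

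First, \autoref{eq:12} applied to the pair $(X,\Sigma)$ (together with the functoriality of filtrations under the morphisms $\Omega_X^\kdot \to \wt\Omega_X^\kdot \to \Om_X^\kdot$ and their restrictions to $\Sigma$) gives the commutative diagram of filtered morphisms
\[
\xymatrix@R=1.25em{
\Omega^\kdot_{X,\Sigma} \ar[r] \ar@{->>}[d] & \wt\Omega^\kdot_{X,\Sigma} \ar[r] \ar@{->>}[d] & \Om^\kdot_{X,\Sigma} \ar[d] \\
\f_p^{X,\Sigma} \ar[r] & \tf_p^{X,\Sigma} \ar[r] & \uf_p^{X,\Sigma}
}
\]
in which the vertical arrows are the natural quotient morphisms to the corresponding co-filtrations. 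The commutativity implies that the composition $\Om^\kdot_{X,\Sigma}\to \uf_p^{X,\Sigma}$, once tracked back along the horizontal arrows, factors through both $\f_p^{X,\Sigma}$ and $\tf_p^{X,\Sigma}$ in a compatible way.

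Second, passing to hypercohomology and combining with the long exact sequences associated to the distinguished triangles $\f^{p+1}_{X,\Sigma}\to\Omega^\kdot_{X,\Sigma}\to \f_p^{X,\Sigma}$ (and its $\tf$-analogue) together with $\uf^{p+1}_{X,\Sigma}\to\Om^\kdot_{X,\Sigma}\to \uf_p^{X,\Sigma}$ yields the commutative ladder
\[
\xymatrix@R=1.25em{
\bH^q(\Omega^\kdot_{X,\Sigma}) \ar[r] \ar[d] & \bH^q(\f_p^{X,\Sigma}) \ar[r]^-{\partial} \ar[d]^{\lambda} & \bH^{q+1}(\f^{p+1}_{X,\Sigma}) \ar[d]^{\delta} \\
\bH^q(\Om^\kdot_{X,\Sigma}) \ar@{->>}[r]^-{\bar t} & \bH^q(\uf_p^{X,\Sigma}) \ar[r]^-{0} & \bH^{q+1}(\uf^{p+1}_{X,\Sigma})
}
\]
in which $\bar t$ is surjective (and equivalently, the bottom connecting map is zero) by \ref{cor:surjective-cohomology}; this in turn is a consequence of the injectivity $\bH^{q+1}(\uf^{p+1}_{X,\Sigma})\hookrightarrow \bH^{q+1}(\Om^\kdot_{X,\Sigma})$ of \ref{lem:coh-of-filt-surj}, which reflects the degeneration of the Hodge-to-\DB spectral sequence at $E_1$.

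Third, I apply the $3$-lemma (\ref{thm:3-lemma}) to a judicious sub-diagram of this ladder. The goal is to produce objects and morphisms so that the role of $\mu$ is played by the surjection $\bar t$ from \ref{cor:surjective-cohomology}, the role of the exact second row is played by the bottom long exact sequence, and the role of $\beta=\delta\circ\lambda$ is played by the vanishing composition $\bar\partial\circ\lambda$. Diagram-chasing the preimage of an arbitrary class $x\in\bH^q(\uf_p^{X,\Sigma})$ then produces a lift in $\bH^q(\f_p^{X,\Sigma})$ via $\lambda$; the analogous argument, substituting $\wt\Omega^\kdot_{X,\Sigma}$ for $\Omega^\kdot_{X,\Sigma}$ throughout, establishes the second surjection $\bH^q(\tf_p^{X,\Sigma})\onto\bH^q(\uf_p^{X,\Sigma})$.

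The main obstacle is that the leftmost vertical morphism $\bH^q(\Omega^\kdot_{X,\Sigma})\to\bH^q(\Om^\kdot_{X,\Sigma})$ need not be surjective in general, since the Hodge-to-de~Rham spectral sequence for the algebraic de~Rham complex on a singular variety is not guaranteed to degenerate at $E_1$. The surjectivity of $\lambda$ must therefore be obtained without this extra input; the key observation is that the kernel of $\bar t$ coincides with the image of $\bH^q(\uf^{p+1}_{X,\Sigma})\hookrightarrow\bH^q(\Om^\kdot_{X,\Sigma})$ (by the injectivity in \ref{lem:coh-of-filt-surj}), and the $3$-lemma packaging absorbs exactly the subtlety in a way that does not require the leftmost vertical map to be epi on its own, only in combination with the filtration data.
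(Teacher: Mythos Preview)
Your attempt has a genuine gap: the $3$-lemma cannot be applied to the diagram you describe. You want $\mu$ to be $\bar t\colon \bH^q(\Om^\kdot_{X,\Sigma})\onto \bH^q(\uf_p^{X,\Sigma})$, but then in the $3$-lemma setup you would need a map $\alpha\colon \bH^q(\Om^\kdot_{X,\Sigma})\to \bH^q(\f_p^{X,\Sigma})$ making the square commute, and there is no such natural map (the natural maps between the de~Rham and \DDB objects go the other way). Your claim that $\beta$ is ``the vanishing composition $\bar\partial\circ\lambda$'' also fails the hypotheses: the $3$-lemma requires $\beta$ to be an epimorphism, which a zero map is only when the target is $0$. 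The last paragraph of your proposal acknowledges the real difficulty but does not resolve it.

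The missing idea, and the one-line argument the paper uses, is to bring in $\jmath_!\bC_V$. By \autoref{thm:hodge}\autoref{item:14} the natural map $\jmath_!\bC_V\to\Om^\kdot_{X,\Sigma}$ is a quasi-isomorphism, and it factors as
\[
\jmath_!\bC_V \longrightarrow \Omega^\kdot_{X,\Sigma} \longrightarrow \f_p^{X,\Sigma} \longrightarrow \tf_p^{X,\Sigma} \longrightarrow \uf_p^{X,\Sigma}.
\]
On hypercohomology the composite $H^q_{\rm c}(V,\bC)\to \bH^q(X,\uf_p^{X,\Sigma})$ is surjective by \autoref{cor:surjective-cohomology}, hence so is every later map in the factorization; no $3$-lemma is needed. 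Equivalently, the quasi-isomorphism $\jmath_!\bC_V\simeq\Om^\kdot_{X,\Sigma}$ gives a section of your left vertical map $\bH^q(\Omega^\kdot_{X,\Sigma})\to\bH^q(\Om^\kdot_{X,\Sigma})$, forcing it to be surjective after all, at which point your commutative square already yields $\lambda$ surjective.
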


\begin{proof}
  By \autoref{cor:surjective-cohomology} the morphism $\sigma$ the following
  commutative diagram induces a surjective morphism on cohomology which implies both
  statements. 
  \begin{equation}
    \label{eq:34}
    \begin{aligned}
      \xymatrix@R2em{%
        \jmath_!\bC_V \ar[r] \ar@/^.5em/[rrrd]_(.175)\sigma
        |!{[r];[rd]}\hole|!{[rr];[rrd]}\hole & \Omega^\kdot_{X,\Sigma} \ar[d] \ar[r]
        & \wt\Omega^\kdot_{X,\Sigma} \ar[d] \ar[r] & \Om_{X,\Sigma}^\kdot
        \ar[d] \\
        & \f_p^{X,\Sigma} \ar[r] & \tf_p^{X,\Sigma} \ar[r] & \uf_p^{X,\Sigma} . }
    \end{aligned} \qedhere
  \end{equation}
\end{proof}

\noin The next result is an application of the above, combined with results from
\autoref{sec:cyclic-covers}. For simplicity, here we assume that $\Sigma=\emptyset$.

\begin{cor}\label{cor:H-surjectivity-on-log-and-line-bundle}
  Using \autoref{setup}, further assume that $X$ is a connected proper 
  variety. Then for each $q$ and each $0< i< n$, the following morphism is
  surjective.
  \begin{equation*}
    \bH^q(X,\tf^X_p\left(\log H\right)\otimes\sL^{-i}) \onto
    \bH^q(X,\uf^X_p\left(\log H\right)\otimes\sL^{-i}).
  \end{equation*}
\end{cor}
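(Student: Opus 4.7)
The plan is to exploit the cyclic cover $\eta \colon Y \to X$ from \autoref{setup} to reduce the claim to the already-proven surjectivity of \autoref{cor:surj-coh-2} applied on $Y$, then isolate the $i$-th eigenspace of the $\bZ/n$-action on cohomology. The point is that both sides of the desired surjection appear naturally as direct summands, indexed by the index $i$ of the decomposition $\eta_*\sO_Y \simeq \bigoplus_{j=0}^{n-1}\sL^{-j}$, of the analogous morphism on $Y$.

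First I would apply the $\Sigma=\Sigma'=\emptyset$ cases of \autoref{cor:finite-cover-filt} to record the isomorphisms
\[
  \eta_*\tf^Y_p \simeq \tf^X_p \oplus \bigoplus_{i=1}^{n-1}\tf^X_p(\log H)\otimes\sL^{-i}, \qquad
  \eta_*\uf^Y_p \simeq \uf^X_p \oplus \bigoplus_{i=1}^{n-1}\uf^X_p(\log H)\otimes\sL^{-i}.
\]
Then I would verify that the pushforward of the natural morphism $\tf^Y_p \to \uf^Y_p$ induced by \autoref{eq:12} respects these direct sum decompositions. For the graded pieces $\eta_*\wt\Omega_Y^p \to \eta_*\Om_Y^p$, this compatibility is exactly what \autoref{cor:finite-cover} supplies; the promotion to the entire (co-)hyperfiltration is an ascending induction on $p$ using the defining triangles \autoref{eq:11} for $\uf$ (and their analogues for $\tf$) on $Y$, precisely the inductive setup employed in the proof of \autoref{cor:finite-cover-filt}.

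Next, since $\eta$ is finite, $Y$ is proper, so \autoref{cor:surj-coh-2} (with $\Sigma' = \emptyset$) gives the surjection $\bH^q(Y,\tf^Y_p) \onto \bH^q(Y,\uf^Y_p)$. Using that $\eta$ is affine we have $\bH^q(Y,\blank) \simeq \bH^q(X,\eta_*\blank)$, so by the displayed decompositions this surjection rewrites as a surjection
\[
  \bH^q(X,\tf^X_p) \oplus \bigoplus_{i=1}^{n-1}\bH^q(X,\tf^X_p(\log H)\otimes\sL^{-i}) \;\onto\; \bH^q(X,\uf^X_p) \oplus \bigoplus_{i=1}^{n-1}\bH^q(X,\uf^X_p(\log H)\otimes\sL^{-i}).
\]
By the compatibility verified above, this surjection is the direct sum of morphisms between matching summands; hence it is surjective on each summand. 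Restricting to the $i$-th summand for $1 \leq i \leq n-1$ yields the claim.

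The main obstacle is the compatibility of the eigenspace decomposition with the natural morphism $\tf \to \uf$ at the level of the whole hyperfiltration rather than only for the graded pieces. This is really a bookkeeping step: both \autoref{cor:finite-cover-filt} and the desired compatibility are proved by the same induction on $p$ via the same commutative diagram of distinguished triangles, and the inductive step for the compatibility reduces, via the $5$-lemma in the derived category, to the base case for the graded quotients, which is \autoref{cor:finite-cover}.
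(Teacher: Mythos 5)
Your proposal is correct and follows essentially the same route as the paper: apply \autoref{cor:surj-coh-2} on the cyclic cover $Y$, use \autoref{cor:finite-cover-filt} to decompose $\eta_*\tf^Y_p$ and $\eta_*\uf^Y_p$ into eigensheaf summands, and project onto the $i$-th summand. The paper leaves implicit the compatibility of the decomposition with the natural morphism $\tf^Y_p\to\uf^Y_p$, which you spell out; that is a reasonable precaution but not a divergence.
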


\begin{proof}
  By \autoref{cor:surj-coh-2}, there is a surjective morphism,
  $\bH^q(Y,\tf^Y_p) \onto \bH^{j}(Y,\uf^Y_{p}).$
  Then, by \autoref{cor:finite-cover-filt},
  \begin{align*}
    \bH^q(Y,\tf^Y_p) &\simeq \bH^q(X,\eta_*\tf^Y_p)\simeq
                       \bH^q\left(X,\tf^X_p\right)\oplus\left(\oplus_{i=1}^{n-1}
                       \bH^q\left(X,\tf^X_p(\log H)
                       \otimes\sL^{-i}\right)\right), \text{and}\\
    \bH^q(Y,\uf^Y_p) &\simeq \bH^q(X,\eta_*\uf^Y_p)\simeq \bH^q\left(X,
                       \uf^X_p\right)\oplus\left(\oplus_{i=1}^{n-1}
                       \bH^q\left(X,\uf^X_p(\log H)\otimes
                       \sL^{-i}\right)\right). \qedhere 
  \end{align*}
\end{proof}






\section{L'\'eminence grise}\label{sec:eminence-grise}
\noindent
\renewcommand\wegpx{\ensuremath{\bsfg\vphantom{\sfG}^{X}_{p}(\sL^{-j})}}
\renewcommand\hegpxz{\ensuremath{{\bsfg}\vphantom{\sfG}^{X,H}_{p}(\sL^{-j})}}
\newcommand{\ii}{q}
\renewcommand\egpx{\ensuremath{\sfG_{X,Z,\qc}^{p}}}
\newcommand\egpxc{\ensuremath{\sfG_{X,Z}^{p}}}
\newcommand\qpx{\ensuremath{\sfQ\vphantom{\sfQ}_{X}^{p}(\sL^{-j})}}
\newcommand\HZ{\ensuremath{\sH_Z}}


\begin{defini}
  Let $X$ be a variety.  The \emph{$p^\text{th}$-DB defect} of $X$ is the mapping
  cone $\Om_X^{p,+}\leteq\cone\left[\wt\Omega_X^p\to\Om_X^p\right]$
\end{defini}

\begin{notation}
  Let $X$ be a variety and $Z\subseteq X$ a closed subset. Then $\HZ$ will denote the
  functor of the \emph{subsheaf of sections supported on $Z$}, $\myR\HZ$ its right
  derived functor, and $\eta_Z:\myR\HZ\to\Id$ the corresponding natural
  transformation.
\end{notation}

\begin{thm}\label{thm:coherent-G}
  Let $X$ be a variety, $Z\subseteq X$ a closed subset and $U\leteq X\setminus
  Z$. Then
  \begin{enumerate}
  \item\label{item:25} there exists an object $\egpx\in\Ob D_\qc^b(X)$ that fits into
    the following commutative diagram in $D_\qc^b(X)$ where the rows are \dts, and
    $\nu
    :\myR\HZ\Om_X^{p,+}\to\Om_X^{p,+}$ is the natural morphism induced by
    $\eta_Z$. (Naively, $\egpx$ is cobbled together from $\wt\Omega_X^p$ on $U$ and
    $\Om_X^p$ on $Z$): \vskip-1em
    \begin{equation}
      \label{eq:79}
      \begin{aligned}
        \xymatrix{%
          \wt\Omega_X^p\ar[d]_\id\ar[r] & \egpx\ar[d]^\zeta \ar[r] &
          \myR\HZ\Om_X^{p,+}\ar[r]^-{+1} \ar[d]^\nu & \\
          \wt\Omega_X^p\ar[r] & \Om_X^p\ar[r] & \Om_X^{p,+}\ar[r]^-{+1} &. }
      \end{aligned}
    \end{equation}
  \item\label{item:26} Let $\sfG\in\Ob D_\filt(X)$ and $\sM$ a line bundle on $X$.
    Further let $\alpha:\wt\Omega_X^p\otimes\sM\to\sfG$ and
    $\beta:\sfG\to\Om_X^p\otimes\sM$ be morphisms such that $\beta\circ\alpha$ is
    equal to the natural morphism $\wt\Omega_X^p\otimes\sM\to\Om_X^p\otimes\sM$. If
    $\alpha\resto U$ is an isomorphism, then $\beta$ factors through
    $\zeta\otimes\sM$:\vskip-1.5em
    \begin{equation}
      \label{eq:84}
      \xymatrix@C4em{%
        \sfG\ar[r]_-\sigma\ar@/^1em/[rr]^-\beta &
        \egpx\otimes\sM\ar[r]_-{\zeta\otimes\sM} & \Om_X^p\otimes\sM 
      }%
    \end{equation}
  \end{enumerate}
\end{thm}
\begin{proof}
  The \dt
  $\xymatrix@C1.35em{\wt\Omega_X^p\ar[r] & \Om_X^p\ar[r] & \Om_X^{p,+}\ar[r]^-{+1}
    &}$ induces a natural morphism $\Om_X^{p,+}\to\wt\Omega_X^p[1]$ and the
  composition of this morphism with the natural morphism
  $\nu:\myR\HZ\Om_X^{p,+}\!\!\!\to\!\Om_X^{p,+}$ gives a morphism
  $\myR\HZ\Om_X^{p,+}\!\!\to\wt\Omega_X^{p}[1]$. Let $\egpx$ be the mapping cone of of
  this latter morphism twisted by $-1$:
  \[
    \egpx\leteq\cone\left[\myR\HZ\Om_X^{p,+}[-1]\longrightarrow
      \wt\Omega_X^{p}\right].
  \]
  Then $\egpx$ fits into a \dt as in the diagram in \autoref{eq:79} and the existing
  outside morphisms induce $\zeta$ in the middle. Therefore this object satisfies the
  required property in \autoref{item:25}.  Further note that by the construction, the
  morphism $\wt\Omega_U^p\to\egpx\resto U$ restricted to $U$ is an isomorphism.



  Let $\sfQ\leteq\cone\left[\alpha:\wt\Omega_X^p\otimes\sM\to\sfG\right]$. Then there
  is a commutative diagram of \dts,
  \[
    \xymatrix@R1.75em{%
      \wt\Omega_X^p\otimes\sM\ar[d]_\id\ar[r]^-\alpha & \sfG\ar[d]^\beta \ar[r] &
      \sfQ\ar[r]^-{+1} \ar[d]^\gamma & \\
      \wt\Omega_X^p\otimes\sM\ar[r] & \Om_X^p\otimes\sM\ar[r] &
      \Om_X^{p,+}\otimes\sM\ar[r]^-{+1} &.  }
  \]
  Here $\gamma$ is induced by $\id_{\wt\Omega_X^p}$ and $\beta$, so turning around
  the triangle, the morphism $\sfG\to\Om_X^p$ induced by $\id_{\wt\Omega_X^p}$ and
  $\gamma$ is equivalent to $\beta$. Replace $\beta$ with this morphism and note that
  if \autoref{item:26} holds for this morphism, then it also holds for the original
  $\beta$.
  
  The assumption that $\alpha\resto U$ is an isomorphism implies that
  $\sfQ\resto U\simeq 0$ and hence 
  $\gamma:\sfQ\to\Om_X^{p,+}\otimes\sM$ factors through the natural morphism,
  $\myR\HZ\Om_X^{p,+}\to\Om_X^{p,+}$. Therefore we have the following diagram:

  \begin{equation}
    \label{eq:85}    
    \begin{aligned}      
      \xymatrix@C4em@R2em{%
        \wt\Omega_X^p\otimes\sM\ar[d]_\id\ar[r]^-\alpha & \sfG\ar@{..>}[d]^\sigma \ar[r]
        \ar@/_4.65em/[dd]_(.3)\beta &
        \sfQ\ar[r]^-{+1} \ar[d] \ar@/_5.35em/[dd]_(.3)\gamma & \\
        \wt\Omega_X^p\otimes\sM\ar[d]_\id\ar[r]|!{[u];[rd]}\hole &
        \egpx\otimes\sM\ar[d]^{\zeta\otimes\sM} 
        \ar[r]|!{[u];[rd]}\hole  &
        \myR\HZ\Om_X^{p,+}\otimes\sM\hskip-4em\phantom{\qquad\qquad}
        \ar[r]^-{+1} \ar[d]^{\nu\otimes\sM} &  \\
        \wt\Omega_X^p\otimes\sM\ar[r] & \Om_X^p\otimes\sM\ar[r] &
        \Om_X^{p,+}\otimes\sM\ar[r]^-{+1} &.  }
    \end{aligned}
  \end{equation}
  As $\beta$ is induced by $\id_{\wt\Omega_X^p}$ and $\gamma$, and $\zeta$ is induced
  by $\id_{\wt\Omega_X^p}$ and $\nu$, the fact that $\gamma$ factors through
  $\nu\otimes\sM$ implies that $\beta$ factors through $\zeta\otimes\sM$.
\end{proof}

\begin{lem}\label{lem:coh-subsheaf}
  Let $X$ be a noetherian scheme, $\alpha:\sA\to\sB$ a morphism of quasi-coherent
  sheaves on $X$, and $\sG\subseteq\im\alpha$ a coherent subsheaf. Then there exists
  a coherent subsheaf $\sF\subseteq\sA$ such that $\im(\alpha\resto\sF)=\sG$.
\end{lem}

\begin{proof}
  Replacing $\sA$ with $\alpha^{-1}\sG$ and $\sB$ with $\sG$ we may assume that $\sB$
  is coherent and $\alpha$ is surjective. 
  Let $X=\cup_{i=1}^r U_i$ be a finite affine cover, and choose generators
  $s_{ij}\in\sG(U_i)$ for $j=1,\dots,r'$. For each $(i,j)$ let $t_{ij}\in\sA(U_i)$ be
  an element such that $\alpha(U_i)(t_{ij})=s_{ij}$. Then $\sF\subseteq\sA$, the
  subsheaf generated by $t_{ij}$ for $i=1,\dots,r$, $j=1,\dots,r'$ satisfies the
  required properties.
\end{proof}

\begin{lem}\label{lem:coherent-image}
  Let $X$ be a noetherian scheme and $\phi:\sfM\to\sfN$ a morphism in $D_\qc(X)$
  with $\sfM\in\Ob D^-_\coh(X)$. Then \tes a $\sfK\in\ob D^-_\coh(X)$ (and if
  $\sfM\in\Ob D^b_\coh(X)$, then $\sfK\in\ob D^b_\coh(X)$) such that
  \begin{enumerate}
  \item\label{item:5} $\phi$ factors through $\sfK$, i.e., \te morphisms $\xymatrix{%
      \sfM\ar[r]^\psi&\sfK\ar[r]^\kappa&\sfN,}$ such that $\phi=\kappa\circ\psi$, and
  \item\label{item:6} for each $i$,
    $\ker\left[h^i(\sfM)\to h^i(\sfN)\right] = \ker\left[h^i(\sfM)\to
      h^i(\sfK)\right]$.
  \end{enumerate}
\end{lem}

\begin{proof}
  First choose a representative of $\sfM$ such that $\sfM^i$ is coherent for each $i$
  (cf.~\cite[\href{https://stacks.math.columbia.edu/tag/0FDA}{Tag
    0FDA}]{stacks-project}) and then choose a representative of $\sfN$ such that
  $\phi$ is an actual morphism between complexes.
  
  Let
  $\sfA^{i+1}\leteq\left((\phi^{i+1})^{-1}\im d^{i}_\sfN\right)\cap\ker
  d^{i+1}_\sfM\subseteq\sfM^{i+1}$, and let
  $\sfB^{i}\subseteq(d^{i}_\sfN)^{-1}\im\phi^{i+1}\subseteq\sfN^i$ be a coherent
  subsheaf that maps surjectively on to $\im\phi^{i+1}\cap\im d^i_\sfN$ (let
  $\sfB^i\leteq 0$ if $\sfM^i=0$), cf.~\autoref{lem:coh-subsheaf}. Let $\sfC^{i}$ be
  the fibered direct sum of $\sfA^{i+1}$ and $\sfB^i$ over $\sfN^{i+1}$;
  $\sfC^{i}\leteq\sfA^{i+1}\oplus_{\sfN^{i+1}}\sfB^{i}=
  \{(a,b)\in\sfA^{i+1}\oplus\sfB^i \skvert \phi^{i+1}(a)=d^{i}_\sfN(b)\}$. Then there
  is a commutative diagram with appropriate morphisms $\alpha^i$ and $\beta^i$,
  \[
    \xymatrix@C3em@R2em{%
      &\sfC^{i} \ar[d]_{\beta^{i}} \ar[r]^-{\alpha^i} & \sfA^{i+1}
      \ar[d]<-.5em>^{\phi^{i+1}\resto{\sfA^{i+1}}} &\hskip-3.6em\subseteq\sfM^{i+1}
      \\
      \sfN^i\supseteq\hskip-4.7em&\sfB^{i} \ar[r]_-{d^{i}_\sfN} & \sfN^{i+1}. }
  \]
  Next let $\sfK^{i}\leteq \sfM^i\oplus \sfC^{i}$ and $d^i_\sfK:\sfK^i\to\sfK^{i+1}$
  defined by $\left(d^i_\sfK\right)\resto{\sfM^i}=d^i_\sfM$ and
  $\left(d^i_\sfK\right)\resto{\sfC^i}=\alpha^i$. Notice that with this definition
  $\im d^i_\sfK\subseteq\ker d^{i+1}_\sfM\subseteq \sfM^{i+1}$ and hence
  $d^{i+1}_\sfK\circ d^i_\sfK=0$, so $\sfK$ is indeed a complex. By the construction
  $\sfA^{i+1}$, $\sfB^i$, and hence $\sfC^i$ and $\sfK^i$ are all coherent. 
  Next define $\psi\leteq(\id_\sfM,0):\sfM\to\sfK$ and
  $\kappa\leteq(\phi,\beta):\sfK\to\sfN$. This shows that $\sfK$, $\psi$, and
  $\kappa$ satisfy \autoref{item:5}.

  Note that the containment
  $\ker\left[h^i(\sfM)\to h^i(\sfK)\right]\subseteq\ker\left[h^i(\sfM)\to
    h^i(\sfN)\right]$ is trivial, so we only need to prove the other direction.  Let
  $U\subseteq X$ be an open affine subset,
  $\ol x\in\ker\left[h^i(\sfM)\to h^i(\sfN)\right](U)$ and $x\in\ker d^{i}_\sfM(U)$
  that maps to $\ol x$. Then $\phi^i(x)\in\im d^{i-1}_\sfN(U)$, so there is a
  $y\in\sfB^{i-1}(U)$ such that $d^{i-1}_\sfN(y)=\phi^i(x)$. Then
  $(x,y)\in\sfC^{i-1}(U)$, and $z\leteq(0,(x,y))\in\sfK^{i-1}(U)$ is such that
  $d^{i-1}_\sfK(z)=x$, which implies that
  $\ol x\in\ker\left[h^i(\sfM)\to h^i(\sfK)\right](U)$, and hence \autoref{item:6}
  follows.
\end{proof}

\begin{cor}\label{cor:leminence-grise-coherent}
  Using the notation from \autoref{thm:coherent-G},
  %
  there exists an object $\egpxc\in\Ob D_\coh(X)$ such that the morphism $\zeta$
  factors through $\egpxc$, i.e., there are morphism $\xymatrix{%
    \egpx\ar[r] & \egpxc\ar[r] & \Om_X^p}$ and for the induced morphisms $\xymatrix{%
    \bD_X(\Om_X^p)\ar[r] & \bD_X(\egpxc)\ar[r] & \bD_X(\egpx)},$ we have that for
  each $i\in\bZ$,
  \[
    \ker\left[h^i(\bD_X(\Om_X^p))\to h^i(\bD_X(\egpx))\right] =
    \ker\left[h^i(\bD_X(\Om_X^p))\to h^i(\bD_X(\egpxc))\right].
  \]
\end{cor}

\begin{proof}
  $\sfM\leteq\bD_X(\Om_X^p)$ and $\sfN\leteq\bD_X(\egpx)$, and apply
  \autoref{lem:coherent-image} to $\bD_X(\zeta)$. Let $\egpxc\leteq\bD_X(\sfK)$,
  where $\sfK$ is the complex provided by \autoref{lem:coherent-image}. Applying
  \cite[\href{https://stacks.math.columbia.edu/tag/0A8U}{Tag 0A8U}]{stacks-project}
  with $L=M=\omega_X^\kdot$, we obtain that $\zeta$ factors as the compostion of
  morphisms $\egpx\to\bD_X(\bD_X(\egpx))\to\egpxc\to\Om_X^p$. This establishes the
  first requirement. The second requirement follows from Grothendieck duality, the
  construction, and \autoref{lem:coherent-image}.
\end{proof}


\begin{thm}\label{cor:surj-for-Om}
  Let $\sL$ be a semi-ample line bundle on a proper variety $X$, $N\in\bN$ such that
  $\sL^N$ is generated by global sections, $s\in H^0(X,\sL^N)$ a general section of
  $\sL^N$, and $H\leteq (s=0)$.
  Then for each $p\in\bN$ and $0<j<N$ there exist objects
  $\wegpx,\hegpxz\in\Ob D_{\rm filt, coh}^b(X)$ and morphisms
  $\gamma_X^{p,j}:\wegpx\to\hegpxz$,
  $\wt\nu_X^{p,j}:\wt\Omega_X^p\otimes\sL^{-j}\to\wegpx$,
  $\oegnu{p,j}X:\wegpx\to\Om_X^p\otimes\sL^{-j}$,
  $\wt\nu_{X,H}^{p,j}:\wt\Omega_X^p(\log H)\otimes\sL^{-j}\to\hegpxz$ and
  $\oegnu{p,j}{X,H}:\hegpxz\to\Om_X^p(\log H)\otimes\sL^{-j}$, such that:
  \begin{enumerate}
  \item\label{item:35} The compositions of the morphisms above,
    $\egnu{p,j}X\leteq\oegnu{p,j}X\circ\wt\nu_X^{p,j}:
    \wt\Omega_X^p\otimes\sL^{-j}\to\Om_X^p\otimes\sL^{-j}$ and
    $\egnu{p,j}{X,H}\leteq\oegnu{p,j}{X,H}\circ\wt\nu_{X,H}^{p,j}: \wt\Omega_X^p(\log
    H)\otimes\sL^{-j}\to\Om_X^p(\log H)\otimes\sL^{-j}$, agree with the natural
    morphisms induced by taking the $0^\text{th}$ cohomology sheaves of the
    respective complexes, cf.~\autoref{prop:morphs-of-cxs}.
  \item\label{item:34} The above morphisms fit into a commutative diagram,
    \[
      \xymatrix@C4em{
        \wt\Omega_X^p\otimes\sL^{-j}\ar[d] \ar[r]^-{\wt\nu_X^{p,j}} & \wegpx
        \ar[d]^{\gamma_X^{p,j}}
        \ar[r]^-{\oegnu{p,j}X} & \Om_X^p\otimes\sL^{-j}\ar[d]\\
        \wt\Omega_X^p(\log H)\otimes\sL^{-j}\ar[r]_-{\wt\nu_{X,H}^{p,j}} & \hegpxz
        \ar[r]_-{\oegnu{p,j}{X,H}} & \Om_X^p(\log H)\otimes\sL^{-j}, }
    \]
    where the unmarked vertical morphisms are induced by the natural
    morphisms 
    constructed in \autoref{eq:20}.
  \item\label{item:37} The restriction $\gamma_X^{p,j}\resto{X\setminus H}$ is an
    isomorphism, and
    $\wt\nu_{X,H}^{p,j}\resto{X\setminus H}\simeq\wt\nu_X^{p,j}\resto{X\setminus H}$, and
    $\oegnu{p,j}{X,H}\resto{X\setminus H} \simeq \oegnu{p,j}X\resto{X\setminus H}$.
  \item\label{item:30} For each $0<j<N$, and $q$, 
    $\oegnu{p,j}X$ and $\oegnu{p,j}{X,H}$ induce surjective maps on hypercohomology:
    \begin{enumerate}
    \item\label{item:33}
      $\bH^q(X,\wegpx) \onto \bH^q(X,\Om_X^p\otimes\sL^{-j})$, and 
    \item\label{item:32}
      $\bH^q(X,\hegpxz) \onto \bH^q(X,\Om_X^p(\log H)\otimes\sL^{-j})$.
    \end{enumerate}
  \item\label{item:31} If $U\subseteq X$ is an open subset that has \premmdb \sings
    for some $m\in\bN$, then
    ${\wt\nu_{X,H}^{p,j}}\resto{U}:\wt\Omega_X^p(\log
    H)\otimes\sL^{-j}\resto{U}\overset\simeq\longrightarrow\hegpxz\resto{U}$, and
    ${\wt\nu_{X}^{p,j}}\resto {U}:
    \wt\Omega_X^p\otimes\sL^{-j}\resto{U}\overset\simeq\longrightarrow\wegpx\resto{U}$
    are isomorphisms for each $p\leq m$ and $0<j<N$.
  \item\label{item:27} If $U\subseteq X$ is an open subset that has \premmdb \sings
    for some $m\in\bN$, let $Z\leteq X\setminus U$, and $\egpx$ the object defined in
    \autoref{thm:coherent-G}\autoref{item:25}.  Then $\oegnu{p,j}X$ factors through
    $\zeta\otimes\sL^{-j}$.
  \end{enumerate}
\end{thm}


\begin{proof}%
  Consider the following diagram, cf.~\autoref{eq:11},\autoref{eq:7tf},
  \autoref{cor:finite-cover-filt}, \autoref{eq:80}:
  \begin{equation}
    \label{eq:67}
    \begin{aligned}
      \xymatrix{
        \wt\Omega_X^p(\log H)\otimes\sL^{-j}[-p] \ar[r]\ar[d]_{\egnu{p,j}{X,H}[-p]} &
        \tf^X_{p}(\log H)\otimes\sL^{-j} \ar[r]\ar[d]\ar@/^1em/[rd]_\alpha &
        \tf^X_{p-1}(\log H)\otimes\sL^{-j} \ar[d] \ar[r]^-{+1} &
        \\
        \Om_X^p(\log H)\otimes\sL^{-j}[-p]\ar[r] & \uf^X_{p}(\log
        H)\otimes\sL^{-j}\ar[r] & \uf^X_{p-1}(\log H)\otimes\sL^{-j} \ar[r]^-{+1} &}
    \end{aligned}
  \end{equation}
  Using the morphism $\alpha$, 
  let
  $\hegpxz\leteq\cone\big[\tf^X_{p}(\log
  H)\otimes\sL^{-j}\overset\alpha\longrightarrow\uf^{X}_{p-1}(\log
  H)\otimes\sL^{-j}\big][p-1].$ This induces morphisms
  $\wt\nu_{X,H}^{p,j}:\wt\Omega_X^p(\log H)\otimes\sL^{-j}\to\hegpxz$ and
  $\oegnu{p,j}X:\hegpxz\to\Om_X^p(\log H)\otimes\sL^{-j}$, and the diagram in
  \autoref{eq:67} can be extended with a middle row:
  \begin{equation}
    \label{eq:69}
    \begin{aligned}
      \xymatrix@R1.5em{%
        \wt\Omega_X^p(\log H)\otimes\sL^{-j} [-p]
        \ar[r]\ar[d]^{\wt\nu_{X,H}^{p,j}[-p]} \ar@/_4em/[dd]_{\egnu{p,j}{X,H}[-p]\!}
        & \tf^X_{p}(\log H)\otimes\sL^{-j} \ar[r]\ar[d]^{\id} & \tf^X_{p-1}(\log
        H)\otimes\sL^{-j} \ar[d] \ar[r]^-{+1} &
        \\
        \hegpxz[-p] \ar[r]\ar[d]^{\oegnu{p,j}{X,H}[-p]} & \tf^X_{p}(\log
        H)\otimes\sL^{-j} \ar[r]\ar[d] & \uf^X_{p-1}(\log H)\otimes\sL^{-j}
        \ar[d]^{\id}
        \ar[r]^-{+1} & \\
        \Om_X^p(\log H)\otimes\sL^{-j} [-p]\ar[r] & \uf^X_{p}(\log
        H)\otimes\sL^{-j}\ar[r] & \uf^X_{p-1}(\log H)\otimes\sL^{-j} \ar[r]^-{+1} &
        .}
    \end{aligned}
  \end{equation}
  Next, compose $\oegnu{p,j}{X,H}$ from \autoref{eq:69} with
  $\psi:\Om^p_X\left(\log H\right)\otimes\sL^{-j}\to \Om^{p-1}_H\otimes\sL^{-j}$ from
  \autoref{eq:20}
  and let
  \[
    \wegpx\leteq\cone\big[\hegpxz\longrightarrow\Om^{p-1}_H\otimes\sL^{-j}\big][-1].
  \]
  This leads to the following commutative diagram, where the last two rows are \dts:
  \begin{equation}
    \label{eq:78}
    \begin{aligned}
      \xymatrix@R1.5em{%
        \wt\Omega_X^p\otimes\sL^{-j} \ar@{..>}[d]_\exists^{\wt\nu_X^{p,j}} \ar[r]
        \ar@/_.45em/[rrd]|!{[r];[rd]}\hole^(.75){=0} \ar@/_3em/[dd]_{\egnu{p,j}{X}\!}
        & \wt\Omega_X^p(\log H)\otimes\sL^{-j}\ar[d]^(.45){\wt\nu_{X,H}^{p,j}} \ar[r]
        &
        \wt\Omega_H^{p-1}\otimes\sL^{-j} \ar[d] & \\
        \wegpx \ar@{..>}[d]_\exists^{\oegnu{p,j}X} \ar[r]^-{\gamma_X^{p,j}} & \hegpxz
        \ar[d]^{\oegnu{p,j}{X,H}} \ar[r] & \Om^{p-1}_H\otimes\sL^{-j} \ar[d]^\id
        \ar[r]^-{+1} & \\
        \Om^{p}_X\otimes\sL^{-j}\ar[r] & \Om_X^p(\log H)\otimes\sL^{-j} \ar[r] &
        \Om^{p-1}_H\otimes\sL^{-j} \ar[r]^-{+1} & }
    \end{aligned}
  \end{equation}
  The morphisms in the first column exist such that their composition is the
  natural morphism $\wt\Omega_X^p\to\Om_X^p$ twisted by $\sL^{-j}$.  Along with
  \autoref{eq:69} this proves the existence of the claimed morphisms, and
  \autoref{item:35} and \autoref{item:34}.
  Restricting \autoref{eq:78} to $X\setminus H$ shows that the first two columns
  become isomorphic. This proves \autoref{item:37}.

  Taking hypercohomology of the complexes in \autoref{eq:69} leads to the following
  diagram:
  \[
    \resizebox{\hsize}{!}{$\xymatrix@C1em{%
        \bH^{q-1}(X,\uf^X_{p-1}(\log H)\otimes\sL^{-j}) \ar[d]_\id \ar[r] &
        \bH^{q-p}(X,\hegpxz) \ar[r]\ar[d] & \bH^q(X,\tf^X_{p}(\log
        H)\otimes\sL^{-j}) \ar[r]\ar@{->>}[d] & \bH^q(X,\uf^X_{p-1}(\log
        H)\otimes\sL^{-j}) \ar[d]^\id
        \\
        \bH^{q-1}(X,\uf^X_{p-1}(\log H)\otimes\sL^{-j})\ar[r] &
        \bH^{q-p}(X,\Om_X^p(\log H)\otimes\sL^{-j}) \ar[r] & \bH^q(X,\uf^X_{p}(\log
        H)\otimes\sL^{-j})\ar[r] & \bH^q(X,\uf^X_{p-1}(\log H)\otimes\sL^{-j}).}$}
  \]      
  The outside vertical morphisms are isomorphisms, and the third vertical morphism is
  surjective by \autoref{cor:H-surjectivity-on-log-and-line-bundle}.  Then
  \autoref{item:32} follows by the $4$-lemma.
  Taking hypercohomology of \autoref{eq:78} leads to
  \[
    \resizebox{\hsize}{!}{$\xymatrix@R1.25em@C1em{%
      \bH^{q-1}(X,\Om_H^{p-1}\otimes\sL^{-j}) \ar[d]_\id \ar[r] &
      \bH^{q}(X,\wegpx) \ar[r]\ar[d] & \bH^q(X,\hegpxz)
      \ar[r]\ar@{->>}[d] & \bH^q(X,\Om_H^{p-1}\otimes\sL^{-j}) \ar[d]^\id
      \\
      \bH^{q-1}(X,\Om_H^{p-1}\otimes\sL^{-j})\ar[r] &
      \bH^{q}(X,\Om_X^p\otimes\sL^{-j}) \ar[r] & \bH^q(X,\Om_X^{p}(\log
      H)\otimes\sL^{-j})\ar[r] & \bH^q(X,\Om_H^{p-1}\otimes\sL^{-j}).}$}
  \]
  As above, the outside vertical morphisms are isomorphisms, and the third vertical
  morphism is surjective by \autoref{item:32}, so \autoref{item:33} follows by the
  $4$-lemma.

  Next, assume that \tes an open subset $U\subseteq X$ that has \premmdb \sings for
  some $m\in\bN$. Then the morphism
  $\tf^X_{p-1}(\log H)\resto U \overset\simeq\longrightarrow \uf^X_{p-1}(\log
  H)\resto U$ is an isomorphism for $p\leq m$ by
  \autoref{cor:log-sequence-for-wt}\autoref{eq:68}, so the
  the last vertical morphism between the first two rows of 
  \autoref{eq:69} is an isomorphism on $U$ 
  and hence ${\wt\nu_{X,H}^{p,j}}\resto{U}$ is an isomorphism.  Furthermore,
  \autoref{prop:hyperplane-sections-of-premdb} implies that $U\cap H$ also has
  \premmdb \sings and hence the last two vertical morphisms between the first two
  rows of 
  \autoref{eq:78} are isomorphisms on $U$. It follows that then
  ${\wt\nu_{X}^{p,j}}\resto{U}$ is also an isomorphism.  This proves
  \autoref{item:31}.

  Finally, let $\sfG\leteq\wegpx$, $\sM\leteq\sL^{-j}$, $\alpha\leteq\wegnu{p,j}X$
  and $\beta\leteq\oegnu{p,j}X$. Then the assumptions of
  \autoref{thm:coherent-G}\autoref{item:26} are satisfied by \autoref{item:35} and
  \autoref{item:31}. Hence $\oegnu{p,j}X$ factors through
  $\zeta\otimes\sL^{-j}:\egpx\otimes\sL^{-j}\to\Om_X^p\otimes\sL^{-j}$ by
  \autoref{thm:coherent-G}\autoref{item:26}.
  %
  %
  This proves \autoref{item:27} and completes the proof of \autoref{cor:surj-for-Om}.
\end{proof}

\begin{cor}\label{cor:ultimate-surjectivity}
  Using the notation from \autoref{thm:coherent-G} and
  \autoref{cor:leminence-grise-coherent}, further assume that $X$ is proper, and $U$
  has \premmdb \sings for some $m\in\bN$.
  If $\sL$ is a semi-ample line bundle on $X$, then $\zeta\otimes\sL^{-j}$ induces a
  surjective map on hypercohomology, for all $j>0$,
  \[
    \xymatrix{%
      \bH^q(X,\egpx\otimes\sL^{-j})\ar@{->>}@/_1em/[rr]\ar[r] &
      \bH^q(X,\egpxc\otimes\sL^{-j}) \ar[r] & \bH^q(X,\Om_X^p\otimes\sL^{-j})}
  \]
\end{cor}

\begin{proof}
  Let $N\gg j$ 
  and $s\in H^0(X,\sL^N)$ a general section. Then $\oegnu{p,j}X$, constructed in
  \autoref{cor:surj-for-Om}, factors through $\egpx\otimes\sL^{-j}$ by
  \autoref{cor:surj-for-Om}\autoref{item:27}, and hence the statement follows from
  \autoref{cor:surj-for-Om}\autoref{item:33}.
\end{proof}


\label{sec:projective-vs-local}
\noin Next, we will turn this surjectivity into an injectivity for the cohomology
sheaves of the duals using Serre vanishing and global generation as in the proofs of
\cite[3.3]{MR3617778} and \cite[3.2]{KS13}.

\begin{lem}\label{lem:injectivity}
  Let $X$ be a proper variety, $V\subseteq X$ an open set, and
  $\alpha:\sfA\to\sfB$ a morphism in $D_\coh(X)$. If
  \begin{enumerate}
  \item\label{item:3} $\bH^{j}(X,\sfA)\onto\bH^{j}(X,\sfB)$ is surjective for each
    $j\in\bZ$,
  \item\label{item:2} $H^{j}(X,h^a(\bD_X(\sfA)))=H^{j}(X,h^a(\bD_X(\sfB)))=0$ for
    each $a\in\bZ$ and each $j>0$, and
  \item\label{item:4} for some $\ii\in\bZ$,
    $\ker\left[h^{\ii}(\bD_X(\sfB))\to h^{\ii}(\bD_X(\sfA))\right]$ is generated by
    global sections on $V$,
  \end{enumerate}
  then 
  $h^{\ii}(\bD_X(\sfB))\resto V\into h^{\ii}(\bD_X(\sfA))\resto V$ is injective.
\end{lem}

\begin{proof}
  \autoref{lem:grothendieck-duality} and \autoref{item:3} imply that
  $\bH^{j}(X,\bD_X(\sfB))\into \bH^{j}(X,\bD_X(\sfA))$ is injective for each
  $j\in\bZ$, and \autoref{item:2} implies that then
  $\bH^{j}(X,\bD_X(\sfB))\simeq H^0(X,h^{j}(\bD_X(\sfB)))$ and
  $\bH^{j}(X,\bD_X(\sfA))\simeq H^0(X,h^{j}(\bD_X(\sfA)))$ for each $j\in\bZ$.
  Hence
  $H^0\left(X,\ker\left[h^{\ii}(\bD_X(\sfB))\to
      h^{\ii}(\bD_X(\sfA))\right]\right)=0$, so the statement follows from
  \autoref{item:4}.
\end{proof}

\begin{rem}\label{rem:injectivity}
  If
  $\dim\supp\left(\ker\left[h^{\ii}(\bD_X(\sfB))\to h^{\ii}(\bD_X(\sfA))\right]\resto
    V\right)=0$, then the assumption of \autoref{item:4} holds.
\end{rem}

\noin
The next theorem confirms {\cite[Conjecture~G]{PSV24}}.
\begin{thm}
  \label{cor:inj-on-dual-of-Om-for-mdb}
  \label{thm:key-injectivity} 
  Let $U$ be a (not necessarily irreducible) variety of pure dimension $n$.
  If $U$ has \premmdb \sings, then for each $\ii$ and $p\leq m$, the following
  natural morphism is injective:
  \centerline{$\xymatrix{%
      h^{\ii}(\bD_U(\Om_U^p))\, \ar@{^(->}[r] & h^{\ii}(\bD_U(\wt\Omega_U^p)) }$.}
\end{thm}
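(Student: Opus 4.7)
The statement is local on $U$, so I would first reduce to the following setup. Pick a point $u\in U$; after shrinking, assume $U$ is affine (hence quasi-projective) and choose a projective compactification $X\supseteq U$. Then pick a semi-ample line bundle $\sL$ on $X$ together with general sections $s_0,\dots,s_l\in H^0(X,\sL^n)$ generating $\sL^n$, with $n$ taken large, and arrange that the zero locus $H=(s_0=0)$ is disjoint from (a neighbourhood in $U$ of) the point $u$, so that $U\subseteq X\setminus H$. By \autoref{cor:wegpx-has-coh-coh}\autoref{item:29} and \autoref{cor:surj-for-Om}, we then have natural isomorphisms $\wt\Omega_X^p\resto U\simeq \wegpx\resto U\simeq \hegpxz\resto U$, and under the closed embedding $U\into X\setminus H\into X$ the functors $\bD_X$ and $\bD_U$ are compatible (cf.~\autoref{lem:Dz-is-same-as-Dx}).

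Next I would apply \autoref{cor:surj-for-Om} to the compactification: for each $p\leq m$, each $q$, and each $0<j<n$, the morphism
\[
  \bH^q(X,\hegpxz\otimes\sL^{-j})\;\twoheadrightarrow\;\bH^q(X,\Om_X^p\otimes\sL^{-j})
\]
is surjective. Dualizing via Grothendieck--Serre duality (\autoref{lem:grothendieck-duality}), and using that $\sL$ is locally free so that $\bD_X(\blank\otimes\sL^{-j})\simeq \bD_X(\blank)\otimes\sL^{j}$, this becomes the injection
\[
  \bH^{n-q}(X,\bD_X(\Om_X^p)\otimes\sL^{j})\;\hookrightarrow\;
  \bH^{n-q}(X,\bD_X(\hegpxz)\otimes\sL^{j}).
\]
Since $X$ is projective and the two relevant objects $\bD_X(\Om_X^p)$ and $\bD_X(\hegpxz)$ have coherent cohomology sheaves (for $\hegpxz$ this uses \autoref{cor:wegpx-has-coh-coh}\autoref{item:27} together with its construction), Serre vanishing applies: for all $j$ large enough, the hypercohomology spectral sequences degenerate and identify each side with the global sections of the appropriate cohomology sheaf tensored with $\sL^{j}$. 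The above injection thereby becomes an injection on global sections
\[
  H^0(X,\, h^{n-q}(\bD_X(\Om_X^p))\otimes\sL^{j})\;\hookrightarrow\;
  H^0(X,\, h^{n-q}(\bD_X(\hegpxz))\otimes\sL^{j}).
\]
Using that $\sL^{j}$ is globally generated for large $j$, a standard argument (tensoring with enough sections) promotes this to an injection of the coherent cohomology sheaves themselves, $h^{n-q}(\bD_X(\Om_X^p))\hookrightarrow h^{n-q}(\bD_X(\hegpxz))$, at least near $u$. Restricting to $U$ and invoking the isomorphism $\hegpxz\resto U\simeq \wt\Omega_U^p$ from the first step yields precisely $h^{\ii}(\bD_U(\Om_U^p))\hookrightarrow h^{\ii}(\bD_U(\wt\Omega_U^p))$ with $\ii=n-q$, establishing the theorem.

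The principal obstacle is the tension between the two halves of the argument: Serre vanishing/duality forces us to work on a projective $X$, while the hypothesis of the theorem concerns only the (a priori non-projective) $U$. The surjectivity in \autoref{cor:surj-for-Om} is the analogue of the main injectivity theorem on $X$, but it is stated in terms of the auxiliary object $\hegpxz$ rather than $\wt\Omega_X^p$; these objects only coincide on $X\setminus H$. The balancing act is therefore to choose the compactification, the sections, and the divisor $H$ so that (i) $X$ is projective and $\sL$ carries enough general sections, (ii) $H$ avoids the locus of $U$ we care about, and (iii) the index range $0<j<n$ allowed by the cyclic-cover construction still contains values of $j$ large enough for Serre vanishing. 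Taking $n$ large from the start makes all three compatible, at which point the argument closes.
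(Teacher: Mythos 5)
Your proposal has the right cast of characters but fails at exactly the points where the argument has to work hardest, and it is missing the key localization mechanism entirely. Two concrete gaps in the middle: first, one cannot arrange $U\subseteq X\setminus H$; $U$ is a dense open in the projective compactification $X$, so a general member $H$ of a very ample linear system necessarily meets $U$ --- all one can arrange is that a chosen point $u$ avoids $H$. The identification $\hegpxz\resto{U}\simeq\wt\Omega_U^p$ that you invoke at the end therefore holds only on $U\setminus H$ (via $\hegpxz\resto{U}\simeq\wegpx\resto{U}$ and \autoref{cor:wegpx-has-coh-coh}\autoref{item:29}), and the only way to recover all of $U$ is to prove the conclusion on $U\setminus H$ and then vary $H$. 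Second, the Serre-vanishing step applied to $\bD_X(\hegpxz)$ is circular: you need the hypercohomology spectral sequence for $\bD_X(\hegpxz)\otimes\sL^j$ to degenerate for some $j$ in the allowed window $0<j<n$, but $\hegpxz$ is built recursively from the degree-$n$ cyclic cover and the divisors $H_i=(s_i=0)$, so the cohomology sheaves $h^\ii(\bD_X(\hegpxz))$ and hence the Serre threshold themselves depend on $n$; ``taking $n$ large from the start'' is chasing a moving target. For $\bD_X(\Om_X^p)$ the maneuver is fine precisely because $\Om_X^p$ is independent of $n$, which is why the paper degenerates only that side.

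Even repairing both of these, the proposal still lacks the mechanism that converts the duality-theoretic injection into a statement local on $U$. What duality and vanishing give you is an injection $H^0(X,h^\ii(\bD_X(\Om_X^p))\otimes\sL^j)\hookrightarrow\bH^\ii(X,\bD_X(\hegpxz)\otimes\sL^j)$; turning this into injectivity of the sheaf map on $U$ requires first knowing that the kernel of $h^\ii(\bD_X(\Om_X^p))\to h^\ii(\bD_X(\hegpxz))$ has zero-dimensional support on $U$. In the paper's proof this comes from a genuine induction on dimension: the theorem is assumed for the hyperplane section $V=U\cap H$ (which is again \premmdb by \autoref{prop:hyperplane-sections-of-premdb}), and a Nakayama-type argument multiplying by the local equation of $V$ (\autoref{claim:fK=K}) shows the stalk of the kernel vanishes at every point of $V$, forcing the support to be punctual on $U$. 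Only then does the local-cohomology contradiction close: a nonzero section of the kernel twisted by $\sL^j$ supported at a point $\xi\in U$ lifts to a nonzero class in $\bH^\ii(X,\bD_X(\hegpxz)\otimes\sL^j)$ whose restriction to an affine open $X\setminus Z$ (with $X\setminus U\subseteq Z$, $\xi\notin Z$) vanishes, so the class comes from $\bH^\ii_Z$, contradicting that it is supported at $\xi\notin Z$. None of this inductive and support-theoretic structure appears in your proposal, and without it the projective/local tension you correctly identify in your final paragraph does not actually resolve.
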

\begin{proof}
  The statement is local on $U$, so we may assume that it is affine and hence
  quasi-projective.  Let $X\supseteq U$ be a projective closure, %
  $Z\leteq X\setminus U$. Further let $\egpx$ be the object defined in
  \autoref{thm:coherent-G}\autoref{item:25} and $\egpxc$ the object obtained in
  \autoref{cor:leminence-grise-coherent}.  As $\Om_X^p$ is bounded, so are
  $\wt\Omega_X^p$, $\Om_X^{p,+}$, $\egpx$ and $\egpxc$, and hence $\bD_X(\Om_X^p)$
  and $\bD_X(\egpxc)$ have finitely many non-zero cohomology sheaves, i.e., the
  sheaves
  $\sK_\ii\leteq\ker\left[h^{\ii}(\bD_X(\Om_X^p))\to
    h^{\ii}(\bD_X(\egpx))\right]=\ker\left[h^{\ii}(\bD_X(\Om_X^p))\to
    h^{\ii}(\bD_X(\egpxc))\right]$ are zero for all but finitely many $q\in\bZ$.
  Finally, let $\sL$ be a very ample line bundle on $X$ such that
  \begin{enumerate}
  \item[$(\ast)$]\label{item:28} $\sK_\ii\otimes\sL^j$ is generated by global
    sections for each $q\in\bZ$, $j>0$, and
  \item[$(\ast\ast)$]\label{item:29} $H^{b}(X,h^{a}(\bD_X({\Xi^p}))\otimes \sL^j)=0$,
    for each $a\in\bZ$, $p\leq m$, $b>0$, $j>0$, where $\Xi^p$ is either
    $\Om_X^p$, 
    or $\egpxc$.
  \end{enumerate}
  We may apply \autoref{lem:injectivity} with $\sfA\leteq\egpxc\otimes\sL^{-1}$,
  $\sfB\leteq\Om_X^p\otimes\sL^{-1}$, $\alpha\leteq\kappa\otimes\sL^{-1}$, 
  $V=X$ by $(\ast)$, $(\ast\ast)$, and \autoref{cor:ultimate-surjectivity}. It
  follows that $h^{\ii}(\bD_X(\Om_X^p))\resto U\into h^{\ii}(\bD_X(\egpxc))\resto U$
  and hence $h^{\ii}(\bD_X(\Om_X^p))\resto U\into$ $h^{\ii}(\bD_X(\egpx))\resto U$ is
  injective. As $\egpx\resto U\simeq \wt\Omega_U^p$, this proves the desired
  statement.
\end{proof}


%
\section{Applications}\label{sec:applications}
\noindent



\noin
\autoref{thm:key-injectivity} also enables us to compare the depths of $\Om_X^p$ and
$\wt\Omega_X^p$. 
(For the definition of $\depth$ for objects in $D^b_\coh(X)$, we refer to
\cite[p.8]{PSV24}).

\begin{cor}
  If $X$ is a variety with \premmdb \sings, then
  $\depth\Om_X^m\geq\depth\wt\Omega_X^m$.\qed
\end{cor}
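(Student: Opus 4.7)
I would derive the corollary as an almost formal consequence of \autoref{thm:key-injectivity}, by transporting the injectivity of cohomology sheaves of the Grothendieck dual through local duality to a statement about depth of a complex in the sense recalled from \cite[p.~8]{PSV24}.

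First, since the claim is local, I would fix a (closed) point $x \in X$ and work with germs there. \autoref{thm:key-injectivity} supplies, for every integer $e$, an injection
\[
  h^e\big(\bD_X(\Om_X^m)\big)_x \hookrightarrow h^e\big(\bD_X(\wt\Omega_X^m)\big)_x,
\]
induced by dualizing the natural morphism $\wt\Omega_X^m \to \Om_X^m$. In particular, the $e$-indexed non-vanishing locus at $x$ of the cohomology of $\bD_X(\Om_X^m)$ is contained in that of $\bD_X(\wt\Omega_X^m)$.

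Next I would transport this containment to a statement about local cohomology via Grothendieck--Matlis local duality on $\sO_{X,x}$: the injection of $\bD_X$-cohomology sheaves above is Matlis-dual to a natural comparison morphism between $H^\bullet_x(\wt\Omega_X^m)$ and $H^\bullet_x(\Om_X^m)$. I would then invoke the definition of $\depth$ for an object $\cF \in D^b_\coh(X)$ as recalled in \cite[p.~8]{PSV24} -- an invariant extracted from the (non-)vanishing of the cohomology of $\bD_X(\cF)$, equivalently of the local cohomology at $x$ -- and read off the inequality $\depth_x \Om_X^m \le \depth_x \wt\Omega_X^m$ at every point $x$. Taking the infimum over $x \in X$ yields the global statement.

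\textbf{Main difficulty.} All the geometric and cohomological content is already packed into \autoref{thm:key-injectivity}; the corollary itself is an unwinding. The only real point of care is bookkeeping: the definition of depth for a complex adopted in \cite{PSV24} is set up precisely so that an injection of $\bD_X$-cohomology stalks yields $\depth \Om_X^m \le \depth \wt\Omega_X^m$, and one should use that definition rather than a naive support-containment translation of the dual (which would produce the opposite direction). Once the conventions are aligned the argument is one line.
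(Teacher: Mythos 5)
Your plan—localize at a point, invoke \autoref{thm:key-injectivity}, pass through local duality to local cohomology, and read off a depth comparison from the definition in \cite[p.8]{PSV24}—is precisely the unwinding the paper has in mind, since it stamps this with a bare qed. But you punt on the one thing that actually needs checking, and I believe you punt in the wrong direction. Matlis-dualizing the injection $h^e(\bD_X(\Om_X^m))_x \hookrightarrow h^e(\bD_X(\wt\Omega_X^m))_x$ yields the natural surjection $H^q_x(X,\wt\Omega_X^m) \twoheadrightarrow \bH^q_x(X,\Om_X^m)$, which is exactly \autoref{cor:surj-for-loc-coh-on-dual-of-Om-for-mdb}. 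With the standard extension of depth to $D^b_\coh$, namely $\depth_x \cK = \min\{q : \bH^q_x(X,\cK)\neq 0\}$, this surjection gives the containment $\{q : \bH^q_x(X,\Om_X^m)\neq 0\}\subseteq\{q : H^q_x(X,\wt\Omega_X^m)\neq 0\}$, hence $\depth_x\Om_X^m \geq \depth_x\wt\Omega_X^m$, not $\leq$. Equivalently, using $\depth_x\cK = n - \max\{e : h^e(\bD_X(\cK))_x \neq 0\}$, the containment of supports of the $\bD_X$-cohomology gives the same $\geq$. The direction you dismiss as the "naive support-containment translation" producing the "opposite direction" is in fact the correct one, and your trust that the definition in \cite{PSV24} is rigged to flip it is misplaced; that definition is the standard local-cohomology one. (This is also the direction that is actually needed for Conjecture~H: the vanishing $h^i(\Om_X^m)=0$ for small $i>0$ rests on $\bH^q_x(\Om_X^m)$ vanishing whenever $H^q_x(\wt\Omega_X^m)$ does, i.e.\ on $\Om_X^m$ having depth at least that of $\wt\Omega_X^m$.) So the corollary as printed appears to have the inequality reversed; your proposal inherits the typo and, as written, would not produce a valid proof. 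With the sign corrected to $\depth\Om_X^m\geq\depth\wt\Omega_X^m$, your argument is complete and matches the paper's.
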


\noin
This in turn implies the following by the argument on \cite[p.14]{PSV24}.

\begin{cor}[\protect{\cite[Conjecture~H]{PSV24}}]
  Let $X$ be a variety with only \premmdb \sings and assume that $X$ has
  \premdb \sings away from a closed subset of dimension $r$. Then
  \[
    h^i(\Om_X^m)=0 \qquad \text{for} \qquad 0<i<\depth\wt\Omega_X^m-r-1.
  \]
\end{cor}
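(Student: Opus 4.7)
The plan is to deduce the vanishing by combining Theorem~\autoref{thm:key-injectivity} with a dimensional analysis, following the argument of \cite[p.~14]{PSV24}.

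Let $W\subseteq X$ denote the closed subset of dimension at most $r$ off of which $X$ is \premdb. The hypothesis forces $h^i(\Om_X^m)$ to be supported on $W$ for every $i>0$, so in particular the cone $\sC$ of the natural morphism $\wt\Omega_X^m\to\Om_X^m$ is supported on $W$, and $\depth_x h^i(\Om_X^m)\leq \dim W\leq r$ at every point of its support. Set $d:=\depth\wt\Omega_X^m$ and $n:=\dim X$.

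Applying $\bD_X$ to the distinguished triangle $\wt\Omega_X^m\to\Om_X^m\to\sC\overset{+1}{\to}$ yields a triangle $\bD_X(\sC)\to\bD_X(\Om_X^m)\to\bD_X(\wt\Omega_X^m)\overset{+1}{\to}$, and the injectivity of Theorem~\autoref{thm:key-injectivity} translates into the surjectivity, for every $e$, of the connecting homomorphism $h^{e-1}(\bD_X\wt\Omega_X^m)\twoheadrightarrow h^e(\bD_X\sC)$. Since $\wt\Omega_X^m$ is a coherent sheaf of depth $d$, Grothendieck duality forces $h^{e-1}(\bD_X\wt\Omega_X^m)=0$ for $e\notin[1,n-d+1]$, whence $h^e(\bD_X\sC)=0$ outside the same range. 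Combining this with the local-to-global spectral sequence
\[
  E_2^{p,q}=\sExt^p\bigl(h^{-q}(\sC),\omega_X\bigr)\Longrightarrow h^{p+q}(\bD_X\sC),
\]
whose entries are nonzero only for $q<0$ (since $\sC$ is concentrated in degrees $\geq 1$) and for $p\geq n-r$ (by the dimensional bound on $W$), one reads off that a nontrivial $h^i(\sC)$ with $0<i<d-r-1$ would produce a nonzero class at some $E_2^{p,-i}$ contributing in total degree $p-i\geq n-r-i>n-d+1$, which lies outside the permissible range for $h^{p-i}(\bD_X\sC)$ and so gives a contradiction.

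The main obstacle is to ensure that the relevant $E_2$-class actually survives to $E_\infty$ and thus produces a nonzero contribution to $h^{p-i}(\bD_X\sC)$. This is handled by choosing $i$ to be the smallest integer in the forbidden range for which $h^i(\Om_X^m)\neq 0$, so that no incoming $d_r$-differentials into $E_r^{p,-i}$ can be nonzero: their sources $E_r^{p-r,-i+r-1}$ involve $h^{i-r+1}(\sC)$, which vanishes by minimality for $2\leq r\leq i$ and lies in the zero row $q\geq 0$ for $r>i$. A parallel bookkeeping of outgoing differentials — whose targets either hit zero rows produced by the minimality of $i$, or are forced to be zero by the same degree constraint $p-i\notin[1,n-d+1]$ applied one step along the diagonal — rules out cancellations at every page, closing the contradiction.
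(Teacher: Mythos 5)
The paper's own route is a two–step reduction: from \autoref{thm:key-injectivity} it first extracts the depth inequality between $\Om_X^m$ and $\wt\Omega_X^m$ (the preceding corollary), and then invokes a separate general lemma from \cite[p.14]{PSV24} that bounds the degrees in which $h^i(\Om_X^m)$ can be nonzero in terms of $\depth\Om_X^m$ and $r$. Your proposal instead tries to run everything through a single hyperext spectral sequence for $\bD_X\sC$, where $\sC=\cone(\wt\Omega_X^m\to\Om_X^m)$. The reduction steps (that $\sC$ is supported on $W$, lives in degrees $\geq 1$, and that $h^e(\bD_X\sC)=0$ outside a window determined by $d=\depth\wt\Omega_X^m$ via the injectivity) are fine. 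The gap is in the degeneration step.

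Your treatment of the incoming differentials into $E_r^{p,-i}$ is correct: with $i$ minimal, the sources involve $h^{i-r+1}(\sC)$ which vanish for degree reasons. But the treatment of the \emph{outgoing} differentials $d_r\colon E_r^{p,-i}\to E_r^{p+r,-i-r+1}$ does not work. Those targets involve $h^{i+r-1}(\sC)$ with $i+r-1>i$, which are \emph{higher} cohomology sheaves of $\sC$; minimality of $i$ controls the lower ones and says nothing here, so the ``zero rows produced by the minimality of $i$'' clause is simply false for outgoing arrows. The fallback clause — ``forced to be zero by the degree constraint $p-i\notin[1,n-d+1]$ applied one step along the diagonal'' — is also not a valid inference: the degree constraint only forces $E_\infty^{p+r,-i-r+1}=0$ (since $h^{p-i+1}(\bD_X\sC)=0$), not $E_r^{p+r,-i-r+1}=0$ at the finite page where the differential lives. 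Indeed an outgoing differential can perfectly well be nonzero into a page entry that itself dies later, killing your class along the way; nothing in the argument rules this out. Concretely, if $\depth h^i(\sC)\geq 2$, one can have $\depth h^{i+r-1}(\sC)\leq \depth h^i(\sC)-r$ for some $r\geq 2$, and then $\sExt^{p+r}(h^{i+r-1}(\sC),\dcx X)$ is nonzero and a nontrivial $d_r$ can occur.

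The fix requires choosing the entry to protect more carefully: one should look at the index $j_0$ minimizing $j+\depth h^j(\sC)$ (equivalently, the $E_2$--entry of \emph{maximal total degree}), and among minimizers take $j_0$ largest. Then \emph{both} incoming and outgoing differentials can be killed, most cleanly not via the spectral sequence at all but via the truncation triangles $\tau_{\leq j_0}\sC\to\sC\to\tau_{\geq j_0+1}\sC$, which produce a nonzero class in $h^{n-\mu_0}(\bD_X\sC)$ with $\mu_0=j_0+\depth h^{j_0}(\sC)$, and then $\mu_0\geq d-1$ forces $i_0\geq d-r-1$ for the least $i_0$ with $h^{i_0}(\sC)\neq 0$. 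This is essentially the content of the general vanishing lemma the paper cites from \cite{PSV24}, and is a genuinely separate piece of input beyond \autoref{thm:key-injectivity} alone. (A minor additional point: your $E_2$--page is written with $\omega_X$; for the argument to apply to arbitrary varieties you should use $\dcx X$ throughout, which shifts the $p$--indexing.)
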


\noin
We also obtain a surjectivity statement for local cohomology:

\begin{thm}\label{cor:surj-for-loc-coh-on-dual-of-Om-for-mdb}
  Let $X$ be a variety and $x\in X$ a point. Assume that $X$ has \premmdb \sings near
  $x$.  Then for each $q$ and $p\leq m$ the following natural morphism is surjective:
  \begin{equation}
    \label{eq:73}
    \xymatrix{%
      H^q_x(X,\wt\Omega_X^p) \ar@{->>}[r] & \bH^q_x(X,\Om_X^p). }
  \end{equation}
\end{thm}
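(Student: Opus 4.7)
The plan is to deduce this surjectivity formally from the injectivity theorem \autoref{thm:key-injectivity} by an application of Grothendieck local duality (Matlis duality). The statement is local at $x$, so I would first pass to an affine neighborhood of $x$ and set $R \leteq \sO_{X,x}$, with maximal ideal $\frm$, residue field $k$, injective hull $E \leteq E_R(k)$, and $n \leteq \dim_x X$. Write $(\blank)^\vee \leteq \Hom_R(\blank, E)$ for the Matlis dual and $(\blank)^\wedge$ for $\frm$-adic completion.

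Next, I would invoke the (hyper)local duality theorem for bounded complexes with coherent cohomology (cf.~\cite{MR491680}): for every $\cx F \in D^b_{\rm coh}(X)$ and every $q \in \bZ$ there is a natural isomorphism
\[
\bH^q_x(X, \cx F)^\vee \simeq h^{n-q}\bigl(\bD_X(\cx F)\bigr)_x^{\wedge}.
\]
(Here I use that $\bD_X(\blank) = \myR\sHom_X(\blank, \dcx X)[-n]$ implements the Grothendieck dual up to a shift by $n$.) Applied to $\cx F = \Om_X^p$ and $\cx F = \wt\Omega_X^p$ this produces, for each $p \leq m$ and $q$, a commutative square
\[
\xymatrix@C3em{
\bH^q_x(X,\Om_X^p)^\vee \ar[r]^-\simeq \ar[d] & h^{n-q}\bigl(\bD_X(\Om_X^p)\bigr)_x^\wedge \ar[d] \\
H^q_x(X,\wt\Omega_X^p)^\vee \ar[r]^-\simeq & h^{n-q}\bigl(\bD_X(\wt\Omega_X^p)\bigr)_x^\wedge
}
\]
in which the vertical arrows are induced by the natural morphism $\wt\Omega_X^p \to \Om_X^p$ from \autoref{prop:morphs-of-cxs}. (On the left I am using that $\wt\Omega_X^p$ is a sheaf, so ordinary and hyper local cohomology coincide.)

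Now \autoref{thm:key-injectivity} asserts precisely that, for $p \leq m$ and every $\ii$, the sheaf morphism $h^{\ii}\bigl(\bD_X(\Om_X^p)\bigr) \into h^{\ii}\bigl(\bD_X(\wt\Omega_X^p)\bigr)$ is injective near $x$. Passing to stalks and then to $\frm$-adic completion preserves this injectivity, since these are finitely generated $R$-modules and completion is exact on them. Applying the exact contravariant functor $\Hom_R(\blank, E)$ converts the right-hand column of the square above into a surjection, and via the horizontal isomorphisms this is exactly the desired surjection $H^q_x(X,\wt\Omega_X^p) \onto \bH^q_x(X,\Om_X^p)$.

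The only genuine content beyond bookkeeping is the injectivity theorem itself; the technical points to verify are that local duality applies in this generality (it does once one passes to $D^b_{\rm coh}$ with a dualizing complex, which we have on any complex variety) and that Matlis duality is a faithful enough operation on the relevant stalks and completions to turn the injection into the needed surjection. Neither of these is expected to pose a real obstacle, so the proof is essentially a short corollary of \autoref{thm:key-injectivity}.
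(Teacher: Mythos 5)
Your proposal is correct and follows essentially the same route as the paper's proof: apply Matlis duality $\Hom_{\sO_{X,x}}(\blank, E(\kappa(x)))$ to the map in \autoref{eq:73}, then invoke local duality (the paper cites \cite[V.6.2]{MR0222093} and models the argument on \cite[3.4]{MR3617778}) to identify the Matlis dual with the stalk/completion of the injection furnished by \autoref{thm:key-injectivity}. Your write-up merely makes explicit the commutative square and the Artinian/double-dual bookkeeping that the paper leaves implicit.
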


\begin{proof}
  The proof is essentially the same as that of \cite[3.4]{MR3617778}.

  Let $E(\kappa(x))$ be the injective hull of the residue field
  $\kappa(x)=\sO_{X,x}/\frm_{X,x}$ at $x$ and apply the faithful and exact functor
  $\Hom_{\sO_{X,x}}(\blank, E(\kappa(x)))$ to the map in \autoref{eq:73}. By Matlis
  duality it is enough to prove that the resulting map is injective.
  %
  By local duality (cf.~\cite[V.6.2]{MR0222093}), this map is the localization at $x$
  of the injective morphism (for $X$) in
  \autoref{thm:key-injectivity}, and hence injective as desired.
\end{proof}

\noin
Next, we obtain a splitting criterion for \premdb \sings.

\begin{thm}\label{thm:splitting-for-Om}
  Let $X$ be a 
  variety and $m\in\bN$.  If the natural morphism $\wt\Omega_X^p\to\Om_X^p$ has a
  left inverse for each $p\leq m$, then $X$ has \premdb \sings.
\end{thm}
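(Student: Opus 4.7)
The plan is to argue by induction on $m$, using \autoref{thm:key-injectivity} (the injectivity theorem) as the essential input that converts the split-mono hypothesis into a genuine isomorphism. For $m\le -1$ there is nothing to prove, since $\Omega_X^p \leteq 0$ for $p<0$ by convention; moreover the condition \premmdb is vacuous at $m=0$, so \autoref{thm:key-injectivity} applies unconditionally there.

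For the inductive step, assume the result for $m-1$. The hypothesis that $\iota_p\col\wt\Omega_X^p\to\Om_X^p$ admits a left inverse for every $p\le m$ in particular gives the same for every $p\le m-1$, so by the inductive hypothesis $X$ is pre-$(m-1)$-\DB, i.e.\ \premmdb. Then \autoref{thm:key-injectivity} yields that the induced morphism $\bD_X(\iota_m)\col\bD_X(\Om_X^m)\to\bD_X(\wt\Omega_X^m)$ is injective on every cohomology sheaf (up to the shift in the definition of $\bD_X$). On the other hand, applying the contravariant functor $\bD_X$ to the identity $\rho_m\circ\iota_m=\id$ converts the assumed left inverse $\rho_m$ into a right inverse $\bD_X(\rho_m)$ of $\bD_X(\iota_m)$; hence $\bD_X(\iota_m)$ is a split epimorphism in the derived category and in particular surjective on every cohomology sheaf. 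Combining the two, $\bD_X(\iota_m)$ is a quasi-isomorphism.

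Biduality (cf.~\autoref{thm:GD-for-D-filt}) then finishes the argument: the cone $C_m=\cone(\iota_m)$ is bounded with coherent cohomology and satisfies $\bD_X(C_m)\qis 0$, so $C_m\qis \bD_X\bD_X(C_m)\qis 0$, and $\iota_m$ is itself a quasi-isomorphism. Together with the inductive hypothesis this shows $\wt\Omega_X^p\isomap\Om_X^p$ for every $p\le m$, i.e.\ $X$ has \premdb \sings. The main obstacle is conceptual rather than technical: a left inverse in an additive category only produces a direct summand, and one has to force the unwanted summand to vanish. The injectivity theorem supplies exactly this vanishing after duality, and once it is available the biduality argument is purely formal.
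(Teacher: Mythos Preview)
Your proof is correct and follows essentially the same approach as the paper: induction on $m$ to obtain the \premmdb hypothesis, then apply the injectivity theorem \autoref{thm:key-injectivity} to get that $\bD_X(\iota_m)$ is injective on cohomology, combine with the dualized splitting to get surjectivity, and conclude via biduality. The paper organizes this as a separate lemma (``\premmdb plus a left inverse at level $m$ implies \premdb'') but the substance is identical.
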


\noin \autoref{thm:splitting-for-Om} follows easily from the following by induction
on $m$:

\begin{thm}\label{thm:splitting-for-Om-sub}
  Let $X$ be a 
  variety and $m\in\bN$.  If $X$ has \premmdb \sings, and the natural morphism
  $\wt\Omega_X^m\to\Om_X^m$ has a left inverse, then $X$ has \premdb \sings.
\end{thm}

\begin{proof}
  Consider the morphisms provided by the assumption:
  $ \xymatrix@C4em{%
    \wt\Omega_X^m\ar[r] \ar@/_1.5em/[rr]^-\simeq & \Om_X^m \ar[r] & \wt\Omega_X^m.
  }$%
  Apply $\bD_X$ and take cohomology: 
  $\xymatrix@C4em{%
      h^\ii(\bD_X(\wt\Omega_X^m))\ar[r] \ar@{->>}@/^1.5em/[rr]_-\simeq &
      h^\ii(\bD_X(\Om_X^m)) \ar@{^(->}[r] & h^\ii(\bD_X(\wt\Omega_X^m)).  }$%
    The second morphism is injective by \autoref{cor:inj-on-dual-of-Om-for-mdb} and
    the composition is an isomorphism. 
    It follows that the second morphism is also an isomorphism for each $\ii$, which
    means that $\wt\Omega_X^m\to\Om_X^m$ is a quasi-isomorphism.
\end{proof}

\begin{cor}\label{thm:splitting-for-fX}
  Let $X$ be a 
  variety and $m\in\bN$.  If the natural (hyperfiltered) morphism
  $\tf^X_{m}\to\uf^X_{m}$ has a (hyperfiltered) left inverse, then it is an
  isomorphism (in the derived category). In particular, then $X$ has \premdb \sings.
\end{cor}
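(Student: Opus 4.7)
The plan is to reduce the statement to the graded-piece splitting result \autoref{thm:splitting-for-Om} and then invoke the equivalent characterization \autoref{lem:equiv-char-of-def-mdb-etc}.

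First, I would set notation: let $\iota\colon\tf^X_m\to\uf^X_m$ denote the natural co-hyperfiltered morphism (induced by \autoref{eq:12}) and let $r\colon\uf^X_m\to\tf^X_m$ be the hypothesized hyperfiltered left inverse, so that $r\circ\iota=\id_{\tf^X_m}$ in the co-hyperfiltered category. By the very definition of (co-)hyperfiltered morphisms, both $\iota$ and $r$ come with compatible components $\iota_p\colon\tf^X_p\to\uf^X_p$ and $r_p\colon\uf^X_p\to\tf^X_p$ for each $p\leq m$, and the relation $r\circ\iota=\id$ in the hyperfiltered category translates into $r_p\circ\iota_p=\id_{\tf^X_p}$ at every level.

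Next, I would pass to the associated graded pieces. The distinguished triangles \autoref{eq:7tf} and \autoref{eq:11},
\[
\wt\Omega_X^p[-p]\to\tf^X_p\to\tf^X_{p-1}\stackrel{+1}{\to}\qquad\text{and}\qquad
\Om_X^p[-p]\to\uf^X_p\to\uf^X_{p-1}\stackrel{+1}{\to},
\]
together with the compatibility of $\iota_p,\iota_{p-1}$ and of $r_p,r_{p-1}$ with the interior morphisms of the co-hyperfiltrations, yield by functoriality of the mapping cone induced morphisms $\wt\Omega_X^p\to\Om_X^p$ and $\Om_X^p\to\wt\Omega_X^p$ whose composition is the identity, since the identity on $\tf^X_p$ induces the identity on its graded quotient. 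Thus the natural morphism $\wt\Omega_X^p\to\Om_X^p$ admits a left inverse for each $p\leq m$. At this point \autoref{thm:splitting-for-Om} applies and gives that $X$ has \premdb \sings, whence by \autoref{lem:equiv-char-of-def-mdb-etc} the morphism $\iota$ is a co-hyperfiltered isomorphism, which is the desired conclusion.

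The only real subtlety lies in the second step: verifying that a left inverse in the hyperfiltered category genuinely descends to left inverses on the associated graded pieces in the derived category. This is essentially a formal consequence of functoriality of the cone under morphisms of distinguished triangles, together with the fact that the identity morphism on a co-hyperfiltered object induces the identity on each graded piece. Once this is granted, the argument is entirely formal, and the main work has already been carried out in \autoref{thm:splitting-for-Om}.
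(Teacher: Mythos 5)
Your proof is correct and takes essentially the same route as the paper: both pass to the associated graded pieces via the compatible distinguished triangles \eqref{eq:7tf} and \eqref{eq:11} to produce a left inverse of the natural morphism $\wt\Omega_X^p\to\Om_X^p$ for each $p\leq m$, and then invoke \autoref{thm:splitting-for-Om}. The subtlety you flag about descending a left inverse to graded pieces is present in the paper's one-line argument as well; it is resolved by the convention (cf.\ \autoref{def:hyperfiltrations}) that the associated graded $\bsfG^p=\cone(\phi^p)$ is a specific complex-level mapping cone, where functoriality does hold, rather than an abstract third vertex in a triangulated category.
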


\begin{proof}
  The \dts \autoref{eq:11}, \autoref{eq:7tf} form a commutative diagram and show
  that 
  the natural morphism $\wt\Omega_X^p\to\Om_X^p$ has a left inverse for all
  $p\leq m$.  The statement follows by \autoref{thm:splitting-for-Om}.
\end{proof}

\begin{cor}\label{cor:finite-splitting}
  Let $f:Y\to X$ be a proper morphism. If either
  \begin{enumerate}
  \item\label{item:19} $\wt\Omega_X^p\to\myR f_*\Om_Y^p$ has a left inverse for all
    $p\leq m$, or
  \item\label{item:21} $\wt\Omega_X^p\to\myR f_*\wt\Omega_Y^p$ has a left inverse for
    all $p\leq m$, and $Y$ has \premdb \sings,
  \end{enumerate}
  Then $X$ has \premdb \sings.
\end{cor}
\begin{proof}
  Observe that \autoref{item:21} implies \autoref{item:19}, so the latter holds in
  both cases.  The natural morphism $\wt\Omega_X^p\to\myR f_*\Om_Y^p$ factors through
  $\Om_X^p$, so \autoref{item:19}, and hence \autoref{item:21}, imply the statement
  by \autoref{thm:splitting-for-Om}.
\end{proof}

\begin{lem}\label{lem:finite-split-for-wt}
  Let $f:Y\to X$ be a morphism and $p\in\bN$. If $\phi:\Om_X^p\to\myR f_*\Om_Y^p$ has
  a left inverse, then so does $\wt\Omega_X^p\to f_*\wt\Omega_Y^p$. In particular, if
  $f$ is finite and $X$ is normal, then $\wt\Omega_X^p\to\myR f_*\wt\Omega_Y^p$ has a
  left inverse.
\end{lem}
\begin{proof}
  By assumption there exists a morphism
  $\psi:\myR f_*\Om_Y^p\to\Om_X^p$ such that
  $\psi\circ\phi=\id_{\Om_X^p}$:
  \[
    \xymatrix@R1.5em{%
      \wt\Omega_X^p\ar[r]\ar[d] & \myR f_*\wt\Omega_Y^p \ar[d]_\eta
      \ar[rd]^{\psi\circ\eta}  &  \\
      \Om_X^p \ar[r]\ar@/_1.75em/[rr]^{\id} & \myR f_*\Om_Y^p \ar[r]^\psi & \Om_X^p }
  \]
  Considering $h^0$ of all the objects in this diagram shows the first claim. The
  second claim follows from the first and from \cite[Cor~1.3]{HKim25}.
\end{proof}

\begin{cor}[cf.~\protect{\cite[Cor~1.4]{HKim25}}]
  Let $f:Y\to X$ be a finite morphism. If $X$ is normal and $Y$ has \premdb \sings,
  then $X$ has \premdb \sings. 
\end{cor}
\begin{proof}
  This follows from \autoref{lem:finite-split-for-wt} and
  \autoref{cor:finite-splitting}\autoref{item:21}.
\end{proof}

\begin{cor}
  Let $f:Y\to X$ be a general cyclic cover as in \autoref{setup}. Then $X$ has
  \premdb 
  \sings if and only if $Y$ does.
\end{cor}
\begin{proof}
  If $Y$ has \premdb \sings, then so does $X$ by \autoref{cor:finite-cover} and
  \autoref{cor:finite-splitting}.
  If $X$ has \premdb \sings, then so does $H$ by
  \autoref{prop:hyperplane-sections-of-premdb}. Hence $\Om_X^p(\log H)$ has no $h^i$
  for $i\neq 0$, $p\leq m$ by \autoref{eq:20}. Then $Y$ has \premdb \sings by
  \autoref{lem:finite-cover}\autoref{item:12}.
\end{proof}

\begin{cor}\label{cor:premrtl-is-premdb}
  Let $X$ be a normal variety with \premrtl \sings. Then $X$ has \wmdb \sings (and
  hence also \premdb \sings).
\end{cor}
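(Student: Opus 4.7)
My plan is to combine the splitting criterion \autoref{thm:splitting-for-Om} with \autoref{prop:wmdb+rtl=mdb} (which asserts that \premdb together with rational singularities implies \wmdb). The main point is that \premrtl automatically furnishes a left inverse of $\wt\Omega_X^p \to \Om_X^p$ via the irrationality complex, so the splitting criterion applies.

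First I would verify that \premrtl forces $X$ to have rational singularities. Applying the definition at $p = 0$ (which is in range since $m \in \bN$), the hypothesis says $h^i(\Irr_X^0) = 0$ for $i > 0$. Since $X$ is normal we have $\codim_X \Sing X \geq 1$, so \autoref{eq:62} identifies $\Irr_X^0$ with $\myR\pi_*\sO_Y$ for a strong log resolution $\pi\colon Y \to X$. Thus $R^i\pi_*\sO_Y = 0$ for $i > 0$ and $X$ is rational. By \autoref{cor:wtOm-of-rtl-sing} we then have canonical isomorphisms $\wt\Omega_X^p \simeq \Omega_X^{[p]} \simeq \wt\Irr_X^p$ for every $p$; in particular $\wt\Omega_X^p$ is reflexive, and since $X$ is normal this is equivalent to $S_2$. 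This secures the $S_2$ clause in the definition of \wmdb.

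To show that $X$ is \premdb I would invoke \autoref{thm:splitting-for-Om}: it is enough, for each $p \leq m$, to produce a left inverse of the natural map $\alpha\colon \wt\Omega_X^p \to \Om_X^p$. \autoref{prop:morphs-of-cxs} supplies the commutative square
\[
\xymatrix@C3em{
\wt\Omega_X^p \ar[r]^\alpha \ar[d]_\delta & \Om_X^p \ar[d]^\beta \\
\wt\Irr_X^p \ar[r]^\gamma & \Irr_X^p
}
\]
(with $\beta$ and $\gamma$ the canonical morphisms and $\delta$ the factorization of $\beta \circ \alpha$ through $\wt\Irr_X^p$ asserted at the end of that proposition). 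By the previous paragraph $\delta$ is an isomorphism, and by the very definition of \premrtl so is $\gamma$. The map $\epsilon := \delta^{-1} \circ \gamma^{-1} \circ \beta\colon \Om_X^p \to \wt\Omega_X^p$ then satisfies $\epsilon \circ \alpha = \delta^{-1} \circ \gamma^{-1} \circ \gamma \circ \delta = \mathrm{id}$, producing the desired left inverse. Feeding this into \autoref{thm:splitting-for-Om} gives \premdb, and \autoref{prop:wmdb+rtl=mdb} upgrades \premdb plus rational to \wmdb.

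The genuinely difficult work is hidden inside the splitting criterion \autoref{thm:splitting-for-Om}, which in turn depends on the main injectivity \autoref{thm:key-injectivity}. The only content specific to this corollary is the observation that a pre-$m$-rational structure is exactly what is needed to invert one side of the canonical square above, turning the map $\Om_X^p \to \Irr_X^p$ into a retraction of $\wt\Omega_X^p \to \Om_X^p$.
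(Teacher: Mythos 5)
Your proof is correct and follows essentially the same route as the paper: use the isomorphisms $\wt\Omega_X^p\simeq\wt\Irr_X^p$ (from \autoref{cor:wtOm-of-rtl-sing}) and $\wt\Irr_X^p\simeq\Irr_X^p$ (from the \premrtl hypothesis) together with the natural square from \autoref{prop:morphs-of-cxs} to produce a left inverse of $\wt\Omega_X^p\to\Om_X^p$, feed this into the splitting criterion \autoref{thm:splitting-for-Om}, and finish with \autoref{prop:wmdb+rtl=mdb}. You simply spell out two steps the paper leaves implicit --- that the $p=0$ case of \premrtl already yields rational singularities, and the explicit formula $\epsilon=\delta^{-1}\circ\gamma^{-1}\circ\beta$ for the retraction --- which is a welcome expansion but not a different argument.
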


\begin{proof}
  By assumption $\wt\Irr_X^p\simeq\Irr_X^p$ for each $p\leq m$. Furthermore,
  $\wt\Irr_X^p\simeq\wt\Omega_X^p$ by \autoref{cor:wtOm-of-rtl-sing} and hence
  \autoref{prop:morphs-of-cxs} implies that the natural morphism
  $\wt\Omega_X^p\to\Om_X^p$ has a left inverse for each $p\leq m$. The statement
  follows from \autoref{thm:splitting-for-Om} and \autoref{prop:wmdb+rtl=mdb}.
\end{proof}

\begin{cor}\label{cor:mrtl-is-mdb}
  If $X$ has \mrtl \sings, then it has \mdb \sings.
\end{cor}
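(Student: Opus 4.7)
The plan is to show that the definition of \mdb is essentially immediate once we combine \autoref{cor:premrtl-is-premdb} with the previously established properties of \wt\Omega_X^p and the standard equivalence in the normal case.

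First, I would invoke \autoref{cor:premrtl-is-premdb} applied to $X$: since $X$ has \mrtl singularities, it has \premrtl singularities, and hence \wmdb singularities. This immediately delivers three of the ingredients needed for \mdb: $X$ is semi-normal (in fact normal, by the definition of \premrtl), $X$ is \premdb, and $\wt\Omega_X^p$ is $S_2$ for $1\leq p\leq m$. Moreover, the codimension hypothesis $\codim_X\Sing X\geq 2m+2$ coming from \mrtl is strictly stronger than the bound $\codim_X\Sing X\geq 2m+1$ required for \mdb.

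The only remaining item to verify is that $\wt\Omega_X^p$ is reflexive for each $1\leq p\leq m$. Here I would combine two facts established earlier: $\wt\Omega_X^p$ is torsion-free by \autoref{lem:hz-is-tf}, and \autoref{rem:for-normal-S2+tf=refl} states that on a normal variety an $S_2$ torsion-free sheaf is automatically reflexive. Since $X$ is normal (from \premrtl), applying this remark to the $S_2$ sheaf $\wt\Omega_X^p$ yields reflexivity.

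No step presents a genuine obstacle; the proof is essentially an assembly of \autoref{cor:premrtl-is-premdb}, \autoref{lem:hz-is-tf}, and \autoref{rem:for-normal-S2+tf=refl}, together with the comparison of codimension bounds. The entire argument fits in a few lines, along the schema: \mrtl $\Longrightarrow$ \premrtl $\Longrightarrow$ \wmdb, and \wmdb plus normality (from \premrtl) plus torsion-freeness upgrades the $S_2$ condition on $\wt\Omega_X^p$ to reflexivity, giving \mdb.
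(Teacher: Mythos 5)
Your proof is correct. The only divergence from the paper's own argument is in how reflexivity of $\wt\Omega_X^p$ is obtained: the paper invokes \autoref{cor:wtOm-of-rtl-sing} directly (which identifies $\wt\Omega_X^p$ with the reflexive sheaf $\Omega_X^{[p]}$ when $X$ has rational singularities), whereas you extract the $S_2$ property already guaranteed by the \wmdb conclusion of \autoref{cor:premrtl-is-premdb} and upgrade it to reflexivity via \autoref{lem:hz-is-tf} and \autoref{rem:for-normal-S2+tf=refl}. Both routes are valid; the paper's is slightly shorter since it does not re-derive reflexivity from $S_2$, while yours is arguably more self-contained in that it uses only the stated consequences of \wmdb rather than re-invoking the rational-singularities identification. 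The codimension comparison and the remaining ingredients match the paper exactly.
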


\begin{proof}
  $X$ has \wmdb \sings by \autoref{cor:premrtl-is-premdb} and $\wt\Omega_X^p$ is
  reflexive by \autoref{cor:wtOm-of-rtl-sing}. As $X$ has \mrtl \sings, its singular
  set satisfies 
  the required codimension condition. 
\end{proof}

\noin
We also have the strict version of the same statement:

\begin{cor}\label{cor:smrtl-is-smdb}
  If $X$ has strict \mrtl \sings, then it has strict \mdb \sings.
\end{cor}

\begin{proof}
  By \autoref{cor:mrtl-is-mdb}, $X$ has \mdb \sings and
  $\Omega_X^p\simeq\wt\Irr_X^p\simeq\wt\Omega_X^p$ by assumption and by
  \autoref{cor:wtOm-of-rtl-sing}, so it follows that $X$ has strict \mdb \sings.
\end{proof}

\begin{rem}
  \autoref{cor:mrtl-is-mdb}, {\bf\sf\small \ref{cor:smrtl-is-smdb}} and the \premrtl
  implies \premdb part of \autoref{cor:premrtl-is-premdb} were 
  shown in \cite[Thm.~B, Cor.~C, Thm.~D(b)]{SVV23} using different arguments.
\end{rem}






\def\cprime{$'$} \def\cprime{$'$} \def\cprime{$'$} \def\cprime{$'$}
  \def\cprime{$'$} \def\polhk#1{\setbox0=\hbox{#1}{\ooalign{\hidewidth
  \lower1.5ex\hbox{`}\hidewidth\crcr\unhbox0}}} \def\cdprime{$''$}
  \def\cprime{$'$} \def\cprime{$'$} \def\cprime{$'$} \def\cprime{$'$}
  \def\cprime{$'$}
\providecommand{\bysame}{\leavevmode\hbox to3em{\hrulefill}\thinspace}
\providecommand{\MR}{\relax\ifhmode\unskip\space\fi MR}
\providecommand{\MRhref}[2]{%
  \href{http://www.ams.org/mathscinet-getitem?mr=#1}{#2}
}
\providecommand{\href}[2]{#2}

\end{document}
